\newcommand\bSI[1]{{\small[\SI{}{#1}]}}
\newlength\unitwdth
\newlength\numwdth
\newlength\tdima
\newcommand\SIdescr[2]{%
    \setlength\tdima{\linewidth}%
    \addtolength\tdima{\@totalleftmargin}%
    \addtolength\tdima{-\dimen\@curtab}%
    \addtolength\tdima{-\unitwdth}%
    \addtolength\tdima{-\numwdth}%
    \parbox[t]{\tdima}{%
        #1
        \leaders\hbox{$\m@th\mkern \@dotsep mu\hbox{\tiny.}\mkern \@dotsep mu$}%
        \hfill
        \ifhmode\strut\fi
        \makebox[0pt][l]{%
            \makebox[\unitwdth][l]{}%
            \makebox[\numwdth][r]{#2}}}}
\newcommand{\Z}{\mathbb{Z}}
\newcommand{\N}{\mathbb{N}}
\newcommand{\R}{\mathbb{R}}
\newcommand{\eps}{\varepsilon}
\DeclareMathOperator*{\esssup}{ess\,sup}
\let\emptyset\varnothing
\newcommand{\identity}{\mathrm{Id}}
\newcommand{\Indicator}{{\mathds{1}}}
\newcommand{\mybullet}{\bullet}
\newcommand{\conc}{{\raisebox{2pt}{\tiny\newmoon} \,}}
\newcommand{\sconc}{\odot}
\DeclareMathOperator{\Lip}{Lip}
\newcommand{\diag}{{\operatorname{diag}}}
\newcommand{\dist}{{\operatorname{dist}}}
\newcommand{\range}{{\operatorname{range}}}
\newcommand{\kernel}{{\operatorname{ker}}}
\newcommand{\rank}{{\operatorname{rank}}}
\newtheorem{theorem}{Theorem}[section]
\newtheorem*{theorem*}{Theorem}
\newtheorem{remark}[theorem]{Remark}
\newtheorem{definition}[theorem]{Definition}
\newtheorem{proposition}[theorem]{Proposition}
\newtheorem{lemma}[theorem]{Lemma}
\newtheorem{corollary}[theorem]{Corollary}
\newtheorem*{remark*}{Remark}
\newtheorem*{proposition*}{Proposition}
\numberwithin{equation}{section}
\DeclareMathOperator{\spann}{span \,}
\newcommand{\pp}[1]{#1}
\definecolor{darkcandyapplered}{rgb}{0.64, 0.0, 0.0}
\newcommand{\Felix}[1]{#1}
\newcommand{\Realization}{\mathrm{R}}
\title{Optimal approximation of piecewise smooth functions\\
using deep ReLU neural networks}
\author{Philipp Petersen\footnotemark[1]\ \footnotemark[3]
\and Felix Voigtlaender\footnotemark[2]\ \footnotemark[3]}
\date{}
\newcommand*\wt[1]{\mathpalette\wthelper{#1}}
\newcommand*\wthelper[2]{%
        \hbox{\dimen@\accentfontxheight#1%
                \accentfontxheight#11.3\dimen@
                $\m@th#1\widetilde{#2}$%
                \accentfontxheight#1\dimen@
        }%
}
\newcommand*\accentfontxheight[1]{%
        \fontdimen5\ifx#1\displaystyle
                \textfont
        \else\ifx#1\textstyle
                \textfont
        \else\ifx#1\scriptstyle
                \scriptfont
        \else
                \scriptscriptfont
        \fi\fi\fi3
}
\begin{document}
\maketitle

\begin{abstract}
  We study the necessary and sufficient complexity of ReLU neural
  networks---in terms of depth and number of weights---which is required for
  approximating classifier functions in an $L^2$-sense.

  As a model class, we consider the set $\mathcal{E}^\beta (\R^d)$ of possibly
  discontinuous piecewise $C^\beta$ functions
  $f : [-\nicefrac{1}{2}, \nicefrac{1}{2}]^d \to \R$, where the different
  ``smooth regions'' of $f$ are separated by $C^\beta$ hypersurfaces.
  For given dimension $d \geq 2$, regularity $\beta > 0$, and accuracy
  $\varepsilon > 0$, we construct artificial neural networks with ReLU
  activation function that approximate functions from $\mathcal{E}^\beta(\R^d)$
  up to an $L^2$ error of $\varepsilon$.
  The constructed networks have a fixed number of layers, depending only on
  $d$ and $\beta$, and they have $\mathcal{O}(\varepsilon^{-2(d-1)/\beta})$
  many nonzero weights, which we prove to be optimal.
  For the proof of optimality, we establish a lower bound on the description
  complexity of the class $\mathcal{E}^\beta (\R^d)$.
  By showing that a family of approximating neural networks gives rise to an
  encoder for $\mathcal{E}^\beta (\R^d)$, we then prove that one cannot
  approximate a general function $f \in \mathcal{E}^\beta (\R^d)$
  using neural networks that are less complex than those produced by our
  construction.

  In addition to the optimality in terms of the number of weights,
  we show that in order to achieve this optimal approximation rate,
  one needs ReLU networks of a certain minimal depth.
  Precisely, for piecewise $C^\beta(\R^d)$ functions, this minimal depth
  is given---up to a multiplicative constant---by $\beta/d$.
  Up to a log factor, our constructed networks match this bound.
  This partly explains the benefits of depth for ReLU networks by showing
  that deep networks are necessary to achieve efficient approximation of
  (piecewise) smooth functions.

  Finally, we analyze approximation in high-dimensional spaces where the
  function $f$ to be approximated can be factorized into a smooth dimension
  reducing feature map $\tau$ and classifier function $g$---defined on a
  low-dimensional feature space---as $f = g \circ \tau$.
  We show that in this case the approximation rate depends only on the
  dimension of the feature space and not the input dimension.
\end{abstract}

\noindent {\bf Keywords:} Deep neural networks, piecewise smooth functions,
function approximation, sparse connectivity, metric entropy, curse of dimension.

\noindent {\bf AMS subject classification:} 41A25, 41A10, 82C32, 41A46,
68T05.
%Just for future reference:
%41A25: Rate of convergence, degree of approximation
%41A10: Approximation by polynomials
%82C32: Neural nets
%41A46: Approximation by arbitrary nonlinear expressions; widths and entropy
%68T05: Learning and adaptive systems
%94A34: Rate-distortion theory

\renewcommand{\thefootnote}{\fnsymbol{footnote}}
\footnotetext[1]{Institut f\"ur Mathematik, Technische Universit\"at Berlin,
10623 Berlin, Germany, \texttt{pc.petersen.pp@gmail.com}}

\footnotetext[2]{Institut f\"ur Mathematik, Technische Universit\"at Berlin,
10623 Berlin, Germany, \texttt{felix@voigtlaender.xyz}}

\footnotetext[3]{Both authors contributed equally to this work.}

%FIXME: Fix all improper uses of \binom!
%FIXME: We sometimes write ``analyse'' and sometimes ``analyze''. We should decide for one of the two!

\section{Introduction}

\emph{Neural networks} implement functions by connecting multiple simple
operations in complex patterns.
They were inspired by the architecture of the human brain and in that framework
probably first studied in 1943 in \cite{MP43}.
A special network model is that of a \emph{multi-layer perceptron}
\cite{Rumelhart:1986:LIR:104279.104293, Rosenblatt1962}, which can,
in mathematical terms, be understood as an alternating concatenation of
affine-linear functions and simple nonlinearities arranged in multiple layers.

Recently, especially deep networks, \Felix{that is}, those with many layers,
have received increased attention, due to the possibility to train them
efficiently.
In particular, given training data in the form of input and output pairs,
there exist highly efficient \Felix{training} algorithms \Felix{that} adapt a
network in such a way that the \Felix{trained} network \Felix{approximately}
implements an interpolation of the training data, and even generalizes
well to previously unseen data points---at least for many problems that occur
in practice.
This procedure is customarily referred to as \emph{deep learning}
\cite{LeCun2015DeepLearning, Goodfellow-et-al-2016}.

A small selection of spectacular applications of deep learning are image
classification \cite{Krizhevsky2012Imagenet},
speech recognition \cite{Hint2012acoustic},
or game intelligence \cite{David2016Go}.
While networks trained by deep learning prove
to be remarkably versatile and powerful classifiers, it is not entirely
understood why these methods work so well.
One aspect of the success of deep learning is certainly the powerful network
architecture.
In mathematical terms, this means that networks yield efficient
approximators for relevant function classes.
Note though that this ability to approximate a given function---or to
interpolate the training data---does in itself \emph{not} explain why neural
networks yield better generalization than other learning architectures.
This \Felix{question of generalization}, however,
is outside the scope of this paper.

\pp{
\Felix{In this paper, we investigate the} \emph{approximation properties of
neural networks}.
In other words, we \Felix{study how complex networks need to be in order
to approximate certain functions well.}
%establish the required sizes of neural networks so that they
%can approximate certain functions well.
\Felix{For this}, we focus on networks that use a certain
activation function---which is possibly the most widely used in applications---%
the \emph{rectified linear unit} (ReLU).
\Felix{For such networks}, we determine the optimal trade-off between the
complexity, \Felix{measured in terms of the number of nonzero weights of the
network,} and the approximation fidelity of neural networks when approximating
\emph{piecewise constant} (or \emph{piecewise smooth}) functions.
As we will elaborate upon below, \Felix{these functions}
resemble the classifier functions that occur in classification problems.
%---function classes that,

\Felix{Roughly speaking, a piecewise constant function $f$ is of the form
$f = \sum_{i=1}^N a_i \, \chi_{K_i}$, where the sets $K_i \subset \R^d$
that determine the indicator functions $\chi_{K_i}$ have
a smooth boundary, say $\partial K_i \in C^\beta$.
For such a function $f$, we show that one can find a ReLU network $\Phi$
with $\mathcal{O}((1 + \beta/d) \cdot \log_2 (2+\beta))$ layers and
$\mathcal{O}(\varepsilon^{-2(d-1)/\beta})$ nonzero weights such that
$\|f - \Phi\|_{L^2} \leq \varepsilon$.
Moreover, we show under natural assumptions that networks with fewer than
$\mathcal{O}(\varepsilon^{-2(d-1)/\beta})$ weights cannot achieve the same
approximation accuracy.}

Additionally, we study the effect of depth of neural networks.
In particular, we show that to attain the optimal
\Felix{``complexity approximation--rate trade--off,''}
for (piecewise) smooth functions $f \in C^\beta(\mathbb{R}^d)$,
one needs networks with a minimal depth of $\mathcal{O}(1 + \beta/d)$ layers.
\Felix{This lower bound for the depth matches the depth of the networks that we
construct, up to a log-factor.}

Finally, we analyze to what extent the presented results provide insights for
the approximation of high-dimensional functions.
\Felix{In contrast to the approximation results from above, where the exponent
$2(d-1)/\beta$ in the number of weights
$\mathcal{O}(\varepsilon^{-2(d-1)/\beta})$ increases with $d$,
we will see for a certain class of highly structured functions that such
a \emph{curse of dimension} can be avoided.
More precisely, if the function $f$ can be factored as $f = g \circ \tau$
with a \pp{smooth} feature map $\tau : \R^d \to \R^k$, and a piecewise
constant (or piecewise smooth) classifier function $g$, then one can approximate
$f$ up to $L^2$-error $\varepsilon$ using a ReLU network $\Phi$ with
$\mathcal{O}(\varepsilon^{-2(k-1)/\beta})$ weights.
Therefore, the approximation rate only depends on the dimension $k$ of the
feature space, instead of the input space dimension $d$.
}
%\Felix{More precisely}, we show that high-dimensional but structured functions
%can be well represented by neural networks, \Felix{in the sense that for such
%functions,} the trade-off between complexity and approximation fidelity is
%\emph{independent} of the ambient dimension.
}

%The established approximation rates are optimal in an information theoretical
%sense. In particular, we provide a lower bound on the %description complexity
%of the function class of piecewise constant functions and demonstrate that
%faster approximation than the established rates would contradict this lower
%bound.

\medskip{}

In the remainder of this introduction, we first motivate our choice of the class
of piecewise constant and piecewise smooth functions as functions of interest.
Afterwards, we review related results concerning the approximation of
(piecewise) smooth functions, both by neural networks and more general function
classes. Then, we will clarify our notion of complexity of neural networks.
Finally, we describe our contribution \Felix{in greater detail},
and fix some standard and non-standard notation.

\subsection{Classification with neural networks}

Neural networks are used in a broad range of classification problems:
Examples include image classification \cite{Krizhevsky2012Imagenet},
digit \Felix{and character} classification
\cite{GUYON1991, KnerrDigits1992, Gale1991Digits, NIPS1989_293},
or even medical diagnosis \cite{Baxt1990Class, Burk1994Cancer}.
A comprehensive survey on classification by neural networks can be found in
\cite{Zhang00neuralnetworks}.

The networks employed in these tasks take high-dimensional input and assign
a simple label to each data point, thereby performing a classification.
Thus, we perceive a prototype classifier function as a map
$f: \R^d \to \{1, \dots, N\}$, where $N$ is the number of possible labels.
In other words, the \emph{function class of classifier functions is that of
piecewise constant functions}.
A special case of particular interest is that of binary classification---that
is, when $N=2$---which is extensively studied in Part 1 of
\cite{Anthony2009NNL}.

Admittedly, the model of a classifier function described above is not the only
conceivable model.
Indeed, another point of view is to consider the classifier function as
\Felix{assigning to each input} a conditional probability distribution
\Felix{that determines for each possible label the probability with which this
label is the correct one for the given input.}
In this regard, not piecewise constant functions but rather functions that admit
reasonably sharp but smooth phase transitions are the right model.
\Felix{However, if the application requires \pp{selecting} one particular label,
instead of a probability density on the set of labels, one will typically
select the label with the highest probability. The resulting map will then
again be a piecewise constant function.}

Which point of view one should adapt naturally depends on the application.
To justify our approach, we give an example where a classifier
should indeed be piecewise constant.
Consider the problem of
%modeling or predicting some physical behavior.
%Precisely, let us consider the problem of
predicting if a material undergoing
some known stress breaks or remains intact.
If the underlying physical model is too complicated, it might be reasonable
to learn the behavior from data and apply a deep learning approach.
In this case, the classifier has two labels---broken and unbroken---and a
potentially very high-dimensional input of forces and material properties.
Nonetheless, there will be a sharp transition between parameter values that
describe stable configurations and those that yield breaks.
It is conceivable that one might want to optimize the forces that can be
applied, which means that the jump set should be finely resolved by the
learned function.

\subsection{Related work on approximation of piecewise smooth functions}

We give a short overview of related work on approximation with neural networks
and approximation of piecewise smooth functions.
In fact, piecewise \emph{smooth} functions form a superset of the previously
described set of piecewise \emph{constant} functions that describe classifiers;
but it will turn out that they admit the same approximation rates with respect
to ReLU neural networks.
\Felix{Therefore}, it is natural to focus on the larger set of piecewise
smooth functions.

One of the central results of approximation with neural networks is the
universal approximation theorem
\cite{Hornik1989universalApprox, Cybenko1989,PinkusUniversalApproximation}
stating that every continuous function on a compact domain can be arbitrarily
well approximated by a \emph{shallow} neural network, that is,
by a network with only one hidden layer.
These approximation results, however, only show the possibility of approximation,
but do not provide any information on the required size of a network to achieve
a given approximation accuracy.

Other works analyze the necessary and sufficient size of networks to
approximate
%certain classes of
functions whose Fourier transform has a
bounded first moment \cite{Barron1994, Barron1993}.
In \cite{Mhaskar:1996:NNO:1362203.1362213}, \cite{pinkus_1999} it is shown that,
\emph{assuming a smooth activation function}, a shallow network with
$\Felix{\mathcal{O}}(\varepsilon^{-d/n})$ neurons can uniformly approximate a general
$C^n$-function on a $d$-dimensional set with infinitesimal error $\varepsilon$.
This approximation rate is also demonstrated to be optimal, in the sense that
if one insists that the weights of the approximating network should depend
\emph{continuously} on the approximated function, the derived rate can not be
improved.
Note though that in \cite[Section 3.3]{YAROTSKY2017103}, Yarotsky gives a
construction where the weights do \emph{not} depend continuously on the
approximated function, and where the ``optimal'' lower bound is improved by a
log factor.
He uses deep networks instead of shallow ones and the ReLU activation function
instead of a smooth one.
This result shows that the optimality can indeed fail if the weights are allowed
to depend discontinuously on the approximated function.

Except for the \Felix{recent paper} \cite{YAROTSKY2017103},
all the results \Felix{mentioned} above concern shallow networks.
However, in applications, one observes that \emph{deep networks appear to
perform better than shallow ones of comparable size}.
Nonetheless, at this point, there does not exist an entirely satisfactory
explanation of why this should be the case.
Still, from an approximation theoretical point of view, there are a couple of
results explaining the connection of depth to the expressive power of a network.
In \cite{Montufar:2014:NLR:2969033.2969153} it was demonstrated that deep
networks can partition a space into exponentially more linear regions than
shallow networks of the same size.
\Felix{The paper} \cite{NIPS2011_4350} analyzes special network architectures of
sum-product networks and establishes the advantage in the expressive power of
deep networks.
Moreover, \cite{Telgarsky2016BenefitsOfDepth, pmlr-v70-safran17a} study the
advantages of depth for networks with special piecewise polynomial activation
functions.
An overview of a large class of functions that can be well approximated with
deep but not with shallow networks can be found in \cite{Poggio2017}.

In \cite{YAROTSKY2017103}, \cite{Telgarsky2017} deep ReLU networks are employed
to achieve optimal approximation rates for smooth functions.
These results are closely related to the findings in this paper.
However, \cite{YAROTSKY2017103} and \cite{Telgarsky2017} consider approximation
in the $L^\infty$ norm, which would not be possible for functions with jumps,
since ReLU networks always implement continuous functions.
Finally, we mention \cite{BoeGKP2017}, where it is demonstrated that for the
case of \emph{two-dimensional} piecewise $C^\alpha$ smooth functions with
$C^\alpha$ jump curves, $\alpha \in (1,2]$, neural networks with certain
smooth activation functions achieve optimal $L^2$ approximation.
However, these results do not cover the case of networks with a ReLU activation
function and do not apply in dimensions $d \neq 2$.

To \Felix{complete} this overview of related work, we also give a review on
\Felix{results concerning the} approximation of piecewise smooth functions by
more general representation systems than neural networks.

Piecewise smooth functions are frequently employed as a model for images in
image processing \cite{CurveletsIntro, KLShearIntro2012,
DonohoSparseComponentsOfImages}, which is why a couple of representation systems
developed in that area are particularly well-suited \Felix{for representing
such functions}.
For instance, \emph{shearlets} and \emph{curvelets} provide optimal $N$-term
approximation rates for piecewise $C^2(\R^2)$ functions with $C^2$ jump curves,
\cite{candes1999curvelets, CurveletsIntro, KLcmptShearSparse2011, GuoL2007, Voi17}.

To obtain optimal approximation of two-dimensional functions with jump curves
smoother than $C^2$, the band\Felix{e}let system was developed, \cite{MallatBand2005},
which is a system consisting of properly smoothly-transformed boundary-adapted
wavelets that are optimally adapted to the smooth jump curves.

Another system, the so-called surflets \cite{SurfletsTechnicalReport},
even yields optimal approximation of piecewise smooth functions in $\R^d$.
This system is constructed by \Felix{using} a partition of unity, as well as
local approximation using so-called horizon functions.
These ideas are also central to the approximation results in this work.

\subsection{Our notion of optimality}

To claim that our approximation results are optimal, we need to specify a
notion of optimality.
First of all, we measure the size of networks mostly in terms of the
\emph{number of nonzero weights} of the network.
Then we adopt an information theoretical point of view, which was already
introduced in \cite{BoeGKP2017, spie-nn-2017}, but will be refined and improved
here.
The underlying idea is the following: Under some assumptions on the encodability
of the weights of a network, each neural network can be encoded with a bit
string the length of which depends only on the number of weights of the network.
For a given function class which can be well approximated by neural networks of
a \Felix{certain} complexity, this gives rise to a lossy compression algorithm
for the function class; the error introduced by this compression algorithm
depends on the quality of approximation that can be achieved by the given class
of networks over the function class.
This observation \Felix{yields} an encoding strategy for function classes
that are well-approximated by neural networks of limited complexity.
In this way, the description complexity of a function class---
which measures how well a general element of the class can be described using $\ell$ bits---
provides a lower bound on the size of the associated networks.
Similar ideas for deriving lower bounds for the approximation with certain
representation systems were used in \cite{DONOHO1993100, grohs2015optimally}.

Certainly, other means of establishing lower bounds exist.
For instance, in \cite{YAROTSKY2017103} known bounds on the Vapnik-Chervonenskis
dimension or fat-shattering dimension of networks \cite{Anthony2009NNL}
are used to obtain lower bounds on the achievable approximation rate
for a large variety of function classes.

The arguments in \cite{YAROTSKY2017103}, however, only yield a lower bound
regarding the approximation with respect to the $L^\infty$ norm.
This is not appropriate in our setting as we study $L^2$ approximation,
\Felix{or more generally $L^p$ approximation with finite $p$}.
Additionally, to obtain \emph{sharp} lower bounds on the approximation using
neural networks as in \cite{YAROTSKY2017103}, it is necessary to impose an upper
bound on the depth of the network.
Such an assumption is not required in our approach.
On the downside, we require an encodability condition on the weights.
A final argument in favor of our optimality criterion is that it is
\emph{independent of the chosen activation function $\varrho$} (as long as
$\varrho(0) = 0$), while the arguments in \cite{YAROTSKY2017103, Anthony2009NNL}
are specific to piecewise polynomial activation functions.
A more in-depth comparison of the two approaches is given in
Section \ref{sec:Optimality}.

A further notion of optimality concerns the number of layers which is necessary
to achieve a certain approximation rate by neural networks of that depth.
In \cite{Poggio2017} an overview is given about function classes that can be
approximated well by deep networks, \Felix{but not by} shallow networks.
%and which cannot be approximated well with shallow networks.
Furthermore, Yarotsky \cite{YAROTSKY2017103} shows that a certain depth is
needed to approximate nonlinear $C^2$ functions with a given approximation rate
with respect to the $L^\infty$ norm.
A similar result is given in \cite{pmlr-v70-safran17a} for approximation with
respect to the $L^2$ norm.

We will discuss this notion in more detail in Section \ref{sec:Optimality}.
In particular, we will show that the result of Yarotsky (and the one in
\cite{pmlr-v70-safran17a}) can be generalized from $L^\infty$ approximation
(or $L^2$ approximation) to approximation in the $L^p$-sense,
for any $p \in (0,\infty)$.%FIXME: Check if we really do this!

\subsection{Our contribution}

%We establish optimal approximation rates for piecewise $C^\beta$ functions,
%$\beta \in (0,\infty)$, on $\R^d$, $d\in \N_{\geq 2}$, by ReLU neural
%networks in terms of the number of nonzero weights and the number of layers.
\Felix{We establish the optimal rates for approximating piecewise
$C^\beta$ functions on $[-\nicefrac{1}{2}, \nicefrac{1}{2}]^d$ (where $d \in \N_{\geq 2}$ and $\beta > 0$)
by ReLU neural networks, measuring the complexity of the networks
in terms of the number of nonzero weights.}
As two special cases, our results cover the approximation of $C^\beta$ functions
and of piecewise constant functions for which the different ``constant regions''
are separated by hyper\Felix{surfaces} of regularity $C^\beta$.

A simplified but honest summary of our main results is the following:
For a given piecewise $C^\beta$ function
$f: [-\nicefrac{1}{2},\nicefrac{1}{2}]^d \to \R$ and approximation accuracy
$\varepsilon \in (0, \nicefrac{1}{2})$ we construct a ReLU neural
network $N_{\varepsilon,f}^{\mathrm{constr}}$ with no more than
$c \cdot \varepsilon^{-2(d-1)/\beta}$ nonzero weights and
$c' \cdot \log_2(\beta + 2) \cdot (1+\nicefrac{\beta}{d})$ layers,
such that $\|f - N_{\varepsilon,f}^{\mathrm{constr}}\|_{L^2} \leq \varepsilon$.
%approximating $f$ up to $L^2$ error $\varepsilon$.
Here $c'$ is an absolute constant, while $c$ might depend on $d$ and $\beta$.
Furthermore, we show that the scaling behavior of the number of
weights with $\varepsilon$ is optimal,
%in the previously described sense, that is,
that is, it cannot be improved if one insists that each weight
\Felix{of the approximating networks} can be encoded using
only $\mathcal{O}(\log_2 (\nicefrac1\varepsilon))$ bits.

Finally, \Felix{we show that} if $(N_\varepsilon)_{\varepsilon > 0}$ is a
family of networks (which are \emph{not} required to have encodable weights)
\Felix{such that $N_{\varepsilon}$ has at most
$c \cdot \varepsilon^{-2(d-1)/\beta}$ nonzero weights, while satisfying
%meeting this rate for a nonlinear smooth function $f$,
$\|f-N_{\varepsilon}\|_{L^2} \leq \varepsilon$ for a nonlinear smooth function
$f$,} then $N_\varepsilon$ needs to have at least
$\max \{1, \beta/(4(d-1))\}$ layers, for $\varepsilon$ small enough.
Note that the depth of the networks $N_{\varepsilon,f}^{\mathrm{constr}}$
constructed above coincides (up to a log factor) with \Felix{this} lower bound
of $\max \{1, \beta / (4(d-1))\}$ layers.
%
%\medskip{}
%
%The optimality of the approximation rates is derived by lower-bounding the
%description complexity of the class of piecewise constant functions and by
%establishing a transference result that yields lower bounds on the sizes of
%approximating networks.
%
We observe that the depth of the optimally approximating networks does
\emph{not} depend on the approximation accuracy, but is influenced only by the
dimension of the input space and by the regularity of the functions.
%In fact, the smoother the functions, the more layers are needed to obtain the
%optimal approximation rates; but of course, these optimal rates
%are better, the smoother the functions become, so that the increased depth pays
%off.
%For the special case of approximation of $C^\beta$ functions, where
%$\beta\geq 3$, we also establish a lower bound on the number of layers required
%to achieve the optimal approximation rate.
%Since $C^\beta$ is a subset of the set of all piecewise $C^\beta$ functions,
%this lower bound also holds for the larger set.

\Felix{These observations regarding the necessary and sufficient depth needed to
obtain good approximation rates} offer some explanation for the
efficiency of deep networks observed in practice:
\emph{With increasing structure or regularity of the underlying signal class,
the best achievable approximation rate gets better, but
%The more structure the underlying signal class admits the
\Felix{deeper networks are} required to achieve this optimal approximation rate.}

\medskip{}

%The optimality results are presented in Section \ref{sec:Optimality}.

All previously described results are based on classical function spaces, that is,
function spaces defined via their smoothness.
As a consequence, all approximation rates---while optimal---suffer from the
\emph{curse of dimension}.
In other words, for a function class defined over $\R^d$, the asymptotically
required size of neural networks to guarantee a certain approximation fidelity
$\varepsilon$ is essentially of the form $\varepsilon^{-d/s}$ where $s>0$
depends on the regularity of the function class.
In practice, such an influence of the dimension on the required size of the
networks is not observed.
Indeed, neural networks are usually successfully employed on
high-dimensional problems.
To model this, we propose a function class consisting of classifier functions
$f$ that can be factorized into a smooth dimension--reducing feature map
$\tau$ and a low-dimensional classifier function $g$, in the sense that
$f = g \circ \tau$. In this model, $\tau$ takes the role of a feature map which
exhibits application-specific invariances, such as, for example, translation,
dilation, and rotation invariances in image classification.
We then demonstrate that such functions \Felix{$f = g \circ \tau$} can be
approximated by ReLU neural networks at a rate independent of the ambient
dimension.
This approach is closely related to the analysis of compositional functions of
\cite{mhaskar2016learning}.

\medskip{}

The approximation results can be found in Section \ref{sec:ApproxOfClass},
and the lower bounds for the number of weights and the number of layers are
presented in Section \ref{sec:Optimality}.
In Section \ref{sec:NeuralNetworks}, we precisely define the notion of neural
networks, and we introduce a kind of calculus for these
networks, which in particular covers their composition.
This calculus will greatly simplify subsequent proofs.
Finally, in Section \ref{sec:CurseOfDIm}, we comment on the curse of dimension,
and introduce a novel function class, which can be approximated by ReLU neural
networks at a rate independent of the ambient dimension.

%% FIXME: Adapt to new template
To not disrupt the flow of the presentation, all results and their
interpretations are presented on the first fifteen pages of the paper, %FIXME: Check this page number at the end!
and almost all proofs have been deferred to the appendix:
Appendix \ref{sec:ApproxWithPieceSmoothAppendix} contains the proofs related to
Section \ref{sec:ApproxOfClass}, \Felix{while} the proofs for
Section \ref{sec:Optimality} are presented in
Appendices \ref{sec:LowerBoundProofs} and \ref{sec:DepthMatters}.
Moreover, the proofs for Section \ref{sec:CurseOfDIm} can be found in
Appendices \ref{sec:SmoothSubmersions} and \ref{sec:HighDimApprox}.
Appendices \ref{sec:IntermediateDerivativesEstimate} and
\ref{sec:StandingAssumptionJustification} contain two technical auxiliary
results.

\medskip{}

Finally, we remark that our construction of approximating neural networks
relies on two technical ingredients which are possibly of independent interest
for future work:

First, we show (see Lemma \ref{lem:MultiplicationWithBoundedLayerNumber}) that
neural networks can realize an \emph{approximate multiplication}:
One can achieve $|xy - N(x,y)| \leq \varepsilon$ using a ReLU neural network $N$
with $L$ layers and $\mathcal{O}(\varepsilon^{-c/L})$ nonzero weights,
for a universal constant $c > 0$.
A similar result (see Lemma \ref{lem:ComputationalUnit}) then holds for general
polynomials.
We emphasize that it is not a new result that ReLU neural networks can realize
an approximate multiplication; this was already observed by
Yarotsky \cite{YAROTSKY2017103}.
What is new, however, is that \emph{the depth of the network is independent
of the approximation accuracy $\varepsilon$}; the depth only influences the
approximation rate.

Second, we show (see Lemma \ref{lem:MultiplicationWithACharacteristicFunction})
that neural networks can implement a ``cutoff'', that is, a multiplication with
an indicator function $\chi_{[a_1,b_1] \times \cdots \times [a_d,b_d]}$, using a
\emph{fixed number of layers and weights}, as long as the error is measured in
$L^p$, $p < \infty$.

By combining the two results, one sees that neural networks can well
approximate every function which is locally well approximated by polynomials.

\subsection{Notation}%\label{sub:Notation}
Given a subset $A \subset X$ of a ``master set'' $X$ (which is usually implied
by the context), we define the \emph{indicator function} of $A$ as
\[
  \chi_A : X \to \{0,1\},
  x \mapsto \begin{cases}
              1 , & \text{if } x \in A, \\
              0, & \text{if } x \notin A.
            \end{cases}
\]
Moreover, if $X$ is a topological space, we write $\partial A$ for the boundary
of $A$.
We denote by $\N = \{1, 2, \dots\}$ the set of natural numbers, by
$\N_0 = \N \cup \{0\}$ the set of natural numbers including $0$, and for
$k\in \N$ we denote by $\N_{\geq k}$ all natural numbers larger or equal to $k$.
Occasionally, we also use the notation $\underline{n} := \{1,\dots, n\}$ for
$n \in \N$.
Furthermore, we write $\lfloor x \rfloor = \max \{k \in \Z \,:\, k \leq x\}$ and
$\lceil x \rceil = \min \{k \in \Z \,:\, k \geq x\}$ for $x \in \R$.

\pp{For a function $f : X \to \R$, we write $\| f \|_{\sup} := \sup_{x \in X} |f(x)| \in [0,\infty]$}, \Felix{while we set as usual
\[
  \|g\|_{L^\infty} := \esssup_{x \in \Omega} |g(x)|
  \quad \text{for} \quad
  \Omega \subset \R^d
  \quad \text{ and } \quad
  g : \Omega \to \R \text{ measurable}.
\]}

For a given norm $\|\cdot\|$ on $\R^d$, we denote by
\[
  B_r^{\|\cdot\|}(x) = B_r (x) = \{y \in \R^d \,:\, \|y-x\| < r\}
  \quad \text{ and } \quad
  \overline{B_r}^{\|\cdot\|} (x)
  = \overline{B_r} (x) = \{y \in \R^d \,:\, \|y-x\|\leq r\}
\]
the open and closed balls around $x \in \R^d$ of radius $r > 0$.
Similar notations are also used in general normed vector spaces,
not only in $\R^d$.

For a \emph{multiindex} $\alpha \in \N_0^d$, we write
$|\alpha| := \alpha_1 + \dots + \alpha_d$.
This creates a slight ambiguity with the notation $|x|$ for the euclidean norm
of $x \in \R^d$, but the context will always make clear which interpretation is
desired.
\Felix{For $x,y \in \R^d$, we write $\langle x,y \rangle :=\sum_{j=1}^d x_j w_j$
for the standard inner product of $x,y$.}

If $X,Y,Z$ are sets and $f: X \to Y$ and $g: Y \to Z$, then we denote by
$g \circ f$ the composition of $f$ and $g$, that is, $g \circ f(x) = g(f(x))$
for $x \in X$.
\Felix{Given functions $f_i : X_i \to Y_i$ for $i \in \{1,\dots,n\}$, we denote
the \emph{cartesian product} of $f_1,\dots,f_n$ by
$f_1 \times \cdots \times f_n :
X_1 \times \cdots \times X_n \to Y_1 \times \cdots \times Y_n,
(x_1,\dots,x_n) \mapsto (f_1(x_1),\dots,f_n(x_n))$.}

We denote by $|M|$ the cardinality $|M| \in \N_0 \cup \{\infty\}$ of a set $M$.
For $A \in \R^{n \times m}$, we denote by
$\| A \|_{\ell^0} := |\{(i,j) \, : \, A_{i,j} \neq 0\}|$ the number of nonzero
entries of $A$.
A similar notation is used for vectors $b \in \R^n$.
\Felix{Finally, we write $A^T \in \R^{m \times n}$ for the \emph{transpose} of
a matrix $A \in \R^{n \times m}$.}

\section{Neural networks}\label{sec:NeuralNetworks}

Below we present a \Felix{mathematical} definition of neural networks.
For our arguments, it will be crucial to emphasize the difference between a
network and the associated function.
Thus, we define a network as a structured set of weights and its
\emph{realization} as the associated function that results from alternatingly
applying the weights and a fixed activation function, which acts componentwise.

\begin{definition}\label{def:NeuralNetworks}
Let $d, L\in \N$.
A \emph{neural network $\Phi$ with input dimension $d$ and $L$ layers} is a
sequence of matrix-vector tuples
\[
  \Phi = \big( (A_1,b_1),  (A_2,b_2),  \dots, (A_L, b_L) \big),
\]
where $N_0 = d$ and $N_1, \dots, N_{L} \in \N$, and where each $A_\ell$ is an
$N_{\ell} \times N_{\ell-1}$ matrix, and $b_\ell \in \R^{N_\ell}$.

If $\Phi$ is a neural network as above, and if $\varrho: \R \to \R$ is arbitrary,
then we define the associated \emph{realization of $\Phi$ with activation
function $\varrho$} as the map $\mathrm{R}_{\varrho}(\Phi): \R^d \to \R^{N_L}$
such that
\[
  \mathrm{R}_{\varrho}(\Phi)(x) = x_L ,
\]
where $x_L$ results from the following scheme:
\begin{equation*}
  \begin{split}
    x_0 :&= x, \\
    x_{\ell} :&= \varrho(A_{\ell} \, x_{\ell-1} + b_\ell),
    \quad \text{ for } \ell = 1, \dots, L-1,\\
    x_L :&= A_{L} \, x_{L-1} + b_{L},
  \end{split}
  %\label{eq:NetworkScheme}
\end{equation*}
where $\varrho$ acts componentwise, that is,
$\varrho(y) = (\varrho(y_1), \dots, \varrho(y_m))$ for
$y = (y_1, \dots, y_m) \in \R^m$.

We call $N(\Phi) := d + \sum_{j = 1}^L N_j$ the
\emph{number of neurons of the network} $\Phi$, \Felix{while}
$L(\Phi) := L$ \Felix{denotes} the \emph{number of layers} \Felix{of $\Phi$}.
\Felix{Moreover,}
$M(\Phi) := \sum_{j=1}^L (\| A_j\|_{\ell^0} + \| b_j \|_{\ell^0} )$ denotes the
\Felix{total} number of nonzero entries of all $A_\ell, b_\ell$, which we call
\emph{the number of weights of $\Phi$}.
\Felix{Finally}, we refer to $N_L$ as the \emph{dimension of the output layer}
of $\Phi$, \Felix{or simply as \emph{the output dimension} of $\Phi$}.
\end{definition}

To construct new neural networks from existing ones,
we will frequently need to concatenate networks or put them in parallel.
We first define the \emph{concatenation} of networks.

\begin{definition}
Let $L_1, L_2 \in \N$ and let
$\vphantom{\sum_j} \Phi^1 = ((A_1^1,b_1^1), \dots, (A_{L_1}^1,b_{L_1}^1))$ and
$\Phi^2 = ((A_1^2,b_1^2), \dots, (A_{L_2}^2,b_{L_2}^2))$
be two neural networks such that the input layer of $\Phi^1$ has the same
dimension as the output layer of $\Phi^2$.
Then, \emph{$\Phi^1 \conc \Phi^2$} denotes the following $L_1+L_2-1$ layer
network:
\[
  \Phi^1 \conc \Phi^2
  :=  ((A_1^2,b_1^2),
      \dots,
      (A_{L_2-1}^2,b_{L_2-1}^2),
      (A_{1}^1 A_{L_2}^2, A_{1}^1 b^2_{L_2} + b_1^1),
      ({A}_{2}^1, b_{2}^1),
      \dots,
      (A_{L_1}^1, b_{{L_1}}^1)
     ).
\]
We call $\Phi^1 \conc \Phi^2$ the \emph{concatenation of $\Phi^1$ and $\Phi^2$}.
\end{definition}
One directly verifies that $\mathrm{R}_\varrho(\Phi^1 \conc \Phi^2)
= \mathrm{R}_\varrho(\Phi^1) \circ \mathrm{R}_\varrho(\Phi^2)$,
which shows that the definition of concatenation is reasonable.

If the activation function $\varrho: \R \to \R$ is the ReLU---that is,
$\varrho(x)  = \max\{0,x \}$---then, \Felix{based on the identity
$x = \varrho(x) - \varrho(-x)$ for $x \in \R$,}
one can construct a simple two-layer network
whose realization is the identity \Felix{$\identity_{\R^d}$} on $\R^d$.

\begin{lemma}\label{lem:identity}
Let $\varrho$ be the ReLU, let $d\in \N$, and define
\[
  \Phi^{\identity}_{\Felix{d}} := \big( (A_1, b_1), (A_2, b_2) \big)
\]
with
\begin{align*}
        A_1 &:= \begin{pmatrix}
                   \identity_{\R^d} \\
                  -\identity_{\R^d}
                \end{pmatrix}
  %      A_1 &:= \left(
  %                \begin{array}{r}
  %                  \identity_{\R^d} \\
  %                  -\identity_{\R^d}
  %                \end{array}
  %              \right),
  \quad b_1 := 0,
  \quad A_2 := \begin{pmatrix}
                 \identity_{\R^d} & - \identity_{\R^d}
               \end{pmatrix}
  %\quad A_2 := \left(
  %               \begin{array}{l l}
  %                 \identity_{\R^d} & - \identity_{\R^d}
  %               \end{array}
  %             \right),
  \quad b_2 := 0.
\end{align*}
Then $\mathrm{R}_\varrho(\Phi^{\identity}_{\Felix{d}}) = \identity_{\R^d}$.
\end{lemma}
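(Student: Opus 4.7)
The plan is to simply unfold the definition of the realization of a two-layer network and verify the equality by direct computation, using the elementary identity $x = \varrho(x) - \varrho(-x)$ for ReLU, which is already noted in the excerpt.

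Let $x \in \R^d$ be arbitrary. Following the scheme in Definition \ref{def:NeuralNetworks}, I first set $x_0 := x$. Because $b_1 = 0$ and $A_1$ stacks $\identity_{\R^d}$ on top of $-\identity_{\R^d}$, I compute
\[
  A_1 x_0 + b_1
  = \begin{pmatrix} \identity_{\R^d} \\ -\identity_{\R^d} \end{pmatrix} x
  = \begin{pmatrix} x \\ -x \end{pmatrix} \in \R^{2d}.
\]
Applying $\varrho$ componentwise yields
\[
  x_1 = \varrho(A_1 x_0 + b_1)
      = \begin{pmatrix} \varrho(x) \\ \varrho(-x) \end{pmatrix},
\]
where $\varrho(x), \varrho(-x) \in \R^d$ are understood componentwise.

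For the output layer, since the last layer is affine (no $\varrho$ is applied) and $b_2 = 0$, I obtain
\[
  x_2 = A_2 x_1 + b_2
      = \begin{pmatrix} \identity_{\R^d} & -\identity_{\R^d} \end{pmatrix}
        \begin{pmatrix} \varrho(x) \\ \varrho(-x) \end{pmatrix}
      = \varrho(x) - \varrho(-x).
\]
Finally, applying the identity $t = \varrho(t) - \varrho(-t) = \max\{0,t\} - \max\{0,-t\}$ componentwise, I conclude $x_2 = x$, so that $\Realization_\varrho(\Phi^{\identity}_d)(x) = x$ for every $x \in \R^d$, as claimed.

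There is no real obstacle here; the statement is a pure unpacking of definitions together with the ReLU identity. The only thing worth being careful about is the convention in Definition \ref{def:NeuralNetworks} that the activation function $\varrho$ is \emph{not} applied in the last layer, which is precisely what makes the computation work out to $\varrho(x) - \varrho(-x) = x$ rather than $\varrho(\varrho(x) - \varrho(-x)) = \varrho(x)$.
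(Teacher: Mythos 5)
Your verification is correct and follows exactly the route the paper indicates: the paper states the ReLU identity $x=\varrho(x)-\varrho(-x)$ immediately before the lemma and leaves the routine unfolding of Definition~\ref{def:NeuralNetworks} to the reader, which is precisely what you have carried out. Your remark that the final layer is affine (no activation applied) is the one convention that matters here, and you have handled it correctly.
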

\begin{remark}\label{rem:DeepIdentity}
  In generalization of Lemma \ref{lem:identity}, for each $d \in \N$, and each
  $L \in \N_{\geq 2}$, one can construct a network $\Phi^{\identity}_{d,L}$
  with $L$ layers and with \Felix{at most} $2d \cdot L$ nonzero,
  $\{1,-1\}$-valued weights such
  that $R_\varrho (\Phi^{\identity}_{d,L}) = \identity_{\R^d}$.
  In fact, one can choose
  \[
    \Phi^{\identity}_{d,L}
    := \left(
          \left(
            %\binom{\identity_{\R^d}}{ - \identity_{\R^d}} , 0
            \begin{pmatrix}
              \identity_{\R^d} \\ - \identity_{\R^d}
            \end{pmatrix}
            , 0
          \right),
          \underbrace{
            (\identity_{\R^{2d}}, 0),
            \dots,
            (\identity_{\R^{2d}}, 0)
          }_{L-2 \text{ times}},
          \left(
            \left[
              \identity_{\R^d} \,\middle|\, - \identity_{\R^d}
            \right],
            0
          \right)
       \right).
  \]
  For $L=1$, one can achieve the same bounds, simply by setting
  $\Phi_{d,1}^{\identity} := ((\identity_{\R^d}, 0))$.
\end{remark}

Lemma \ref{lem:identity} enables us to define an alternative concatenation where
one can precisely control the number of weights of the resulting network.
Note though, that this only works for the ReLU \Felix{activation function}.

\begin{definition}\label{def:SpecialConc}
Let  $\varrho : \R \to \R$ be the ReLU, let $L_1, L_2 \in \N$, and let
$\Phi^1 = ((A_1^1,b_1^1), \dots, (A_{L_1}^1,b_{L_1}^1))$ and
$\Phi^2 = ((A_1^2,b_1^2), \dots, (A_{L_2}^2,b_{L_2}^2))$
be two neural networks such that the input layer of $\Phi^1$ has the same
dimension $d$ as the output layer of $\Phi^2$.
Let $\Phi^{\identity}_{\Felix{d}}$ be as in Lemma \ref{lem:identity}.
Then, the \emph{sparse concatenation of $\Phi^1$ and $\Phi^2$} is defined as
\[
  \Phi^1 \sconc \Phi^2
  := \Phi^1 \conc \Phi^{\mathrm{id}}_{\Felix{d}} \conc \Phi^2.
\]
%We call $\Phi^1 \sconc \Phi^2$ the
%\emph{sparse concatenation of $\Phi^1$ and $\Phi^2$}.
\end{definition}
\begin{remark}\label{rem:SpecialConc}
It is easy to see that
\[
  \Phi^1 \sconc \, \Phi^2
  \! = \!\!
       \left( \!
         (A^2_1, b_1^2), \dots, (A^2_{L_2-1}, b^2_{L_2 - 1}),
         \left( \!
           \begin{pmatrix}
               A_{L_2}^2 \\[0.15cm]
             - A_{L_2}^2
           \end{pmatrix} \! ,
           \begin{pmatrix}
             b_{L_2}^2 \\[0.15cm]
             -b_{L_2}^2
           \end{pmatrix}
           %\binom{A_{L_2}^2}{-A_{L_2}^2}, \!
           %\binom{b_{L_2}^2}{-b_{L_2}^2} \!
         \right) \! ,
         \left(
           \left[ A^1_{1} \, \middle| \, -A^1_{1}\right], b_1^1 \,
         \right),
         (A^1_2, b_2^1),
         \dots,
         (A^1_{L_1}, b^1_{L_1}) \!
       \right)
\]
has $L_1+L_2$ layers and that $\mathrm{R}_\varrho(\Phi^1 \sconc \Phi^2)
= \mathrm{R}_\varrho(\Phi^1) \circ \mathrm{R}_\varrho(\Phi^2)$
and $M(\Phi^1 \sconc \Phi^2) \leq 2M(\Phi^1) + 2M(\Phi^2)$.
\Felix{From this, and since $a+b \leq 2 \max\{a,b\}$ for $a,b \geq 0$,
it follows inductively that}
\[
  \Felix{
  M(\Phi^1 \sconc \cdots \sconc \Phi^n)
  \leq 4^{n-1} \cdot \max \{ M(\Phi^1), \dots, M(\Phi^n) \} \, .
  }
\]
\end{remark}

%Using concatenations of $\Phi^{\mathrm{Id}}$, arbitrarily deep neural networks
%whose realization is the identity can be constructed.
%Finally,
\Felix{In addition to concatenating networks,}
one can put two networks in parallel by using the following procedure.

\begin{definition}\label{def:Parallelization}
Let $L \in \N$ and let $\Phi^1 = ((A_1^1,b_1^1), \dots, (A_{L}^1,b_{L}^1))$ and
$\Phi^2 = ((A_1^2,b_1^2), \dots, (A_{L}^2,b_{L}^2))$
be two neural networks with $L$ layers and with $d$-dimensional input.
We define
\[
  \mathrm{P}(\Phi^1, \Phi^2)
  := \big(
       (\widetilde{A}_1,\widetilde{b}_1),
       \dots,
       (\widetilde{A}_{L},\widetilde{b}_{L})
     \big),
\]
where
\begin{align*}
  \widetilde{A}_1 := \begin{pmatrix}
                       A_1^1 \\[0.1cm]
                       A_1^2
                     \end{pmatrix},
  %\widetilde{A}_1 := \left(
  %                 \begin{array}{r}
  %                   A_1^1 \\ A_1^2
  %                 \end{array}
  %               \right),
  \quad
  \widetilde{b}_1 := \begin{pmatrix}
                       b_1^1 \\[0.1cm]
                       b_1^2
                     \end{pmatrix}
  %\widetilde{b}_1 := \left(
  %                 \begin{array}{r}
  %                   b_1^1 \\ b_1^2
  %                 \end{array}
  %               \right),
  \quad \text{ and } \quad
  \widetilde{A}_\ell := \left(
                      \begin{array}{l l}
                        A_\ell^1 & 0        \\
                        0        & A_\ell^2
                      \end{array}
                    \right),
  \quad
  \widetilde{b}_\ell: = \begin{pmatrix}
                          b_\ell^1 \\[0.1cm]
                          b_\ell^2
                        \end{pmatrix}
  %\widetilde{b}_\ell: = \left(
  %                    \begin{array}{r}
  %                      b_\ell^1 \\ b_\ell^2
  %                    \end{array}
  %                  \right)
  \quad \text{ for } 1 < \ell \leq L.
\end{align*}
Then, $\mathrm{P}(\Phi^1, \Phi^2)$ is a neural network with $d$-dimensional input
and $L$ layers, called the \emph{parallelization of $\Phi^1$ and $\Phi^2$}.
\end{definition}

One readily verifies that $M(P(\Phi^1, \Phi^2)) = M(\Phi^1) + M(\Phi^2)$, and
\begin{equation}
  \Realization_\varrho\big(\mathrm{P}(\Phi^1,\Phi^2)\big) (x)
  = \big(\Realization_\varrho(\Phi^1)(x), \Realization_\varrho(\Phi^2)(x)\big)
  \qquad \text{ for all } x \in \R^d.
  \label{eq:ParallelizationDoesTheRightThing}
\end{equation}

\begin{remark}
With the above definition, parallelization is only defined for networks with the
same number of layers.
However, since we will be working with ReLU networks only,
Remark \ref{rem:DeepIdentity} and Definition \ref{def:SpecialConc} enable a
reasonable definition of the
%Lemma \ref{lem:identity} enables a reasonable definition of the
parallelization of two networks $\Phi^1, \Phi^2$ of \emph{different} sizes
$L_1 < L_2$:
One first sparsely concatenates $\Phi^1$ with a network with $L_2-L_1$ layers
whose realization is the identity; that is, one defines
$\widetilde{\Phi}^1 :=  \Phi^1 \sconc \Phi^{\mathrm{Id}}_{d,L_2 - L_1}$.
%We call the resulting network $\widetilde{\Phi}^1$ and
We then define
$\mathrm{P}(\Phi^1,\Phi^2) := \mathrm{P}(\widetilde{\Phi}^1,\Phi^2)$.
It is not hard to verify that with this new definition,
Equation \eqref{eq:ParallelizationDoesTheRightThing} still holds.
Of course, a similar construction works for $L_1 > L_2$.
\end{remark}

\Felix{When implementing a neural network on a typical computer, one only has
a fixed number of bits for storing each weight of the network.
Generalizing from this restrictive condition, in the remainder of the paper}
%In the sequel,
we will be especially interested in neural networks whose weights
are \emph{bounded and quantized}, since these networks can be stored on a
computer with controllable memory requirements.
\Felix{However, instead of allowing for each weight only a number of bits that is
fixed a priori, we allow the number of bits per weight to increase in a
controlled way as the approximation accuracy gets better and better.}
This notion of quantized weights is made precise in the following definition:
\begin{definition}\label{def:Quantisation}
  Let $\varepsilon \in (0, \infty)$ and let $s\in \N$.
  A neural network $\Phi = ((A_1, b_1),\dots, (A_L, b_L))$ is said to possess
  \emph{$(s, \varepsilon)$-quantized weights}, if all weights
  (that is, all entries of $A_1,\dots,A_L$ and $b_1,\dots,b_L$) are elements of
  $[-\varepsilon^{-s}, \varepsilon ^{-s}]
   \cap 2^{-s \lceil \log_2 ( \nicefrac{1}{\varepsilon} )\rceil}\Z$.
\end{definition}

\begin{remark}\label{rem:QuantisationConversion}
  \setlength{\leftmargini}{0.5cm}
\begin{itemize}
  \item Assume that $\varepsilon \in (0,\nicefrac12)$,
        $q \in (0, \infty)$, $C \geq 1$, and $s \in \N$.
        If $\Phi$ is a network with $(s,\varepsilon^q/C)$-quantized weights,
        then the weights are also $(\tilde{s}, \varepsilon)$-quantized,
        where $\tilde{s} = \lceil q s + s\log_2(C) \rceil+s$.
        This is because
        \begin{align*}
               \varepsilon^{-\tilde{s}} \geq \varepsilon^{-q s - s\log_2(C)}
          =    \varepsilon^{-q s}
               \cdot \left(\frac{1}{\varepsilon}\right)^{s\log_2(C)}
          \geq \varepsilon^{-q s} \cdot 2^{s\log_2(C)}
          = \varepsilon^{-q s} \cdot C^s
          = \left(\frac{\varepsilon^{q}}{C}\right)^{-s},
        \end{align*}
        and
        \begin{align*}
          \frac{s \cdot \lceil \log_2 (1 / (\varepsilon^q / C))\rceil}
               {\tilde{s} \cdot \lceil \log_2(\frac{1}{\varepsilon}) \rceil}
          \leq \frac{s (q \log_2(\frac{1}{\varepsilon}) + \log_2(C) + 1)}
                    { (q s + s\log_2(C) + s)  \log_2(\frac{1}{\varepsilon})}
          = \frac{s q \log_2(\frac{1}{\varepsilon}) + s\log_2(C) + s}
                 {s q \log_2(\frac{1}{\varepsilon})
                  + s\log_2(C)\log_2(\frac{1}{\varepsilon})
                  + s\log_2(\frac{1}{\varepsilon})}
          \leq 1.
        \end{align*}

  \item It was shown in \cite[Lemma 3.7]{BoeGKP2017} that for a Lipschitz
        continuous activation function $\varrho$, each neural network
        $\Phi$ with all weights bounded in absolute value by
        $\varepsilon^{-s_0}$ (where $s_0 \in \N$ and
        $\varepsilon \in (0,\nicefrac12)$) can be well approximated by a neural
        network with quantized weights.

        Specifically, if $\Felix{p},\sigma, \Felix{\theta,C} \in (0,\infty)$ and
        $M(\Phi) \leq C \cdot \eps^{-\theta}$
        and $L(\Phi) = L$,
        \Felix{and if $\Omega \subset \R^d$ is bounded, with $d$ denoting the
        input dimension of $\Phi$,}
        then there is a constant
        $s = s (\theta,\sigma, C, s_0,\Omega,p, L, \varrho)\in \N$, such that
        there exists a network $\Psi$ with $L(\Phi) = L(\Psi)$ and
        $M(\Phi) = M(\Psi)$, and such that $\Psi$ has
        $(s, \varepsilon)$-quantized weights and \pp{satisfies $\|
            \mathrm{R}_\varrho(\Phi) - \mathrm{R}_\varrho(\Psi)
          \|_{L^p(\Omega)}
          \leq \varepsilon^\sigma$.}
        \Felix{Therefore, if one can achieve
        $\|f - \Realization_\varrho(\Phi)\|_{L^p (\Omega)}
        \leq \varepsilon^\sigma$
        for a ReLU network $\Phi$ with weights bounded in absolute value by
        $\varepsilon^{-s_0}$, then also
        $\|f - \Realization_\varrho(\Psi)\|_{L^p(\Omega)}
        \leq 2^{\max\{1,p^{-1}\}} \varepsilon^\sigma$ for a network $\Psi$
        with $(s,\varepsilon)$-quantized weights.
        Hence, for the approximation results that we are interested in,}
        %In this sense,
        requiring quantized weights is essentially equivalent to
        requiring the weights to be bounded (in absolute value) by
        $\varepsilon^{-s_0}$ for some $s_0 \in \N$.
        \Felix{There is only one caveat: The constant $s$ depends on the number
        of layers $L$ of the network $\Phi$, so that the argument is only
        effective if $L \leq L_0$ for a fixed $L_0 \in \N$.
        This is satisfied in many, but not all interesting cases.}
    \end{itemize}
\end{remark}

\section{Approximation of classifier functions} \label{sec:ApproxOfClass}

In this section, we will provide the main approximation results of the paper.
We will only state the results without the underlying proofs,
which would otherwise distract from the essentials.
All proofs can be found in Appendix \ref{sec:ApproxWithPieceSmoothAppendix}.
In this entire section, we assume that
$\varrho : \R \to \R, x \mapsto \max\{0,x\}$ is the ReLU.

\subsection{Approximation of horizon functions}
\label{sub:HorizonApproximation}

For $\beta \in (0,\infty)$ with $\beta = n + \sigma$, where
$n \in \mathbb{N}_0$ and $\sigma \in (0,1]$ and $d \in \N$,
we define for $f \in C^n ([-\nicefrac{1}{2}, \nicefrac{1}{2}]^d)$ the norm
\begin{equation*}
  \| f \|_{C^{0, \beta}}
  := \max \left\{
             \max_{|\alpha| \leq n} \| \partial^\alpha f \|_{\sup} \, , \,\,
             \max_{|\alpha| = n} \,\,
               \Felix{\mathrm{Lip}_\sigma (\partial f)}
               %\sup_{x,y \in [-\nicefrac{1}{2}, \nicefrac{1}{2}]^d, x \neq y}
               %  \frac{|\partial^\alpha f(x) - \partial^\alpha f(y)| }
               %       { |x-y|^\sigma}
          \right\}
  \in [0,\infty],
  %\label{eq:HoelderNormDefinition}
\end{equation*}
\Felix{where we used the notation
\[
  \mathrm{Lip}_\sigma (g)
  := \sup_{x,y \in \Omega, x \neq y} \frac{|g(x) -g (y)|}{|x-y|^\sigma}
  \quad \text{for} \quad g : \Omega \subset \R^d \to \R \, .
\]
Then,} for $B > 0$, we define the following class of smooth functions:
\begin{equation}
    \mathcal{F}_{\beta,d, B}
    := \left\{
         f \in C^{n}\left(\left[-\nicefrac{1}{2},\nicefrac{1}{2}\right]^d\right)
         \,:\,
         \|f\|_{C^{0,\beta}} \leq B
       \right\}.
    \label{eq:SmoothFunctionClassDefinition}
\end{equation}
It should be observed that for $\beta = n + 1$, we do \emph{not} require
$f \in \mathcal{F}_{\beta,d,B}$ to be $n+1$ times continuously differentiable.
Instead, we only require $f \in C^n$, where all derivatives of order $n$ are
assumed to be Lipschitz continuous.
Of course, if $f \in C^{n+1} ([-\nicefrac{1}{2} , \nicefrac{1}{2}]^d)$ with
$\| \partial^{\Felix{\beta}} f \|_{L^\infty} \leq B$ for all
$|\Felix{\beta}| \leq n+1 \vphantom{\sum_j}$, then it easily follows that
$\partial^\alpha f$ is Lipschitz continuous, with Lipschitz constant
$\mathrm{Lip}_{\Felix{1}} (\partial^\alpha f) \leq \sqrt{d} \cdot B$ for all
$|\alpha| = n$, so that $f \in \mathcal{F}_{n+1, d, \sqrt{d}B}$.
In this sense, our assumptions in case of $\beta = n+1$ are slightly weaker
than assuming $f \in C^{n+1}$.

The following theorem establishes optimal approximation rates by ReLU neural
networks for the function class $\mathcal{F}_{\beta, d, B}$.
It is proved in the appendix as Theorem \ref{thm:ApproxOfSmoothFctnAppendix}.

\begin{theorem}\label{thm:ApproxOfSmoothFctn}
For any $d\in \N$, and $\beta, B, \Felix{p} > 0$, there exist constants
$s = s(d,\beta,B \Felix{,p})\in \N$ and $c= c(d,\beta, B) >0$ such that for any
function $f\in \mathcal{F}_{\beta,d, B}$ and any
$\varepsilon \in (0, \nicefrac{1}{2})$, there is a neural network
$\Phi^f_\varepsilon$ with at most
%$c' \cdot \log_2 (2 + \beta) \cdot (1 + \nicefrac{\beta}{d})$ layers,
\Felix{$(2 + \lceil \log_2 \beta \rceil) \cdot (11 + \nicefrac{\beta}{d})$}
layers, and at most $c \cdot \varepsilon^{-d/\beta}$ nonzero,
$(s,\varepsilon)$-quantized weights such that%for all $p\in (0,2]$,
\[
  \|
    \Realization_\varrho(\Phi^f_\varepsilon) - f
  \|_{L^p \Felix{([-\nicefrac{1}{2}, \nicefrac{1}{2}]^d)}}
  < \varepsilon
  \quad \text{and} \quad
  \|\Realization_\varrho(\Phi^f_\varepsilon)\|_{\sup} \leq \lceil B \rceil.
\]
\end{theorem}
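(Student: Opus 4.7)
The plan is to approximate $f$ locally by Taylor polynomials on a fine grid and to realize the resulting global piecewise polynomial with ReLU networks built from the approximate multiplication of Lemma \ref{lem:MultiplicationWithBoundedLayerNumber} and the cutoff of Lemma \ref{lem:MultiplicationWithACharacteristicFunction}. First, I would partition $[-\nicefrac12, \nicefrac12]^d$ into a grid of $N = \mathcal{O}(\varepsilon^{-d/\beta})$ closed cubes $(Q_i)_{i=1}^N$ of side length $h \sim \varepsilon^{1/\beta}$. For each cube $Q_i$ centered at $x_i$, let $P_i(x) := \sum_{|\alpha| \leq n} \frac{\partial^\alpha f(x_i)}{\alpha !}(x-x_i)^\alpha$ denote the Taylor polynomial of degree $n := \lceil \beta \rceil - 1$. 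The $C^{0,\beta}$ regularity of $f$ combined with Taylor's theorem gives $\|f - P_i\|_{L^\infty(Q_i)} \leq C(\beta,d,B) \cdot h^\beta = \mathcal{O}(\varepsilon)$. Re-expanding $P_i$ in the standard monomial basis of $\R^d$ yields $P_i(x) = \sum_{|\alpha|\leq n} c_{i,\alpha} x^\alpha$ with $|c_{i,\alpha}|$ bounded uniformly in $i$ (in terms of $\beta, d, B$), since $x_i \in [-\nicefrac12,\nicefrac12]^d$.

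The key idea is to share the polynomial computation across all cubes: I would build a single ReLU subnetwork $\Psi_{\mathrm{mon}}$ whose output approximates the entire family of monomials $(x^\alpha)_{|\alpha|\leq n}$ uniformly on $[-\nicefrac12,\nicefrac12]^d$ with error $\leq \varepsilon$, using Lemma \ref{lem:ComputationalUnit} built from the approximate multiplication of Lemma \ref{lem:MultiplicationWithBoundedLayerNumber}. The critical tuning is to choose the depth $L_{\mathrm{mult}}$ of each approximate multiplication proportional to $\beta/d$, with a proportionality constant large enough that the universal exponent $c$ from Lemma \ref{lem:MultiplicationWithBoundedLayerNumber} yields $\varepsilon^{-c/L_{\mathrm{mult}}} = \mathcal{O}(\varepsilon^{-d/\beta})$ weights per multiplication. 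Since $\binom{n+d}{d}$ depends only on $\beta$ and $d$, the whole monomial network $\Psi_{\mathrm{mon}}$ then has $\mathcal{O}(\varepsilon^{-d/\beta})$ weights and depth $\mathcal{O}((\log_2 \beta)(1 + \beta/d))$, which matches the claimed bound $(2+\lceil\log_2\beta\rceil)(11+\beta/d)$ after absorbing absolute constants.

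For each cube $Q_i$, I would form the linear combination $\sum_{|\alpha|\leq n} c_{i,\alpha} y_\alpha$ of the monomial outputs $(y_\alpha)$ of $\Psi_{\mathrm{mon}}$ and multiply by the indicator $\chi_{Q_i}$ via Lemma \ref{lem:MultiplicationWithACharacteristicFunction}; this adds only a fixed number of layers and weights per cube and is the step responsible for the passage from $L^\infty$ control on each cube to $L^p$ control globally. Parallelizing the $N$ resulting branches using Definition \ref{def:Parallelization} and summing via a single linear output layer produces a network whose realization $L^p$-approximates $\sum_i \chi_{Q_i}(x) P_i(x)$. A triangle inequality then decomposes the total $L^p$-error into the Taylor error, the monomial-approximation error propagated through the (uniformly bounded) coefficients $c_{i,\alpha}$, and the cutoff error from Lemma \ref{lem:MultiplicationWithACharacteristicFunction}, each of which is $\mathcal{O}(\varepsilon)$ by construction. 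The total weight count is $\mathcal{O}(\varepsilon^{-d/\beta})$ from $\Psi_{\mathrm{mon}}$ plus $N \cdot \mathcal{O}_{\beta,d}(1) = \mathcal{O}(\varepsilon^{-d/\beta})$ cube-specific weights.

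Finally, I would enforce $\|\mathrm{R}_\varrho(\Phi_\varepsilon^f)\|_{\sup} \leq \lceil B \rceil$ by composing the output with the two-layer ReLU clipping network $t \mapsto \varrho(t + \lceil B \rceil) - \varrho(t - \lceil B \rceil) - \lceil B \rceil$, and then convert all weights to $(s,\varepsilon)$-quantized ones via the Lipschitz-based rounding argument of Remark \ref{rem:QuantisationConversion} (whose depth hypothesis is met since the total depth is a function of $\beta$ and $d$ only). The main bookkeeping challenge will be verifying that the proportionality constant in $L_{\mathrm{mult}} \sim \beta/d$ can indeed be chosen so that $\varepsilon^{-c/L_{\mathrm{mult}}} \leq \varepsilon^{-d/\beta}$ uniformly in $\varepsilon$, while simultaneously accommodating the error produced by Lemma \ref{lem:MultiplicationWithACharacteristicFunction}; a secondary issue is that the errors introduced by the approximate multiplications must be controlled in the uniform norm on $[-\nicefrac12, \nicefrac12]^d$ before being multiplied by the indicator functions, which forces one to track the size of intermediate outputs through the monomial tree and clip them when necessary.
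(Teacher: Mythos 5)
Your proposal follows essentially the same route as the paper's own proof (Theorem \ref{thm:ApproxOfSmoothFctnAppendix}): local Taylor expansion on a uniform $\mathcal{O}(\varepsilon^{-1/\beta})$-grid, a shared monomial subnetwork built from the fixed-depth approximate multiplication of Lemma \ref{lem:MultiplicationWithBoundedLayerNumber} tuned with depth $\asymp \beta/d$, per-cube linear recombination, cutoff via the characteristic-function multiplication (the paper packages your per-cube application of Lemma \ref{lem:MultiplicationWithACharacteristicFunction} into the array version Lemma \ref{lem:MultiplicationWithACharacteristicFunctionArray}), a final clipping network to enforce the $\sup$-bound (the paper's Lemma \ref{lem:boundedApprox}), and quantization via Remark \ref{rem:QuantisationConversion}. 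You also correctly identify the two genuine bookkeeping points — tuning the multiplication depth against the exponent $c$ and controlling intermediate outputs in $L^\infty$ before the cutoff — both of which the paper handles in Lemmas \ref{lem:MonomialsWithBoundedNumberOfLayers}, \ref{lem:ComputationalUnit}, and \ref{lem:boundedApprox}.
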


\begin{remark}
  Approximation of functions in $ \mathcal{F}_{\beta,d, B}$ by ReLU networks
  was already considered in \cite[Theorem 1]{YAROTSKY2017103},
  which provides a result similar to Theorem \ref{thm:ApproxOfSmoothFctn}.
  The two theorems differ mainly in two points:
  First of all, the approximation is with respect to the $L^\infty$ norm in
  \cite[Theorem 1]{YAROTSKY2017103}, whereas we provide an approximation result
  in $L^p$, $p\in (0,\infty)$.
  Additionally, \cite[Theorem 1]{YAROTSKY2017103} requires the number of
  \emph{layers} of the network to grow logarithmically in
  $\nicefrac{1}{\varepsilon}$, which is not necessary for our result.
  Overcoming the dependence of the number of layers on $\varepsilon$ is
  achieved by using a refined construction of a multiplication operator,
  which is given in Lemma \ref{lem:MultiplicationWithBoundedLayerNumber},
  and the fact that (approximate) multiplications with indicator functions can
  be much more efficiently implemented if only $L^{\Felix{p}}$ approximation
  \Felix{with $p \in (0,\infty)$} is required,
  see Lemma \ref{lem:MultiplicationWithACharacteristicFunction}.

  % We remark that, while the number of layers in the preceding theorem is
  % \emph{independent} of $\varepsilon$, the depth of
  % the network does depend on $\beta$ and $d$.
  % In fact, the proof reveals that the number of layers increases with the
  % smoothness $\beta$ of the underlying function.
\end{remark}

One of the main function classes of interest in the subsequent analysis is that
of \emph{horizon functions}.
These are $\{0,1\}$-valued functions with a jump along a hypersurface and such
that the jump surface is the graph of a smooth function.
%from all but one coordinates to the remaining coordinate.
Formally, we define the class of horizon functions as follows:
\begin{definition}\label{def:HorizonFunctions}
Let $d \in \N_{\geq 2}$, and $\beta, B>0$.
Furthermore, let $H := \chi_{[0,\infty) \times \R^{d-1}}$ be the Heaviside
function. We define
\[
  \mathcal{HF}_{\beta,d, B}
  \! := \!
  \left\{
    \! f \! \circ \! T \!
    \in \! L^\infty \! \left( \left[-\nicefrac{1}{2},\nicefrac{1}{2}\right]^d\right)
    :
    f(x) \! = \! H(x_1 \! + \! \gamma(x_2, \dots, x_d), x_2, \dots, x_d),
    \gamma \! \in \! \mathcal{F}_{\beta,d-1, B},
    T \! \in  \! \Pi(d,\R) \!
  \right\},
\]
where $\Pi(d,\R) \subset GL(d, \R)$ denotes the group of permutation matrices.
\end{definition}

Concerning approximation by neural networks of functions in the class
$\mathcal{HF}_{\beta, d, B}$, we achieve the following result,
which is proved in the appendix as Lemma \ref{lem:HorFunctAppendix}.

\begin{lemma}\label{lem:HorFunct}
  For any $\Felix{p},\beta,B > 0$ and $d \in \N_{\geq 2}$
  there exist constants $c= c(d, \beta, B \Felix{,p}) > 0$,
  and $s = s(d, \beta, B \Felix{,p}) \in \N$, such that for every function
  $f\in \mathcal{HF}_{\beta,d, B}$ and every
  $\varepsilon \in (0, \nicefrac{1}{2})$ there is a neural network
  $\Phi^f_\varepsilon$ with at most
  %$c' \cdot \log_2 (2 + \beta) \cdot (1 + \nicefrac{\beta}{d})$ layers,
  \Felix{$(2 + \lceil \log_2 \beta \rceil) \cdot (14 + \nicefrac{2\beta}{d})$}
  layers, and at most $c \cdot \varepsilon^{-\Felix{p}(d-1)/\beta}$ nonzero,
  $(s, \varepsilon)$-quantized weights, such that
  $\|
     \Realization_\varrho(\Phi^f_\varepsilon) - f
   \|_{L^{\Felix{p}}([-\nicefrac{1}{2}, \nicefrac{1}{2}]^d)}
   < \varepsilon$.
  Moreover, $0 \leq \Realization_\varrho(\Phi^f_\varepsilon)(x) \leq 1$ for all
  $x \in \Felix{\R^d}$.
\end{lemma}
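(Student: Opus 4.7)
The plan is to reduce the problem to approximating a single smooth function and then composing with a continuous ramp replacing the Heaviside $H$. First I would observe that $T \in \Pi(d,\R)$ acts only by permuting coordinates and hence can be absorbed into the first affine layer at zero asymptotic cost, so it suffices to treat $f(x) = H(x_1 + \gamma(x_2,\dots,x_d))$ with $\gamma \in \mathcal{F}_{\beta,d-1,B}$. Next I would introduce the continuous ramp $R_\delta:\R\to[0,1]$, equal to $0$ on $(-\infty,-\delta]$, $1$ on $[0,\infty)$, and linear on $[-\delta,0]$; it is realised by a fixed two-layer ReLU network via $R_\delta(t) = \varrho((t+\delta)/\delta) - \varrho(t/\delta)$ with $O(1)$ weights. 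Finally, I would invoke Theorem \ref{thm:ApproxOfSmoothFctn} in dimension $d-1$, with the $L^1$-norm and accuracy $\eta$, to obtain a network $\Phi^\gamma_\eta$ with $O(\eta^{-(d-1)/\beta})$ quantized weights and a number of layers bounded in terms of $\beta$ and $d$ only.

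The composite network would be built by parallelising (after padding with the deep identities from Remark \ref{rem:DeepIdentity} to equalise depths) the identity on $x_1$ with $\Phi^\gamma_\eta$ acting on $(x_2,\dots,x_d)$, absorbing the sum $(y_1,y_2)\mapsto y_1+y_2$ into the first affine part of the ramp sub-network, and then sparsely concatenating with the ramp network as in Definition \ref{def:SpecialConc}. By Remark \ref{rem:SpecialConc} the total weight count is then a constant multiple of $M(\Phi^\gamma_\eta)$.

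The error estimate is a one-dimensional slicing argument. Fixing $x' \in [-\nicefrac12,\nicefrac12]^{d-1}$ and setting $g = \gamma(x')$, $t = \Realization_\varrho(\Phi^\gamma_\eta)(x')$, I would observe that as $x_1$ varies the integrand $|H(x_1+g) - R_\delta(x_1+t)|$ is at most $1$ and vanishes outside an interval of length at most $\delta + |g - t|$. Fubini then yields
\[
  \| H(x_1 + \gamma(x')) - R_\delta(x_1 + \Realization_\varrho(\Phi^\gamma_\eta)(x')) \|_{L^p([-\nicefrac12,\nicefrac12]^d)}^p
  \leq \delta + \| \gamma - \Realization_\varrho(\Phi^\gamma_\eta) \|_{L^1([-\nicefrac12,\nicefrac12]^{d-1})} .
\]
Choosing $\delta = \eta = \varepsilon^p/2$ produces total $L^p$-error at most $\varepsilon$ and weight count $O(\varepsilon^{-p(d-1)/\beta})$, while the bound $0 \leq \Realization_\varrho(\Phi^f_\varepsilon) \leq 1$ is automatic from $R_\delta(\R) \subseteq [0,1]$.

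The main obstacle is the incompatibility between the discontinuity of $H$ and the continuity of every ReLU realization, and it is precisely this tension that makes the $L^p$-framework essential: the ramp width $\delta$ contributes a term of order $\delta^{1/p}$ to the $L^p$-error, so $\delta$ must scale like $\varepsilon^p$, which in turn produces the exponent $p(d-1)/\beta$ in the weight count. The layer count then follows by adding the $(2 + \lceil\log_2 \beta\rceil)(11 + \beta/(d-1))$ layers supplied by Theorem \ref{thm:ApproxOfSmoothFctn} to the $O(1)$ overhead for routing, summation and ramp, and using $\beta/(d-1) \leq 2\beta/d$ for $d \geq 2$ to fit the result under $(2 + \lceil\log_2 \beta\rceil)(14 + 2\beta/d)$. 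Quantization of the final network is inherited from the quantized $\Phi^\gamma_\eta$ and the $O(1)$-weighted ramp network via Remark \ref{rem:QuantisationConversion}.
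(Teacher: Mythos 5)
Your proposal is correct and follows essentially the same route as the paper's proof in Appendix~\ref{sec:ApproxWithPieceSmoothAppendix} (Lemma~\ref{lem:HorFunctAppendix}): absorb the permutation, approximate $\gamma$ in $L^1$ via Theorem~\ref{thm:ApproxOfSmoothFctn}, replace $H$ by a two--layer ReLU ramp (the paper's $\Phi^H_{\varepsilon'}$ of Lemma~\ref{lem:ApproxOfHeaviside} is precisely your $R_\delta$), sparsely concatenate, and bound the $L^p$ error by a Fubini slicing argument; the only mild difference is that you fold the two error contributions into a single slice estimate $\int |H(x_1+g)-R_\delta(x_1+t)|^p\,dx_1 \leq \delta + |g-t|$, whereas the paper splits this into separate terms $\mathrm{I}$ and $\mathrm{II}$ using the $L^p$ quasi--triangle inequality, but the content is the same. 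The one point you gloss over (and would need to flesh out) is the quantization of the ramp weights $\pm\delta^{-1}$: the paper picks $\varepsilon'\in 2^{-\mathbb{N}}$ so that $(\varepsilon')^{-1}$ is dyadic, and one then converts to $(s,\varepsilon)$--quantization via Remark~\ref{rem:QuantisationConversion}.
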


At first, the approximation of horizon functions might seem a bit arbitrary as
this is not a function class of interest that is typically considered.
However, \Felix{Lemma \ref{lem:HorFunct}} directly enables the optimal
approximation of piecewise constant and even of piecewise smooth functions,
as we will see in the next subsection.

\subsection{Approximation of piecewise smooth functions}

In this subsection, we present approximation rates for piecewise smooth
functions $f$, depending on the smoothness of the jump surfaces and
on the smoothness of $f$ on each of the ''smooth pieces''.
%of the piecewise constant function.
We first observe that if one is able to approximate indicator functions
$\chi_K$ of compact sets $K\subset [-\nicefrac{1}{2},\nicefrac{1}{2}]^d$ with
say $\partial K \in C^\beta$, then---up to a constant depending on the number
$N$ of ``pieces''---one can achieve the same approximation quality for functions
$f = \sum_{k \leq N} c_k  \, \chi_{K_k}$, where $\partial K_k \in C^\beta$ for
all $1 \leq k \leq N$.

Thus, we will only demonstrate how to approximate indicator functions with a
condition on the smoothness of the jump surface.
We start by introducing a set of domains with smooth boundaries:
Let $r \in \N$, $d \in \N_{\geq 2}$, and $\beta, B >0$.
Then we define
\begin{align*}
  \mathcal{K}_{r, \beta, d, B}
  &:= \left\{
        K  \! \subset \! \left[-\nicefrac{1}{2},\nicefrac{1}{2}\right]^d
        :
        \forall \, x \! \in \! \left[-\nicefrac{1}{2},
                                     \nicefrac{1}{2}\right]^d \,
          \exists \, f_x \in \mathcal{HF}_{\beta,d, B}:
            \chi_{K} = f_x
            \text{ on } \left[-\nicefrac{1}{2}, \nicefrac{1}{2}\right]^d \!
                        \cap
                        \overline{B_{2^{-r}}}^{\|\cdot\|_{\ell^\infty}}\! (x)
      \right\}.
\end{align*}
Although the definition of $\mathcal{K}_{r, \beta, d, B}$ is strongly tailored
to our needs, it is not overly restrictive.
In fact, for every closed set $K' \subset [-\nicefrac{1}{2}, \nicefrac{1}{2}]^d$
such that $\partial K'$ is locally the graph of a $C^\beta$ function of all but
one coordinate, it follows by compactness of $[-\nicefrac12, \nicefrac12]^d$
that $K' \in \mathcal{K}_{r, \beta, d, B}$, for sufficiently large $r$ and large
enough $B$.
%\todo[inline]{Sufficient? Now we take: being locally the graph of a $C^\beta$
%function as definition of $\partial K \in C^\beta$ which I believe to be quite
%reasonable!}

We obtain the following approximation result, which is proved in the appendix
as Theorem \ref{thm:ApproximationOfPiecewiseConstantFunctionsAppendix}.

\begin{theorem}\label{thm:ApproximationOfPiecewiseConstantFunctions}
  For $r \in \N$, $d \in \N_{\geq 2}$, and $\Felix{p}, \beta, B > 0$,
  there are constants $c= c(d,r,\Felix{p}, \beta, B) > 0$
  and $s = s(d, r, \Felix{p}, \beta, B)\in \N$,
  such that for any $K\in \mathcal{K}_{r, \beta,d, B}$ and any
  $\varepsilon \in (0, \nicefrac{1}{2})\vphantom{\sum_j}$,
  there is a neural network $\Phi^K_\varepsilon$ with at most
  %$c' \cdot \log_2 (2 + \beta) \cdot (1 + \nicefrac{\beta}{d})$ layers,
  \Felix{$(3 + \lceil \log_2 \beta \rceil) \cdot (11 + \nicefrac{2\beta}{d})$}
  layers, and at most $c \cdot \varepsilon^{-\Felix{p}(d-1)/\beta}$ nonzero,
  $(s, \varepsilon)$-quantized weights such that
  \[
    \|
      \Realization_\varrho(\Phi^K_\varepsilon) - \chi_{K}
    \|_{L^{\Felix{p}} \Felix{([-\nicefrac{1}{2}, \nicefrac{1}{2}]^d)}}
    < \varepsilon
    \quad \text{ and } \quad
    \|\Realization_\varrho(\Phi^K_\varepsilon)\|_{\sup} \leq  1.
  \]
\end{theorem}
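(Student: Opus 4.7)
The plan is to exploit the local structure encoded in the definition of $\mathcal{K}_{r,\beta,d,B}$: on every $\ell^\infty$-ball of radius $2^{-r}$, the indicator $\chi_K$ coincides with a horizon function from $\mathcal{HF}_{\beta,d,B}$, so I would approximate $\chi_K$ by pasting together local horizon approximations from Lemma~\ref{lem:HorFunct}, multiplied by indicators of small cubes via Lemma~\ref{lem:MultiplicationWithACharacteristicFunction}.

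First I would partition $[-\nicefrac{1}{2},\nicefrac{1}{2}]^d$, up to a Lebesgue null set, into closed cubes $Q_1,\dots,Q_N$ of side length at most $2^{-r}$, where $N = N(r,d)$ depends only on $r$ and $d$. For each $Q_i$ with center $x_i$, the inclusion $Q_i \subset \overline{B_{2^{-r}}}^{\|\cdot\|_{\ell^\infty}}(x_i)$ and the defining property of $\mathcal{K}_{r,\beta,d,B}$ yield a horizon function $f_{x_i} \in \mathcal{HF}_{\beta,d,B}$ with $\chi_K = f_{x_i}$ on $Q_i$, so that
\[
  \chi_K \;=\; \sum_{i=1}^{N} f_{x_i} \cdot \chi_{Q_i} \qquad \text{almost everywhere on } [-\nicefrac{1}{2},\nicefrac{1}{2}]^d .
\]

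Next I would apply Lemma~\ref{lem:HorFunct} to each $f_{x_i}$ with target $L^p$-accuracy $\varepsilon/(3N)$, producing networks $\Phi^{(i)}$ of the depth, weight count and quantization stated there, with $0 \leq \mathrm{R}_\varrho(\Phi^{(i)}) \leq 1$. Using Lemma~\ref{lem:MultiplicationWithACharacteristicFunction}, I would build for each $i$ a bounded-depth network $M_i$ approximating $(y,x) \mapsto y \cdot \chi_{Q_i}(x)$ in $L^p$ to within $\varepsilon/(3N)$. Sparsely concatenating $M_i$ with the parallelization of $\Phi^{(i)}$ and an identity network routing $x$ to the multiplier yields $\Psi_i$; then I would parallelize $\Psi_1,\dots,\Psi_N$, sum their outputs with a single affine layer, and append the clipping $t \mapsto \varrho(t) - \varrho(t-1)$ to enforce outputs in $[0,1]$. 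This produces $\Phi^K_\varepsilon$, and Minkowski's inequality yields
\[
  \|\mathrm{R}_\varrho(\Phi^K_\varepsilon) - \chi_K\|_{L^p}
  \;\leq\; \sum_{i=1}^{N} \|\mathrm{R}_\varrho(\Psi_i) - f_{x_i}\chi_{Q_i}\|_{L^p} \;<\; \varepsilon,
\]
since each summand is bounded by $2\varepsilon/(3N)$ via the triangle inequality and the disjointness of the $Q_i$. The clipping can only reduce the $L^p$-distance to the $[0,1]$-valued target $\chi_K$ and delivers the sup-norm bound.

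For the size accounting, $N$ is a fixed constant, so parallelization preserves the weight order and $M(\Phi^K_\varepsilon) = \mathcal{O}(\varepsilon^{-p(d-1)/\beta})$. The depth is that of one $\Psi_i$ plus an absolute constant for the summation and clipping, i.e.\ the horizon depth plus the (fixed) multiplier depth, which fits within $(3 + \lceil \log_2 \beta \rceil)(11 + \nicefrac{2\beta}{d})$ once the absolute constants in Lemma~\ref{lem:HorFunct} and Lemma~\ref{lem:MultiplicationWithACharacteristicFunction} are matched. Quantization at scale $(s,\varepsilon)$ is propagated through the construction by Remark~\ref{rem:QuantisationConversion}, after adjusting the exponent $s$ to absorb the factor $1/(3N)$ in the target accuracy. \emph{The main obstacle}, as I see it, is ensuring that the approximate multiplication by the discontinuous indicators $\chi_{Q_i}$ remains cheap in both depth and weights---a naive construction would force the layer count to grow with $\nicefrac{1}{\varepsilon}$, destroying the constant-depth claim. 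Lemma~\ref{lem:MultiplicationWithACharacteristicFunction} is precisely what rescues this, and the whole argument is structurally driven by it.
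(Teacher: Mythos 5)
Your proposal follows essentially the same route as the paper's proof (Theorem~\ref{thm:ApproximationOfPiecewiseConstantFunctionsAppendix}): partition $[-\nicefrac{1}{2},\nicefrac{1}{2}]^d$ into the $2^{rd}$ dyadic cubes of side length $2^{-r}$, use the definition of $\mathcal{K}_{r,\beta,d,B}$ to replace $\chi_K$ by a local horizon function $f_{\lambda}\in\mathcal{HF}_{\beta,d,B}$ on each cube, approximate each $f_\lambda$ via Lemma~\ref{lem:HorFunct}, cut off to the corresponding cube via Lemma~\ref{lem:MultiplicationWithACharacteristicFunction}, sum, and append a final truncation (Lemma~\ref{lem:boundedApprox}) to enforce the sup-bound. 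The only difference is cosmetic: the paper packages the per-cube cutoffs, the identity routing of the input $x$ to the multiplier layer, and the final summation into the single auxiliary Lemma~\ref{lem:MultiplicationWithACharacteristicFunctionArray}, whereas you construct these pieces by hand; the underlying networks and the bookkeeping are the same.

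The one step that does not hold as written is the invocation of Minkowski's inequality for the final summation $\sum_{i=1}^{N}\|\Realization_\varrho(\Psi_i)-f_{x_i}\chi_{Q_i}\|_{L^p}$. The theorem covers all $p\in(0,\infty)$, and for $p\in(0,1)$ the $L^p$-quasi-norm is not subadditive, so Minkowski fails. The paper substitutes the quasi-triangle estimate $\|\sum_{i=1}^{N}g_i\|_{L^p}\leq N^{\max\{1,p^{-1}\}}\max_i\|g_i\|_{L^p}$ (Equation~\eqref{eq:PseudoTriangleInequality}) and accordingly takes per-cube target accuracy $\varepsilon/2^{1+q+rdq}$ with $q=\max\{1,p^{-1}\}$; your budget $\varepsilon/(3N)$ would need the analogous $p$-dependent inflation. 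Since $N=2^{rd}$ is a fixed constant, this only changes $c$ and $s$ and is an easy repair, but as written the proposal silently assumes $p\geq 1$.
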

\begin{remark}
  Theorem \ref{thm:ApproximationOfPiecewiseConstantFunctions} establishes
  approximation rates for piecewise constant functions.
  It should be noted that the number of required layers is fixed and
  only depends on the dimension $d$ and the regularity parameter $\beta$;
  in particular, it does \emph{not} depend on the
  approximation accuracy $\varepsilon$.
   %However, since the proof is heavily based on
  %Theorem \ref{thm:ApproxOfSmoothFctn} we get that the number
  %of layers depends on the smoothness parameter $\beta$ and the dimension $d$.
  %In fact, to achieve the claimed approximation rate,
  %the number of layers has to increase with the smoothness parameter $\beta$,
  %at least with our construction of the approximating networks.
  %
  %It is an interesting question for future work to determine if this is also
  %necessary in general.
  %Related to this, we note that it was shown in
  %\cite[Theorem 6]{YAROTSKY2017103} that in order to achieve
  %$\| f - \Realization_\varrho (\Phi)\|_{L^\infty} \leq \varepsilon$
  %for an arbitrary \emph{nonlinear} function $f \in C^2([0,1]^d)$
  %using a network $\Phi$ with $L$ layers, the network needs to have at least
  %$c \cdot \varepsilon^{- 1 / (2(L-2))}$ nonzero weights.
  %Thus, in order to achieve the optimal rates for very smooth functions,
  %one needs more and more layers.
  %Note, however, that the arguments
  %used in \cite{YAROTSKY2017103} do not seem to generalize easily to the case
  %of approximation in $L^p$ with $p < \infty$.
\end{remark}

A simple extension of Theorem \ref{thm:ApproximationOfPiecewiseConstantFunctions}
allows us to also approximate piecewise smooth functions optimally.
First, let us introduce a suitable class of piecewise smooth functions:
\Felix{Since the approximation rate for a piecewise constant function with
boundary surface of regularity $C^\beta$ is $p(d-1)/\beta$, while the
approximation rate for $C^\beta$ functions is $d/\beta$, we will consider
piecewise smooth functions for which the smoothness of the boundary surfaces is
potentially different from that of the smooth regions.}
Precisely, for $r \in \N$, $d \in \N_{\geq 2}$,
and $\Felix{p}, \beta, B>0$ we define $\beta' := (d \beta)/(\Felix{p}(d-1))$ and
\[
  \mathcal{E}_{r, \beta, d, B}^{\Felix{p}}
  := \left\{
        f = \chi_{K} \cdot g
        \,:\,
        g \in \mathcal{F}_{\beta',d, B}
        \text{ and }
        K \in \mathcal{K}_{r, \beta, d, B}
     \right\}.
\]
In terms of this new function class of piecewise smooth functions,
we get the following result, which is proven in the appendix as
Corollary \ref{cor:PiecewiseSmoothFunctionsAppendix}.
%For $r \in \N$, $d \in \N_{\geq 2}$,
%and $\beta, B>0$ we define $\beta' := (d \beta)/(2(d-1))$ and
%\[
%  \mathcal{E}_{r, \beta, d, B}
%  := \left\{
%        f = \chi_{K} \cdot g
%        \,:\,
%        g \in \mathcal{F}_{\beta',d, B}
%        \text{ and }
%        K \in \mathcal{K}_{r, \beta, d, B}
%     \right\}.
%\]
%In terms of this new function class of piecewise smooth functions,
%we get the following result, which is proven in the appendix as
%Corollary \ref{cor:PiecewiseSmoothFunctionsAppendix}.

\begin{corollary}\label{cor:PiecewiseSmoothFunctions}
  Let $r \in \N$, $d\in \N_{\geq 2}$, and $\Felix{p}, B, \beta > 0$.
  Let $\beta'$ as above, and set $\beta_0 := \max \{\beta,\beta'\}$.
  Then there exist constants $c = c(d,\Felix{p}, \beta,r, B) > 0$ and
  $s = s(d, \Felix{p}, \beta,r,B) \in \N$,
  such that for all $\varepsilon \in (0, \nicefrac{1}{2})$ and all
  $f \in \mathcal{E}_{r, \beta, d, B}^{\Felix{p}}$ there is a neural network
  $\Phi^f_\varepsilon$ with at most
  %$c' \cdot \log_2 (2 + \beta) \cdot (1 + \nicefrac{\beta}{d})$ layers,
  \Felix{$(4 + \lceil \log_2 \beta_0 \rceil) \cdot (12 + \nicefrac{3\beta_0}{d})$}
  layers, and at most $c \cdot \varepsilon^{-\Felix{p} (d-1)/\beta}$ nonzero,
  $(s, \varepsilon)$-quantized weights, such that
  \[
    \|
      \Realization_{\varrho}(\Phi^f_\varepsilon) - f
    \|_{L^{\Felix{p}} \Felix{([-\nicefrac{1}{2}, \nicefrac{1}{2}]^d)}}
    \leq \varepsilon
    \quad \text{ and } \quad
    \|\Realization_\varrho(\Phi^f_\varepsilon)\|_{\sup} \leq \lceil B \rceil.
  \]
\end{corollary}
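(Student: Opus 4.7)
The plan is to write $f = g \cdot \chi_K$, approximate each factor separately using Theorem \ref{thm:ApproxOfSmoothFctn} and Theorem \ref{thm:ApproximationOfPiecewiseConstantFunctions}, and combine the two approximating networks via the approximate multiplication from Lemma \ref{lem:MultiplicationWithBoundedLayerNumber}. The crucial design choice is the exponent $\beta' = d\beta / (p(d-1))$, which is tailored precisely so that the smooth-function rate $\varepsilon^{-d/\beta'}$ matches the horizon-function rate $\varepsilon^{-p(d-1)/\beta}$; this is what makes the combined network achieve the claimed rate $\varepsilon^{-p(d-1)/\beta}$.

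First, for a target accuracy $\varepsilon$, I would pick auxiliary accuracies $\varepsilon_1, \varepsilon_2, \varepsilon_3$ each of order $\varepsilon$ and apply Theorem \ref{thm:ApproxOfSmoothFctn} to $g \in \mathcal{F}_{\beta',d,B}$ to get a network $\Phi^g_{\varepsilon_1}$ with $\mathcal{O}(\varepsilon_1^{-d/\beta'}) = \mathcal{O}(\varepsilon_1^{-p(d-1)/\beta})$ nonzero $(s,\varepsilon_1)$-quantized weights satisfying $\|\Realization_\varrho(\Phi^g_{\varepsilon_1}) - g\|_{L^p} < \varepsilon_1$ and $\|\Realization_\varrho(\Phi^g_{\varepsilon_1})\|_{\sup} \leq \lceil B \rceil$. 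Similarly, Theorem \ref{thm:ApproximationOfPiecewiseConstantFunctions} produces a network $\Phi^K_{\varepsilon_2}$ with $\mathcal{O}(\varepsilon_2^{-p(d-1)/\beta})$ nonzero $(s,\varepsilon_2)$-quantized weights, $\|\Realization_\varrho(\Phi^K_{\varepsilon_2}) - \chi_K\|_{L^p} < \varepsilon_2$, and $\|\Realization_\varrho(\Phi^K_{\varepsilon_2})\|_{\sup} \leq 1$. I would then parallelize the two networks (padding the shallower one with an identity network as in Remark \ref{rem:DeepIdentity} and the remark after Definition \ref{def:Parallelization}) to produce a single network whose realization is $x \mapsto (\Realization_\varrho(\Phi^g_{\varepsilon_1})(x), \Realization_\varrho(\Phi^K_{\varepsilon_2})(x))$, and sparsely concatenate this parallelization with an approximate-multiplication network $M_{\varepsilon_3}$ from Lemma \ref{lem:MultiplicationWithBoundedLayerNumber} satisfying $|uv - \Realization_\varrho(M_{\varepsilon_3})(u,v)| \leq \varepsilon_3$ for $(u,v) \in [-\lceil B \rceil,\lceil B\rceil] \times [0,1]$.

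The error estimate then follows from the decomposition
\[
  g\chi_K - \Realization_\varrho(M_{\varepsilon_3})(\Phi^g, \Phi^K)
  = g \cdot (\chi_K - \Phi^K) + (g - \Phi^g) \cdot \Phi^K
    + \bigl(\Phi^g \Phi^K - \Realization_\varrho(M_{\varepsilon_3})(\Phi^g,\Phi^K)\bigr),
\]
where I abbreviated $\Phi^g = \Realization_\varrho(\Phi^g_{\varepsilon_1})$ and $\Phi^K = \Realization_\varrho(\Phi^K_{\varepsilon_2})$. Taking $L^p$-norms over $[-\nicefrac12,\nicefrac12]^d$ (which has unit Lebesgue measure), the three contributions are bounded by $B\,\varepsilon_2$, $\varepsilon_1$, and $\varepsilon_3$, respectively, using the pointwise sup-bounds on $g$ and $\Phi^K$. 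Choosing $\varepsilon_1 = \varepsilon/3$, $\varepsilon_2 = \varepsilon/(3B)$, $\varepsilon_3 = \varepsilon/3$ yields the desired $\varepsilon$ bound, and the weight count remains $\mathcal{O}(\varepsilon^{-p(d-1)/\beta})$.

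The main obstacle is bookkeeping, not conceptual. First, I must verify the layer bound $(4 + \lceil\log_2\beta_0\rceil)(12 + 3\beta_0/d)$. The parallelized factor network has depth equal to the maximum of the two depths from Theorems \ref{thm:ApproxOfSmoothFctn} and \ref{thm:ApproximationOfPiecewiseConstantFunctions}, each of the form $(3 + \lceil\log_2\beta_0\rceil)(11 + 2\beta_0/d)$ up to constants; the multiplication network contributes only a small additive depth (independent of $\varepsilon$ by Lemma \ref{lem:MultiplicationWithBoundedLayerNumber}), which is absorbed into the overall prefactor after sparse concatenation. Second, I must check that quantization is preserved: since $\varepsilon_i$ is proportional to $\varepsilon$, Remark \ref{rem:QuantisationConversion} shows that $(s,\varepsilon_i)$-quantization can be converted to $(\tilde s, \varepsilon)$-quantization for a new $\tilde s$ depending only on $d, p, \beta, r, B$; combining the quantization constants across the parallelization and the multiplication block yields a single admissible $s = s(d,p,\beta,r,B) \in \N$, completing the proof.
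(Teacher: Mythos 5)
Your construction is essentially the paper's own proof: factor $f = g\cdot\chi_K$, approximate each factor at the matched rates via Theorem~\ref{thm:ApproxOfSmoothFctn} (using $\beta'$) and Theorem~\ref{thm:ApproximationOfPiecewiseConstantFunctions}, parallelize (after padding to equal depth), and feed the pair into an approximate-multiplication block from Lemma~\ref{lem:MultiplicationWithBoundedLayerNumber}. The three-term error decomposition is the same one the paper uses.

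There are two bookkeeping gaps. First, the corollary asserts $\|\Realization_\varrho(\Phi^f_\varepsilon)\|_{\sup}\leq\lceil B\rceil$, but your construction does not produce this: Lemma~\ref{lem:MultiplicationWithBoundedLayerNumber} only controls the error $|xy-\Realization_\varrho(\wt{\times})(x,y)|$, not the range of the output, so the composed network can exceed $\lceil B\rceil$ by up to $\varepsilon_3$. The paper closes this by applying Lemma~\ref{lem:boundedApprox} once more to the finished network, composing it with the $1$-Lipschitz clamp $\tau_B$; this is cheap and does not disturb the $L^p$ error since $f=\tau_B\circ f$. Second, your choice $\varepsilon_1=\varepsilon_3=\varepsilon/3$, $\varepsilon_2=\varepsilon/(3B)$ relies on the ordinary triangle inequality to sum the three contributions, which fails for $p\in(0,1)$. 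The statement allows arbitrary $p>0$, so you need the quasi-norm bound of Equation~\eqref{eq:PseudoTriangleInequality}; concretely the paper requests errors of size $\varepsilon/(3\cdot 4^{q}B)$, $\varepsilon/(3\cdot 4^{q})$, $\varepsilon/(3\cdot 2^{q})$ with $q=\max\{1,p^{-1}\}$. Both fixes are routine, but without the first your proof establishes only one of the two conclusions of the corollary.
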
\pp{
\begin{remark}
 We will see in Section \ref{sec:Optimality} that the given number of layers is optimal (up to \Felix{a factor of the form
  $c' \cdot (1 + \lceil \log_2 \beta_0 \rceil)$}) if one wants to achieve the approximation rate stated in the theorem.
\end{remark}}

\section{Optimality}\label{sec:Optimality}

In this section, we study two notions of optimality:
First of all, we establish in the upcoming subsection a lower bound on the
number of weights that neural networks need to have in order to achieve a
given approximation accuracy for the class of horizon functions
of regularity $\beta > 0$.
These results are valid for arbitrary activation functions $\varrho$,
\Felix{as long as $\varrho(0) = 0$}.
In the second subsection, we study lower bounds on the number of layers that a
ReLU neural network needs to have in order to achieve a given approximation
rate in terms of the number of weights or neurons.
Overall, we will see that the constructions from the previous section
achieve the optimal number of weights and have the optimal number of layers,
both up to logarithmic factors.

\subsection{Optimality in terms of \texorpdfstring{\Felix{the}}{the}
number of weights}

In this subsection, we show that the approximation results from the
preceding section are \emph{sharp}.
More precisely, we show that in order to approximate functions from the class
$\mathcal{HF}_{\beta, d, B}$ of horizon functions up to an error of
$\varepsilon > 0$ with respect to the $L^{\Felix{p}}$ norm, one generally needs
a network with at least $\Omega(\varepsilon^{-\Felix{p}(d-1)/\beta})$ nonzero
weights, independent of the employed activation functions,
\Felix{as long as $\varrho(0) = 0$}.
This claim is still somewhat imprecise; the precise statements are contained in
the theorems below.
Here, we mention the  following \Felix{five} most important points that should
be observed:
\begin{itemize}
  \item We have for all $d \in \N_{\geq 2}$, $r \in \N$, and
        $\beta, B \Felix{,p} >0$ that
        \[
          \mathcal{HF}_{\beta, d, B}
          \subset \{ \chi_K: K \in \mathcal{K}_{r, \beta, d, B}\}
          \subset \frac{1}{B} \cdot \mathcal{E}_{r, \beta, d, B}^{\Felix{p}}.
        \]
        Thus all lower bounds established for horizon functions also hold for
        the function classes of piecewise constant and piecewise
        smooth functions.

  \item The statement ``one generally needs a network with at least
        $\Omega(\varepsilon^{-\Felix{p}(d-1)/\beta})$ nonzero weights''
        suppresses a $\log$ factor.
        Actually, we show that one needs a network with at least
        $c \cdot \varepsilon^{-\Felix{p}(d-1)/\beta}
                 \big/ \log_2 (\nicefrac{1}{\varepsilon})$
        nonzero weights, for a suitable constant
        $c = c(d,\beta,B \Felix{,p}) > 0$.

  \item In \cite[Theorem 4]{YAROTSKY2017103}, Yarotsky also derive\Felix{d}
        lower bounds for approximating functions using ReLU networks,
        by using known bounds for the VC dimension of such networks.
        The most obvious difference of this result to ours is that
        Yarotsky considers $L^\infty$ approximation of smooth functions,
        while we consider $L^{\Felix{p}}$ approximation of piecewise smooth,
        possibly discontinuous functions.
        Apart from these obvious differences, there are also more subtle ones:

        \Felix{On the one hand}, our lower bounds are more general than those in
        \cite{YAROTSKY2017103} in the sense that they hold for
        \emph{arbitrary} activation functions $\varrho : \R \to \R$,
        as long as $\varrho(0) = 0$. In contrast, the results of Yarotsky only
        apply for piecewise linear activation functions with a finite number
        of ``pieces''.

        On the other hand, our results are \emph{less} general than those in
        \cite{YAROTSKY2017103}, since we impose (as in \cite{BoeGKP2017})
        a restriction on the \emph{complexity of the weights} of the network.
        Put briefly, we assume that each weight of the networks $\Phi$ that we
        consider can be encoded with at most
        $\lceil C_0 \cdot \log_2 (\nicefrac{1}{\varepsilon}) \rceil$ bits,
        where $\varepsilon$ denotes the allowed approximation error, that is,
        $\| f - \Realization_{\varrho} (\Phi)\|_{L^{\Felix{p}}} \leq \varepsilon$.
        This assumption might appear somewhat restrictive and artificial at
        first glance, but we believe it to be quite natural,
        for \Felix{two} reasons:

        \begin{enumerate}
          \item The assumption is reasonable if one wants to understand the
                behavior of networks that are used in practice.
                Here, the weights of the network have to be stored in the memory
                of a computer and thus have to be of limited complexity.
                Note that our results, in particular, apply for the usual
                floating point numbers, since these only use
                a fixed number of bits per weight, independent of $\varepsilon$.

          \item Our results apply for general (arbitrary, but fixed) activation
                functions $\varrho$ \Felix{with $\varrho(0) = 0$}.
                In this generality, it is \emph{impossible} to derive
                nontrivial lower bounds without restricting the size and
                complexity of the weights:
                Indeed, \cite[Theorem 4]{Maiorov1999LowerBounds} shows that
                there exists an activation function $\varrho:\R \to \R$ that is
                analytic, strictly increasing, and sigmoidal
                (that is, $\lim_{x\to-\infty}\varrho(x) = 0$ and
                $\lim_{x\to\infty} \varrho(x) = 1$) such that for any $d\in \N$,
                any $f\in C([0,1]^d)$ and any $\varepsilon>0$ there exists a
                neural network $\Phi$ with two hidden layers of dimensions $3d$
                and $6d+3$ such that
                $\|f-\Realization_{\varrho}(\Phi)\|_{L^\infty} \leq\varepsilon$.
                Thus, if one uses this (incredibly complex) activation function
                $\varrho$, then one can approximate \emph{arbitrary} continuous
                functions to an \emph{arbitrary} precision, using a constant
                number of layers, neurons and weights.
                From this, it is not too hard to see that a similar result holds
                for functions in $\mathcal{HF}_{\beta, d, B}$, when the error
                is measured in $L^{\Felix{p}}$.
                Our bounds show that the weights used in such networks have to
                be incredibly complex and/or numerically large.
        \end{enumerate}

  \item There are two different settings in which one can derive lower bounds:
        \begin{enumerate}
          \item For \emph{optimality in a uniform setting}, we are given
                $\varepsilon > 0$ and want to find the smallest
                $M_{\varepsilon,\Felix{p}} \in \N$ such that for \emph{every}
                $f \in \mathcal{HF}_{\beta, d, B}$ there is a neural network
                $\Phi_{\varepsilon, f}$ with at most $M_{\varepsilon,\Felix{p}}$
                nonzero weights (and such that each weight can be encoded with
                at most
                $\lceil C_0 \cdot \log_2 (\nicefrac{1}{\varepsilon}) \rceil$
                bits) satisfying
                $\|f-\Realization_{\varrho}(\Phi_{\varepsilon,f})\|_{L^{\Felix{p}}}
                \leq \varepsilon$.

                Put differently, for each sufficiently small $\varepsilon >0$,
                there is some ``hard to approximate'' function
                $f_{\varepsilon} \in \mathcal{HF}_{\beta, d, B}$ such that
                $f_\varepsilon$ \emph{cannot} be approximated up to $L^p$ error
                $\varepsilon$ with a network using less than
                $M_{\varepsilon,\Felix{p}}$ nonzero weights.

                In Theorem \ref{thm:Optimality}, we will show
                $M_{\varepsilon,\Felix{p}}
                 \geq C \cdot \varepsilon^{-\Felix{p}(d-1)/\beta}
                              \big/ \log_2 (\nicefrac{1}{\varepsilon})$
                for some $C = C(d, \Felix{p}, \beta,B,C_0) \! > \!0$.

          \item In the setting of \emph{instance optimality}, we consider for
                each $f \in \mathcal{HF}_{\beta, d, B}$ the minimal number
                $M_{\varepsilon,\Felix{p}} (f) \Felix{\in\N}$ of nonzero weights
                (of limited complexity, as above) that a neural network needs to
                have in order to approximate this specific function $f$ up to an
                $L^{\Felix{p}}$ error of at most $\varepsilon$.
                Note $M_{\varepsilon,\Felix{p}}
                = \sup_{f \in \mathcal{HF}_{\beta, d, B}}
                    M_{\varepsilon,\Felix{p}} (f)$.

                Of course, for some $f$, it can be that
                $M_{\varepsilon,\Felix{p}} (f)$ grows much slower than
                $\varepsilon^{-\Felix{p} (d-1)/\beta}$, for example if the
                boundary surfaces of $f$ are much smoother than $C^\beta$.
                Indeed, if, for example, $f \in \mathcal{HF}_{\beta + 10, d, B}$,
                then Lemma \ref{lem:HorFunct} shows $M_\varepsilon (f)
                \lesssim \varepsilon^{-\Felix{p} (d-1)/(\beta + 10)}
                \ll \varepsilon^{-\Felix{p} (d-1)/\beta}$.

                Now, note that our lower bounds from the preceding point yield
                for each $\varepsilon \in (0, \nicefrac{1}{2})$ a function
                $f_\varepsilon$ with $M_{\varepsilon,\Felix{p}} (f_\varepsilon)
                \geq C \cdot \varepsilon^{-\Felix{p}(d-1)/\beta}
                             \big/ \log_2 (\nicefrac{1}{\varepsilon})
                \gg \varepsilon^{-\gamma}$,
                for fixed but arbitrary
                $\gamma < \Felix{p} (d-1)/\beta =: \gamma^\ast$.
                Nevertheless, since the choice of the  function $f_\varepsilon$
                might depend heavily on the choice of $\varepsilon$,
                this does \emph{not} rule out the possibility that we could have
                $M_{\varepsilon,\Felix{p}} (f)
                \in \mathcal{O}(\varepsilon^{-\gamma})$
                as $\varepsilon \downarrow 0$ for all
                $f \in \mathcal{HF}_{\beta, d, B}$ and some
                $\gamma < \gamma^\ast$.
                But as we will see in Theorem \ref{thm:SingleFunctionOptimality}
                and in Corollary \ref{cor:IndividualFunctionLowerBound},
                there is a \emph{single function}
                $f \in \mathcal{HF}_{\beta, d, B}$ such that
                $M_{\varepsilon,\Felix{p}} (f)
                \notin \mathcal{O}(\varepsilon^{-\gamma})$
                for all $\gamma < \gamma^\ast$.

                This shows that the exponent
                $\gamma^\ast = \Felix{p}(d-1)/\beta$
                from Theorem \ref{thm:ApproximationOfPiecewiseConstantFunctions}
                is the best possible, not only in a uniform sense, but
                even for a single (judiciously chosen) function
                $f \in \mathcal{HF}_{\beta, d, B}$.
        \end{enumerate}

  \item \pp{While we do not \Felix{discuss} such constructions in detail to
        keep technicalities limited, our results also hold if we allow
        realizations to apply \Felix{in each layer a different activation
        function, as long as all of these activation functions are chosen
        from a fixed, finite set of functions.}
        One particularly notable example is that of allowing an
        application of a soft-max \Felix{or an arg-max} function
        \Felix{in the last layer, as is commonly done in networks used
        for classification}.
        Essentially, this \Felix{means} that---as long as we have weights with
        reasonable complexity \Felix{and consider piecewise smooth
        functions---one cannot improve the approximation rate by also
        allowing other activation functions in addition to the ReLU}.}
\end{itemize}

After this overview of our optimality results, we state the precise theorems;
for the sake of clarity, we deferred the proofs to
Appendix \ref{sec:LowerBoundProofs}.
The first order of business is to make precise the assumption that
``the weights of a network can be encoded with $K$ bits''.

\begin{definition}\label{def:WeightEncodability}
  A \emph{coding scheme for real numbers} is a sequence
  $\mathcal{B} = (B_\ell)_{\ell \in \N}$ of maps $B_\ell : \{0,1\}^\ell \to \R$.
  %and such that $0 \in \mathrm{Range}(B_\ell)$ for all $\ell \in \N$.
  %\todo{Do we maybe need $0 \in \mathrm{Range}(B_\ell)$?}

  We say that the coding scheme is \emph{consistent} if ``each number that can
  be represented with $\ell$ bits can also be represented with $\ell+1$ bits``,
  that is, if $\mathrm{Range}(B_\ell) \subset \mathrm{Range}(B_{\ell+1})$
  for all $\ell \in \N$.

  Given a (not necessarily consistent) coding scheme for real numbers
  $\mathcal{B} = (B_\ell)_{\ell \in \N}$, and integers $M, K, \Felix{d} \in \N$,
  we denote by $\mathcal{NN}_{M,K,d}^{\mathcal{B}}$ the class of all neural
  networks $\Phi$ with $d$-dimensional input and one-dimensional output,
  with at most $M$ nonzero weights and such that the value of
  each \emph{nonzero} weight of $\Phi$ is contained in $\mathrm{Range}(B_K)$.
  In words, $\mathcal{NN}_{M,K,d}^{\mathcal{B}}$ is the class of all neural
  networks with at most $M$ nonzero weights, each of which can be encoded with
  $K$ bits, using the coding scheme $\mathcal{B}$.
  If the coding scheme is implied by the context, we simply write
  $\mathcal{NN}_{M,K,d}$ instead of $\mathcal{NN}_{M,K,d}^{\mathcal{B}}$.
\end{definition}

Now, given a fixed activation function $\varrho : \R \to \R$ and a fixed
coding scheme of real numbers $\mathcal{B}$, it makes sense to ask for a given
function $f \in L^{\Felix{p}} ([-\nicefrac{1}{2}, \nicefrac{1}{2}]^d)$ how
quickly the minimal error $\| f - \Realization_\varrho (\Phi) \|_{L^{\Felix{p}}}$
(with $\Phi \in \mathcal{NN}_{M,K,d}^{\Felix{\mathcal{B}}}$) decays,
as $M, K \to \infty$.
More precisely, given a fixed $C_0 > 0$, we are interested in the behavior of
\begin{equation}
  M_{\varepsilon,\Felix{p}} (f)
  := M_{\varepsilon,\Felix{p}}^{\mathcal{B}, \varrho, C_0} (f)
  := \inf
     \left \{
        M \in \N
        \, : \,
        \exists \,\,
        \Phi \in \mathcal{NN}_{M,
                               \lceil
                                 C_0 \cdot \log_2 (\nicefrac{1}{\varepsilon})
                               \rceil,
                               d}^{\mathcal{B}}
            \, : \, \| f - \Realization_\varrho (\Phi) \|_{L^{\Felix{p}}}
                    \leq \varepsilon
     \right\} \, ,
  \label{eq:MEpsilonDefinitionSingleFunction}
\end{equation}
as $\varepsilon \downarrow 0$.
In words, $M_{\varepsilon,\Felix{p}} (f)$ describes the minimal number of
nonzero weights that a neural network (with activation function $\varrho$ and
with weights that can be encoded with
$\lceil C_0 \cdot \log_2 (\nicefrac{1}{\varepsilon}) \rceil$
bits using the coding scheme $\mathcal{B}$) needs to have in order to
approximate $f$ up to an $L^{\Felix{p}}$-error of at most $\varepsilon$.
Of course, for a badly chosen activation function
(for instance, for $\varrho \equiv 0$), it might happen that the set over
which the infimum is taken in
Equation \eqref{eq:MEpsilonDefinitionSingleFunction} is empty;
in this case, $M_{\varepsilon,\Felix{p}} (f) := \infty$.

The quantity $M_{\varepsilon,\Felix{p}} (f)$ describes how well a \emph{single}
function $f$ can be approximated.
In contrast, for optimality in a uniform setting, we are given a whole
\emph{function class}
$\mathcal{C} \subset L^{\Felix{p}} ([-\nicefrac{1}{2}, \nicefrac{1}{2}]^d)$,
and we are interested in the behavior of
\begin{equation*}
  M_{\varepsilon,\Felix{p}} (\mathcal{C})
  := M_{\varepsilon,\Felix{p}}^{\mathcal{B}, \varrho, C_0} (\mathcal{C})
  := \sup_{f \in \mathcal{C}}
       M_{\varepsilon,\Felix{p}}^{\mathcal{B}, \varrho, C_0} (f)
  %\label{eq:MEpsilonDefinitionFunctionClass}
\end{equation*}
as $\varepsilon \downarrow 0$.
Note that $M_{\varepsilon,\Felix{p}} (\mathcal{C}) \leq M$ if and only if
\emph{every} function $f \in \mathcal{C}$ can be approximated with a neural
network
$\Phi_{f,\varepsilon} \in \mathcal{NN}_{M,
                                        \lceil
                                          C_0 \cdot \log_2 (\nicefrac{1}{\varepsilon})
                                        \rceil,
                                        d}^{\mathcal{B}}$
up to an $L^{\Felix{p}}$ error of $\varepsilon$.

% Now that we have a lower bound on the minimax code length for the class of
% horizon functions, and now that we know that neural networks can be encoded
% using a reasonable number of bits, we can derive a lower bound on the
% (worst case) number of weights which are needed in a neural network to achieve
%a certain approximation quality for horizon functions.

The following theorem establishes a lower bound on
$M_{\varepsilon,\Felix{p}} (\mathcal{HF}_{\beta, d, B})$.
This lower bound shows that the size of the networks that are constructed in
Theorem \ref{thm:ApproximationOfPiecewiseConstantFunctions}
and Corollary \ref{cor:PiecewiseSmoothFunctions} is optimal,
up to a logarithmic factor in $\nicefrac{1}{\varepsilon}$.

\begin{theorem}\label{thm:Optimality}
  Let $d \in \N_{\geq 2}$ and $\Felix{p}, \beta, B, C_0 > 0$.
  Then there exist constants $C = C(d,\Felix{p}, \beta,B, C_0) > 0$ and
  $\varepsilon_0 = \varepsilon_0(d, \Felix{p}, \beta, B)>0$, such that for each
  encoding scheme of real numbers $\mathcal{B}$ and any activation function
  $\varrho : \R \to \R$ with $\varrho(0) = 0$, we have
  \[
    M_{\varepsilon,\Felix{p}}^{\mathcal{B}, \varrho, C_0}
      (\mathcal{HF}_{\beta, d, B} )
    \geq C \cdot \varepsilon^{- \frac{\Felix{p} (d-1)}{\beta}} \,
                 \bigg/ \, \log_2 \left(\frac{1}{\varepsilon} \right)
    \qquad \text{for all} \quad \varepsilon \in (0, \varepsilon_0).
  \]
  %$\varepsilon \in (0, \nicefrac{1}{2})$, and for each encoding scheme of real
  %numbers (which might depend on $\varepsilon$), the following holds:

  %There is a horizon function $f_\varepsilon \in \mathcal{HF}_{n,d,B}$
  %(also depending on the encoding scheme),
  %such that if $N$ is \emph{any} neural network with $M$ (nonzero) weights and
  %with
  %$\| f_\varepsilon - N \|_{L^2 ((-\nicefrac{1}{2}, \nicefrac{1}{2})^d)}
  % \leq \varepsilon$,
  %and such that each (nonzero) weight of $N$ can be encoded with at most
  %$C_0 \cdot \log_2 (\nicefrac{1}{\varepsilon})$ bits (in the fixed encoding
  %scheme from above), then
  %\[
  %  M \geq C \cdot \varepsilon^{- 2(d-1) / n} \,
  %                 \bigg/ \, \log_2 (1 / \varepsilon ).
  %\]
\end{theorem}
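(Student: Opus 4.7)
The approach I would take follows the information-theoretic strategy of \cite{BoeGKP2017}: I combine a packing (metric entropy) lower bound for $\mathcal{HF}_{\beta,d,B}$ with an upper bound on the number of bits needed to encode networks in $\mathcal{NN}_{M,K,d}^{\mathcal{B}}$, and then compare them. The proof splits into three steps.

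\textbf{Step 1 (Packing of horizon functions).} Fix a nonnegative bump $b \in C_c^\infty(\R^{d-1})$ with $\|b\|_{C^{0,\beta}} \leq 1$ supported in the unit cube, and for $n \in \N$ place grid points $x^{(i)} \in [-\nicefrac12,\nicefrac12]^{d-1}$ at spacing $1/n$, giving $n^{d-1}$ centers. Set $b_i(y) := b(n(y - x^{(i)}))$; these have pairwise disjoint supports, and $\|b_i\|_{C^{0,\beta}} \leq c_0 n^\beta$. For $\sigma \in \{0,1\}^{n^{d-1}}$ and $\delta := c_1 B n^{-\beta}$ with $c_1$ sufficiently small, define
\[
  \gamma_\sigma := \delta \sum_{i} \sigma_i \, b_i
  \in \mathcal{F}_{\beta,d-1,B},
  \qquad
  f_\sigma(x) := H\bigl(x_1 + \gamma_\sigma(x_2, \ldots, x_d)\bigr) \in \mathcal{HF}_{\beta,d,B}.
\]
A direct computation of the symmetric-difference volume gives $\|f_\sigma - f_{\sigma'}\|_{L^p}^p = (c_2 \delta/n^{d-1}) \cdot d_H(\sigma,\sigma')$. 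Applying Gilbert-Varshamov to $\{0,1\}^{n^{d-1}}$ yields a subset $\Sigma$ of size $|\Sigma| \geq 2^{c_3 n^{d-1}}$ with pairwise Hamming distance $\geq n^{d-1}/4$, so $\|f_\sigma - f_{\sigma'}\|_{L^p} \geq c_4 \delta^{1/p} \geq c_5 n^{-\beta/p}$. Choosing $n = \lceil c_6\, \varepsilon^{-p/\beta} \rceil$ makes this separation $\geq 3\varepsilon$ while $\log_2 |\Sigma| \geq c_7\, \varepsilon^{-p(d-1)/\beta}$.

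\textbf{Step 2 (Encoding networks).} I show that any $\Phi \in \mathcal{NN}_{M,K,d}^{\mathcal{B}}$ can be described by a bit string of length $\leq C \cdot M(K + \log_2 M)$. Since $\Phi$ has $\leq M$ nonzero weights, one may assume that every neuron has at least one incident nonzero weight; hence the total number of neurons, layers, and of candidate weight positions are all polynomially bounded in $M$. Encoding (i) the architecture (layer count and widths), (ii) the positions of the at most $M$ nonzero weights, and (iii) their values in $\mathrm{Range}(B_K)$ costs $O(M\log M)$, $O(M\log M)$, and $MK$ bits respectively. Suppose towards the conclusion that $M := M_{\varepsilon,p}^{\mathcal{B},\varrho,C_0}(\mathcal{HF}_{\beta,d,B}) < \infty$. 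Then for each $\sigma \in \Sigma$ there is a network $\Phi_\sigma \in \mathcal{NN}_{M, \lceil C_0\log_2(1/\varepsilon)\rceil, d}^{\mathcal{B}}$ with $\|f_\sigma - \Realization_\varrho(\Phi_\sigma)\|_{L^p} \leq \varepsilon$. The $3\varepsilon$-separation of $\{f_\sigma\}_{\sigma \in \Sigma}$ forces the realizations $\Realization_\varrho(\Phi_\sigma)$ to be pairwise distinct, hence the encodings are pairwise distinct. Therefore
\[
  2^{C \cdot M(C_0 \log_2(1/\varepsilon) + \log_2 M)} \;\geq\; |\Sigma| \;\geq\; 2^{c_7 \varepsilon^{-p(d-1)/\beta}}.
\]

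\textbf{Step 3 (Solving for $M$).} Taking logarithms gives $M\bigl(C_0\log_2(1/\varepsilon) + \log_2 M\bigr) \geq c_8\, \varepsilon^{-p(d-1)/\beta}$. If $M \geq \varepsilon^{-p(d-1)/\beta}/\log_2(1/\varepsilon)$, the claimed bound is immediate; otherwise $M$ is bounded by a fixed polynomial in $1/\varepsilon$, whence $\log_2 M \leq C_1 \log_2(1/\varepsilon)$, and the inequality simplifies to $M \geq c_9 \, \varepsilon^{-p(d-1)/\beta} / \log_2(1/\varepsilon)$. The main technical obstacle is Step 1: producing bumps whose sum genuinely lies in $\mathcal{F}_{\beta,d-1,B}$ for non-integer $\beta$ (controlling the Hölder seminorm of the $n$-th derivative across different bump supports via the disjointness of supports) and precisely estimating the $L^p$ separation constant. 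Step 2 is standard but requires care because the coding scheme $\mathcal{B}$ need not be consistent, so one must treat each layer's weights as indexed by the pair (position, $K$-bit codeword) rather than relying on nested codebooks; the hypothesis $\varrho(0)=0$ ensures that neurons or weights set to zero do not spuriously contribute, so that the ``active neurons $\leq 2M$'' bound holds uniformly in $\varrho$.
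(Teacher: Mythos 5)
Your proof is correct and follows the same high-level information-theoretic strategy as the paper (show a packing of $\mathcal{HF}_{\beta,d,B}$ in $L^p$, show that networks in $\mathcal{NN}_{M,K,d}^{\mathcal{B}}$ can be encoded with $\mathcal{O}(M(K+\log_2 M))$ bits, and compare). The one place where you deviate is Step~1: you construct the $\varepsilon$-separated family explicitly, by placing rescaled $C_c^\infty$ bumps at lattice points and selecting a Gilbert--Varshamov code of $\pm1$ sign patterns, then passing to horizon functions via the identity $\|f_\sigma - f_{\sigma'}\|_{L^p}^p = \|\gamma_\sigma - \gamma_{\sigma'}\|_{L^1}$. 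The paper instead cites Clements' metric-entropy theorem for H\"older balls over $[-\nicefrac12,\nicefrac12]^{d-1}$ and transfers the resulting $\varepsilon$-distinguishable set to $\mathcal{HF}_{\beta,d,B}$ via the same map $\gamma \mapsto \mathrm{HF}_\gamma$ and Lemma~\ref{lem:HFBounds}. Your route is more self-contained (no black-box entropy theorem) at the cost of the bookkeeping you yourself flag: verifying that $\gamma_\sigma \in \mathcal{F}_{\beta,d-1,B}$ requires a uniform bound on the cross-bump $\sigma$-H\"older quotient of the $n$-th derivatives, which holds once the bump is supported in a strictly smaller sub-cube so that distinct supports are separated by $\Theta(1/n)$; and for $p<1$ the separation margin should be $2^{1+1/p}\varepsilon$ rather than $3\varepsilon$, a harmless change of constants. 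The paper's route is more modular; yours exposes the packing construction that Clements' theorem abstracts away. Steps~2 and~3 of your argument are essentially identical to the paper's Lemma~\ref{lem:NNEncoding} and the final counting/contradiction (including the correct observation that, since the realizations $\Realization_\varrho(\Phi_\sigma)$ are pairwise distinct, $|\Sigma| \le |\mathcal{NN}_{M,K,d}^{\mathcal{B},\varrho}| \le 2^{\ell}$, without needing consistency of the coding scheme $\mathcal{B}$).
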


The preceding theorem establishes a lower bound in the uniform setting that
was discussed \Felix{above}.
%at the beginning of this section.
In general, given such a lower bound for the \emph{uniform} error,
it is \emph{not} clear that there is also a specific \emph{single}
function $f \in \mathcal{HF}_{\beta, d, B}$ for which
$M_{\varepsilon,\Felix{p}} (f) \gtrsim \varepsilon^{-\Felix{p} (d-1)/\beta}$
(up to log factors).
As the following theorem---our main optimality result---shows,
this \Felix{nevertheless} turns out to be true.

\begin{theorem}\label{thm:SingleFunctionOptimality}
  %For each $\ell \in \N$, let $B_\ell : \{0,1\}^\ell \to \R$ be arbitrary,
  %but with $\mathrm{Range}(B_\ell) \subset \mathrm{Range}(B_{\ell+1})$
  %for all $\ell \in \N$.
  %For $M, K \in \N$, let $\mathcal{NN}_{M,K}$ denote the set of all neural
  %networks $\Phi$ such that $\Phi$ has at most $M$ nonzero weights,
  %and such that the value of each of these weights is in $\mathrm{Range}(B_K)$.
  %Let $\varrho : \R \to \R$ be an arbitrary rectifier.

  Let $d \in \N_{\geq 2}$, and $\Felix{p}, \beta, B, C_0 > 0$.
  Let $\varrho : \R \to \R$ be arbitrary with $\varrho(0) = 0$, and let
  $\mathcal{B}$ be a \emph{consistent} encoding scheme of real numbers.
  Then there is some $f \in \mathcal{HF}_{\beta,d,B}$
  (potentially depending on $\varrho, d, \mathcal{B}, \Felix{p}, \beta, B, C_0$)
  %\[
  %  M_\varepsilon (f)
  %  := \min \{
  %             M \in \N
  %             \, : \,
  %             \exists
  %               \Phi \in \mathcal{NN}_{M,
  %                                      \lceil
  %                                        C_0 \cdot \log_2 (\frac{1}{\varepsilon})
  %                                      \rceil}
  %                 \, : \,
  %                 \| f - \Realization_{\varrho} (\Phi)\|_{L^2} \leq \varepsilon
  %          \},
  %\]
  and a null-sequence $(\varepsilon_k)_{k \in \N}$ in $(0, \nicefrac{1}{2})$
  satisfying
  \[
    M_{\varepsilon_k, \Felix{p}}^{\mathcal{B}, \varrho, C_0} (f)
    \geq \frac{\varepsilon_k^{-\frac{\Felix{p} (d-1)}{\beta}}}
              {\log_2 \left(\frac{1}{\varepsilon_k}\right)
               \cdot
               \log_2 \left(\log_2 \left(\frac{1}{\varepsilon_k}\right)\right)}
    \qquad \text{for all} \quad k \in \N.
  \]
\end{theorem}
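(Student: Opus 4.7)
The plan is to apply a Baire category argument in the complete metric space $(\mathcal{HF}_{\beta,d,B},\|\cdot\|_{L^p})$. Completeness follows because $\mathcal{F}_{\beta,d-1,B}$ is compact in $C^0$ by Arzelà--Ascoli, the map $\gamma \mapsto f_\gamma$ is continuous from $(C^0,\|\cdot\|_\infty)$ into $L^p$ (using $\|f_{\gamma_1}-f_{\gamma_2}\|_{L^p}^p \leq \|\gamma_1-\gamma_2\|_{L^\infty}$, modulo harmless boundary effects when $\|\gamma_i\|_\infty$ is small), and $\Pi(d,\R)$ is finite, so $\mathcal{HF}_{\beta,d,B}$ is closed in $L^p$. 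Fix $\varepsilon_k := 2^{-k}$ and set
\[
N_k := \left\lceil \frac{\varepsilon_k^{-p(d-1)/\beta}}{\log_2(1/\varepsilon_k)\,\log_2\log_2(1/\varepsilon_k)}\right\rceil,
\qquad K_k := \lceil C_0 \log_2(1/\varepsilon_k)\rceil .
\]
Define the ``bad set'' $B_k := \{ f \in L^p : M_{\varepsilon_k, p}^{\mathcal{B},\varrho,C_0}(f) \leq N_k \}$. Because $|\mathrm{Range}(B_{K_k})| \leq 2^{K_k}$ is finite and only finitely many network architectures have $\leq N_k$ nonzero weights, $B_k$ is a finite union of closed $L^p$-balls of radius $\varepsilon_k$, hence closed. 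Once I show that each $B_k \cap \mathcal{HF}_{\beta,d,B}$ is nowhere dense in $\mathcal{HF}_{\beta,d,B}$ for all $k$ sufficiently large, Baire yields a dense (in particular nonempty) set of $f \in \mathcal{HF}_{\beta,d,B}$ lying outside $\bigcup_{K}\bigcap_{k\geq K} B_k$; any such $f$ satisfies $M_{\varepsilon_k,p}(f) > N_k$ for infinitely many $k$, and the claim follows by extracting this subsequence.

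The main technical ingredient is a local packing lemma: for every $\gamma_0 \in \mathcal{F}_{\beta,d-1,B}$ with $\|\gamma_0\|_{C^{0,\beta}} < B$, every $\rho > 0$, and every sufficiently small $\varepsilon$, there exists a family $(\phi_j)_{j \in J_\varepsilon}$ with $\gamma_0 + \phi_j \in \mathcal{F}_{\beta,d-1,B}$ such that
\[
\|f_{\gamma_0+\phi_j} - f_{\gamma_0}\|_{L^p} \leq \rho,
\qquad
\|f_{\gamma_0+\phi_j}-f_{\gamma_0+\phi_{j'}}\|_{L^p} \geq 3\varepsilon \ (j \neq j'),
\qquad
|J_\varepsilon| \geq 2^{c_1 \varepsilon^{-p(d-1)/\beta}} .
\]
I would build this by tiling a subdomain of $[-\nicefrac{1}{2},\nicefrac{1}{2}]^{d-1}$ with $n \asymp h^{-(d-1)}$ disjoint cubes of side $h := c\,\varepsilon^{p/\beta}$, putting on each cube a smooth bump $b_i$ of height $\sim h^\beta$ whose $C^{0,\beta}$-norm is $\leq B-\|\gamma_0\|_{C^{0,\beta}}$, and considering binary sums $\phi_S := \sum_{i\in S} b_i$. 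Disjointness of supports keeps $\|\phi_S\|_{C^{0,\beta}}$ uniformly bounded by $B-\|\gamma_0\|_{C^{0,\beta}}$, so $\gamma_0+\phi_S \in \mathcal{F}_{\beta,d-1,B}$. A Gilbert--Varshamov code then supplies $\geq 2^{c_1 n}$ index sets $S$ with pairwise Hamming distance $\gtrsim n$; combined with the identity $\|f_{\gamma_1}-f_{\gamma_2}\|_{L^p}^p = \|\gamma_1-\gamma_2\|_{L^1}$ this yields $L^p$-separation $\gtrsim h^{\beta/p} \asymp \varepsilon$, while the total perturbation size remains $\lesssim h^{\beta/p} \asymp \varepsilon$, which is $\leq \rho$ for small $\varepsilon$.

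To verify nowhere density of $B_k$, let $V \subset \mathcal{HF}_{\beta,d,B}$ be nonempty and open. The set $\{f_\gamma \circ T : \|\gamma\|_{C^{0,\beta}} < B\}$ is $L^p$-dense in $\mathcal{HF}_{\beta,d,B}$ (since $f_{(1-\delta)\gamma} \to f_\gamma$ in $L^p$ as $\delta \downarrow 0$, while $\|(1-\delta)\gamma\|_{C^{0,\beta}} < B$), so $V$ contains some $f_{\gamma_0}\circ T$ with $\|\gamma_0\|_{C^{0,\beta}}<B$ together with a neighborhood in $\mathcal{HF}_{\beta,d,B}$ of some $L^p$-radius $\rho$. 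Applying the local packing lemma at $\varepsilon=\varepsilon_k$ places $\geq 2^{c_1 \varepsilon_k^{-p(d-1)/\beta}}$ pairwise $3\varepsilon_k$-separated horizons inside $V$. On the other hand, a standard architecture-and-weights count (cf.~\cite{Anthony2009NNL}) gives $|\mathcal{NN}_{N_k,K_k,d}^{\mathcal{B}}| \leq 2^{c_2 N_k(\log_2(N_k+1)+K_k)}$, so that
\[
\log_2 |\mathcal{NN}_{N_k,K_k,d}^{\mathcal{B}}|
\;\leq\; c_3 \cdot \frac{\varepsilon_k^{-p(d-1)/\beta}}{\log_2\log_2(1/\varepsilon_k)}
\;=\; o\!\left(\varepsilon_k^{-p(d-1)/\beta}\right)
\qquad (k\to\infty).
\]
Since each $\varepsilon_k$-ball comprising $B_k$ contains at most one of the $3\varepsilon_k$-separated packing points, for $k$ large some horizon in $V$ lies outside $B_k$; thus $B_k \cap \mathcal{HF}_{\beta,d,B}$ has empty interior, and Baire concludes the proof.

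The hardest step will be pinning down the local packing lemma, where the $C^{0,\beta}$-budget $B-\|\gamma_0\|_{C^{0,\beta}}$, the $L^p$-localization radius $\rho$, and the $L^p$-separation $3\varepsilon$ must all balance so as to extract the sharp combinatorial packing count $2^{c_1\varepsilon^{-p(d-1)/\beta}}$; the precise scaling $h \asymp \varepsilon^{p/\beta}$ together with the Gilbert--Varshamov code is what makes the raw bump construction sharp. The $\log_2\log_2$ factor in the statement is exactly the ``slack'' in $N_k$ that drives the decisive inequality $\log_2|\mathcal{NN}_{N_k,K_k,d}^{\mathcal{B}}| = o(\log_2|J_{\varepsilon_k}|)$, and the consistency of $\mathcal{B}$ is what guarantees this counting estimate does not degenerate as $K_k$ grows.
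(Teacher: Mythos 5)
Your overall strategy—Baire category applied directly to the compact subset $\mathcal{HF}_{\beta,d,B}$ of $L^p$, with nowhere density of the ``bad sets'' $B_k$ established by a local packing count—is valid and genuinely different in organization from the paper's proof. The paper instead applies Baire in the Banach space $C^{0,\beta}$ to the ball of smooth $\gamma$'s (Lemma~\ref{lem:IndividualCnApproximation}), invokes the entropy bounds of \cite{ClementsLipschitzFunctionsL1Entropy} rather than constructing the packing from scratch, transfers to horizon functions via the isometry $\gamma\mapsto\mathrm{HF}_\gamma$ (Lemmas~\ref{lem:HFBounds}, \ref{lem:IndividualHorizonApproximation}), and only then translates encoder--decoder pairs into neural networks. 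Your version removes the encoder--decoder detour and talks about networks directly, which is arguably cleaner.

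However, there are two concrete gaps. First, your local packing lemma as stated is false at base points $\gamma_0$ with $\|\gamma_0\|_\infty$ well beyond $1/2$: for instance, if $\gamma_0\equiv B>1/2$ then $f_{\gamma_0}\equiv 1$ on $[-\nicefrac12,\nicefrac12]^d$ and \emph{every} small perturbation $f_{\gamma_0+\phi}$ is also identically $1$, so there is no separation at all. Lemma~\ref{lem:HFBounds} only gives the equality $\|\mathrm{HF}_\gamma-\mathrm{HF}_\psi\|_{L^p}^p=\|\gamma-\psi\|_{L^1}$ when both $\gamma,\psi$ map into $[-\nicefrac12,\nicefrac12]$; otherwise it degrades to an inequality in the wrong direction. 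The standard fix (and what the paper quietly does in Steps~2--3 of Lemma~\ref{lem:HorizonFunctionsMinimaxCodeLength}) is to replace $B$ by $B_0:=\min\{B,\nicefrac12\}$ throughout and run the entire Baire/nowhere-density argument inside the smaller compact set $\mathcal{HF}_{\beta,d,B_0}\subset\mathcal{HF}_{\beta,d,B}$. Without this restriction the nowhere density of $B_k$ fails in neighborhoods of the constant horizon functions. Second, your justification that $B_k$ is closed—``only finitely many network architectures have $\leq N_k$ nonzero weights''—is simply not true, since the depth $L$ is unbounded. What is finite is the set of \emph{realizations} $\mathcal{NN}_{N_k,K_k,d}^{\mathcal{B},\varrho}$, and this requires the non-obvious reduction from Lemma~\ref{lem:WeightsAndNeuronsAssumptionJustification} that one may assume $N(\Phi)\leq M(\Phi)+d+1$, feeding into the encoding of Lemma~\ref{lem:NNEncoding}. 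Finally, I do not think your argument ever actually uses the consistency of $\mathcal{B}$: your counting bound is a property of a single coding level $K_k$, whereas the paper uses consistency only in Step~5 of its proof, to assert $\mathcal{NN}_{\Omega_\varepsilon,K_\varepsilon,d}^{\mathcal{B}}\supset\mathcal{NN}_{\Omega_{\varepsilon'},K_{\varepsilon'},d}^{\mathcal{B}}$ for $\varepsilon\leq\varepsilon'$ and hence extract a genuine null-sequence $(\varepsilon_k)$; fixing $\varepsilon_k=2^{-k}$ up front, as you do, sidesteps this, so your belief about why consistency matters is mistaken.
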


% \begin{remark}
%   %\begin{itemize}
%   %\item
%   In words, the quantity $M_\varepsilon (f)$ describes the minimal number of
%   edges that a neural network needs to have in order to approximate $f$ up to
%   an $L^2$-error of $\varepsilon$, if one insists that each edge of the
%   network can be encoded with at most
%   $\lceil C_0 \cdot \log_2 (\nicefrac{1}{\varepsilon}) \rceil$ bits.
%   Here, the precise meaning of ``can be encoded with $B$ bits'' is
%   determined by the choice of the maps $B_\ell$.
%
%   %\end{itemize}
% \end{remark}

%Before we begin properly start this section and the proof of
%Theorem \ref{thm:SingleFunctionOptimality}, we note the following corollary:
Although it is a trivial consequence of Theorem \ref{thm:SingleFunctionOptimality},
we note the following corollary which shows that the networks constructed in
Theorem \ref{thm:ApproximationOfPiecewiseConstantFunctions} and
Corollary \ref{cor:PiecewiseSmoothFunctions} are of (almost) optimal complexity,
even if one is only interested in approximating a single (judiciously chosen)
horizon function $f \in \mathcal{HF}_{\beta, d, B}$.

\begin{corollary}\label{cor:IndividualFunctionLowerBound}
  The function $f \in \mathcal{HF}_{\beta,d,B}$ from
  Theorem \ref{thm:SingleFunctionOptimality} satisfies
  $M_{\varepsilon,\Felix{p}} (f) \notin \mathcal{O}(\varepsilon^{-\gamma})$
  as $\varepsilon \downarrow 0$, for every $\gamma < \Felix{p} (d-1)/\beta$.
\end{corollary}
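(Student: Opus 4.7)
The plan is to derive the corollary directly from Theorem \ref{thm:SingleFunctionOptimality} by a contradiction argument. Let $f \in \mathcal{HF}_{\beta,d,B}$ and the null-sequence $(\varepsilon_k)_{k\in\N} \subset (0,\nicefrac{1}{2})$ be as provided by that theorem, and set $\gamma^{\ast} := p(d-1)/\beta$. Fix any $\gamma < \gamma^{\ast}$ and suppose towards a contradiction that $M_{\varepsilon,p}(f) \in \mathcal{O}(\varepsilon^{-\gamma})$ as $\varepsilon \downarrow 0$. Then there exist $C > 0$ and $\varepsilon^{\ast} \in (0,\nicefrac{1}{2})$ such that
\[
  M_{\varepsilon,p}(f) \leq C \cdot \varepsilon^{-\gamma}
  \quad \text{for all } \varepsilon \in (0, \varepsilon^{\ast}).
\]

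The next step is to combine this hypothetical upper bound with the lower bound from Theorem \ref{thm:SingleFunctionOptimality}. Since $\varepsilon_k \to 0$, we have $\varepsilon_k < \varepsilon^{\ast}$ for all sufficiently large $k$, and hence
\[
  \frac{\varepsilon_k^{-\gamma^{\ast}}}
       {\log_2(\nicefrac{1}{\varepsilon_k}) \cdot \log_2\log_2(\nicefrac{1}{\varepsilon_k})}
  \leq M_{\varepsilon_k, p}(f)
  \leq C \cdot \varepsilon_k^{-\gamma}.
\]
Rearranging and setting $\delta := \gamma^{\ast} - \gamma > 0$, this yields
\[
  \varepsilon_k^{-\delta}
  \leq C \cdot \log_2(\nicefrac{1}{\varepsilon_k}) \cdot \log_2\log_2(\nicefrac{1}{\varepsilon_k})
  \quad \text{for all sufficiently large } k.
\]

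Finally, I observe that the left-hand side grows like a positive power of $\log$-scale exponentials, while the right-hand side grows only like $\log(\nicefrac{1}{\varepsilon_k}) \cdot \log\log(\nicefrac{1}{\varepsilon_k})$. More precisely, writing $t_k := \log_2(\nicefrac{1}{\varepsilon_k})$, the inequality becomes $2^{\delta t_k} \leq C \cdot t_k \cdot \log_2 t_k$, and since $t_k \to \infty$ (because $\varepsilon_k \to 0$), the left side grows exponentially in $t_k$ while the right side grows only as $t_k \log_2 t_k$. This is a contradiction for $k$ large enough, which proves $M_{\varepsilon,p}(f) \notin \mathcal{O}(\varepsilon^{-\gamma})$. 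There is no serious obstacle here; the entire content of the corollary lies in Theorem \ref{thm:SingleFunctionOptimality}, and the argument is a one-line asymptotic comparison of a polynomial against a poly-logarithmic quantity.
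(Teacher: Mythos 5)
Your proof is correct and is exactly the ``trivial consequence'' argument the paper alludes to (the paper offers no separate proof of this corollary, treating it as immediate from Theorem~\ref{thm:SingleFunctionOptimality}). Nothing to add.
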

\begin{remark*}
  Thus, the rate obtained in Theorem
  \ref{thm:ApproximationOfPiecewiseConstantFunctions} is (almost) \emph{optimal}
  in the sense that there is one fixed (but unknown) horizon function
  $f \in \mathcal{HF}_{\beta,d,B}$ such that as $\varepsilon \downarrow 0$,
  one \emph{cannot} achieve
  $\|f - \Realization_{\varrho} (\Phi_\varepsilon)\|_{L^{\Felix{p}}}
   \leq \varepsilon$
  with a network $\Phi_\varepsilon$ that has only
  $\mathcal{O}(\varepsilon^{-\gamma})$ nonzero weights,
  for some $\gamma < \Felix{p} (d-1)/\beta$, at least if one insists that the
  weights of $\Phi_\varepsilon$ can be encoded with at most
  $\lceil C_0 \cdot \log_2 (\nicefrac{1}{\varepsilon}) \rceil$ bits.
\end{remark*}

%\subsection{We have to go deeper: optimal number of layers}\label{sub:OptimalDepth}
\subsection{\texorpdfstring{\Felix{Optimality in terms of the number of layers}}
{Optimality in terms of the number of layers}}
\label{sub:OptimalDepth}

We now establish a lower bound on the number of layers $L(\Phi_\varepsilon)$
that a family of ReLU neural network $(\Phi_\varepsilon)_{\varepsilon > 0}$
needs to have to achieve a given approximation rate for approximating smooth
functions.
In this subsection, we again assume that the activation function $\varrho$
is the ReLU, that is, $\varrho(x) = \max \{0, x\} = x_+$.
%if the number of layers of a network $\Phi$, denoted by $L(\Phi)$ is known.

Shortly after the first version of the present paper appeared on the arXiv, we
became aware of \cite[Theorem 4]{pmlr-v70-safran17a}, which yields a statement
close to the following result, and which was published almost a year before.
Nevertheless, our result still yields a generalization of that in
\cite[Theorem 4]{pmlr-v70-safran17a}:
First, we are able to cover approximation in $L^p$ for arbitrary
\Felix{$p \in (0,\infty)$}, while in \cite{pmlr-v70-safran17a}, only the case
$p=2$ is considered.
Second, our proof is more elementary, since it does not rely on
Legendre polynomials, which are used crucially in \cite{pmlr-v70-safran17a}.

The following theorem will be proven in the appendix as
Theorem \ref{thm:DepthLowerBoundAppendix}.
\begin{theorem}
\label{thm:DepthLowerBound}Let $\Omega\subset\R^{d}$ be nonempty,
open, bounded, and connected. Furthermore, let $f\in C^{3}\left(\Omega\right)$
be nonlinear, \Felix{and let $p \in (0,\infty)$}.
Then there is a constant $C_{f,\Felix{p}}>0$ satisfying
\begin{align*}
  \left\Vert
    f - \Realization_{\varrho}(\Phi)
  \right\Vert _{L^{p}}
  & \geq C_{f,\Felix{p}} \cdot \left(N(\Phi) - 1\right)^{-2 L (\Phi)},\\
  \left\Vert
    f - \Realization_{\varrho}(\Phi)
  \right\Vert _{L^{p}}
  & \geq C_{f,\Felix{p}} \cdot \left(M(\Phi) + d\right)^{-2L(\Phi)}
\end{align*}
for each ReLU neural network $\Phi$
with input dimension $d$ and output dimension $1$.
\end{theorem}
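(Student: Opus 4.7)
The realization $g := \Realization_\varrho(\Phi)$ is a continuous piecewise affine function, whose number of affine pieces on any one-dimensional slice is bounded by a polynomial in the network size with the depth in the exponent. Along a direction of nonvanishing curvature of $f$, this piece count governs the $L^p$-approximation error: each affine piece of length $l_i$ on the slice contributes error $\gtrsim c_0 \, l_i^{2+1/p}$ in $L^p$, and after summing via the power-mean inequality the total error scales like $K^{-2}$, not the naive $K^{-(2+1/p)}$.

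\textbf{Localization and piece counting.} Since $f \in C^3(\Omega)$ is nonlinear on the open connected set $\Omega$, there exist $x_0 \in \Omega$ and a unit vector $v$ with $\partial_v^2 f(x_0) \neq 0$; by continuity, choose $\rho, c_0 > 0$ so that the closed box $Q := x_0 + [-\rho,\rho]^d \subset \Omega$ and $|\partial_v^2 f| \geq c_0$ on $Q$. After an orthogonal change of coordinates, take $v = e_1$ and write $x = (t,y) \in \R \times \R^{d-1}$. A standard induction over the layers (a ReLU layer of width $n$ multiplies the collective piece count on a 1D slice by at most $n+1$) gives that for each fixed $y$ the restriction $t \mapsto g(t,y)$ has at most $\prod_{j=1}^{L-1}(N_j + 1)$ pieces. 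After pruning neurons whose incoming weights are all zero (which preserves $g$ and forces $N_j \geq 1$), we obtain $N_j + 1 \leq N(\Phi)-1$ and $N(\Phi) \leq M(\Phi)+d$, so that
\[
  K \;\leq\; (N(\Phi)-1)^{L(\Phi)} \quad\text{and}\quad K \;\leq\; (M(\Phi)+d)^{L(\Phi)}.
\]

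\textbf{Per-slice estimate and integration.} Fix $y$ in the transverse box $B' := x_0' + [-\rho,\rho]^{d-1}$, let $I := x_{0,1} + [-\rho,\rho]$, and set $\tilde f_y := f(\cdot,y)|_I$, $\tilde g_y := g(\cdot,y)|_I$. Then $|\tilde f_y''| \geq c_0$ on $I$, and $\tilde g_y$ is affine on each of $K' \leq K$ subintervals $I_i$ of lengths $l_i$ with $\sum_i l_i = 2\rho$. A standard 1D lemma---the distance in $L^p([a,b])$ from a $C^2$ function $u$ with $|u''| \geq c_0$ to any affine is at least $c_p c_0 (b-a)^{2 + 1/p}$, proved via duality against a degree-two orthogonal polynomial for $p \geq 1$ and by a direct comparison argument for $p \in (0,1)$---gives $\|\tilde f_y - \tilde g_y\|_{L^p(I_i)}^p \geq c_p^p c_0^p\, l_i^{2p+1}$. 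Summing over $i$ and applying the power-mean inequality $\sum_i l_i^{2p+1} \geq (K')^{-2p}(2\rho)^{2p+1} \geq K^{-2p}(2\rho)^{2p+1}$ (valid since $2p+1 \geq 1$) yields $\|\tilde f_y - \tilde g_y\|_{L^p(I)}^p \geq c_p^p c_0^p (2\rho)^{2p+1} K^{-2p}$. Integrating over $y \in B'$ (volume $(2\rho)^{d-1}$) and taking $p$-th roots produces
\[
  \|f - g\|_{L^p(\Omega)} \;\geq\; C_{f,p}\, K^{-2},
  \qquad C_{f,p} := c_p\, c_0\, (2\rho)^{2 + d/p};
\]
inserting the two bounds on $K$ delivers the two inequalities of the theorem.

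\textbf{Main obstacle.} The crucial nonroutine point is the aggregation in the per-slice estimate. A naive argument that uses only the single longest piece yields $K^{-(2+1/p)}$, which falls short of $K^{-2}$ and hence of the claimed exponent $-2L$; summing the per-piece $L^p$ contributions and invoking the power-mean inequality is what recovers the sharp $K^{-2}$ scaling. The one-dimensional distance-from-affines lemma, the layerwise piece-count induction, and the translation of $\prod_{j=1}^{L-1}(N_j+1)$ into clean powers of $N-1$ and $M+d$ are standard but require a few careful reductions with the assumption that no neuron is dead.
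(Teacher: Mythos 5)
Your proposal follows essentially the same strategy as the paper's proof (Theorem~\ref{thm:DepthLowerBoundAppendix}, via Proposition~\ref{prop:PPiecewiseAffineFunctionsAreBad} and Lemmas~\ref{lem:SquareApproxLowerBound}--\ref{cor:C3ApproxLowerBound}): localize to a direction of nonvanishing curvature, count affine pieces along one-dimensional slices via a Telgarsky-type layerwise product bound, combine a one-dimensional affine-approximation lower bound with a Fubini argument, and apply a power-mean/H\"older inequality to the piece lengths to beat the naive exponent from a single longest piece. The only deviations are cosmetic: you use the $\prod_{j}(N_j+1)$ piece count and bound factors directly by $N(\Phi)-1$ (the paper uses $2^L\prod N_j$ and AM-GM), and you state the 1D lemma under a pure $C^2$ hypothesis (the paper proves a $C^3$ version with a restriction on the interval length, which suffices here since $f\in C^{3}$ and the intervals can be taken short); note also that the reduction step actually yields $N(\Phi)\le M(\Phi)+d+1$ rather than $N(\Phi)\le M(\Phi)+d$, which is harmless because you only need $N(\Phi)-1\le M(\Phi)+d$.
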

\begin{remark}
The theorem (and also its proof) is inspired by \cite[Theorem 6]{YAROTSKY2017103},
where it is shown that if $f\in C^{2}\left(\smash{\left[0,1\right]^{d}}\right)$
is nonlinear and $L\in\N$ is fixed, and if
$\left\Vert
   f-\Realization_{\varrho}\left(\Phi\right)
 \right\Vert_{L^{\infty}\left(\left[0,1\right]^{d}\right)}
 \leq\varepsilon$
with $\varepsilon \in \left(0,1\right)$ for a neural network $\Phi$
with $L\left(\Phi\right)=L\geq2$, then
$\min\left\{ M\left(\Phi\right), \, N\left(\Phi\right) \right\}
 \geq c\cdot\varepsilon^{-1/\left(2\left(L-1\right)\right)}$
with $c=c\left(f,L\right)$. Note that Yarotsky uses a slightly different
definition of neural networks, but the given formulation of his result
is already adapted to our definition of neural networks.

The main difference \Felix{between the two results}
is that Yarotsky considers approximation in $L^{\infty}$,
while we consider approximation in $L^{p}$ for \Felix{$p \in (0,\infty)$},
where it is harder to reduce the $d$-dimensional case to the one-dimensional
case, as seen in the proof of
Proposition \ref{prop:PPiecewiseAffineFunctionsAreBad}.

Furthermore, there is a difference in the \emph{sharpness} of the
results: As we saw in Section \ref{sec:ApproxOfClass},
to approximate a function $f\in\mathcal{F}_{\beta,d,B}$ of regularity
$C^{\beta}$ up to error $\varepsilon$ in the $L^{p}$ norm, one
can take a neural network $\Phi$ with
$\mathcal{O}\left(\varepsilon^{-d/\beta}\right)$ nonzero weights and a given
%fixed depth $L\leq c'\log_2(2+\beta)(1+\nicefrac{d}{\beta})$ for an
fixed depth
\Felix{$L \leq (2 + \lceil \log_2 \beta \rceil) \cdot (11 + \nicefrac{\beta}{d})$}.
In this sense, up to a logarithmic multiplicative factor,
our constructed networks have an \emph{optimal depth}.

In contrast, the networks $\Phi$ constructed in \cite[Theorem 1]{YAROTSKY2017103}
\Felix{for approximating} a given function $f\in\mathcal{F}_{n,d,1}$ up
to error $\varepsilon$ in the $L^{\infty}$ norm \Felix{have}
$\mathcal{O}\left(
   \varepsilon^{-d/n} \cdot \log_{2}\left(\nicefrac{1}{\varepsilon}\right)
 \right)$
nonzero weights and neurons, \Felix{and they} have
$\Theta\left(\log_{2}\left(\nicefrac{1}{\varepsilon}\right)\right)$
\Felix{layers}; that is,
\emph{the depth grows with increasing accuracy of the approximation}.

Finally, note that it is necessary to assume a certain regularity
of $f$ to get the result, since there are nonlinear functions (like
the ReLU $\varrho$) which can be approximated arbitrarily well using
ReLU networks with a \emph{fixed} number of weights, neurons and layers.
\end{remark}

The following corollary states the connection between the number of weights or
neurons and the number of layers more directly.
It is proven in the appendix as
Corollary \ref{cor:OptimalRateRequiresDepthAppendix}.

\begin{corollary}
\label{cor:OptimalRateRequiresDepth}Let $\Omega\subset\R^{d}$ be nonempty,
open, bounded, and connected. Furthermore, let $f\in C^{3}\left(\Omega\right)$
be nonlinear, and let $p \in (0,\infty)$.
If there are constants $C,\theta > 0$, a null-sequence
$\left(\varepsilon_{k}\right)_{k \in \N}$ of positive numbers, and
a sequence $\left(\Phi_{k}\right)_{k \in \N}$ of ReLU neural networks satisfying
\[
  \left\Vert
    f - \Realization_{\varrho}\left(\Phi_{k}\right)
  \right\Vert _{L^{p}}
  \leq C\cdot\varepsilon_{k}
  \quad\text{ and }\quad
  \left[
    M\left(\Phi_{k}\right) \leq C\cdot\varepsilon_{k}^{-\theta}
    \text{ or }
    N\left(\Phi_{k}\right)\leq C\cdot\varepsilon_{k}^{-\theta}
  \right]
  % \quad
\]
for all $k \in \N$, %and some $1\leq p<\infty$,
then
\[
  \liminf_{k\to\infty} \:
    L\left(\Phi_{k}\right)
  \geq \frac{1}{2\theta}.
\]
\end{corollary}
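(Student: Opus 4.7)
The plan is to invoke Theorem \ref{thm:DepthLowerBound} directly and extract the lower bound on $L(\Phi_k)$ through elementary logarithmic manipulations. Since the hypothesis gives us either $M(\Phi_k) \leq C \varepsilon_k^{-\theta}$ or $N(\Phi_k) \leq C \varepsilon_k^{-\theta}$, and since both the $M$-bound and the $N$-bound from Theorem \ref{thm:DepthLowerBound} have the same structure (up to the additive constants $d$ and $-1$, and the constant $C_{f,p}$), I will treat both cases in parallel by writing $Q(\Phi_k) \in \{N(\Phi_k) - 1, \, M(\Phi_k) + d\}$ with $Q(\Phi_k) \leq C' \varepsilon_k^{-\theta}$ for some constant $C'$ depending on $C$, $d$ and (in the case of $N$) a trivial shift.

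The key computation is as follows. From Theorem \ref{thm:DepthLowerBound} applied to $\Phi_k$ and the approximation hypothesis, I obtain
\begin{equation*}
  C \cdot \varepsilon_k
  \;\geq\; \|f - \Realization_{\varrho}(\Phi_k)\|_{L^p}
  \;\geq\; C_{f,p} \cdot Q(\Phi_k)^{-2 L(\Phi_k)}.
\end{equation*}
Rearranging and taking $\log_2$ yields
\begin{equation*}
  2 L(\Phi_k) \cdot \log_2 Q(\Phi_k)
  \;\geq\; \log_2(1/\varepsilon_k) + \log_2(C_{f,p}/C).
\end{equation*}
Using the polynomial bound $Q(\Phi_k) \leq C' \varepsilon_k^{-\theta}$, the left-hand side is bounded above by $2 L(\Phi_k) \cdot \bigl(\theta \log_2(1/\varepsilon_k) + \log_2 C'\bigr)$. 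Hence
\begin{equation*}
  L(\Phi_k)
  \;\geq\; \frac{\log_2(1/\varepsilon_k) + \log_2(C_{f,p}/C)}
                {2 \bigl(\theta \log_2(1/\varepsilon_k) + \log_2 C'\bigr)}.
\end{equation*}

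To finish, I divide numerator and denominator by $\log_2(1/\varepsilon_k)$, which is legitimate and useful because $\varepsilon_k \to 0^+$ implies $\log_2(1/\varepsilon_k) \to \infty$. The ratio then tends to $\tfrac{1}{2\theta}$, so taking $\liminf_{k \to \infty}$ gives $\liminf_k L(\Phi_k) \geq \tfrac{1}{2\theta}$, as required. There is no real obstacle here; the only points to handle carefully are that $L(\Phi_k)$ is a positive integer (so the inequality makes sense even if $Q(\Phi_k) = 0$, a degenerate case that can be ruled out for $k$ large by the approximation hypothesis combined with nonlinearity of $f$), and that the additive constants inside the logarithms become negligible precisely because $\log_2(1/\varepsilon_k) \to \infty$.
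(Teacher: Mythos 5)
Your proof is correct and uses the same key ingredient as the paper: apply Theorem \ref{thm:DepthLowerBound} to each $\Phi_k$ and rearrange the resulting inequality. The only difference is presentational—the paper argues by contradiction (passing to a subsequence with $L(\Phi_k)\leq (2\theta)^{-1}-\delta$), whereas you bound $L(\Phi_k)$ explicitly by a $k$-dependent quantity and take the limit directly, which is arguably a bit cleaner.
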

\begin{remark}
The corollary demonstrates that a specific approximation rate in terms of
numbers of neurons or weights \emph{cannot} be achieved if the depth of the
network is too small.
In fact, suppose we are given $f \in \mathcal{E}_{r, \beta, d, B}^{\Felix{p}}$
where $r \in \N$, $d \in \N_{\geq 2}$, $\beta, B > 0$ and such that $f$ is
non-linear and $C^3$ when restricted to an open, connected set
$A \subset [-\nicefrac12,\nicefrac12]^d$, and let $(\varepsilon_k)_{k \in \N}$
be a null-sequence of positive numbers.
Then we conclude by Corollary \ref{cor:PiecewiseSmoothFunctions}
that there is a sequence of neural networks $\Phi_k$ such that
\[
  \|
    f - \Realization_\varrho(\Phi_k)
  \|_{L^{\Felix{p}}}
  \leq \varepsilon_k
  \quad \text{ and } \quad
  M(\Phi_k) \leq C \cdot \varepsilon_k^{-\frac{\Felix{p} (d-1)}{\beta}}.
\]
for all $k\in \N$.
Consequently, Corollary \ref{cor:OptimalRateRequiresDepth} applied to
$f\Felix{|_A}$
%restricted to $A$
demonstrates that there is a lower bound on the number of layers of the
constructed networks given by $\beta/(\Felix{2p} (d-1))$.
\Felix{Therefore}, the neural networks constructed in
Corollary \ref{cor:PiecewiseSmoothFunctions} have
the optimal number of layers, up to a multiplicative factor which is
logarithmic in $\beta$.
\end{remark}

\section{Curse of dimension}\label{sec:CurseOfDIm}

%\todo[inline]{Mark Twain wrote: ``Substitute 'damn' every time you're inclined
%to write 'very;' your editor will delete it and the writing will be just as
%it should be.''}

The results of the previous sections demonstrate that piecewise smooth functions
in $\R^d$, $d\in \N$, with jump curves of regularity $C^\beta$ ($\beta > 0$) can
be approximated up to an $L^{\Felix{p}}$-error of
$\mathcal{O}(M^{-\beta/(\Felix{p}(d-1))})$ by
realizations of ReLU networks with $M$ nonzero weights.
While this is the optimal rate, we observe that this rate suffers from the
curse of dimension.
In fact, for \Felix{large input dimensions $d$}, only a very slow approximation
rate can be guaranteed.
Nevertheless, even though in practice data is usually high-dimensional,
\Felix{neural} networks appear to model the involved function classes well.
This raises the question whether the classical function spaces are an
appropriate model.

\Felix{Specifically, classifier functions that occur in practice exhibit
\emph{invariances}, while such invariances are not incorporated into the
classical function spaces.}
%One observation in applications is that appropriate function classes
%exhibit invariances.
For instance, an image classifier should be translation invariant,
scaling invariant, invariant to small smooth deformations, and invariant
to small changes in color, brightness, or contrast;
see \cite{Mallat2012, WiatowskiBDeepConcNets}.

A way to model such a function class is by resorting to a two-step procedure:
\Felix{The occurrence of invariances}
%Indeed, the symmetry of function classes
described above can be interpreted %in the way
\Felix{as stating} that every classifier function $f$ can be decomposed as
$f = g \circ \tau$, where $\tau$ is a smooth dimension-reducing ``feature map''
that incorporates \Felix{the invariances},
and $g$ is a piecewise smooth function responsible for the classification.

To translate this intuition into a solid mathematical framework,
%To give ourselves a solid mathematical framework,
we start by introducing a function class that models the smooth
dimension-reducing ``feature maps'' $\tau$.
%functions modelling the symmetries of the function class, i.e., smooth functions that reduce the dimension.

\begin{definition}
Let $d, D \in \N$ with $d \leq D$, and let $\kappa, \Felix{p} > 0$ and
$a = (a_n)_{n\in \N}$, with $a_n\in \Felix{(0,\infty)}$ for all $n\in \N$.
Then we define
\begin{align*}
\mathcal{S}_{\kappa, d, D, a}^{\Felix{p}}
: =
& \left\{
     \tau :     \left[-\nicefrac{1}{2}, \nicefrac{1}{2}\right]^D
            \to \left[-\nicefrac{1}{2}, \nicefrac{1}{2}\right]^d
     \,:\,
     \tau_i \in \mathcal{F}_{n, D, a_n}
     \text{ for all } i = 1, \dots, d \text{ and } n \in \N, \right. \\
   & \quad \text{and } \left.
     \|
       g \circ \tau
     \|_{L^{\Felix{p}} \left(\left[-\nicefrac{1}{2}, \nicefrac{1}{2}\right]^D\right)}
     \leq \kappa \cdot
          \|g\|_{L^{\Felix{p}}\left(\left[-\nicefrac{1}{2}, \nicefrac{1}{2}\right]^d\right)}
     \text{ for all } g \in L^{\Felix{p}}\left(\left[-\nicefrac{1}{2}, \nicefrac{1}{2}\right]^d\right)
  \right\}.
\end{align*}
\end{definition}

The assumption
$\|g \circ \tau \|_{L^{\Felix{p}}\left(\left[-\nicefrac{1}{2}, \nicefrac{1}{2}\right]^D\right)}
\leq \kappa \cdot \|g \|_{L^{\Felix{p}}\left(\left[-\nicefrac{1}{2}, \nicefrac{1}{2}\right]^d\right) }$
might seem quite restrictive at first sight, but is in fact satisfied by all
smooth submersions $\tau$, for an appropriate constant
$\kappa = \kappa(\tau,\Felix{p})$.
Indeed, we have the following lemma, which will be proven in
Appendix \ref{sec:SmoothSubmersions}.

\begin{lemma}\label{lem:CompositionWithSubmersion}
  Let $n,m\in \N$ with $n \leq m$, let $U \subset \R^m$ be open,
  and let $\tau : U \to \R^n$ be continuously differentiable.
  Finally, let $\emptyset \neq K \subset U$ be compact and assume that
  $D \tau (x) \in \R^{n \times m}$ has full rank for all $x \in K$.

  Then there is a constant $C > 0$ satisfying
  \[
    \int_{K} (f \circ \tau)(x) \, dx
    \leq C \cdot \int_{\tau (K)} f(y) \, dy
    \qquad \text{for all Borel measurable } \quad f : \tau (K) \to \R_+ \, .
  \]
  \Felix{In particular,
  \[
    \|f \circ \tau\|_{L^p (K)} \leq C^{1/p} \cdot \|f\|_{L^p (\tau(K))}
    \quad \text{for all } 0 < p < \infty \text{ and }
    f : \tau(K) \to \R \text{ measurable}.
  \]}
\end{lemma}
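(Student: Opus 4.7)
\textbf{Proof proposal for Lemma \ref{lem:CompositionWithSubmersion}.}

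The plan is to reduce the global estimate to a finite collection of local ones via the submersion normal form, and on each chart use a change of variables followed by Fubini to trade the ambient integral for an integral of $f$ on $\tau(K)$, times the measure of the fibers. The main obstacle is book-keeping: making sure the local charts can be chosen bounded (so that fiber measures are finite) and that the constants produced on each chart combine into a single constant $C$.

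First I would invoke the submersion theorem: for each $x_0 \in K$, since $D\tau(x_0) \in \R^{n \times m}$ has full rank $n$, there exist an open bounded neighborhood $V'_{x_0} \subset \R^m$ of some point, an open neighborhood $V_{x_0} \subset U$ of $x_0$, and a $C^1$-diffeomorphism $\Phi_{x_0} : V'_{x_0} \to V_{x_0}$ such that $\tau \circ \Phi_{x_0} = \pi_n$, where $\pi_n : \R^m \to \R^n,\ (y', y'') \mapsto y'$ is the projection onto the first $n$ coordinates. By shrinking $V'_{x_0}$, we may assume it is a bounded open box or ball, so that $M_{x_0} := \sup_{V'_{x_0}} |\det D\Phi_{x_0}| < \infty$ and $S_{x_0} := \sup_{y' \in \R^n} \lambda^{m-n}(\{y'' : (y', y'') \in V'_{x_0}\}) < \infty$. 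Since $K$ is compact, finitely many $V_{x_1}, \dots, V_{x_N}$ cover $K$; set $\Phi_i := \Phi_{x_i}$, $V_i := V_{x_i}$, $V'_i := V'_{x_i}$, $M_i := M_{x_i}$, $S_i := S_{x_i}$.

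On each chart the change-of-variables formula and Fubini give
\begin{align*}
  \int_{V_i \cap K} (f \circ \tau)(x)\, dx
  &= \int_{V'_i} \chi_K(\Phi_i(y))\, f(\pi_n(y))\, |\det D\Phi_i(y)|\, dy \\
  &\leq M_i \int_{\R^n} f(y') \int_{\{y'' : (y', y'') \in V'_i\}} \chi_K(\Phi_i(y', y''))\, dy''\, dy' \\
  &\leq M_i \cdot S_i \int_{\pi_n(\Phi_i^{-1}(V_i \cap K))} f(y')\, dy',
\end{align*}
and since $\pi_n(\Phi_i^{-1}(V_i \cap K)) = \tau(V_i \cap K) \subset \tau(K)$, the last integral is bounded by $\int_{\tau(K)} f(y')\, dy'$.

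Finally I would globalize by subadditivity (overlaps are fine, because we only want an upper bound):
\begin{equation*}
  \int_K (f \circ \tau)(x)\, dx
  \leq \sum_{i=1}^N \int_{V_i \cap K} (f \circ \tau)(x)\, dx
  \leq \left( \sum_{i=1}^N M_i S_i \right) \int_{\tau(K)} f(y)\, dy,
\end{equation*}
so the constant $C := \sum_{i=1}^N M_i S_i$ works. For the $L^p$ statement, I would simply apply the established inequality to the nonnegative measurable function $|f|^p$ in place of $f$ and take $p$-th roots, giving $\|f \circ \tau\|_{L^p(K)} \leq C^{1/p} \|f\|_{L^p(\tau(K))}$.
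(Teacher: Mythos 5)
Your proof is correct, and it takes a genuinely different and more elementary route than the paper's. The paper first extends $\tau$ Lipschitz-continuously to all of $\R^m$ via the Kirszbraun--Valentine theorem, then applies the \emph{coarea formula} for Lipschitz maps to write $\int_K f\circ\tau\,dx \le C_0^{-1}\int_{\tau(K)} f(t)\,\mathcal{H}^{m-n}\bigl(\tau^{-1}(\{t\})\cap K\bigr)\,dt$, and only afterwards invokes the constant rank theorem to bound the fiber Hausdorff measures by a uniform constant. You instead invoke the constant rank theorem (in its submersion normal form) \emph{first}, localize to finitely many charts $\Phi_i$ straightening $\tau$ to the projection $\pi_n$, and on each chart reduce the claim to the ordinary change-of-variables formula followed by Tonelli--Fubini; the role played in the paper by the coarea formula is here played by Fubini for the projection $\pi_n$, which is the elementary special case. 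As a result you never need the Kirszbraun extension nor the general coarea theorem, and the constant comes out explicitly as $C=\sum_i M_i S_i$. The trade-off is that the paper's proof isolates the uniform fiber bound $\mathcal{H}^{m-n}(\tau^{-1}(\{t\})\cap K)\le C$ as an intrinsic geometric statement, whereas in your argument the fiber bound is implicit in the slice volumes $S_i$, which depend on the chosen charts. One minor point of rigor: when you shrink $V'_{x_0}$ to a bounded box so that $M_{x_0}$ is finite, you should shrink to a box whose \emph{closure} lies in the original chart domain (so that $|\det D\Phi_{x_0}|$ is bounded by its maximum on a compact set) while keeping $\Phi_{x_0}^{-1}(x_0)$ inside; this is standard and easily arranged, but worth saying explicitly.
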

\begin{remark*}
  As a consequence of Lemma \ref{lem:CompositionWithSubmersion},
  if $U \subset \R^D$ is open with $U \supset [-\nicefrac12, \nicefrac12]^D$,
  and if
  \begin{itemize}
    \item $\tau \in C^\infty (U; \R^d)$ with
          $\tau([-\nicefrac12,\nicefrac12]^D) \subset [-\nicefrac12,\nicefrac12]^d$
          and $d \leq D$;

    \item $\tau_i \in \mathcal{F}_{n,D,a_n}$ for all $i =1,\dots, d$ and $n \in \N$; and

    \item $D\tau(x)$ has full rank for all $x \in [-\nicefrac{1}{2},\nicefrac{1}{2}]^D$,
  \end{itemize}
  then $\tau \in \mathcal{S}_{\kappa,d,D,a}^{\Felix{p}}$
  for a suitable constant $\kappa = \kappa(\tau, \Felix{p}) > 0$,
  \Felix{where $p \in (0,\infty)$ can be chosen arbitrarily}.
\end{remark*}

We proceed to define a function class modelling precisely the behavior we
described before.
\begin{definition}
  Let $d,D, r \in \N$ with $D \geq d \geq 2$,
  let $B, \beta, \kappa, \Felix{p} > 0$, and $a = (a_n)_{n\in \N}$, with
  $a_n \in \Felix{(0,\infty)}$ for all $n\in \N$. Then we define
  \[
    \mathcal{SE}_{r, \beta, a, B, \kappa, d, D}^{\Felix{p}}
    := \left\{
         f \in L^\infty \left(\left[-\nicefrac{1}{2}, \nicefrac{1}{2}\right]^D\right)
         \,:\,
         f = g \circ \tau,
         \text{ where } g \in \mathcal{E}_{r, \beta, d, B}^{\Felix{p}},
         \text{ and } \tau \in \mathcal{S}_{\kappa, d, D, a}^{\Felix{p}}
       \right\}.
  \]
\end{definition}

We can now describe the size of networks which suffices for the approximation
of arbitrary functions
$f \in \mathcal{SE}_{r, \beta, a, B, \kappa, d, D}^{\Felix{p}}$,
up to an $L^{\Felix{p}}$-error of $\eps$.
The proof for the theorem below is given in the appendix in
Section \ref{sec:HighDimApprox}.

\begin{theorem}\label{thm:ApproximationOfSymmetricFunctions}
  For $r,d,D \in \N$ with $D \geq d \geq 2$,
  for $\Felix{p}, \kappa, \beta, B > 0$, and $a = (a_n)_{n\in \N}$ with
  $a_n \in \Felix{(0,\infty)}$ for all $n \in \N$,
  there are constants $c= c(d,D, \Felix{p}, \kappa, r,\beta, B, a) > 0$,
  $L = L(d, D, \Felix{p}, \kappa, r,\beta, B, a) \in \N$ and
  $s = s(d,D, \Felix{p}, \kappa, r,\beta, B, a) \in \N$, such that for any
  $f \in \mathcal{SE}_{r, \beta, a, B, \kappa, d, D}^{\Felix{p}}$ and any
  $\varepsilon \in (0, \nicefrac{1}{2})\vphantom{\sum_j}$,
  there is a neural network $\Phi^f_\varepsilon$ with at most $L$ layers,
  and at most $c \cdot \varepsilon^{-\Felix{p} (d-1)/\beta}$ nonzero,
  $(s,\varepsilon)$-quantized weights such that
  \begin{align} \label{eq:TargetApproxSymmetricFunctions}
    \left\|\Realization_\varrho(\Phi^f_\varepsilon) - f\right\|_{L^{\Felix{p}}}
    < \varepsilon.
  \end{align}
\end{theorem}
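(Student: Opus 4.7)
The plan is to exploit the factorization $f = g \circ \tau$ with $g \in \mathcal{E}_{r,\beta,d,B}^{p}$ and $\tau \in \mathcal{S}_{\kappa,d,D,a}^{p}$, approximate $g$ and each component $\tau_i$ individually, and sparsely concatenate to obtain $\Phi^f_\varepsilon := \Phi_g \sconc \mathrm{P}(\Phi_{\tau_1},\dots,\Phi_{\tau_d})$. Writing $\Phi_\tau := \mathrm{P}(\Phi_{\tau_1},\dots,\Phi_{\tau_d})$, the triangle inequality gives
\[
  \|f - \Realization_\varrho(\Phi^f_\varepsilon)\|_{L^{p}}
  \leq \underbrace{\|g \circ \tau - \Realization_\varrho(\Phi_g) \circ \tau\|_{L^{p}}}_{(\mathrm{I})}
       + \underbrace{\|\Realization_\varrho(\Phi_g) \circ \tau - \Realization_\varrho(\Phi_g) \circ \Realization_\varrho(\Phi_\tau)\|_{L^{p}}}_{(\mathrm{II})} .
\]
The defining property of $\mathcal{S}_{\kappa,d,D,a}^{p}$, applied to the bounded function $g - \Realization_\varrho(\Phi_g) \in L^{p}([-\nicefrac12,\nicefrac12]^d)$, directly yields $(\mathrm{I}) \leq \kappa \cdot \|g - \Realization_\varrho(\Phi_g)\|_{L^{p}}$.

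To control $(\mathrm{I})$, I apply Corollary \ref{cor:PiecewiseSmoothFunctions} with target accuracy $\delta := \varepsilon/(2\kappa)$, obtaining a network $\Phi_g$ with a fixed number of layers $L_g$, sup-norm at most $\lceil B\rceil$, and at most $c_1 \cdot \delta^{-p(d-1)/\beta}$ nonzero $(s_1,\delta)$-quantized weights, so that $(\mathrm{I})\leq\varepsilon/2$. Since each weight of $\Phi_g$ is bounded in absolute value by $\delta^{-s_1}$, the layer widths are bounded by $M(\Phi_g)+d$, and the ReLU is $1$-Lipschitz, the global Lipschitz constant of $\Realization_\varrho(\Phi_g)$—bounded by the product of the operator norms of the affine maps—satisfies
\[
  \Lambda := \mathrm{Lip}\bigl(\Realization_\varrho(\Phi_g)\bigr)
  \leq \bigl( \sqrt{M(\Phi_g)+d} \cdot \delta^{-s_1} \bigr)^{L_g}
  \leq C \cdot \varepsilon^{-\alpha}
\]
for some exponent $\alpha = \alpha(d,p,\beta,B,r) > 0$ depending only on the fixed parameters.

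Pulling this Lipschitz bound inside the integral in $(\mathrm{II})$ gives $(\mathrm{II}) \leq \Lambda \cdot \|\tau - \Realization_\varrho(\Phi_\tau)\|_{L^p}$, so it suffices to approximate each component $\tau_i$ in $L^{p}$ to accuracy $\eta := \varepsilon/(2d\Lambda) = \Theta(\varepsilon^{1+\alpha})$. Since $\tau_i \in \mathcal{F}_{n,D,a_n}$ for every $n \in \N$, I invoke Theorem \ref{thm:ApproxOfSmoothFctn} with smoothness $n$ to get networks $\Phi_{\tau_i}$ of fixed depth and at most $c_n \cdot \eta^{-D/n}$ nonzero $(s_n,\eta)$-quantized weights realizing this accuracy. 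Choosing $n \in \N$ large enough that $D(1+\alpha)/n \leq p(d-1)/\beta$—possible because $\alpha,D,\beta,d,p$ are fixed constants—the total weight count of $\Phi_\tau$ is bounded by $\mathcal{O}(\varepsilon^{-p(d-1)/\beta})$.

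Assembling the pieces, by Remark \ref{rem:SpecialConc} the network $\Phi^f_\varepsilon = \Phi_g \sconc \Phi_\tau$ has a fixed depth (depending only on $d,D,p,\kappa,r,\beta,B,a$), at most $\mathcal{O}(\varepsilon^{-p(d-1)/\beta})$ nonzero weights, and satisfies the error bound $(\mathrm{I}) + (\mathrm{II}) \leq \varepsilon$. Since the quantization accuracies $\delta$ and $\eta$ of the component networks are both polynomial in $\varepsilon$, the first part of Remark \ref{rem:QuantisationConversion} permits reinterpreting all weights as $(s,\varepsilon)$-quantized for a uniform $s \in \N$ depending only on the fixed parameters. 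The main obstacle is the polynomial control of $\Lambda$: this rests on the fact that the depth of $\Phi_g$ is a constant and that its weights and widths are each polynomially bounded in $1/\varepsilon$ with fixed exponents, which jointly force $\alpha$ to be independent of $\varepsilon$ and therefore allow the smoothness parameter $n$ for the approximation of $\tau$ to be chosen once and for all.
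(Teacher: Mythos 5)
Your argument is correct and takes essentially the same route as the paper: the paper proves this theorem by first establishing the auxiliary Lemma \ref{lem:L1concatenations}, whose two error terms are precisely your (I) and (II), bounded by the pullback estimate from $\mathcal{S}_{\kappa,d,D,a}^{p}$ and the (polynomial-in-$\nicefrac1\varepsilon$) Lipschitz constant of $\Realization_\varrho(\Phi_g)$, respectively; the paper then picks a smoothness order $\beta_0$ for the $\tau_i$ exactly as you choose $n$ so that the extra cost from the accuracy boost $\varepsilon^{1+\alpha}$ is absorbed into the target rate. The only small point to watch is that your first display invokes the ordinary triangle inequality, which fails for $p<1$ --- one needs, as the paper does via Equation \eqref{eq:PseudoTriangleInequality}, a factor $2^{\max\{1,p^{-1}\}}$ folded into the intermediate accuracies, but this is a constant adjustment and does not affect the argument.
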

\begin{remark}\begin{itemize}
  \item Contrary to all previous results, we do not give an explicit bound on
        $L$ here. This is because the proof requires a very large non-explicit
        number of layers, which we believe to be highly suboptimal.

  \item We observe that the established approximation rate for
        $\mathcal{SE}_{r, \beta, a, B, \kappa, d, D}^{\Felix{p}}$ matches the
        optimal rate of Corollary \ref{cor:PiecewiseSmoothFunctions}
        and \Felix{Theorem} \ref{thm:Optimality}
        for $\mathcal{E}_{r, \beta, d, B}^{\Felix{p}}$.
        In particular, it is independent of the ambient dimension $D$.

  \item \Felix{Even though the approximation \emph{rate}---that is, the exponent
    $-p(d-1)/\beta$ of $\varepsilon$---is independent of the input dimension
    $D$, it should be observed that the number of neurons is bounded by
    $c \cdot \varepsilon^{-p(d-1)/\beta}$, where the constant $c$ \emph{does}
    depend on $D$.}
\end{itemize}
\end{remark}

\appendix

%%%%%%%%%%%%%%%%%%%%%%%%%%%%%%%%%%%%%%%%%%%%%%%%%%%%%%%%%%%%%%%%%%%%%%%%%%%%%%%
\section{Approximation of piecewise smooth functions}
\label{sec:ApproxWithPieceSmoothAppendix}
%%%%%%%%%%%%%%%%%%%%%%%%%%%%%%%%%%%%%%%%%%%%%%%%%%%%%%%%%%%%%%%%%%%%%%%%%%%%%%%

In this section, we prove all results stated in Section \ref{sec:ApproxOfClass},
as well as a couple of auxiliary lemmas.
\pp{Throughout the entire section, we assume that $\varrho$ is the ReLU,}
\Felix{that is, $\varrho : \R \to \R, x \mapsto \max \{0,x\}$.}

We start with a lemma that will be used often to obtain approximating networks
with \emph{bounded} realization.

\begin{lemma}\label{lem:boundedApprox}
There is a universal constant $c > 0$ such that the following holds:

For arbitrary $d, s \Felix{, k} \in \N$, $B >0$,
$\varepsilon \in (0,\nicefrac{1}{2})$, and any neural network $\Psi$ with
$d$-dimensional input and \Felix{$k$}-dimensional output and with
$(s,\varepsilon)$-quantized weights, there exists a neural network $\Phi$ with
the same input/output dimensions as $\Psi$ and with the following properties:
\begin{itemize}
\item $M(\Phi) \leq \Felix{2} M(\Psi) + c \Felix{k}$,
      and $L(\Phi) \leq L(\Psi) + 2$;

\item all weights of $\Phi$ are $(s_0, \varepsilon)$-quantized,
      where $s_0 := \max\{\lceil\log_2(\lceil B \rceil)\rceil, s\}$;

\item $\Realization_\varrho(\Phi)
       = \Felix{(\tau_{B} \times \cdots \times \tau_B)}
       \circ \Realization_\varrho(\Psi)$,
      where the function
      \[
        \tau_{B} :
        \R \to     \Felix{\big[ -\lceil B \rceil \,,\, \lceil B \rceil \, \big]},
        y  \mapsto \mathrm{sign}(y) \cdot \min\{|y|, \lceil B \rceil\}
      \]
      is $1$-Lipschitz and satisfies $\tau_{B} (y) = y$ for all $y \in \R$ with
      $|y| \leq \lceil B \rceil$.
\end{itemize}
\end{lemma}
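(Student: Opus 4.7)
The plan is to realize the clipping function $\tau_B$ as a small, explicit two-layer ReLU network and to prepend it (componentwise, once per output coordinate) to $\Psi$ via sparse concatenation. The key identity is that, for $N := \lceil B \rceil \in \N$, one has $\tau_B(y) = \varrho(y+N) - \varrho(y-N) - N$, as a direct case analysis on the three regions $y \leq -N$, $-N \leq y \leq N$, $y \geq N$ shows. I would first define the scalar network $\tau^{\mathrm{NN}}$ with first-layer weight $A_1 = \binom{1}{1}$ and bias $b_1 = \binom{N}{-N}$, and second-layer weight $A_2 = (1,-1)$ and bias $b_2 = -N$; by the identity above, $\Realization_\varrho(\tau^{\mathrm{NN}}) = \tau_B$.

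Next, I would take the $k$-fold parallelization $\tau^{\mathrm{NN},k} := \mathrm{P}(\tau^{\mathrm{NN}},\ldots,\tau^{\mathrm{NN}})$ (using the block-diagonal construction of Definition \ref{def:Parallelization}), which is still a two-layer network, whose realization is $\tau_B \times \cdots \times \tau_B$ on $\R^k$, and which has at most $7k$ nonzero weights: $2k$ in $A_1$, $2k$ in $b_1$, $2k$ in $A_2$, and $k$ in $b_2$. Finally, I would set $\Phi := \tau^{\mathrm{NN},k} \sconc \Psi$. By Remark \ref{rem:SpecialConc}, this network has $L(\Psi) + 2$ layers, satisfies $M(\Phi) \leq 2 M(\Psi) + 2\cdot 7 k = 2 M(\Psi) + 14 k$ (so $c = 14$ works), and realizes $\Realization_\varrho(\tau^{\mathrm{NN},k}) \circ \Realization_\varrho(\Psi) = (\tau_B \times \cdots \times \tau_B) \circ \Realization_\varrho(\Psi)$, as required.

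It remains to verify the $(s_0,\varepsilon)$-quantization. The weights of $\tau^{\mathrm{NN},k}$ are all in $\{0, \pm 1, \pm N\}$. Since $s_0 \geq \lceil \log_2 \lceil B \rceil \rceil$ and $\varepsilon < \nicefrac12$, one has $\varepsilon^{-s_0} \geq 2^{s_0} \geq N$, so all these values lie in $[-\varepsilon^{-s_0}, \varepsilon^{-s_0}]$; and since $\lceil \log_2(1/\varepsilon) \rceil \geq 1$, the grid $2^{-s_0 \lceil \log_2(1/\varepsilon)\rceil} \Z$ contains $\Z$, hence all these integers. For the weights of $\Psi$, which are only assumed $(s,\varepsilon)$-quantized, I would observe that the admissible range $[-\varepsilon^{-s},\varepsilon^{-s}]$ is contained in $[-\varepsilon^{-s_0},\varepsilon^{-s_0}]$ (using $s \leq s_0$ and $\varepsilon < 1$), and that $2^{-s \lceil \log_2(1/\varepsilon)\rceil}\Z \subset 2^{-s_0 \lceil \log_2(1/\varepsilon)\rceil}\Z$, so the quantization level may be coarsened from $s$ to $s_0$ without loss. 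The sparse concatenation itself does not introduce new numerical values other than signs and duplications coming from the identity network $\Phi^{\identity}_k$, whose weights are in $\{-1,0,1\}$, and the matrix/bias of the final combined layer inside $\tau^{\mathrm{NN},k} \sconc \Psi$ is a signed copy of an already admissible one, so $(s_0,\varepsilon)$-quantization persists.

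There is no real obstacle here; the construction is essentially forced by the explicit formula for $\tau_B$. The only things requiring a moment's care are the bookkeeping of the universal constant in $M(\Phi) \leq 2M(\Psi) + ck$ (which comes from the factor $2$ in the sparse-concatenation bound combined with the $\mathcal{O}(k)$ weights of $\tau^{\mathrm{NN},k}$) and the monotonicity check $s \leq s_0 \Rightarrow$ $(s,\varepsilon)$-quantized $\Rightarrow$ $(s_0,\varepsilon)$-quantized used to handle the weights of $\Psi$ uniformly with the new clipping weights.
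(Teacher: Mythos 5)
Your construction is correct and matches the paper's proof essentially step for step: you use the same explicit ReLU identity $\tau_B(y) = \varrho(y+\lceil B\rceil) - \varrho(y-\lceil B\rceil) - \lceil B\rceil$, implement it as a two-layer network with the same $7k$-weight budget, sparsely concatenate with $\Psi$ to obtain $c=14$ via Remark~\ref{rem:SpecialConc}, and verify $(s_0,\varepsilon)$-quantization by the same monotonicity argument in $s$. Your quantization check is, if anything, slightly more explicit than the paper's, which simply asserts that the clipping network is $(\lceil\log_2\lceil B\rceil\rceil,\varepsilon)$-quantized.
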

\begin{proof}
\Felix{Consider the neural network $\Phi^B := \big( (A_1, b_1) , (A_2, b_2) \big)$,
where $A_1 \in \R^{2k \times k}$ is the matrix associated
(via the standard basis) to the linear map
$\R^k \to \R^{2k}, (x_1, \dots, x_k) \mapsto (x_1,x_1, x_2,x_2,\dots,x_k,x_k)$,
while $A_2 \in \R^{2k \times k}$ is associated to the linear map
$\R^{2k} \to \R^k,
(x_1,\dots,x_{2k}) \mapsto (x_1 - x_2, x_3 - x_4, \dots, x_{2k-1} - x_{2k})$.
Furthermore, $b_1 := \lceil B \rceil \cdot (1,-1,1,-1,\dots,1,-1)^T \in \R^{2k}$
%(\lceil B \rceil, -\lceil B \rceil, \dots, \lceil B \rceil, -\lceil B \rceil)^T
%\in \R^{2k}$
and $b_2 := - \lceil B \rceil \cdot (1,\dots,1)^T \in \R^k$.

It is not hard to see $\|A_i\|_{\ell^0} \leq 2k$ for $i \in \{1,2\}$,
and furthermore $\|b_1\|_{\ell^0} \leq 2k$ and $\|b_2\|_{\ell^0} \leq k$,
so that $M(\Phi^B) \leq 7k$, and clearly $L(\Phi^B) = 2$.
In addition, we note that the weights of $\Phi^B$ are
$(\lceil \log_2(\lceil B \rceil) \rceil, \varepsilon)$-quantized,
for any $\varepsilon \in (0, \nicefrac{1}{2})$.
%\[
%  \Phi^B
%  := \left(
%       \left(
%         \binom{1}{1},
%         \binom{ \ \ \lceil B\rceil}{-\lceil B\rceil}
%       \right),
%       \left( (1, -1), -\lceil B \rceil \right)
%     \right).
%\]

Finally, a direct calculation shows}
$\Realization_\varrho(\Phi^B) = \tau_{B} \Felix{\times \cdots \times \tau_B}$,
\Felix{where the \pp{C}artesian product has $k$ factors.}
All in all, setting $\Phi := \Phi^B \sconc \Psi$ yields the claim,
\Felix{thanks to Remark \ref{rem:SpecialConc}. In fact, that remark shows
that we can take $c=14$.}
\end{proof}

\subsection{Approximation of the Heaviside function}

As a first step towards approximating horizon functions,
it is necessary to recreate a sharp jump.
To this end, we show that the Heaviside function can be approximately
created with a network of fixed size.

\begin{lemma}\label{lem:ApproxOfHeaviside}
Let $d\in \N_{\geq 2}$ and $H := \chi_{[0,\infty) \times \R^{d-1}}$.
For every $\varepsilon > 0$ there exists a neural network $\Phi_\varepsilon^H$
with two layers and five (nonzero) weights which
only take values in $\{\Felix{\varepsilon^{-1}}, 1,-1\}$, such that
\[
  0 \leq \Realization_\varrho (\Phi_\varepsilon^H) \leq 1
  \quad \text{and} \quad
  |H(x) - \Realization_\varrho(\Phi_\varepsilon^H)(x)|
  %\leq \chi_{0 \leq x_1 \leq \Felix{\varepsilon}}(x)
  \leq \chi_{\Felix{[0,\varepsilon] \times \R^{d-1}}}(x)
  %\quad \text{for all} \quad x = (x_1,\dots,x_d) \in \R^d \, .
  \quad \text{for all} \quad x \in \R^d \, .
\]
Moreover,
$\|
   H - \Realization_\varrho(\Phi_\varepsilon^H)
 \|_{L^{\Felix{p}}([-\nicefrac{1}{2},\nicefrac{1}{2}]^d)}
 \leq \varepsilon^{\Felix{1/p}}$
\Felix{for all $p \in (0,\infty)$.}
\end{lemma}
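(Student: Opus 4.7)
The plan is to build $\Phi^H_\varepsilon$ as an explicit two-layer ReLU ``ramp'' that depends only on the first coordinate $x_1$, clamped to $[0,1]$ by a width-$\varepsilon$ linear piece on $[0,\varepsilon]$.

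Concretely, I would define the one-dimensional ramp
\[
  r_\varepsilon(t) := \varrho\!\left(\tfrac{t}{\varepsilon}\right) - \varrho\!\left(\tfrac{t}{\varepsilon} - 1\right),
\]
and observe by cases that $r_\varepsilon(t) = 0$ for $t \leq 0$, $r_\varepsilon(t) = t/\varepsilon$ for $t \in [0,\varepsilon]$, and $r_\varepsilon(t) = 1$ for $t \geq \varepsilon$. In particular $0 \leq r_\varepsilon \leq 1$. To realize $x \mapsto r_\varepsilon(x_1)$ as a network, I would take
\[
  \Phi^H_\varepsilon := \big( (A_1, b_1), (A_2, b_2) \big),
\]
where $A_1 \in \R^{2 \times d}$ has $\varepsilon^{-1}$ in both entries of its first column and zeros elsewhere, $b_1 = (0,-1)^T$, $A_2 = (1,-1)$, and $b_2 = 0$. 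A quick count gives exactly $5$ nonzero weights, taking values only in $\{\varepsilon^{-1},1,-1\}$, and $L(\Phi^H_\varepsilon) = 2$. By construction $\Realization_\varrho(\Phi^H_\varepsilon)(x) = r_\varepsilon(x_1)$.

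For the pointwise bound, splitting into the three cases $x_1 < 0$, $x_1 \in [0,\varepsilon]$, $x_1 > \varepsilon$ shows $H(x) = r_\varepsilon(x_1)$ outside the slab $[0,\varepsilon] \times \R^{d-1}$, while inside the slab $|H(x) - r_\varepsilon(x_1)| = 1 - x_1/\varepsilon \in [0,1]$. This yields the pointwise inequality $|H - \Realization_\varrho(\Phi^H_\varepsilon)| \leq \chi_{[0,\varepsilon] \times \R^{d-1}}$ and simultaneously gives $0 \leq \Realization_\varrho(\Phi^H_\varepsilon) \leq 1$. Finally, for $p \in (0,\infty)$,
\[
  \|H - \Realization_\varrho(\Phi^H_\varepsilon)\|_{L^p([-\nicefrac12,\nicefrac12]^d)}^p
  \leq \int_{[0,\varepsilon] \times [-\nicefrac12,\nicefrac12]^{d-1}} 1 \, dx = \varepsilon,
\]
so taking $p$-th roots gives the desired $\varepsilon^{1/p}$ bound.

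There is essentially no obstacle here: the entire content is the well-known ``clipped linear ramp'' identity $r_\varepsilon(t) = \varrho(t/\varepsilon) - \varrho(t/\varepsilon-1)$, and the only care needed is to verify the weight count and that all weights lie in $\{\varepsilon^{-1}, 1, -1\}$, which follows directly from the choice of $A_1,b_1,A_2,b_2$ above.
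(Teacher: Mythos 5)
Your construction is identical to the paper's: the same matrices $A_1, b_1, A_2, b_2$, the same clipped-ramp identity $\varrho(t/\varepsilon)-\varrho(t/\varepsilon-1)$, and the same three-case pointwise analysis followed by the trivial $L^p$ integration over the width-$\varepsilon$ slab. Everything checks out, and this is exactly the route the paper takes.
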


\begin{proof}
Let $\Phi_\varepsilon^H := \big( (A_1, b_1), (A_2, b_2) \big)$ with
\begin{align*}
  A_1 := \left(
           \begin{array}{l l l l}
             \Felix{\varepsilon^{-1}} & 0 & 0 & \dots \\
             \Felix{\varepsilon^{-1}} & 0 & 0 & \dots
           \end{array}
         \right)
      \in \R^{2\times d},
  & \quad
  %b_1 := \left(
  %         \begin{array}{r} 0 \\ -1\end{array}
  %       \right)
  b_1 := \begin{pmatrix} 0 \\ -1 \end{pmatrix} \in \R^2, \\
  A_2 := \left(\begin{array}{l l} 1 & -1 \end{array}\right)
      \in \R^{1 \times 2},
& \quad
  b_2 := 0 \in \R^1.
\end{align*}
Then
\begin{align*}
  \Realization_\varrho(\Phi_\varepsilon^H)(x)
  = \varrho \left(\frac{x_1}{\Felix{\varepsilon}}\right)
    -\varrho \left(\frac{x_1}{\Felix{\varepsilon}}-1\right)
  \text{ for } x = (x_1, \dots, x_d) \in \R^d .
\end{align*}
From this, we directly compute
\begin{align*}
  \Realization_\varrho(\Phi_\varepsilon^H)(x) &= 0
  \text{ for } x_1 < 0,
  \quad
  \Realization_\varrho(\Phi_\varepsilon^H)(x) = \frac{x_1}{\Felix{\varepsilon}}
  \text{ for } 0 \leq x_1 \leq \Felix{\varepsilon},
  \quad \text{ and }
  \Realization_\varrho(\Phi_\varepsilon^H)(x) = 1
  \text{ for } \Felix{ \varepsilon } \pp{ < x_1}.
\end{align*}
We conclude that indeed
$|H(x) - \Realization_\varrho (\Phi_\varepsilon^H)(x)|
%\leq \chi_{0 \leq x_1 \leq \Felix{\varepsilon}}(x)$ and
\leq \chi_{\Felix{[0,\varepsilon] \times \R^{d-1}}}(x)$ and
$0 \leq \Realization_\varrho (\Phi_\varepsilon^H) \leq 1$, and therefore also
%\begin{align*}
\[
  \left\|
    H - \Realization_\varrho(\Phi_\varepsilon^H)
  \right\|_{L^p ([-\nicefrac{1}{2}, \nicefrac{1}{2}]^d)}^p
%\leq \int_{[-\frac{1}{2},\frac{1}{2}]^d}
%        |\chi_{x_1 \geq 0}(x) - \Realization_\varrho(\Phi_\varepsilon^H)(x)|^2 dx
   \leq \int_{[0,\Felix{\varepsilon}] \times [-\nicefrac{1}{2},\nicefrac{1}{2}]^{d-1}}
            1 \, dx
   =    \Felix{\varepsilon}. \qedhere
%\end{align*}
\]
\end{proof}

\subsection{Approximation of smooth functions}

The second cornerstone of our approximation results is the approximation of
smooth functions.
The argument proceeds as follows:
We start by \Felix{showing that one can approximate} a multiplication operator
with a ReLU network (Lemma \ref{lem:MultiplicationWithBoundedLayerNumber}).
With such an operator in place, one can construct networks realizing approximate
monomials (Lemma \ref{lem:MonomialsWithBoundedNumberOfLayers}).
From there on, it is quite clear that for a given function $f$ one is also able
to approximate Taylor polynomials of $f$ at various root points
(Lemma \ref{lem:ComputationalUnit}).
In combination with an approximate partition of unity
(Lemmas \ref{lem:MultiplicationWithACharacteristicFunction} and
\ref{lem:MultiplicationWithACharacteristicFunctionArray}), one can thus
approximate $C^\beta$ functions (Theorem \ref{thm:ApproxOfSmoothFctnAppendix}).

We start by constructing the approximate multiplication operator.
Already in \cite[Proposition 3]{YAROTSKY2017103}, it was shown that ReLU
networks can compute an approximate multiplication map with error at most
$\varepsilon$, using $\log_2 (\nicefrac{1}{\varepsilon})$ layers and nodes.
\Felix{However}, \Felix{this means that} the number of layers of the network
grows indefinitely as $\varepsilon \downarrow 0$.
The following lemma offers a compromise between the number of layers and the
growth of the number of weights, thereby allowing a construction with a
\emph{fixed} number of layers.

\begin{lemma}\label{lem:MultiplicationWithBoundedLayerNumber}
Let $\theta > 0$ be arbitrary.
Then, for every $L \in \N$ with $L > \Felix{(2\theta)^{-1}}$ and each
$M \geq 1$, there are constants $c = c(L,M,\theta) \in \N$ and $s = s(M) \in \N$
with the following property:

For each $\varepsilon \in (0, \nicefrac{1}{2})$, there is a neural network
$\wt{\times}$ with the following properties:
%at most $\Felix{10} \cdot L$ layers, and with at most
%$c \cdot \varepsilon^{-\theta}$ nonzero, $(s, \varepsilon)$-quantized weights,
%and such that $\wt{\times}$ satisfies the following properties,
%for all $x,y \in [-M, M]$:
\begin{itemize}
  \item $\wt{\times}$ has at most $c \cdot \varepsilon^{-\theta}$
        nonzero, $(s, \varepsilon)$-quantized weights;

  \item $\wt{\times}$ has \Felix{$2L + 8$} layers;

  \item for all $x,y \in [-M,M]$, we have
        $|xy - \Realization_{\varrho} (\wt{\times})(x,y)| \leq \varepsilon$;

  \item for all $x,y \in [-M,M]$ with $x \cdot y = 0$, we have
        $\Realization_{\varrho} (\wt{\times})(x,y) = 0$.
\end{itemize}
\end{lemma}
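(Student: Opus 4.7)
The plan is to reduce the approximate multiplication to an approximate squaring via the polarization identity $xy = \tfrac{1}{4}\bigl((x+y)^2 - (x-y)^2\bigr)$. Specifically, I aim to construct a network $\wt{\mathrm{sq}}$ of depth $2L+O(1)$ and $O(\varepsilon^{-\theta})$ quantized weights such that (i) $|\wt{\mathrm{sq}}(t) - t^2| \leq C_M^{-1}\varepsilon$ on $[-2M,2M]$, (ii) $\wt{\mathrm{sq}}$ is \emph{even}, and (iii) $\wt{\mathrm{sq}}(0)=0$. Setting $\wt{\times}(x,y):=\tfrac{1}{4}\bigl(\wt{\mathrm{sq}}(x+y)-\wt{\mathrm{sq}}(x-y)\bigr)$ then yields the desired approximate multiplication, and the condition $\wt{\times}(x,y)=0$ whenever $xy=0$ follows because $\wt{\times}(x,0)=\tfrac{1}{4}(\wt{\mathrm{sq}}(x)-\wt{\mathrm{sq}}(x))=0$ trivially, while $\wt{\times}(0,y)=\tfrac{1}{4}(\wt{\mathrm{sq}}(y)-\wt{\mathrm{sq}}(-y))=0$ by evenness. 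Evenness of $\wt{\mathrm{sq}}$ is enforced by precomposing a one-sided squarer $\wt{\mathrm{sq}}_0$ on $[0,2M]$ with $|x|=\varrho(x)+\varrho(-x)$, adding one depth-$2$ block; requiring $\wt{\mathrm{sq}}(0)=0$ is a trivial bias shift.

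The heart of the proof is the construction of $\wt{\mathrm{sq}}_0$ as an $N$-ary analog of Yarotsky's iterated-sawtooth approximation of $t^2$. Fix $N:=\lceil\varepsilon^{-1/(2L)}\rceil$, which is polynomial in $1/\varepsilon$ since $L$ is fixed. On the rescaled domain $[0,1]$, one builds a sequence of piecewise linear approximants $p_k$ to $t\mapsto t^2$ that interpolate $t^2$ exactly at the $N^k+1$ nodes $\{j/N^k\}_{j=0}^{N^k}$. The critical analytic fact is the self-similar error formula: on each sub-interval $[j/N^k,(j+1)/N^k]$ of length $N^{-k}$, the error $t^2-p_k(t)$ equals $N^{-2k}\,\psi(s)$, where $s\in[0,1]$ is the local coordinate and $\psi(s):=-s(1-s)$ is a universal symmetric shape. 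Consequently, $p_{k+1}-p_k$ can be realized as a fixed depth-$2$, width-$O(N)$ ReLU subnetwork encoding a piecewise linear interpolant $\widetilde\psi$ of $\psi$ on $\{j/N\}_{j=0}^N$ (which satisfies $\|\psi-\widetilde\psi\|_\infty = O(N^{-2})$), composed with an appropriate continuous extractor of the level-$k$ local coordinate. Iterating $L$ times and summing the $L$ weighted corrections yields $\|t^2-p_L(t)\|_\infty = O(N^{-2L})\leq C_L\cdot\varepsilon$ by induction.

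To realize the local-coordinate extractor at level $k$, use the continuous $N$-tooth triangle wave $G_N:[0,1]\to[0,1]$ (itself a depth-$2$, width-$O(N)$ ReLU network), and iterate by sparse concatenation to form $G_N^{\circ k}$ at depth $2k$ and $O(kN)$ cumulative weights. Strictly speaking, $G_N^{\circ k}(t)$ equals the true local coordinate $s$ on half of the sub-intervals and $1-s$ on the other half; the symmetry $\psi(s)=\psi(1-s)$ (inherited by $\widetilde\psi$ if one chooses it symmetric) makes this irrelevant, so the discontinuous sawtooth can be replaced by the continuous $G_N^{\circ k}$ without changing the correction values. Parallelizing the $L$ correction branches (with depths equalized using the identity networks from Remark~\ref{rem:DeepIdentity}), combining them with the identity term $t$ in a single linear combination in the output layer, rescaling input/output by $1/(2M)$ and $(2M)^2$ to obtain $\wt{\mathrm{sq}}_0$ on $[0,2M]$, and finally wrapping with $|\cdot|$ and polarization, produces $\wt{\times}$ of depth exactly $2L+8$ once all $\sconc$-overheads are tallied, and of total weight count $O(L\cdot N)\leq c(L,M)\cdot\varepsilon^{-\theta}$, where the inequality uses $\theta>1/(2L)$. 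Quantizing to the $(s,\varepsilon)$-grid with $s=s(M)$ introduces at most $O(\varepsilon)$ additional error, since all intermediate activations are bounded by constants depending only on $M$ and $L$.

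The main technical obstacle is making the multiresolution scheme rigorous: one must verify the exact self-similar identity $t^2-p_k(t)=N^{-2k}\psi(s)$ on each sub-interval of $p_k$, show that the symmetry $\psi(s)=\psi(1-s)$ really does let one replace the discontinuous fractional-part-of-$N^k t$ by the continuous $G_N^{\circ k}$ at every stage, and carefully track the compounded error $O(N^{-2(k+1)})$ introduced when $\psi$ is replaced by its $O(N^{-2})$-accurate interpolant $\widetilde\psi$, ensuring that the telescoping sum $\sum_k N^{-2k}\cdot O(N^{-2})$ stays of order $O(N^{-2L})$ and does not pick up factors that grow with $L$. A secondary but fussy point is the exact depth accounting that lands at $2L+8$ rather than $2L+O(1)$: each use of $\sconc$ is additive in layers, the $|\cdot|$-precomposition costs $2$ layers, the polarization wrapper costs constant layers at input and output, and the final linear combination can be fused with an existing output layer, leaving just enough slack to land on the stated bound.
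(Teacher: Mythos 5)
Your approach is correct in substance but genuinely different from the paper's. The paper also uses polarization (in the form $xy = \tfrac{1}{2}\bigl((x+y)^2 - x^2 - y^2\bigr)$) and Yarotsky's iterated tent-map approximation $f_m(x) = x - \sum_{t=1}^m g_t(x)/4^t$ of the square; its key trick for achieving depth $O(L)$ rather than $O(m)$ is to re-group the binary composition chain, writing $g_t = g^{\circ N} \circ \cdots \circ g^{\circ N} \circ g^{\circ r}$ with at most $L$ factors of the flattened $2^N$-piece sawtooth $g^{\circ N}$, at the cost of $O(2^N)$ weights per factor. You instead build a native $N$-ary multiresolution scheme: $p_L = p_0 + \sum_{k=0}^{L-1} N^{-2k}\,\widetilde\psi(G_N^{\circ k}(\cdot))$ with $N \asymp \varepsilon^{-1/(2L)}$, which is arguably cleaner — there are only $L$ correction branches (rather than $m \asymp \log(1/\varepsilon)$), so you even drop the logarithmic factor the paper carries in its weight count (both are $O(\varepsilon^{-\theta})$, so this is a cosmetic gain). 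Your polarization variant $xy = \tfrac{1}{4}\bigl((x+y)^2 - (x-y)^2\bigr)$ combined with an \emph{even} squaring network is a nice simplification for the vanishing condition $\wt{\times}(x,y)=0$ when $xy=0$; the paper gets the same conclusion by checking $f_m(0)=0$ and that $|x+y|=|x|$ or $|x+y|=|y|$ when $xy=0$.

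Two points you should tighten. First, your description of the error control is slightly off: you worry about a ``compounded error'' $\sum_k N^{-2k}\cdot O(N^{-2})$ from replacing $\psi$ by $\widetilde\psi$, but that sum is $\Theta(N^{-2})$, not $O(N^{-2L})$, so if that \emph{were} the error term the construction would fail for $L\geq 2$. In fact no such cumulative error arises: $p_{k+1}-p_k$ \emph{is exactly} $N^{-2k}\widetilde\psi(s_k)$, since $\widetilde\psi$ is by definition the shape of the level-$(k+1)$ interpolant minus the level-$k$ one. Furthermore $\psi - \widetilde\psi = N^{-2}\psi(\sigma)$ by the very same self-similar formula (because $\psi$ is a monic quadratic), so $t^2 - p_L = N^{-2L}\psi(s_L)$ exactly, and the error is $N^{-2L}/4$ with no accumulation. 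Second, the quantization claim $s = s(M)$ deserves care: in your $N$-ary scheme the breakpoints $j/N^k$ and scalings $N^{-2k}$ are not automatically on the dyadic grid $2^{-s\lceil\log_2(1/\varepsilon)\rceil}\Z$, unlike the paper's binary construction where everything is dyadic by design. You should either round $N$ up to a power of $2$ (which is harmless and makes all relevant weights dyadic with exponents $\leq 2L\log_2 N = O(\log_2(1/\varepsilon))$, uniformly in $L$), or more carefully propagate the rounding error through the $2L+8$ layers; merely asserting ``$O(\varepsilon)$ additional error'' glosses over the fact that the propagation constant depends on network Lipschitz bounds that involve $N$. The depth tally to ``exactly $2L+8$'' is also asserted rather than shown, but since the statement only needs an upper bound and identity-padding is available, this is inessential.
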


\begin{proof}
Our proof is heavily based on that of \cite[Propositions 2 and 3]{YAROTSKY2017103}.
\Felix{The basic idea is to first approximate the square function, and then use
the polarization identity $xy = \frac{1}{2} \cdot \big( (x+y)^2 - x^2 - y^2 \big)$
to get an approximate multiplication operator.}

\Felix{As a preparation for approximating the square function, we define as in
\cite{YAROTSKY2017103} the function}
\[
    g : [0,1] \to [0,1], \
    x \mapsto \begin{cases}
                2x,     & \text{if } x < \nicefrac{1}{2}, \\
                2(1-x), & \text{if } x \geq \nicefrac{1}{2}.
              \end{cases}
\]
Next, for $t \in \N$, we let
$g_t := \underbrace{g \circ \cdots \circ g}_{t \text{ times}}$ be the $t$-fold
composition of $g$.
In the proof of \cite[Proposition 2]{YAROTSKY2017103}, it was shown that
\[
  g_t (x)
  = \begin{cases}
      2^t \cdot \left( x - \frac{2k}{2^t} \right),
      & \text{if } x \in \left[ \frac{2k}{2^t}, \frac{2k+1}{2^t} \right]
        \text{ for some } k \in \{0,1,\dots,2^{t-1} - 1\}, \\[0.2cm]
      - 2^t \cdot \left( x - \frac{2k}{2^t} \right),
      & \text{if } x \in \left[ \frac{2k - 1}{2^t}, \frac{2k}{2^t} \right]
        \text{ for some } k \in \{1,\dots,2^{t-1}\},
    \end{cases}
\]
so that each function $g_t$ is continuous and piecewise affine-linear
with $2^t$ ``pieces''. From this, it is not hard to see that
\[
  g_t (x)
  = 2^{t} \cdot
    \left(
      \varrho (x)
      + \sum_{k=1}^{2^{t-1} - 1}
          2 \cdot \varrho\left( x - \frac{2k}{2^{t}} \right)
      - \sum_{\ell=1}^{2^{t-1}}
          2 \cdot \varrho \left( x - \frac{2\ell - 1}{2^t} \right)
    \right)
  \qquad \text{for all} \quad x \in [0,1] \, .
\]
Therefore, for each $t \in \N$, there is a neural network $\Phi_t$ with
one-dimensional input and output, with two layers,
\pp{and $1 + 2^{t} + 1 \leq \Felix{4} \cdot 2^t$ neurons and at most
$2\cdot 2^{t-1} + 2 \cdot 2^{t-1} + 2^{t-1} + 2^{t-1} \leq \Felix{4} \cdot 2^t$}
nonzero weights, such that $g_t = \Realization_{\varrho} (\Phi_t)\Felix{|_{[0,1]}}$.
Furthermore, all weights of $\Phi_t$ can be chosen to be elements of
$[-2^{t+1}, 2^{t+1}] \cap 2^{-t} \Z \subset [-2^{m+1}, 2^{m+1}] \cap 2^{-m}\Z$
as long as $1 \leq t \leq m$ \Felix{for some $m \in \N$}.
Setting $g_0 := \identity_{[0,1]}$, we see also for $t = 0$ that
\Felix{there is a network $\Phi_t$ with all of the properties just stated;}
see Lemma \ref{lem:identity}.

Next, set
\[
  s_0 := 1 + \lceil \log_2 M \rceil \in \N \, ,
  \qquad %\text{and} \quad
  M_0 := 2^{s_0} \, ,
  \qquad %\text{as well as} \quad
  m := s_0 + \left\lceil
               \log_2 (\nicefrac{1}{\varepsilon})/2
             \right\rceil
    \in \N \, ,
  \quad \text{and} \quad
  N := \lceil m / L \rceil \in \N \, ,
\]
%$s_0 := 1 + \lceil \log_2 M \rceil \in \N$ and define $M_0 := 2^{s_0}$,
so that $2M \leq M_0 \leq 4M$.
%Furthermore, choose
%$m := s_0 + \left\lceil \log_2 (\nicefrac{1}{\varepsilon})/2 \right\rceil \in \N$,
%and set $N := \lceil m / L \rceil \in \N$.
Now, \Felix{by division with remainder}, we can write each $1 \leq t \leq m$ as
$t = k N + r$ for certain $k \in \N_0$ and $r \in \{0, \dots, N-1\}$.
Note $k = (t-r)/N \leq t/N \leq m/N \leq L$, and observe
$g_t = \underbrace{g_N \circ \cdots \circ g_N}_{k \text{ factors}} \circ g_r$,
so that we get $g_t = \Realization_{\varrho} (\Phi^{(t)}_0)\Felix{|_{[0,1]}}$,
where $\Phi^{(t)}_0
:= \underbrace{\Phi_N\sconc\cdots\sconc\Phi_N}_{k \text{ factors}} \sconc \Phi_r$
is a neural network with $2(k+1) \leq 2(L+1) < \Felix{2L + 3}$ layers,
\Felix{and with
\[
  M(\Phi_0^{(t)})
  \leq 4^{k} \cdot \max \{ M(\Phi_N) \,,\, M(\Phi_r) \}
  \leq 4^{k} \cdot 4 \cdot 2^N
  \leq 4^{L+1} \cdot 2^N \, ,
\]
see Remark \ref{rem:SpecialConc}.}
Therefore, with $\Phi_{1, \lambda}^{\mathrm{Id}}$
as in Remark \ref{rem:DeepIdentity}, the network
$\Phi^{(t)} := \Phi_{1, 2L+3 - 2(k+1)}^{\mathrm{Id}} \sconc \Phi^{(t)}_0$
satisfies $\Realization_\varrho (\Phi^{(t)})\Felix{|_{[0,1]}} = g_t$,
and $\Phi^{(t)}$ has precisely $\Felix{2L+3}$ layers, and at most
\[
  \Felix{
  2 \, M(\Phi_0^{(t)}) + 2 \, M(\Phi_{1, 2L+3 - 2(k+1)}^{\mathrm{Id}})
  \leq 4^{L+2} \cdot 2^N + 2 \cdot (2L+3)
  \leq (2L + 3 + 4^{L+2}) \cdot 2^N
  =: c_1 \cdot 2^N
  }
\]
%\pp{$2^{k+2}(k\cdot 3\cdot 2^N + 3\cdot 2^N+\ldots ) \leq c_1 \cdot 2^N$}
nonzero weights, all of which lie in $[-2^{m+1}, 2^{m+1}] \cap 2^{-m} \Z$.
%Here, $c_1 = c_1 (L) > 0$ is a suitable constant, and we used that
%$k \leq L$ and that  each network $\Phi_t$ with $0 \leq t \leq N \leq m$
%has at most $4 \cdot 2^t \leq 4 \cdot 2^N$
%nonzero weights which all lie in $[-2^{m+1}, 2^{m+1}] \cap 2^{-m} \Z$.
%
% neural network with at most $k+2 \leq L+2 \leq 3 \cdot L$ layers, and at most
% $c_1 \cdot (k \cdot 2^N + 2^r) \leq 2c_1 \cdot L \cdot 2^N$ neurons, for some
% absolute constant $c_1 \in \N$ (in fact, one can take $c_1 = 4$).
% But in a neural network, each neuron can at most be connected to all other
% neurons, so that $\Phi^{(t)}$ has at most
% $(2c_1 \cdot L \cdot 2^N)^2 =: c_2 \cdot L^2 \cdot 2^{2N}$ nonzero weights,
% for an absolute constant $c_2 \in \N$.
% %and with at most
% %$c_1 \cdot (k \cdot 2^N + 2^r) \leq c_1 \cdot (1+L) \cdot 2^N$ nonzero
% %weights, for some absolute constant $c_1 \in \N$.
% Furthermore, from the definition of the concatenation of neural networks,
% and since all weights of the networks $\Phi_t$ with $0 \leq t \leq N \leq m$
% are elements of $[-2^{m+1}, 2^{m+1}] \cap 2^{-m}\Z$ and since each of these
% networks has at most $2^t \leq 2^m$ neurons in each layer,
% it is not hard to see that all weights of $\Phi^{(t)}$
% belong to $[-2^{4m}, 2^{4m}] \cap 2^{-2m} \Z$.

\medskip{}

We now use the functions $g_t$ to construct an approximation to the square
function.
Precisely, in the proof of \cite[Proposition 2]{YAROTSKY2017103},
it is shown that
\[
  f_m : [0,1] \to [0,1], x \mapsto x - \sum_{t=1}^m \frac{g_t (x)}{2^{2t}}
  \quad \text{satisfies} \quad
  \| (x \mapsto x^2) - f_m \|_{L^\infty \Felix{([0,1])}} \leq 2^{-2 - 2m} \, .
\]
Now, set
\[
  \Psi := P(\Phi_{1, \Felix{2L+3}}^{\mathrm{Id}},
            P(\Phi^{(1)},
              P(\Phi^{(2)},
                \dots,
                P(\Phi^{(m-1)}, \Phi^{(m)})
                \dots
               )
             )
           ) \, ,
\]
and $\Phi_{\mathrm{sum}} := ( (A_{\mathrm{sum}}, 0) )$ with
$A_{\mathrm{sum}} := (1, -2^{-2 \cdot 1}, -2^{-2 \cdot 2}, \dots, -2^{-2 m})
\in \R^{1 \times (m+1)}$.
Then, the neural network $\Phi_0 := \Phi_{\mathrm{sum}} \sconc \Psi$ satisfies
$\Realization_\varrho (\Phi_0)\Felix{|_{[0,1]}} = f_m \vphantom{\sum_j}$,
and $\Phi_0$ has \Felix{$(2L+3) + 1 = 2L + 4$} layers, and not more than
\Felix{$2 \cdot ( 2 \cdot (2L+3) + m \cdot c_1 \cdot 2^N) + 2(m+1)
\leq c_2 \cdot m \cdot 2^N$}
nonzero weights, which all lie in
$[-2^{m+1}, 2^{m+1}] \cap 2^{-2m}\Z$. Here, $c_2 = c_2 (L) > 0$.
% Furthermore, from Definition \ref{def:Parallelization}
% and Lemma \ref{lem:identity}, it follows easily that $f_m = \mathrm{R}_{\varrho} (\Phi_0)$ for a neural network $\Phi_0$ with
% at most $c_3 \cdot L$ layers, at most $c_3 \cdot L m \cdot 2^N$ neurons,
% and with at most $c_3 \cdot L^2 m \cdot 2^{2N}$ nonzero weights, all of which are in $[-2^{4m}, 2^{4m}] \cap 2^{-4m} \Z$,
% for some absolute constant $c_3 \in \N$.

As in the proof of \cite[Proposition 3]{YAROTSKY2017103},
we now use the polarization identity
$x\cdot y = \frac{1}{2} \cdot ((x+y)^2 - x^2 - y^2)$ and the approximation
$f_m$ of the square function to obtain an approximate multiplication.
Precisely, define
\[
    h : \left[- \Felix{\frac{M_0}{2}}, \Felix{\frac{M_0}{2}} \right]^2 \to \R,
                 (x,y) \mapsto \frac{M_0^2}{2} \cdot
                               \left[
                                  f_m \left( \frac{|x+y|}{M_0} \right)
                                  - f_m \left( \frac{|x|}{M_0} \right)
                                  - f_m \left( \frac{|y|}{M_0} \right)
                               \right].
\]
Because of $|x| = \varrho(x) + \varrho(-x)$, and given our implementation of
$f_m = \Realization_\varrho(\Phi_0)\Felix{|_{[0,1]}}$,
it is easy to see
that
$h = \Realization_{\varrho} (\wt{\times})\Felix{|_{[-M_0/2, M_0/2]}}$
for a neural network $\wt{\times}$ with \Felix{$(2L+4) + 4 = 2L + 8$} layers,
and at most $c_3 \cdot m \cdot 2^N$ nonzero weights,
%and with at most $c_4 \cdot L^2 m \cdot 2^{2N}$ nonzero weights,
all of which are in $[-2^{2m+2 s_0}, 2^{2m+2 s_0}] \cap 2^{-2m - s_0} \Z$
for some constant $c_3 = c_3 (L) \in \N$.
Next, since $f_m (0) = 0$, we easily get
$\Realization_{\varrho} (\wt{\times})(x,y) = h(x,y) = 0$ if
$x,y \in [-M,M] \subset [-M_0/2, M_0/2]$ with $x \cdot y = 0$.

Finally, for $x,y \in [-M, M] \subset [-M_0/2, M_0/2]$,
we have $|x+y| \leq |x| + |y| \leq 2M \leq M_0$, and hence
\begin{align*}
  & |h(x,y) - xy|
  = \left| h(x,y) - M_0^2 \cdot \frac{x}{M_0} \cdot \frac{y}{M_0} \right| \\
  ({\scriptstyle{\text{polarization}}})
  &= M_0^2 \left|
             \frac{1}{2} \left[
                           f_m \! \left( \frac{|x+y|}{M_0} \right)
                           \! - \! f_m \! \left( \frac{|x|}{M_0} \right)
                           \! - \! f_m \! \left( \frac{|y|}{M_0} \right)
                         \right] \!
             - \! \frac{1}{2} \left[
                                \left(
                                  \frac{x}{M_0} \! + \! \frac{y}{M_0}
                                \right)^2
                                \! \!-\! \left( \frac{x}{M_0} \right)^2
                                \! \!-\! \left( \frac{y}{M_0} \right)^2
                              \right]
           \right| \\
  ({\scriptstyle{\text{since } z^2 = |z|^2}})
  &\leq \frac{M_0^2}{2}
        \left(
          \left|
            f_m \left( \frac{|x+y|}{M_0} \right)
            \!-\! \left(\frac{|x+y|}{M_0}\right)^2
          \right|
          + \left|
              f_m \left( \frac{|x|}{M_0} \right)
              \!-\! \left( \frac{|x|}{M_0} \right)^2
            \right|
          + \left|
              f_m \left( \frac{|y|}{M_0}\right)
              \!-\! \left( \frac{|y|}{M_0} \right)^2
            \right|
        \right) \\
  &\leq \frac{M_0^2}{2} \cdot (2^{-2-2m} + 2^{-2-2m} + 2^{-2-2m})
   \leq \left( \frac{M_0}{2^m} \right)^2
   \leq \varepsilon.
\end{align*}
Here, the last step used that by choice of $m$, we have
$2^m \geq 2^{s_0} \cdot 2^{\log_2 (\nicefrac{1}{\varepsilon}) / 2}
= M_0 \cdot \varepsilon^{-1/2}$.
Thus, all that remains to be proven is that $\wt{\times}$ has the required
number of layers and nonzero weights, and that these weights are
$(s,\varepsilon)$-quantized for some $s = s(M) \in \N$.

%To this end, first note that $\wt{\times}$ has at most
%$c_3 \cdot (L+1) \leq 2c_3 \cdot L$ layers, since $L \geq 1$.
To this end, first recall that $\wt{\times}$ has \Felix{$2L + 8$} layers.
Next, we saw above that all weights of $\wt{\times}$ lie in
$[-2^{2m+2s_0}, 2^{2m+2s_0}] \cap 2^{-2m-s_0} \Z$, where
%$N = \lceil m/L \rceil \leq \lceil m \rceil = m
%= s_0 + \lceil \log_2 (1/\varepsilon) / 2 \rceil$,
%so that we get because of $\varepsilon < 1/2$ that
$m = s_0 + \lceil \log_2 (\nicefrac{1}{\varepsilon}) / 2 \rceil
\leq 1 + s_0 + \nicefrac{1}{2} \log_2 (\nicefrac{1}{\varepsilon})$.
Because of $0 < \varepsilon < \nicefrac{1}{2}$, this implies
$2^{2m + 2s_0} \leq 2^{2 + 4 s_0 + \log_2 (\nicefrac{1}{\varepsilon})}
= 2^{2 + 4 s_0} \cdot \varepsilon^{-1} \leq \varepsilon^{-s}$
for $s := 3 + 4 s_0$.
Note that indeed $s = s(M)$, since $s_0 = s_0 (M)$.
%$2^{m+N} \leq 2^{4m} = 2^{4 s_0 + 4} \cdot \varepsilon^{-2}
%\leq \varepsilon^{-(4 s_0 + 6)} \leq \varepsilon^{-s}$ for $s = 4 s_0 + 6$.
%Note that we indeed have
%$s = s(M)$ since $s_0 = 1 + \lceil \log_2 (M) \rceil$.
Next, we observe $\log_2 (\nicefrac{1}{\varepsilon}) \geq 1$, which implies
$2m + s_0 \leq 3 s_0 + 2 + \log_2 (\nicefrac{1}{\varepsilon})
\leq (4 s_0 + 3) \cdot \log_2 (\nicefrac{1}{\varepsilon})
\leq s \lceil \log_2 (\nicefrac{1}{\varepsilon})\rceil $, and hence
$2^{-2m-s_0} \Z
\subset 2^{-s \lceil \log_2 (\nicefrac{1}{\varepsilon}) \rceil} \Z$.

\smallskip{}

Finally, we note that the number $M (\wt{\times})$ of nonzero weights of the
network $\wt{\times}$ satisfies
\begin{align*}
  M( \wt{\times})
  &\leq c_3 \cdot m \cdot 2^N
  =     c_3 \cdot \left(
                    s_0
                    + \left\lceil \frac{\log_2 (1/\varepsilon)}{2} \right\rceil
                  \right)
            \cdot 2^{\lceil m / L \rceil} \\
  &\leq 4 c_3 \cdot (1 + s_0) \cdot \log_2 (1/\varepsilon) \cdot 2^{m/L}
  \leq  8 c_3 \cdot (1 + s_0) 2^{s_0} \cdot \log_2 (1/\varepsilon)
              \cdot 2^{\log_2 (1/\varepsilon)/(2L)} \\
  &=    8 c_3 \cdot (1 + s_0) 2^{s_0} \cdot \log_2 (1/\varepsilon)
              \cdot \varepsilon^{-1/(2L)}
  \leq  c_{L,M,\theta} \cdot \varepsilon^{-\theta}.
\end{align*}
Here, we used in the last step that $s_0 = s_0 (M)$ and that
$1 / (2L) < \theta$, \Felix{whence}
$\log_2 (\nicefrac{1}{\varepsilon}) \cdot \varepsilon^{-1/(2L)}
\leq C_{L,\theta} \cdot \varepsilon^{-\theta}$,
for a suitable constant $C_{L,\theta} > 0$ and all
$\varepsilon \in (0, \nicefrac{1}{2})$.
\end{proof}

We will be especially interested in the following consequence of
Lemma \ref{lem:MultiplicationWithBoundedLayerNumber}, which demonstrates that
monomials can be (approximately) reproduced by neural networks with a
fixed number of layers.

\begin{lemma}\label{lem:MonomialsWithBoundedNumberOfLayers}
  Let $n, d, \ell \in \N$ be arbitrary.
  Then, there are constants $s = s(n) \in \N$, $c = c(d,n,\ell) \in \N$,
  and $L = L(d, n, \ell) \in \N$ such that
  $L \leq \Felix{(1 + \lceil \log_2 n \rceil ) \cdot (10 + \nicefrac{\ell}{d})}$
  with the following property:

  For each $\varepsilon \in (0, \nicefrac{1}{2})$ and $\alpha \in \N_0^d$ with
  $|\alpha| \leq n$, there is a neural network
  $\Phi^\alpha_\varepsilon \vphantom{\sum_j}$ with $d$-dimensional input and
  one-dimensional output, with at most $L$ layers,
  and with at most $c \cdot \varepsilon^{-d/\ell}$ nonzero,
  $(s, \varepsilon)$-quantized weights, and such that $\Phi^\alpha_\varepsilon$
  satisfies
  \begin{align}\label{eq:ThisShouldBeSatisfiedByPhiAlpha}
    | \mathrm{R}_{\varrho}(\Phi^\alpha_\varepsilon)(x) - x^\alpha |
    \leq \varepsilon
    \quad \text{ for all } x \in \left[ -\frac{1}{2}, \frac{1}{2}\right]^d.
  \end{align}
\end{lemma}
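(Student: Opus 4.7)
The plan is to realize $x^\alpha = x_1^{\alpha_1}\cdots x_d^{\alpha_d}$ as a balanced binary tree of approximate multiplications, each copy implemented by the network from Lemma \ref{lem:MultiplicationWithBoundedLayerNumber}. First, I would set $\theta := d/\ell$ and pick $L_0 \in \N$ with $L_0 > (2\theta)^{-1} = \ell/(2d)$ and $L_0 \leq 1 + \ell/(2d)$, explicitly $L_0 := \lfloor \ell/(2d)\rfloor + 1$. Applying Lemma \ref{lem:MultiplicationWithBoundedLayerNumber} with this $L_0$, bound $M=1$, and tolerance $\delta := \varepsilon/(4n)$ produces a network $\wt{\times}_\delta$ of depth $2L_0+8 \leq 10 + \ell/d$ with at most $c\cdot \delta^{-\theta} \leq c'(d,n,\ell)\cdot \varepsilon^{-d/\ell}$ nonzero, $(s',\delta)$-quantized weights. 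By Remark \ref{rem:QuantisationConversion} (with $q=1$, $C=4n$), these weights are also $(s,\varepsilon)$-quantized for some $s=s(n)$.

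Next, I would handle $|\alpha|\in\{0,1\}$ by hand (a constant-$1$ network, respectively a coordinate projection), padded to the target depth using $\Phi^{\mathrm{Id}}_{1,L}$ from Remark \ref{rem:DeepIdentity}. For $|\alpha|\geq 2$, list the $|\alpha|$ scalar factors (each $x_i$ appearing $\alpha_i$ times, padded by $1$'s up to the next power of two), and pair them in a balanced binary tree of depth $k := \lceil \log_2 |\alpha|\rceil \leq \lceil \log_2 n\rceil$. Each internal node is an instance of $\wt{\times}_\delta$; instances at the same level are combined in parallel via Definition \ref{def:Parallelization}, and consecutive levels are joined by $\sconc$. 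The total depth is $k(2L_0+8) \leq \lceil\log_2 n\rceil \cdot (10 + \ell/d) \leq (1+\lceil\log_2 n\rceil)(10+\ell/d)$, and the weight count is bounded by $(|\alpha|-1)$ copies of $\wt{\times}_\delta$ times an $n$-dependent overhead from the sparse concatenations and parallelizations, giving the asserted $c(d,n,\ell)\cdot\varepsilon^{-d/\ell}$.

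The main step requiring care is the error propagation, which I expect to be the only genuine obstacle. Since $|x_i|\leq 1/2$, every \emph{true} intermediate product has magnitude $\leq 1/2$. Letting $e_j$ denote the maximum error over all nodes at depth $j$ (counted from the leaves), one multiplication step yields
\[
  e_{j+1} \leq \delta + M\cdot(e_j + e_j) = \delta + 2 e_j,
\]
using $M=1$ as the input bound of $\wt{\times}_\delta$. This recursion gives $e_k \leq (2^{k+1}-1)\delta \leq 4n\delta = \varepsilon$. To legitimately invoke Lemma \ref{lem:MultiplicationWithBoundedLayerNumber} at every node I also need the \emph{approximate} intermediate values to lie in $[-M,M]=[-1,1]$; this follows by the same induction, since $|\tilde P_j| \leq |P_j| + e_j \leq \tfrac12 + \varepsilon < 1$. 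Combining the depth, weight, quantization, and error bounds established above yields \eqref{eq:ThisShouldBeSatisfiedByPhiAlpha} and completes the proof.
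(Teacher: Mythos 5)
Your construction is essentially the same as the paper's: both organize the $|\alpha|$ scalar factors into a binary structure of depth $\lceil\log_2 |\alpha|\rceil$ and implement each internal node with the multiplication network $\wt{\times}$ of Lemma \ref{lem:MultiplicationWithBoundedLayerNumber}, instantiated with $\theta=\nicefrac{d}{\ell}$ and the smallest admissible number of layers so that the per-node depth is $\leq 10+\nicefrac{\ell}{d}$. The paper phrases this as a top-down recursion in which the accuracy target degrades by a factor $\nicefrac{1}{6}$ at each level, while you fix a single tolerance $\delta=\varepsilon/(4n)$ for every node and track errors additively from the leaves; these are equivalent bookkeeping choices, and the resulting depth and weight bounds match.

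There is, however, a genuine gap in the boundedness step. You pad the factor list to length $2^k$ with $1$'s, and in the induction you assert $|\tilde P_j|\leq |P_j|+e_j\leq\tfrac12+\varepsilon<1$. But $|P_j|\leq\tfrac12$ is \emph{false} at any interior node whose subtree consists entirely of padding $1$'s: there $P_j=1$. Such pure-$1$ nodes do occur whenever $2^k-|\alpha|\geq 2$ (for example $|\alpha|=5$, $2^k=8$). At such a node $\wt{\times}(\tilde P,\tilde Q)$ with true children $P=Q=1$ may exceed $1$ by up to $e_j>0$, and then the \emph{next} call to $\wt{\times}$ receives an argument outside $[-M,M]=[-1,1]$, so Lemma \ref{lem:MultiplicationWithBoundedLayerNumber} no longer guarantees the $\delta$-accuracy on which your recursion $e_{j+1}\leq\delta+2e_j$ relies. (A secondary, cosmetic imprecision: the factor-of-two in that recursion implicitly uses $|\tilde P_j|\leq 1$, which fails for the same reason; the correct bound is $e_{j+1}\leq\delta+(1+e_j)e_j+e_j$, harmless once $M$ is chosen large enough.) The fix is cheap and is precisely what the paper does: invoke Lemma \ref{lem:MultiplicationWithBoundedLayerNumber} with $M=2$, so that $|\tilde P_j|\leq 1+\varepsilon<2$ suffices and the quantization constant becomes $s(2)$, still absolute. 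Alternatively, do not pad at all and build a balanced binary tree on exactly $|\alpha|$ leaves, so that every interior true product contains at least one genuine factor $x_i$ and is hence bounded by $\tfrac12$. Either repair makes the rest of your argument---depth bound, weight count $\mathcal{O}\big(n\cdot\delta^{-d/\ell}\big)$, and requantization via Remark \ref{rem:QuantisationConversion}---go through.
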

\begin{proof}
  Let $d\in \N$ be fixed, and let $s = s(2) \in \N$ denote the constant from
  Lemma \ref{lem:MultiplicationWithBoundedLayerNumber} for the choice $M=2$.
  We prove the claim by induction over $n \in \N$.

  For $n=1$, we either have $\alpha = 0$, so that
  $x^\alpha = 1 = \Realization_\varrho (\Phi_\varepsilon^\alpha) (x)$ for a $1$-layer
  network $\Phi_\varepsilon^\alpha$ that has only one nonzero
  \Felix{(properly quantized)} weight, or there exists
  $j \in \{1, \dots, d\}$ such that $x^\alpha = x_j$ for all $x$ in
  $[-\nicefrac{1}{2}, \nicefrac{1}{2}]^d$.
  But also in this case, there is a one-layer, one-weight, quantized network
  $\Phi_\varepsilon^\alpha$ with $\Phi_\varepsilon^\alpha (x) = x_j = x^\alpha$
  for all $x \in \R^d$, so that the claim holds.

  Now, let us assume that the claim holds for all $1 \leq n < k$,
  for some $k \in \N_{\geq 2}$.
  We want to show that the claim also holds for $n = k$.
  First, in case of $|\alpha| < k$, it is easy to see that the claim follows
  from the one for the case $n = |\alpha| < k$.
  Therefore, we can assume $|\alpha| = k$.
  Now, pick $\alpha^{(1)}, \alpha^{(2)}\in \N_0^d$ such that
  $|\alpha^{(2)}| =  2^{\lceil  \log_2 k \rceil - 1}$ and
  $\alpha^{(1)}  + \alpha^{(2)} = \alpha$.
  Note that indeed $2^{\lceil \log_2 k \rceil - 1} \in \N$ with
  $2^{\lceil \log_2 k \rceil - 1} < k = |\alpha|$, so that such a choice of
  $\alpha^{(1)}, \alpha^{(2)}$ is possible.
  Next, observe $|\alpha^{(1)}| \leq |\alpha^{(2)}| < k$, and
  $\log_2 |\alpha^{(2)}| = \lceil \log_2 k \rceil - 1$.

  Thus, by applying the inductive claim with $n = |\alpha^{(2)}|$,
  we conclude that there are $s_1 = s_1 (k) \in \N$,
  $c_1 = c_1 (d,k,\ell) \in \N$, and $L_0 = L_0 (d,k,\ell)\in \N$ with
  $L_0 \leq \Felix{(1 + \lceil \log_2 k \rceil -1) (10 + \nicefrac{\ell}{d})}$
  such that for all $\varepsilon \in (0,\nicefrac{1}{2})$ there exist
  two neural networks $\Phi^1_\varepsilon, \Phi^2_\varepsilon$ satisfying
  \[
    | \Realization_{\varrho}(\Phi^1_\varepsilon)(x) - x^{\alpha^{(1)}} |
    \leq \nicefrac{\varepsilon}{6}
    \quad \text{ and } \quad
    | \Realization_{\varrho}(\Phi^2_\varepsilon)(x) - x^{\alpha^{(2)}} |
    \leq \nicefrac{\varepsilon}{6}
    \quad \text{ for all }
          x \in \left[ -\nicefrac{1}{2}, \nicefrac{1}{2}\right]^d \, ,
  \]
  and $\Phi^1_\varepsilon, \Phi^2_\varepsilon$ both have at most $L_0$ layers,
  and at most $c_1 \cdot \varepsilon^{-d/\ell}$ nonzero,
  $(s_1, \nicefrac{\varepsilon}{6})$-quantized weights.
  Note by Remark \ref{rem:QuantisationConversion} that the weights of
  $\Phi^1_\varepsilon$ and $\Phi^2_\varepsilon$ are also
  $(s_2, \varepsilon)$-quantized for a suitable $s_2 = s_2(k) \in \N$.
  Next, by possibly replacing $\Phi_\varepsilon^t$ by
  $\Phi^{\mathrm{Id}}_{1, \lambda_t} \sconc \Phi_\varepsilon^t$
  with $\Phi^{\mathrm{Id}}_{1, \lambda_t}$ as in Remark \ref{rem:DeepIdentity}
  and for $\lambda_t = L_0 - L(\Phi_\varepsilon^t)$,
  we can assume that both $\Phi_\varepsilon^1 , \Phi_\varepsilon^2$ have exactly
  $L_0$ layers.
  Note in view of Remark \ref{rem:SpecialConc} and because of $L_0
  = L_0 (d,k,\ell)$ that this will not change the quantization of the weights,
  and that the number of weights of $\Phi_\varepsilon^t$ is still bounded by
  $c_1 ' \cdot \varepsilon^{-d/\ell}$ for a suitable $c_1 ' = c_1 ' (d,k,\ell)$.
  For simplicity, we will write $c_1$ instead of $c_1 '$ in what follows.

  Now, let $\wt{\times}$ be the network of
  Lemma \ref{lem:MultiplicationWithBoundedLayerNumber}
  with accuracy $\delta := \nicefrac{\varepsilon}{6}$ and with $M = 2$,
  and $\theta = \nicefrac{d}{\ell}$.
  Note \Felix{$(2\theta)^{-1} = \nicefrac{\ell}{2d}$}, so that we can choose
  $L = 1 + \lfloor \nicefrac{\ell}{\Felix{2d}}\rfloor$ in
  Lemma \ref{lem:MultiplicationWithBoundedLayerNumber}.
  Thus, $\wt{\times}$ can be chosen to have at most
  $c_2 \cdot \varepsilon^{-d/\ell}$ nonzero, $(s,\delta)$-quantized weights, and
  \Felix{$8 + 2 \cdot ( 1 + \lfloor \nicefrac{\ell}{2d}\rfloor)$} layers,
  with $s$ as chosen at the start of the proof,
  and for a suitable constant $c_2 = c_2(d,\ell)$.
  Again by Remark \ref{rem:QuantisationConversion} we see that the
  weights of $\wt{\times}$ are also $(s_3, \varepsilon)$-quantized for a
  suitable $s_3 = s_3(k) \in \N$.

  We now define
  \[
    \Phi^\alpha_\varepsilon
    := \wt{\times} \sconc P(\Phi^1_\varepsilon, \Phi^2_\varepsilon).
  \]
  By construction, $\Phi^\alpha_\varepsilon$ has not more than
  \[
    \Felix{
    8 + 2 \cdot \left(
                  1 + \left\lfloor \frac{\ell}{2d}\right\rfloor
                \right)
      + L_0
    \leq 10 + \frac{2\ell}{d}
            + \lceil \log_2 k \rceil \cdot \left(10 + \frac{\ell}{d}\right)
    = (1 + \lceil \log_2 k \rceil) \cdot \left(10 + \frac{\ell}{d}\right)
    }
  \]
  many layers, as desired. Next, we estimate by the triangle inequality
  \begin{align*}
    &| \Realization_{\varrho}(\Phi^\alpha_\varepsilon)(x) - x^\alpha | \\
    &\quad \leq
        \left|
          \Realization_{\varrho}(\wt{\times})
            \big(\Realization_{\varrho}(\Phi^1_\varepsilon)(x),
                 \Realization_{\varrho}(\Phi^2_\varepsilon)(x)\big)
          - \Realization_{\varrho}(\Phi^1_\varepsilon)(x)
            \cdot \Realization_{\varrho}(\Phi^2_\varepsilon)(x)
        \right|
        + \left|
            \Realization_{\varrho}(\Phi^1_\varepsilon)(x)
            \cdot \Realization_{\varrho}(\Phi^2_\varepsilon)(x)
            - x^\alpha
          \right| \\
    &\quad \leq \frac{\varepsilon}{6}
                + \left|
                    \Realization_{\varrho}(\Phi^1_\varepsilon)(x)
                    \cdot \Realization_{\varrho}(\Phi^2_\varepsilon)(x)
                    - \Realization_{\varrho}(\Phi^1_\varepsilon)(x)
                      \cdot x^{\alpha^{(2)}}
                  \right|
                + \left|
                    \Realization_{\varrho}(\Phi^1_\varepsilon)(x)
                    \cdot x^{\alpha^{(2)}}
                    - x^\alpha
                  \right| \\
    &\quad \leq \frac{\varepsilon}{6}
                + \left|
                    \Realization_{\varrho}(\Phi^1_\varepsilon)(x)
                  \right|
                  \cdot \frac{\varepsilon}{6}
                + |x^{\alpha^{(2)}}| \cdot \frac{\varepsilon}{6}
           \leq \varepsilon,
  \end{align*}
  \Felix{where the last three steps are justified since
  $x \in [-\nicefrac{1}{2}, \nicefrac{1}{2}]^d$ and
  $|\Realization_{\varrho}(\Phi^t_\varepsilon)(x)|
  \leq |x^{\alpha^{(t)}}| + \nicefrac{\varepsilon}{6} < 2$ for $t \in \{1,2\}$.}
  Finally, it is easy to see from Remark \ref{rem:SpecialConc} that there exist
  $c_3 = c_3(d,k,\ell) > 0$ and $s_4 = s_4(k) \in \N$ such that
  $\Phi^\alpha_\varepsilon$ has not more than $c_3 \cdot \varepsilon^{-d/\ell}$
  nonzero, $(s_4, \varepsilon)$-quantized weights. This concludes the proof.
\end{proof}

\Felix{Being able} to reproduce monomials, we can \Felix{now} construct networks
that reproduce polynomials up to a given degree.
Moreover, this can be achieved with a fixed and controlled number of layers.
\Felix{In fact, the main point of the following lemma is that for implementing
$m$ different polynomials, one does \emph{not} need $m \cdot W$ weights,
where $W$ denotes the number of weights needed to implement one polynomial.
In contrast, one only needs $\mathcal{O}(m + W)$ weights, which is much smaller.}

\begin{lemma}\label{lem:ComputationalUnit}
Let $d,m \in \N$, let $B, \beta>0$, let
$\{ c_{\ell,\alpha} : \ell \in \{ 1, \dots, m\} , \alpha \in \N_0^d, |\alpha| < \beta\}
\subset [-B,B]$ be a sequence of coefficients, and let
$(x_\ell)_{\ell = 1}^m \subset [-\nicefrac{1}{2}, \nicefrac{1}{2}]^d$
be a sequence of base points.

Then, there exist constants $c = c(d, \beta,B) > 0$, $s = s(d, \beta, B) \in \N$,
and $L = L(d, \beta) \in \N$ with
\Felix{$L \leq 1 + (1 + \lceil \log_2 \beta \rceil) \cdot (11 + \nicefrac{\beta}{d})$}
%\Felix{$L \leq (1 + \lceil \log_2 (1 + \beta) \rceil ) \cdot (22 + \nicefrac{4\beta}{d})$,}
such that for all $\varepsilon \in (0, \nicefrac{1}{2})$ there is a neural
network $\Phi^{\mathrm{p}}_\varepsilon$ with at most
$c \cdot (\varepsilon^{-d/\beta} + m)$ many nonzero,
$(s, \varepsilon)$-quantized weights, at most $L$ layers,
and with an $m$-dimensional output such that
\begin{align}\label{eq:approxOfPolynomials}
  \left|\vphantom{\sum}
      [\Realization_{\varrho}(\Phi^{\mathrm{p}}_\varepsilon)]_\ell (x)
      - \smash{\sum_{|\alpha| < \beta}}
          c_{\ell, \alpha} \cdot (x-x_\ell)^\alpha
  \right|
  < \varepsilon
  \quad \text{ for all } \quad \ell \in \{ 1, \dots, m \}
        \text{ and } x \in \left[-\nicefrac{1}{2}, \nicefrac{1}{2}\right]^d.
\end{align}
\end{lemma}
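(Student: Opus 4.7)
The plan is to exploit that every polynomial $P_\ell(x) := \sum_{|\alpha|<\beta} c_{\ell,\alpha}(x-x_\ell)^\alpha$ can be re-expanded in the monomial basis at the origin as $P_\ell(x) = \sum_{|\gamma|<\beta} \widetilde{c}_{\ell,\gamma} \, x^\gamma$, where
\[
  \widetilde{c}_{\ell,\gamma}
  := \sum_{|\alpha|<\beta,\, \gamma\leq\alpha}
        \binom{\alpha}{\gamma} \, (-x_\ell)^{\alpha-\gamma} \, c_{\ell,\alpha}.
\]
Since $\|x_\ell\|_\infty\leq \nicefrac{1}{2}$, $|c_{\ell,\alpha}|\leq B$, and the number of summands depends only on $d,\beta$, this yields $|\widetilde{c}_{\ell,\gamma}|\leq B'$ for some $B'=B'(d,\beta,B)$. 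Let $N := |\{\gamma\in\N_0^d : |\gamma|<\beta\}|$; this depends only on $d,\beta$. The key observation is that we only need to approximate the \emph{single} vector-valued map $x\mapsto(x^\gamma)_{|\gamma|<\beta}$ once, and then multiply by the $m\times N$ matrix $(\widetilde{c}_{\ell,\gamma})$, so the weight count splits as $\mathcal{O}(\varepsilon^{-d/\beta})$ (for the monomials) plus $\mathcal{O}(mN)=\mathcal{O}(m)$ (for the linear combination).

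For the monomial step, I would apply Lemma \ref{lem:MonomialsWithBoundedNumberOfLayers} with target accuracy $\delta := \varepsilon/(2N(B'+1))$ and with its integer parameters chosen so that all degrees $|\gamma|<\beta$ are covered and the resulting weight bound $\delta^{-d/(\cdot)}$ is $\mathcal{O}_{d,\beta,B}(\varepsilon^{-d/\beta})$; concretely, taking $n=\max\{1,\lceil\beta\rceil-1\}$ and (in the notation of that lemma) $\ell=\lceil\beta\rceil$ works. This produces, for each multiindex $\gamma$ with $|\gamma|<\beta$, a network $\Phi^\gamma$ with at most $L_0 := (1+\lceil\log_2\beta\rceil)(11+\beta/d)$ layers and at most $c_1\cdot\varepsilon^{-d/\beta}$ nonzero, $(s_0,\varepsilon)$-quantized weights (where $s_0=s_0(d,\beta,B)$ is obtained by absorbing the factor $\delta/\varepsilon$ via Remark \ref{rem:QuantisationConversion}), satisfying $|\Realization_\varrho(\Phi^\gamma)(x)-x^\gamma|\leq\delta$ on $[-\nicefrac{1}{2},\nicefrac{1}{2}]^d$. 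After padding each $\Phi^\gamma$ to the common depth $L_0$ by sparse concatenation with an identity network from Remark \ref{rem:DeepIdentity}, iterating Definition \ref{def:Parallelization} assembles them into a single network $\Psi$ of depth $L_0$, output dimension $N$, and $\mathcal{O}(\varepsilon^{-d/\beta})$ weights whose $\gamma$-th output component is $\Realization_\varrho(\Phi^\gamma)$.

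Next, let $Q=(q_{\ell,\gamma})\in\R^{m\times N}$ be the entrywise closest $(s,\varepsilon)$-quantized approximation of $(\widetilde{c}_{\ell,\gamma})$. Since $|\widetilde{c}_{\ell,\gamma}|\leq B'$ and the quantization step $2^{-s\lceil\log_2(1/\varepsilon)\rceil}$ is bounded by $\varepsilon^s$, choosing $s=s(d,\beta,B)$ large enough ensures both $|q_{\ell,\gamma}|\leq\varepsilon^{-s}$ and $|q_{\ell,\gamma}-\widetilde{c}_{\ell,\gamma}|\leq\varepsilon/(2N)$ uniformly in $\varepsilon\in(0,\nicefrac{1}{2})$. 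Put $\Phi^{\mathrm{lin}}:=((Q,0))$ and define $\Phi^{\mathrm{p}}_\varepsilon:=\Phi^{\mathrm{lin}}\sconc\Psi$; by Remark \ref{rem:SpecialConc}, $\Phi^{\mathrm{p}}_\varepsilon$ has $L_0+1\leq 1+(1+\lceil\log_2\beta\rceil)(11+\beta/d)$ layers and at most $c\cdot(m+\varepsilon^{-d/\beta})$ nonzero, $(s,\varepsilon)$-quantized weights (after possibly enlarging $s$ once more to absorb $s_0$ and the quantization of $Q$). The approximation bound \eqref{eq:approxOfPolynomials} then follows from the triangle inequality applied to the decomposition
\[
  \left| [\Realization_\varrho(\Phi^{\mathrm{p}}_\varepsilon)]_\ell(x) - P_\ell(x) \right|
  \leq \sum_\gamma |q_{\ell,\gamma}|\cdot\delta + \sum_\gamma |q_{\ell,\gamma}-\widetilde{c}_{\ell,\gamma}|\cdot|x^\gamma|
  \leq N(B'+1)\delta + N\cdot\frac{\varepsilon}{2N} \leq \varepsilon.
\]

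The only conceptual ingredient is the re-expansion at $0$, which decouples the number of polynomials $m$ from the accuracy $\varepsilon$ and delivers the sub-additive complexity $\mathcal{O}(m+\varepsilon^{-d/\beta})$ in place of the naive $\mathcal{O}(m\cdot\varepsilon^{-d/\beta})$. Everything else is bookkeeping: tracking depths from Lemma \ref{lem:MonomialsWithBoundedNumberOfLayers}, padding networks to a common depth before parallelizing, and calibrating the quantization parameter $s$ using Remark \ref{rem:QuantisationConversion}. The main place where care is required is verifying that $B'$ and $N$ are purely combinatorial in $d,\beta$, so that all constants collapse to the allowed dependencies $s=s(d,\beta,B)$ and $c=c(d,\beta,B)$.
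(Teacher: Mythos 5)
Your proposal is correct and follows essentially the same strategy as the paper's own proof: re-expand each local Taylor-type polynomial in the monomial basis at the origin, build one shared $N$-component monomial network via Lemma \ref{lem:MonomialsWithBoundedNumberOfLayers} (padded to a common depth before parallelization), and compose with a single one-layer linear map encoding the quantized re-expansion coefficients, which is exactly what decouples $m$ from $\varepsilon$ and yields the additive $\mathcal{O}(m + \varepsilon^{-d/\beta})$ weight count. The only differences are cosmetic parameter choices (you pass $n = \max\{1,\lceil\beta\rceil - 1\}$ to the monomial lemma whereas the paper passes $\lceil\beta\rceil$, and your error budget is split $\varepsilon/2 + \varepsilon/2$ rather than $\varepsilon/4 + \varepsilon/2$), both of which remain comfortably within the stated depth and quantization constants.
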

\begin{proof}
Write $\beta = n + \sigma$, with $n\in \N_0$ and $\sigma \in (0,1]$, let
$\{ c_{\ell,\alpha} : \ell \in \{ 1, \dots, m \} , \alpha \in \N_0^d, |\alpha| < \beta\}$
and $(x_\ell)_{\ell = 1}^m$ be as in the statement of the lemma,
and let $\varepsilon \in (0, \nicefrac{1}{2})$.
%By the binomial formula for multi-indices, we have that
By the $d$-dimensional binomial theorem
(cf.\@ \cite[Chapter 8, Exercise 2]{FollandRA}), we have\vspace{-0.2cm}
\[
     (x- x_\ell)^\alpha
   = \sum_{\gamma \leq \alpha}
        %\left(\begin{array}{c} \alpha \\ \gamma \end{array} \right)
        \binom{\alpha}{\gamma}
         (-x_\ell)^{\alpha - \gamma} x^\gamma
   \quad \text{for all } x \in \R^d \text{ and } \alpha \in \N_0^d .
\]

Note \Felix{for $\alpha \in \N_0^d$} that $|\alpha| < \beta$ is equivalent to
$|\alpha| \leq n$.
Thus we have for all $x \in \R^d$ \Felix{and $\ell \in \{1,\dots,m\}$}
that\vspace{-0.2cm}
\[
  \sum_{|\alpha| \Felix{< \beta}}
      c_{\ell, \alpha} (x-x_\ell)^\alpha
  = \sum_{|\alpha| \leq n}
        \left[
          c_{\ell, \alpha}
          \sum_{\gamma \leq \alpha}
             %\left(\begin{array}{c} \alpha \\ \gamma \end{array} \right)
             \binom{\alpha}{\gamma}
             x^\gamma (-x_\ell)^{\alpha - \gamma}
        \right]
  = \sum_{|\gamma| \leq n}
        \left[
            \, \, x^\gamma
            \vphantom{\sum_j}
            \smash{
            \underbrace{
                    \sum_{\substack{|\alpha|\leq n \\ \alpha \geq \gamma}}
                        c_{\ell, \alpha}
                        \binom{\alpha}{\gamma}
                        (-x_\ell)^{\alpha - \gamma}
                }_{=: \tilde{c}_{\ell, \gamma}}
                  }
              \,\,
        \right] .
        \vphantom{
            \underbrace{
                    \sum_{\substack{|\alpha|\leq n \\ \alpha \geq \gamma}}
                        c_{\ell, \alpha}
                        \binom{\alpha}{\gamma}
                        (-x_\ell)^{\alpha - \gamma}
                }_{=: \tilde{c}_{\ell, \gamma}}
        }
\]
% Let us set
% \[
%     \tilde{c}_{\ell, \gamma}
%     := \sum_{\gamma \leq \alpha}
%         c_{\ell, \alpha}
%         %\left(\begin{array}{c} \alpha \\ \gamma \end{array} \right)
%         \binom{\alpha}{\gamma}
%         (-x_\ell)^{\alpha - \gamma}.
% \]
It is easy to see that there is a constant $C = C(d,\beta, B) \geq 1$ such that
for all $\ell \in \{1, \dots, m\}$ and $\gamma \in \N_0^d$ with $|\gamma| \leq n$,
we have $|\tilde{c}_{\ell, \gamma}| \leq C$.
%the absolute value of $\tilde{c}_{\ell, \gamma}$ is bounded by a constant $C = C(d, \beta, B)$ and we have that
Furthermore, we just saw that
\begin{equation}
  \sum_{|\alpha| \Felix{< \beta}}
      c_{\ell, \alpha} (x-x_\ell)^\alpha
  = \sum_{|\gamma| \leq n}
       \tilde{c}_{\ell, \gamma} \, x^\gamma
  \quad \text{ for all } x \in \R^d.
  \label{eq:ComputationalUnit1}
\end{equation}

Since $\varepsilon \in (0, \nicefrac{1}{2})$, so that
$\vphantom{\sum_j} \varepsilon^{-s} > 2^s$ for $s \in \N$, there clearly exists
some $s_1 = s_1 (d,\beta, B) \in \N$ (\emph{independent} of $\varepsilon$)
such that there are $\tilde{\tilde{c}}_{\ell, \gamma, \varepsilon}
\in [-\varepsilon^{-s_1}, \varepsilon^{-s_1}]
\cap 2^{-s_1\lceil \log_2(\nicefrac{1}{\varepsilon}) \rceil}\Z$
with $|\tilde{c}_{\ell, \gamma} - \tilde{\tilde{c}}_{\ell, \gamma, \varepsilon}| \leq 1$ for all $\gamma \in \N_0^d$ with
$|\gamma| \leq n$ and all $1 \leq \ell \leq m$, and such that
\begin{equation}
  \Big| \,\,
      \sum_{|\gamma| \leq n}
          \tilde{c}_{\ell, \gamma} \, x^\gamma
    - \sum_{|\gamma| \leq n}
          \tilde{\tilde{c}}_{\ell, \gamma, \varepsilon} \, x^\gamma
  \,\,\Big|
  < \frac{\varepsilon}{2}
  \quad \text{for all } x \in \left[-\nicefrac{1}{2}, \nicefrac{1}{2} \right]^d.
  \label{eq:ComputationalUnit2}
\end{equation}

Write $\{ \gamma \in \N_0^d \,:\, | \gamma | \leq n\} = \{\gamma_1, \dots, \gamma_N\}$ with distinct $\gamma_i$,
for some $N = N(d,n) = N(d,\beta) \in \N$. %, where $N \leq (1+n)^d$.
With this choice, we define for $\ell \in \{1,\dots, m\}$ the network
%\begin{align*}
\[
  \Phi^{\ell, \varepsilon} := ((A^{\ell, \varepsilon}, b^\ell))
  \quad \text{where} \quad
  A^{\ell, \varepsilon}
  := ({\tilde{\tilde{c}}}_{\ell, \gamma_1, \varepsilon}, \dots,  {\tilde{\tilde{c}}}_{\ell, \gamma_N, \varepsilon}) \in \R^{1 \times N},
  \quad \text{ and } \quad
  b^\ell := 0 \in \R^1 .
\]
%\end{align*}

An application of Lemma \ref{lem:MonomialsWithBoundedNumberOfLayers}
(with $\ell = n+1 \in \N$ and with $\lceil \beta \rceil = n+1 \in \N$ instead of $n$)
shows for arbitrary $\delta \in (0,\nicefrac{1}{2})$ and $\gamma \in \N_0^d$
with $|\gamma| \leq n+1$ that there exists a network $\Phi_\delta^\gamma$ with
$d$-dimensional input and one-dimensional output,
at most $c_1 \cdot \delta^{-d/(n+1)}$ nonzero, $(s_2,\delta)$-quantized weights,
and at most $L_1$ layers, such that
\begin{equation}
  |\Realization_{\varrho}(\Phi_\delta^\gamma)(x) - x^\gamma| \leq \delta
  \quad \text{ for all } \quad
  x \in \left[-\nicefrac{1}{2}, \nicefrac{1}{2}\right]^d.
  \label{eq:ComputationalUnit3}
\end{equation}
Here, $c_1 = c_1(d,n) = c_1 (d,\beta) > 0$, $s_2 = s_2 (n) = s_2 (\beta) \in \N$,
and $L_1 = L_1(d, n) = L_1(d, \beta) \in \N$ are constants, and
\Felix{$L_1 \leq (1 + \lceil \log_2 \beta \rceil )
                 \cdot (10 + \nicefrac{(n+1)}{d})
            \leq (1 + \lceil \log_2 \beta \rceil)
                 \cdot (11 + \nicefrac{\beta}{d})$}.
\Felix{To get this bound on $L_1$, we used that
$2^{\lceil \log_2 \beta \rceil} \geq \beta$
and thus $2^{\lceil \log_2 \beta \rceil} \geq \lceil \beta \rceil$,
whence $\lceil \log_2 \beta \rceil \geq \log_2 (\lceil \beta \rceil)$,
which finally implies
$\lceil \log_2 \beta \rceil \geq \lceil \log_2 (\lceil \beta \rceil) \rceil$.}

\Felix{As usual, by possibly replacing the network $\Phi_\delta^\gamma$ by the
network $\Phi_{1,\lambda_\gamma}^{\identity} \sconc \Phi_\delta^\gamma$
with $\Phi_{1,\lambda_\gamma}^{\identity}$ as in Remark \ref{rem:DeepIdentity}
and with $\lambda_\gamma = L_1 - L(\Phi_\delta^\gamma)$, we can assume
that the networks $\Phi_\delta^\gamma$ all have \pp{exactly} $L_1$ layers.
This might require changing the constant $c_1$, but otherwise leaves the
complexity of the networks $\Phi_\delta^\gamma$ unchanged.}

%Let $\varepsilon \in (0, \nicefrac{1}{2})$.
We now choose $\delta := \varepsilon/(4CN)$ and define
\begin{align*}
    \Phi^a_\varepsilon
    := P(\Phi^{1, \varepsilon},
         P(\Phi^{2, \varepsilon},
           \dots,
           P(\Phi^{m-1,\varepsilon},
             \Phi^{m,\varepsilon}
            )
           \dots
          )
        )
    \quad \text{ and } \quad
    \Phi^b_\varepsilon
    := P(\Phi^{\gamma_1}_\delta,
         P(\Phi^{\gamma_2}_\delta,
           \dots,
           P(\Phi_\delta^{\gamma_{N-1}},
             \Phi_\delta^{\gamma_N}
            )
           \dots
          )
        ) .
\end{align*}
\pp{Finally, we set $\Phi^{\mathrm{p}}_\varepsilon := \Phi^a_\varepsilon \sconc \Phi^b_\varepsilon$.} By construction \Felix{and by choice of $\delta = \varepsilon / (4CN)$,
by combining
Equations \eqref{eq:ComputationalUnit1}--\eqref{eq:ComputationalUnit3},
and by using $|\tilde{\tilde{c}}_{\ell,\gamma,\varepsilon}|
\leq 1 + |\widetilde{c}_{\ell,\gamma}| \leq 1 + C \leq 2C$, we see that
Equation} \eqref{eq:approxOfPolynomials} holds.
Moreover, the weights were chosen quantized
(see also Remark \ref{rem:QuantisationConversion} and note
$\delta \geq \varepsilon / C_2$ for a constant $C_2 = C_2 (d,\beta, B) > 0$),
and the number of weights of $\Phi^a_\varepsilon$ satisfies
$M(\Phi^a_\varepsilon) \leq mN$, while the number of weights of
$\Phi^b_\varepsilon$---up to a multiplicative constant depending on
$n=n(\beta)$, $d$ and $B$---is bounded by
$\varepsilon^{-d/(n+1)} \leq \varepsilon^{-d/\beta}$.
Therefore, Remark \ref{rem:SpecialConc} shows that
$\Phi_\varepsilon^{\mathrm{p}}$ has the required number of properly
quantized weights.

Additionally, since $\Phi^a_\varepsilon$ has one layer and $\Phi^b_\varepsilon$
has at most
\Felix{$L_1 \leq (1 + \lceil \log_2 \beta \rceil )
                 \cdot (11 + \nicefrac{\beta}{d})$}
layers, we conclude that $\Phi^{\mathrm{p}}_\varepsilon$ has at most
\Felix{$1 + (1 + \lceil \log_2 \beta \rceil) \cdot (11 + \nicefrac{\beta}{d})$}
layers.
%Clearly, since $1 \leq n+1 \leq \beta+1$ there is an absolute constant
%$c'' \in \N$ such that
%\[
%  (1 + c') \cdot (1 + \lceil \log_2 (n + 1) \rceil )
%           \cdot \left(1 + \frac{(n+1)}{d}\right)
%  \leq c'' \cdot (1 + \lceil \log_2 (\beta + 1) \rceil )
%           \cdot \left(1 + \frac{\beta}{d}\right).
%  %\text{ for all } n, d\in \N \text {and } \leq \beta \leq n+1.
%\]
This completes the proof.
\end{proof}

As the next step of our construction we show that one can construct a network
that \Felix{approximates a ``cutoff'' of a given network to an interval}.
\Felix{
\pp{We start by collecting} two estimates concerning the $L^p$ (quasi)-norms,
which we will use \pp{frequently}.
First, since the set $[-\nicefrac{1}{2}, \nicefrac{1}{2}]^d$ with the
Lebesgue measure is a probability space, Jensen's inequality
(see \cite[Theorem 10.2.6]{DudleyRealAnalysisProbability}) shows
\begin{equation}
  \|f\|_{L^p ([-\nicefrac{1}{2}, \nicefrac{1}{2}]^d)}
  \leq \|f\|_{L^q ([-\nicefrac{1}{2}, \nicefrac{1}{2}]^d)}
  \quad \text{for} \quad
  0 < p \leq q < \infty
  \quad \! \text{and} \quad \!
  f : [-\nicefrac{1}{2},\nicefrac{1}{2}]^d \to \R \text{ measurable} \, .
  \label{eq:JensenEstimate}
\end{equation}
\pp{Second}, if $p \in (0,1)$ then the (quasi)-norm $\|\mybullet\|_{L^p}$
does \emph{not} satisfy the triangle inequality.
However, as shown for example in \cite[Example 2.2.6]{Megginson}, we have
$\|f+g\|_{L^p}^p \leq \|f\|_{L^p}^p + \|g\|_{L^p}^p$.
Combining this with the elementary estimate
$\sum_{i=1}^N a_i^p \leq N \cdot \max\{a_i \,:\, i = 1,\dots,N\}^p$, we see
\begin{equation}
  \Big\|
    \sum_{i=1}^N f_i \,
  \Big\|_{L^p}
  \leq N^{\max\{1,p^{-1}\}} \cdot \max\{ \|f_i\|_{L^p} \,:\, i = 1,\dots,N \},
  \label{eq:PseudoTriangleInequality}
\end{equation}
which remains valid also in case of $p \geq 1$.
With these preparations, we can prove \pp{the previously announced} ``cutoff'' result.
}

\begin{lemma}\label{lem:MultiplicationWithACharacteristicFunction}
Let $d \in \N$, \Felix{$p \in (0,\infty)$}, and $B \geq 1$.
Let $-\nicefrac{1}{2}\leq a_{i} \leq b_i \leq \nicefrac{1}{2}$
for $i = 1, \dots, d$,
and let $\varepsilon \in (0, \nicefrac{1}{2})$ be arbitrary.
Then there exist constants $c = c(d) \in \N$, $s = s(d, B, \Felix{p}) \in \N$,
and a neural network $\Lambda_\varepsilon$ with \pp{a $d+1$-dimensional input},
at most four layers, and at most $c$ nonzero, $(s, \varepsilon)$-quantized
weights such that for each neural network $\Phi$ with one-dimensional output
layer and $d$-dimensional input layer, and with
$\|\Realization_\varrho(\Phi)\|_{L^\infty([-\nicefrac{1}{2},\nicefrac{1}{2}]^d)}
\leq B$, we have
\[
  \left\|
    \Realization_{\varrho}(\Lambda_\varepsilon)
      \big(\bullet, \Realization_{\varrho}({\Phi})(\bullet)\big)
    - \chi_{\prod_{i = 1}^d [a_i, b_i]} \cdot \Realization_{\varrho}(\Phi)
  \right\|_{L^p ([-\nicefrac{1}{2}, \nicefrac{1}{2}]^d)}
  \leq \varepsilon.
\]
\end{lemma}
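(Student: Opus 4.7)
The plan is to build $\Lambda_\varepsilon$ by hand from three gadgets: a coordinate-wise trapezoidal smoothing $\phi_i \approx \chi_{[a_i,b_i]}$ with transition width $\delta > 0$, an affine aggregate $\phi(x) \approx \chi_{\prod_i[a_i,b_i]}$, and a ``gated multiplication'' that takes $(x,y)$ and returns essentially $\phi(x) \cdot y$. The reason the gate fits into a fixed number of layers and weights, unlike the genuine multiplication of Lemma \ref{lem:MultiplicationWithBoundedLayerNumber}, is that we are willing to commit pointwise errors of order $B$ on a $\delta$-neighborhood of $\partial \prod_i[a_i,b_i]$ whose Lebesgue measure is only $\mathcal{O}(d\delta)$; this is harmless in $L^p$ for finite $p$, although it would be fatal in $L^\infty$.

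Concretely, for a free dyadic parameter $\delta > 0$ I would set
\[
  \phi_i(x_i)
  := \varrho(\delta^{-1}(x_i-a_i))
     - \varrho(\delta^{-1}(x_i-a_i)-1)
     - \varrho(\delta^{-1}(x_i-b_i)+1)
     + \varrho(\delta^{-1}(x_i-b_i)),
\]
the standard piecewise-linear trapezoid equal to $1$ on $[a_i+\delta, b_i-\delta]$ and $0$ outside $[a_i,b_i]$. The aggregate $\phi(x) := 1 - \sum_{i=1}^d(1-\phi_i(x_i))$ is then an \emph{affine} function of the first ReLU layer: it equals $1$ on the inner box $I_\delta := \prod_i[a_i+\delta, b_i-\delta]$, and whenever some $x_j \notin [a_j, b_j]$ one has $1-\phi_j(x_j) = 1$ and hence $1-\phi(x) \geq 1$. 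The final piece is
\[
  g(x,y) := \varrho(y - B(1-\phi(x))) - \varrho(-y - B(1-\phi(x))).
\]
Packed as a single network, $\Lambda_\varepsilon$ needs only three layers: a first hidden layer of $4d+2$ units computing the ramps of the $\phi_i$ together with $\varrho(y)$ and $\varrho(-y)$; a second hidden layer of two units $\varrho(\pm y - B(1-\phi(x)))$, whose arguments are affine combinations of the first layer because $y = \varrho(y)-\varrho(-y)$ and $\phi$ is itself affine in the $\phi_i$; and a linear output layer returning the difference. This yields $L(\Lambda_\varepsilon) \leq 3 \leq 4$ with $\mathcal{O}(d)$ nonzero weights, depending only on $a_i, b_i, B, \delta$.

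For the error analysis, a short case distinction on $\alpha := B(1-\phi(x))$ shows that $g(x,y) = y$ on $I_\delta$, that $g(x,y) = 0$ on the complement of $\prod_i[a_i,b_i]$ when $|y| \leq B$ (since then $\alpha \geq B \geq |y|$ forces both ReLU arguments to be $\leq 0$), and that $|g(x,y) - y| \leq B$ pointwise on the shell $S_\delta := \prod_i[a_i,b_i] \setminus I_\delta$ when $|y| \leq B$. Continuity of the ReLU realization together with $\|\Realization_\varrho(\Phi)\|_{L^\infty} \leq B$ guarantees $|y| \leq B$ pointwise, and a telescoping expansion gives $|S_\delta| \leq 2d\delta$. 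Direct integration then yields an $L^p$ error bounded by $B \cdot (2d\delta)^{1/p}$, so choosing $\delta$ to be the largest dyadic rational with $\delta \leq \varepsilon^p/(2dB^p)$ closes the estimate. The main obstacle is $(s,\varepsilon)$-quantization: since $\delta^{-1} \leq 2dB^p\varepsilon^{-p}$, the weights lie in $[-\varepsilon^{-s}, \varepsilon^{-s}]$ for any $s \geq p + \lceil\log_2(2dB^p)\rceil + 1$, and the biases $\delta^{-1} a_i$, $\delta^{-1} b_i$ may be further rounded to the grid $2^{-s \lceil\log_2(1/\varepsilon)\rceil}\Z$ at the price of translating $S_\delta$ by $\mathcal{O}(\delta \varepsilon^s)$, which is absorbed into the slack. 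The degenerate case in which some side length $b_i - a_i$ is already smaller than our target $\delta$ is dispatched separately by returning the zero network, whose $L^p$ error is bounded by $B \cdot \prod_i(b_i-a_i)^{1/p} \leq \varepsilon$.
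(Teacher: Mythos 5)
Your proposal is correct and takes essentially the same approach as the paper: both build the trapezoid indicators $\phi_i$, both use $y=\varrho(y)-\varrho(-y)$ inside a ReLU to suppress the output when the box penalty $\alpha$ exceeds $B$, both bound the $L^p$ error by $B\cdot|S_\delta|^{1/p}$ with $|S_\delta|\lesssim d\delta$, and both dispatch the thin-box case by returning the zero network; in fact using $\varrho(Bz)=B\varrho(z)$ one checks that your gate $\varrho(y-\alpha)-\varrho(-y-\alpha)$ with $\alpha=B\sum_i(1-\phi_i(x_i))$ is \emph{algebraically identical} to the paper's $n_\varepsilon$. The only small omission is that for $(s,\varepsilon)$-quantization the constant $B$ (which appears as a weight in the second hidden layer) must itself be replaced by a dyadic value such as $B_0:=2^{\lceil\log_2 B\rceil}$, as the paper does; this changes nothing substantive in your estimates.
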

\begin{proof}
  \Felix{In order to obtain a network with quantized weights, we first
  construct modified interval boundaries $\widetilde{a}_i,\widehat{b}_i$.
  To this end, let $p_0 := \lceil p \rceil \in \N$, and set
  $s_1 := s_1 (d,B,p) := 8d + p_0 (1 + 4 \lceil B \rceil) \in \N$, and
  $\widetilde{\varepsilon} := 2^{-s_1 \lceil \log_2(1/\varepsilon) \rceil}$.
  %First, note
  Then, on the one hand, %$\widetilde{\varepsilon}$ satisfies
  $\widetilde{\varepsilon}^{-1} = 2^{s_1 \lceil \log_2 (1/\varepsilon) \rceil}
  \leq 2^{s_1 (1+ \log_2(1/\varepsilon))} \leq 2^{2s_1 \log_2(1/\varepsilon)}
  = \varepsilon^{-2 s_1}$.
  On the other hand, since $\lceil \log_2 (1/\varepsilon) \rceil \geq 1$ and
  $2^{2x} = 4^x \geq e^x \geq 1+x \geq x$ for $x > 0$, we see
  \begin{equation}
    \widetilde{\varepsilon}
    \leq 2^{-2 \cdot 4d}
         \cdot \left(
                 2^{-2
                    \cdot 2 \lceil B \rceil
                    \cdot \lceil \log_2 (1/\varepsilon) \rceil}
                 \cdot 2^{-\lceil \log_2 (1/\varepsilon) \rceil}
               \right)^{p_0}
    \leq \frac{1}{4d} \cdot \left(
                              2^{-2 \cdot 2 \lceil B \rceil} \varepsilon
                            \right)^{p_0}
    \leq \frac{(\varepsilon / (2B))^{p_0}}{4d}
    \leq 1 \, .
    \label{eq:CutoffEpsilonEstimate}
  \end{equation}
  Finally, for each $i \in \{1,\dots,d\}$ we can choose
  $\widetilde{a_i}, \widetilde{b_i} \in [-\nicefrac{1}{2}, \nicefrac{1}{2}]
  \cap 2^{-s_1 \lceil \log_2 (1/\varepsilon) \rceil} \Z$ with
  $|a_i - \widetilde{a_i}| \leq \widetilde{\varepsilon}$
  and $|b_i - \widetilde{b_i}| \leq \widetilde{\varepsilon}$.

  %Now, for $i \in \{1,\dots,d\}$ note that $\widetilde{\varepsilon}^{-1},
  Now, note that $\widetilde{\varepsilon}^{-1},
  \widetilde{\varepsilon}^{-1} \cdot \widetilde{a_i},
  \widetilde{\varepsilon}^{-1} \cdot \widetilde{b_i}$
  are all elements of $\Z \cap [-\varepsilon^{-2 s_1},\varepsilon^{2 s_1}]
  \subset \Z \cap [-\varepsilon^{-3 s_1}, \varepsilon^{-3 s_1}]$
  and likewise that
  $1 + \widetilde{\varepsilon}^{-1} \widetilde{a_i},
  1 + \widetilde{\varepsilon}^{-1} \widetilde{b_i}$
  are all elements of
  $\Z \cap [-\varepsilon^{-(1 + 2 s_1)}, \varepsilon^{-(1 + 2 s_1)}]
  \subset \Z \cap [-\varepsilon^{-3 s_1}, \varepsilon^{-3 s_1}]$.
  Therefore, the function
  %We set $\widetilde{\varepsilon}
  %:= 2^{\lfloor \log_2 (\varepsilon/( 2 B \sqrt{2 d})) \rfloor}
  %\leq \varepsilon/(2 B \sqrt{2 d})$.
  %Because of $\varepsilon \in (0, \nicefrac{1}{2})$, there exists a constant
  %$s_1 = s_1 (d,B)\in \N$ such that there are $\widetilde{a}_{i}, \widetilde{b}_i
  %\in [-\varepsilon^{-s_1}, \varepsilon^{-s_1}]
  %\cap 2^{-s_1 \lceil \log_2 (1 / \varepsilon)\rceil}\Z$
  %with $|\widetilde{a}_i - a_i| < \widetilde{\varepsilon}^2$ and
  %$|\tilde{b}_i - b_i| < \tilde{\varepsilon}^2$ for all $i = 1, \dots, d$.
  %
  %We first construct for each $i \in \{1,\dots, d\}$ the following map $t_i$,
  %which is clearly the realization of a two layer neural network
  %with at most $12$ nonzero, $(s_2 ,\varepsilon)$-quantized weights,
  %for some $s_2 = s_2 (d, B) \in \N$:
  \[
    t_i : \left[-\frac{1}{2}, \frac{1}{2}\right] \to \R,
    x \mapsto
    %t_i(x)
    \varrho\left(
             \frac{x-\widetilde{a}_i}{\widetilde{\varepsilon}}
           \right)
    - \varrho\left(
               \frac{x - \widetilde{a}_i - \widetilde{\varepsilon}}
                    {\widetilde{\varepsilon}}
             \right)
    - \varrho\left(
               \frac{x - \widetilde{b}_i + \widetilde{\varepsilon}}
                    {\widetilde{\varepsilon}}\right)
    + \varrho\left(
                \frac{x - \widetilde{b}_i}{\widetilde{\varepsilon}}
             \right)
    %\quad \text{ for } x \in  \left[ -\frac{1}{2}, \frac{1}{2}\right].
  \]
  is the realization of a two-layer network with at most $12$ nonzero,
  $(3s_1, \varepsilon)$-quantized weights.

  A simple computation yields that if
  $\widetilde{b}_i - \widetilde{a}_i > 2\widetilde{\varepsilon}$, then
  \[
  t_i(x)
  = \begin{cases}
      0 ,
      & \text{for } x \in \R \setminus [\widetilde{a}_i, \widetilde{b}_i], \\
      \frac{x - \widetilde{a}_i}{\widetilde{\varepsilon}},
      & \text{for } x \in [\widetilde{a}_i,
                           \widetilde{a}_i + \widetilde{\varepsilon}], \\
      1 ,
      & \text{for } x \in [\widetilde{a}_i + \widetilde{\varepsilon},
                           \widetilde{b}_i - \widetilde{\varepsilon}], \\
      1 - \frac{x - (\widetilde{b}_i - \widetilde{\varepsilon})}
               {\widetilde{\varepsilon}}
      & \text{for } x \in [\widetilde{b}_i - \widetilde{\varepsilon},
                           \widetilde{b}_i] \, .
    \end{cases}
  %= \left\{
  %\begin{array}{l l}
  %    0
  %    & \text{ for } x \in \R \setminus [\tilde{a}_i, \tilde{b}_i],\\
  %    \frac{x - \tilde{a}_i}{\tilde{\varepsilon}^2}
  %    & \text{ for } x \in [\tilde{a}_i, \tilde{a}_i+\tilde{\varepsilon}^2],\\
  %    1
  %    & \text{ for } x \in [\tilde{a}_i+\tilde{\varepsilon}^2,
  %                          \tilde{b}_i - \tilde{\varepsilon}^2],\\
  %    1- \frac{x - (\tilde{b}_i - \tilde{\varepsilon}^2)}{\tilde{\varepsilon^2}}
  %    & \text{ for } x \in [\tilde{b}_i - \tilde{\varepsilon}^2, \tilde{b}_i ].
  %\end{array} \right.
  \]
  }

  We continue defining the function $n_\varepsilon: \R^{d} \times \R \to \R$
  which will be the realization of $\Lambda_\varepsilon$.
  First, we set $B_0 := 2^{\lceil \log_2 B \rceil}$.
  \pp{If $\widetilde{b}_i - \widetilde{a}_i\geq 2\Felix{\widetilde{\varepsilon}}$}
  holds for all $i = 1, \dots, d$ then we define
  \[
    n_\varepsilon(x,y)
    := B_0 \cdot \varrho\left(\sum_{i = 1}^d t_i(x_i)
       + \varrho \left(\frac{y}{ B_0}\right) - d\right)
       - B_0 \cdot \varrho\left(\sum_{i=1}^d t_i (x_i)
       + \varrho\left(- \frac{y}{B_0}\right) - d\right).
  \]
  \pp{If
  $\widetilde{b}_i - \widetilde{a}_i < 2 \Felix{\widetilde{\varepsilon}}$} for
  some $i \in \{1,\dots, d\}$, we set $n_\varepsilon \equiv 0$.
  In both cases, it is easy to see that $n_\varepsilon$ is the realization of a
  four layer neural network $\Lambda_\varepsilon$ with at most $c = c(d)$
  nonzero, $(s_{\Felix{2}} , \varepsilon)$-quantized weights, for some
  $s_{\Felix{2}} = s_{\Felix{2}} (d,B,p) \in \N$.
  Further, in both cases, for all $y \in [-B,B] \subset [-B_0, B_0]$,
  the following hold:
  If $x \in \prod_{i = 1}^d [\widetilde{a}_i+\widetilde{\varepsilon},
                             \widetilde{b}_i-\widetilde{\varepsilon}]$,
  then $n_\varepsilon(\Felix{x,} y) = y$; and if
  $x \in \R^d \setminus \prod_{i = 1}^d [\widetilde{a}_i,\widetilde{ b}_i]$,
  then $n_\varepsilon(\Felix{x,} y) = 0$.
  Moreover, $\prod_{i = 1}^d [\widetilde{a}_i, \widetilde{b}_i]
  \setminus \prod_{i = 1}^d [\widetilde{a}_i+\widetilde{\varepsilon},
                             \widetilde{b}_i-\widetilde{\varepsilon}]$
  has Lebesgue measure bounded by \Felix{$2 d \, \widetilde{\varepsilon}
  \leq (\varepsilon / (2B))^{p_0}\pp{/2}$,
  see Equation \eqref{eq:CutoffEpsilonEstimate}.}
  Finally, since the ReLU $\varrho$ is $1$-Lipschitz, we have
  $|n_\varepsilon (x,y)| \leq B_0 \cdot |\varrho(y/B_0) - \varrho(-y/B_0)|
  = |y| \leq B$ for arbitrary $y \in [-B,B]$.
  %it is not hard to see (using the monotonicity of $\varrho(x) = x_+$)
  %that $-B \leq - \varrho (y)  \leq n_\varepsilon (x,y) \leq \varrho (y) \leq B$
  %for arbitrary $y \in [-B,B]$.
  Therefore, for any measurable
  $f : [-\nicefrac{1}{2}, \nicefrac{1}{2}]^d \to [-B, B]$, we have
  \begin{align*}
    \|
      n_\varepsilon(\bullet, f(\bullet))
      - \chi_{\prod_{i = 1}^d [a_i, b_i]} \cdot f
    \|_{L^{\Felix{p_0}}}
    \leq & \|
             n_\varepsilon(\bullet, f(\bullet))
             - \chi_{\prod_{i = 1}^d [\widetilde{a}_i, \widetilde{b}_i]} \cdot f
           \|_{L^{\Felix{p_0}}} \\
    &\qquad
         + \|
             \chi_{\prod_{i = 1}^d [\widetilde{a}_i, \widetilde{b}_i]} \cdot f
             - \chi_{\prod_{i = 1}^d [a_i, b_i]}  \cdot f
           \|_{L^{\Felix{p_0}}}.
  \end{align*}
  By the previous considerations, and since $|f| \leq B$, we can estimate
  \[
    \|
      n_\varepsilon(\bullet, f(\bullet))
      - \chi_{\prod_{i = 1}^d [\widetilde{a}_i, \widetilde{b}_i]} \cdot f
    \|_{L^{\Felix{p_0}}}
    \Felix{\leq 2B \cdot 2 d \widetilde{\varepsilon}}
    \leq \Felix{\left( \Big(\frac{\varepsilon}{2B}\Big)^{p_0} \right)^{1/p_0}}
         \cdot B
    \leq \frac{\varepsilon}{2}.
  \]
  Since $2d \widetilde{\varepsilon} \leq \Felix{(\varepsilon / (2B))^{p_0}}$,
  and since $a_i, \widetilde{a_i}, b_i , \widetilde{b_i}
  \in [-\nicefrac{1}{2}, \nicefrac{1}{2}]$ with
  $|\widetilde{a}_i - a_i| \leq \widetilde{\varepsilon}$
  and $|\widetilde{b}_i - b_i| \leq \widetilde{\varepsilon}$ for all
  $i \in \{1, \dots, d\}$, we also have
  \[
    \|
      \chi_{\prod_{i = 1}^d [\widetilde{a}_i, \widetilde{b}_i]} \cdot f
      - \chi_{\prod_{i = 1}^d [a_i, b_i]}  \cdot f
    \|_{L^{\Felix{p_0}}}
    \leq \Felix{\left( \Big( \frac{\varepsilon}{2B}\Big)^{p_0} \right)^{1/p_0}} \cdot B
    \leq \frac{\varepsilon}{2}.
  \]
  In combination, these estimates imply the result %(for the case $p=2$),
  \Felix{for the $L^{p_0}$ norm instead of the $L^p$ (quasi)-norm.
  In view of Equation \eqref{eq:JensenEstimate}, this implies the claim.}
  %by choosing $f = \Realization_\varrho (\Phi)$.
  %Since $[-\nicefrac{1}{2} , \nicefrac{1}{2}]^d$ (with the Lebesgue measure)
  %is a probability space, the claim for $0 < p < 2$
  %follows from Jensen's inequality.
  %if one chooses $N_\varepsilon$ as the appropriate neural network with
  %realization $n_\varepsilon$.
\end{proof}

For technical reasons we require the following refinement of
Lemma \ref{lem:MultiplicationWithACharacteristicFunction}.

\begin{lemma}\label{lem:MultiplicationWithACharacteristicFunctionArray}
Let $d,m,s\in \N$, \Felix{$p \in (0,\infty)$},
and $\varepsilon\in (0,\nicefrac{1}{2})$,
and let $\Phi$ be a neural network with $d$-dimensional input and
$m$-dimensional output, and with $(s, \varepsilon)$-quantized weights.
Furthermore, let $B \geq 1$ with
$\|
   [\Realization_\varrho(\Phi)]_\ell
 \|_{L^\infty ([-\nicefrac{1}{2}, \nicefrac{1}{2}]^d)} \leq B$
for all $\ell = 1, \dots, m$.
Finally, let
$-\nicefrac{1}{2}\leq a_{i, \ell} \leq b_{i, \ell} \leq \nicefrac{1}{2}$
for $i = 1, \dots, d$ and $\ell = 1, \dots, m$.

Then, there exist constants $c = c(d) > 0$, $s_0 = s_0(d, B, \Felix{p}) \in \N$,
and a neural network $\Psi_\varepsilon$ with $d$-dimensional input layer and
$1$-dimensional output layer, with at most $6 + L(\Phi)$ layers,
and at most $c \cdot (m + L(\Phi) + M(\Phi))$ nonzero,
$(\max\{s, s_0\}, \nicefrac{\varepsilon}{m})$-quantized weights,
such that
\[
  \left\|
      \Realization_{\varrho}(\Psi_\varepsilon)
      - \sum_{\ell = 1}^m
          \chi_{\prod_{i = 1}^d [a_{i,\ell}, b_{i,\ell}]}
          \cdot [\Realization_{\varrho}(\Phi)]_\ell
  \right\|_{L^{\Felix{p}} ([-\nicefrac{1}{2}, \nicefrac{1}{2}]^d)}
  \leq \varepsilon.
\]
\end{lemma}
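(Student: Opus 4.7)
The plan is to reduce to Lemma \ref{lem:MultiplicationWithACharacteristicFunction} by handling each of the $m$ summands separately and then combining the resulting cutoff networks via parallelization and a final summing layer. For each $\ell \in \{1, \dots, m\}$, apply Lemma \ref{lem:MultiplicationWithACharacteristicFunction} with tolerance $\tilde{\varepsilon} := \varepsilon / m^{\max\{1, 1/p\}}$ and the intervals $[a_{i,\ell}, b_{i,\ell}]$, obtaining a four-layer network $\Lambda_\varepsilon^\ell$ with at most $c_0(d)$ nonzero, $(s_1, \tilde{\varepsilon})$-quantized weights (for some $s_1 = s_1(d, B, p) \in \N$) such that
\[
  \big\|
    \Realization_\varrho(\Lambda_\varepsilon^\ell)(\bullet, g(\bullet))
    - \chi_{\prod_{i=1}^d [a_{i,\ell}, b_{i,\ell}]} \cdot g
  \big\|_{L^p([-\nicefrac{1}{2},\nicefrac{1}{2}]^d)}
  \leq \tilde{\varepsilon}
  \quad \text{whenever } \|g\|_{L^\infty} \leq B.
\]

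Next, I would form the auxiliary network $\tilde{\Phi} := P(\Phi, \Phi^{\identity}_{d, L(\Phi)})$ using the deep identity of Remark \ref{rem:DeepIdentity}, which has $L(\Phi)$ layers, at most $M(\Phi) + 2d \cdot L(\Phi)$ nonzero weights, and realizes $x \mapsto (\Realization_\varrho(\Phi)(x), x) \in \R^{m+d}$. For each $\ell$, modify the first affine map of $\Lambda_\varepsilon^\ell$ so that it acts on $\R^{m+d}$ and picks out exactly the coordinates $(x, [\Realization_\varrho(\Phi)(x)]_\ell)$; this preserves the four layers and leaves the nonzero-weight count of magnitude $O_d(1)$ unchanged. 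Denote the resulting networks by $\tilde{\Lambda}_1, \dots, \tilde{\Lambda}_m$, and let $\Phi^{\mathrm{sum}}$ be the one-layer network with weight matrix $(1, 1, \dots, 1) \in \R^{1 \times m}$ and zero bias. Setting
\[
  \Psi_\varepsilon
  := \Phi^{\mathrm{sum}}
     \sconc P(\tilde{\Lambda}_1, \dots, \tilde{\Lambda}_m)
     \sconc \tilde{\Phi},
\]
the layer count is $L(\Phi) + 4 + 1 \leq L(\Phi) + 6$, and Remark \ref{rem:SpecialConc} yields a weight bound of the form $c(d) \cdot (m + L(\Phi) + M(\Phi))$.

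For the error estimate, the hypothesis $\|[\Realization_\varrho(\Phi)]_\ell\|_{L^\infty} \leq B$ allows us to apply the individual bound above for each $\ell$, and then Equation \eqref{eq:PseudoTriangleInequality} gives
\[
  \Big\|
    \Realization_\varrho(\Psi_\varepsilon)
    - \sum_{\ell=1}^m
        \chi_{\prod_{i=1}^d [a_{i,\ell}, b_{i,\ell}]}
        \cdot [\Realization_\varrho(\Phi)]_\ell
  \Big\|_{L^p}
  \leq m^{\max\{1, 1/p\}} \cdot \tilde{\varepsilon}
  = \varepsilon.
\]
The quantization claim then follows from Remark \ref{rem:QuantisationConversion}, which upgrades both the $(s,\varepsilon)$-quantization of $\Phi$ and the $(s_1,\tilde{\varepsilon})$-quantization of each $\Lambda_\varepsilon^\ell$ to a common $(\max\{s, s_0\}, \varepsilon/m)$-quantization for a suitable $s_0 = s_0(d, B, p)\in\N$; the integer weights $\pm 1$ introduced by the identity network, the parallelizations, the sparse concatenations, and the summing layer are trivially compatible. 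The main subtlety is to avoid an $O(md)$ wiring layer that would otherwise arise when routing the input $x$ into every $\tilde{\Lambda}_\ell$: absorbing the coordinate selection into the first affine map of each $\Lambda_\varepsilon^\ell$, as sketched above, is precisely what keeps the nonzero-weight count linear in $m + L(\Phi) + M(\Phi)$.
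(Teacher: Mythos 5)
Your proof is correct and follows essentially the same approach as the paper's: parallelize $\Phi$ with a deep identity network to make the input $x$ available alongside $\Realization_\varrho(\Phi)(x)$, apply Lemma~\ref{lem:MultiplicationWithACharacteristicFunction} to each coordinate, parallelize the $m$ cutoff networks, and finish with a summing layer. The only differences are cosmetic: you absorb the coordinate selection into the first affine map rather than sparsely prepending a one-layer projection network (which even saves one layer), and you handle the quasi-normed case $p<1$ directly via Equation~\eqref{eq:PseudoTriangleInequality} with tolerance $\varepsilon/m^{\max\{1,1/p\}}$, whereas the paper works in $L^{p_0}$ with $p_0=\max\{1,p\}$ and tolerance $\varepsilon/m$, then reverts to $L^p$ by Jensen's inequality \eqref{eq:JensenEstimate}.
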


\begin{proof}
  \Felix{First}, let $L := L(\Phi)$, and set
  $\widetilde{\Phi} := P(\Phi_{d,L}^{\mathrm{Id}}, \Phi)$, where
  $\Phi_{d,L}^{\mathrm{Id}}$ is as in Remark \ref{rem:DeepIdentity}, so that
  $\Phi_{d,L}^{\mathrm{Id}}$ has $L = L(\Phi)$ layers and at most
  $2d \cdot L(\Phi)$ nonzero, $(1, \varepsilon)$-quantized weights, and
  satisfies $\Realization_\varrho(\Phi_{d,L}^{\mathrm{Id}}) = \identity_{\R^d}$.
  %Such a network exists according to Lemma \ref{lem:identity} and the
  %ensuing remark.
  We conclude that $\widetilde{\Phi}$ has $L = L(\Phi)$ layers, and at most
  $M(\Phi) + 2dL(\Phi)$ nonzero, $(s, \varepsilon)$-quantized weights.

  \Felix{Second, set $p_0 := \max \{1 , p\}$,}
  and for each $\ell \in \{1, \dots, m\}$ let $\Lambda^\ell_\varepsilon$ be the
  neural network provided by
  Lemma \ref{lem:MultiplicationWithACharacteristicFunction} applied with
  $a_i = a_{i,\ell}$, $b_i = b_{i, \ell}$ and with
  \Felix{$p_0$ instead of $p$ and}
  $\nicefrac{\varepsilon}{m}$ instead of $\varepsilon$.
  There exist $c_0 = c_0(d) \in \N$ and $s_0 = s_0(d, B, p) \in \N$ such that
  $\Lambda^\ell_\varepsilon$ has four layers and at most $c_0$ nonzero,
  $(s_0, \nicefrac{\varepsilon}{m})$-quantized weights.

  \Felix{Third}, let $P_\ell \in \R^{(d+1) \times (d+m)}$ be the matrix
  associated (via the standard basis) to the linear map
  $\R^d \times \R^m \ni (x, y) \mapsto (x, y_{\ell}) \in \R^d \times \R^{1}$,
  and let $\Phi_\ell := ((P_\ell, 0))$ be the associated $1$-layer network.
  Clearly, $\Phi_\ell$ has $d+1$ nonzero, $(1,\varepsilon)$-quantized weights.

  \Felix{Fourth}, define
  $\Phi^{\mathrm{sum}} := ((A^{\mathrm{sum}}, b^{\mathrm{sum}}))$ where
  \[
    A^{\mathrm{sum}} := (1, 1, \dots, 1) \in \R^{1\times m},
    \quad \text{ and } \quad
    b^{\mathrm{sum}} := 0.
  \]
  $\Phi^{\mathrm{sum}}$ has exactly $m$ nonzero, $(1, \varepsilon)$-quantized
  weights and one layer.

  \Felix{With all these preparations, we can finally define}
  \[
    \Psi_\varepsilon
    := \Phi^{\mathrm{sum}}
        \sconc P(\Lambda^1_{\varepsilon} \sconc \Phi_1,
                 P(\Lambda^2_\varepsilon \sconc \Phi_2,
                   \dots,
                   P(\Lambda^{m-1}_{\varepsilon} \sconc \Phi_{m-1},
                     \Lambda^m_{\varepsilon} \sconc \Phi_{m})
                   \dots
                  )
                )
        \sconc \widetilde{\Phi}.
  \]
  By Remark \ref{rem:SpecialConc} we see that $\Psi_{\varepsilon}$ has
  $1 + 5 + L(\Phi)$ layers and at most
  \[
    \Felix{
    16 \cdot \max \{ m \,,\, m \cdot 2(d+1+c_0) \,,\, M(\Phi) + 2d \, L(\Phi) \}
    }
    %4(m + m \cdot 2(d+1 + c_0) + M(\Phi) + 2d L(\Phi))
    \leq c \cdot (m+L(\Phi) + M(\Phi))
  \]
  nonzero, $(\max\{s_0, s\}, \nicefrac{\varepsilon}{m})$\Felix{-quantized}
  weights for a constant $c = c(d)>0$.

  We observe that
  \[
    \Realization_\varrho (\Psi_\varepsilon)(x)
    = \sum_{\ell=1}^m
         \Realization_\varrho (\Lambda^{\ell}_{\varepsilon})
           \left(x, [\Realization_\varrho (\Phi)(x)]_\ell\right)
    \quad \text{ for } x \in \R^d \, .
  \]
  Thus, by the triangle inequality, which is valid since $p_0 \geq 1$, we see
  \begin{align*}
    &\left\|
        \Realization_\varrho (\Psi_\varepsilon)
        - \sum_{\ell \leq m}
            \chi_{\prod_{i = 1}^d [a_{i,\ell}, b_{i,\ell}]}
            \cdot [\Realization_{\varrho}(\Phi)]_\ell
     \right\|_{L^{\Felix{p_0}}([-\nicefrac{1}{2}, \nicefrac{1}{2}]^d)} \\
    &\qquad \leq \sum_{\ell \leq m}
                    \left\|
                        \Realization_\varrho (\Lambda_{\varepsilon}^{\ell})
                          \left(
                            \bullet,
                            [\Realization_\varrho (\Phi)(\bullet)]_\ell
                          \right)
                        - \chi_{\prod_{i = 1}^d [a_{i,\ell}, b_{i,\ell}]}
                          \cdot [\Realization_{\varrho}(\Phi)]_\ell
                    \right\|_{L^{\Felix{p_0}} ([-\nicefrac{1}{2}, \nicefrac{1}{2}]^d)}
    \overset{(\ast)}{\leq} \sum_{\ell \leq m}
                             \frac{\varepsilon}{m}
    = \varepsilon,
  \end{align*}
  where the step marked with $(\ast)$ holds by choice of the neural networks
  $\Lambda^\ell_\varepsilon$,
  see Lemma \ref{lem:MultiplicationWithACharacteristicFunction}.
  \Felix{Finally, we apply Equation \eqref{eq:JensenEstimate} to get the desired
  estimate for the $L^p$ (quasi)-norm instead of the $L^{p_0}$ norm.}
\end{proof}

Our next larger goal is to show that neural networks can well approximate
smooth functions with respect to the $L^{\Felix{p}}$ norm, in such a way that
the number of layers does \emph{not} grow with the approximation accuracy,
only with the smoothness of the function.
A central ingredient for the proof is the local approximation of smooth
functions via their Taylor polynomials.
Precisely, we need the following result, which is probably folklore:

\begin{lemma}\label{lem:HoelderTaylor}
  Let $\beta \in (0,\infty)$, and write $\beta = n + \sigma$ with
  $n \in \N_0$ and $\sigma \in (0,1]$, and let $d \in \N$.
  Then there is a constant $C = C(\beta,d) > 0$ with the following property:

  For each $f \in \mathcal{F}_{\beta, d, B}$ and arbitrary
  $x_0 \in (-\nicefrac{1}{2}, \nicefrac{1}{2})^d$, there is a polynomial
  $p(x) = \sum_{|\alpha| \leq n} c_\alpha (x-x_0)^\alpha$ with
  $c_\alpha \in [- C \cdot B, C \cdot B]$ for all
  $\alpha \in \N_0^d$ with $|\alpha| \leq n$ and such that
  \[
    |f(x) - p(x)| \leq C \cdot B \cdot |x - x_0|^\beta
    \qquad \text{ for all } x \in \left[-\nicefrac{1}{2}, \nicefrac{1}{2}\right]^d.
  \]
  In fact, $p = p_{f, x_0}$ is the Taylor polynomial of $f$ of degree $n$.
\end{lemma}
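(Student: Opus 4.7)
The plan is to take $p=p_{f,x_0}$ to be the degree-$n$ Taylor polynomial of $f$ at $x_0$, that is, $c_\alpha:=\partial^\alpha f(x_0)/\alpha!$ for $|\alpha|\leq n$. Since the assumption $\|f\|_{C^{0,\beta}}\leq B$ in particular implies $\|\partial^\alpha f\|_{\sup}\leq B$ for every $|\alpha|\leq n$, we at once get $|c_\alpha|\leq B/\alpha!\leq B$, which yields the coefficient bound with an absolute constant.

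For the pointwise error I would split into two cases. If $n=0$, then $\beta=\sigma\in(0,1]$ and $p(x)=f(x_0)$; the hypothesis $\mathrm{Lip}_\sigma(f)\leq B$ then gives $|f(x)-f(x_0)|\leq B\,|x-x_0|^\sigma$ directly. If instead $n\geq 1$, the closed segment $\{x_0+t(x-x_0):t\in[0,1]\}$ lies entirely in $[-\nicefrac12,\nicefrac12]^d$ (because both endpoints do), so the multivariate integral form of Taylor's theorem applies and yields
\begin{align*}
f(x)-p(x)=\sum_{|\alpha|=n}\frac{n\,(x-x_0)^\alpha}{\alpha!}\int_0^1(1-t)^{n-1}\bigl[\partial^\alpha f(x_0+t(x-x_0))-\partial^\alpha f(x_0)\bigr]\,dt.
\end{align*}
Using $\mathrm{Lip}_\sigma(\partial^\alpha f)\leq B$ for each $|\alpha|=n$, the bracketed difference is bounded by $B\,t^\sigma|x-x_0|^\sigma$; combining this with the trivial estimate $|(x-x_0)^\alpha|\leq|x-x_0|^n$, with the finite Beta-function value $\int_0^1(1-t)^{n-1}t^\sigma\,dt=\mathrm{B}(\sigma+1,n)$, and with the multinomial identity $\sum_{|\alpha|=n}1/\alpha!=d^n/n!$, one arrives at
\begin{align*}
|f(x)-p(x)|\leq B\,|x-x_0|^{n+\sigma}\cdot n\,\mathrm{B}(\sigma+1,n)\cdot\frac{d^n}{n!}= C(\beta,d)\cdot B\cdot|x-x_0|^\beta.
\end{align*}
Combining the two cases and enlarging $C$ if necessary so that it also dominates the coefficient bound completes the proof.

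There is no serious obstacle here; the only point of care is the case split $n=0$ versus $n\geq 1$, needed because the integral remainder formula contains a factor $(1-t)^{n-1}$ which is only integrable on $[0,1]$ for $n\geq 1$. Everything else is a direct application of the classical Taylor formula together with the Hölder-regularity assumption on the top-order derivatives.
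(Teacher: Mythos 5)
Your proof is correct and follows essentially the same route as the paper's: define $p$ as the degree-$n$ Taylor polynomial, handle $n=0$ separately via the Hölder condition, and for $n\geq 1$ use Taylor's theorem with integral remainder together with $\mathrm{Lip}_\sigma$ of the order-$n$ derivatives. The only differences are cosmetic: you work directly in multi-index notation and track the Beta-function value $\mathrm{B}(\sigma+1,n)$ and the identity $\sum_{|\alpha|=n}1/\alpha!=d^n/n!$ explicitly, whereas the paper uses index-sequence notation from Lee and the cruder bound $\int_0^1(1-t)^{n-1}t^\sigma\,dt\leq 1/n$; and your coefficient bound $|c_\alpha|\leq B$ is tighter than the paper's $|c_\alpha|\leq d^nB$, but either suffices since $C$ may depend on $d$ and $\beta$.
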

\begin{proof}
  In case of $n = 0$, \Felix{so that $\beta = \sigma$,} the claim is trivial
  \Felix{for $C = 1$}:
  If we set $p(x) := f(x_0)$, then
  $|f(x_0)| \leq \| f \|_{C^{0,\beta}} \leq B$, and
  \[
      |f(x) - p(x)|
      = |f(x) - f(x_0)|
      \leq \| f \|_{C^{0,\beta}} \cdot |x - x_0|^\sigma
      \leq B \cdot |x - x_0|^\beta,
  \]
  as desired. Therefore, we can from now on assume $n \in \N$.

  In the following, we use a slightly different multi-index notation
  \Felix{than in the rest of the paper}, to be compatible with the notation in
  \cite{LeeSmoothManifolds}: We write $\underline{d} := \{1,\dots,d\}$, and for
  $I = (i_1,\dots, i_m) \in \underline{d}^m$ with $m \in \N$, we write
  $\partial_I f := \partial_{i_1} \cdots \partial_{i_m} f$ and
  $y^I = y^{i_1} \cdots y^{i_m}$ for $y \in \R^d $.
  Using this notation, the Taylor polynomial of $f$ of degree $n-1$ at $x_0$ is
  given by
  \[
    p_0(x) := f(x_0)
            + \sum_{m=1}^{n-1}
                \frac{1}{m!} \sum_{I \in \underline{d}^m}
                                (\partial_I f)(x_0) \cdot (x-x_0)^I .
  \]
  Taylor's theorem with integral remainder
  (see \cite[Theorem C.15]{LeeSmoothManifolds}) shows
  for $x \in (-\nicefrac{1}{2}, \nicefrac{1}{2})^d$ that
  \begin{align*}
    &f(x) - p_0 (x) \\
    &= \frac{1}{(n-1)!} \cdot
      \sum_{I \in \underline{d}^n}
          (x - x_0)^I \!
          \int_0^1
            (1-t)^{n-1} \partial_I f (x_0 + t(x-x_0))
          dt \\
    &= \frac{1}{(n-1)!}
       \left(
              \sum_{I \in \underline{d}^n}
                (x - x_0)^I
                \int_0^1
                  (1\!-\!t)^{n-1} \partial_I f (x_0)
                dt\right.\\
     &\qquad \qquad \qquad
       \left. + \! \sum_{I \in \underline{d}^n}
                     (x - x_0)^I \!
                     \int_0^1
                       (1\!-\!t)^{n-1}
                       [ \partial_I f (x_0 \! + \! t(x-x_0))
                         - \partial_I f (x_0) ]
                     dt
       \right)\\
    &= \frac{1}{n!} \cdot
      \sum_{I \in \underline{d}^n}
          \partial_I f (x_0) \cdot (x - x_0)^I
    + \frac{1}{(n-1)!} % \cdot
          \sum_{I \in \underline{d}^n}
              (x - x_0)^I
              \int_0^1
                (1-t)^{n-1}
                [ \partial_I f (x_0 + t(x-x_0)) - \partial_I f (x_0) ]
              dt \\
    &=: q(x) + R (x).
  \end{align*}
  But $p := p_0 + q$ is the Taylor polynomial of $f$ of degree $n$ at $x_0$, and
  $p(x) = \sum_{|\alpha|\leq n} c_\alpha (x-x_0)^\alpha$ for certain
  $c_\alpha \in \R$, which are easily seen to satisfy
  \[
    |c_\alpha|
    \leq \sum_{I \in \underline{d}^{|\alpha|}
               \text{ with } \alpha = \Felix{e_{i_1} + \dots + e_{i_{|\alpha|}}}}
            |\partial_I f(x_0)|
    \leq d^{|\alpha|} \cdot B \leq d^n \cdot B,
  \]
  \Felix{where $(e_1, \dots, e_d)$ denotes the standard basis of $\R^d$.}

  Finally, since $\partial_I f$ is $\sigma$ Hölder continuous with
  $\Lip_\sigma (\partial_I f) \leq \| f \|_{C^{0,\beta}} \leq B$
  for each $I \in \underline{d}^n$, we get
  \begin{align*}
      |f(x) - p(x)| = |R (x)|
      &\leq \frac{1}{(n-1)!} \cdot
           \sum_{I \in \underline{d}^n}
              |(x - x_0)^I| \cdot
              \int_0^1
                (1-t)^{n-1} \cdot B \cdot |t (x - x_0)|^\sigma
              dt \\
      & \leq \frac{d^n}{n!} |x-x_0|^n \cdot B \cdot |x - x_0|^\sigma
      \leq C \cdot B \cdot |x-x_0|^\beta \, ,
  \end{align*}
  for $C = C(d,n) = C(d,\beta) := d^n$.
  By continuity, this estimate holds for all
  $x \in [-\nicefrac{1}{2}, \nicefrac{1}{2}]^d$,
  not just for $x \in (-\nicefrac{1}{2}, \nicefrac{1}{2})^d$.
\end{proof}

Now, we can finally prove our main result about the
$L^{\Felix{p}}$-approximation of smooth functions using ReLU networks.

\begin{theorem}\label{thm:ApproxOfSmoothFctnAppendix}
  For $d\in \N$, and $\beta, B\Felix{, p} > 0$, there are constants
  $L = L(\beta,d) \in \N$ and $c= c(d,\beta, B) >0$,
  $s = s(d,\beta,B\Felix{,p})\in \N$ with
  \Felix{$L \leq 11 + \big(1 + \lceil \log_2 \beta \rceil \big)
                      \cdot \big(11 + \nicefrac{\beta}{d}\big)$}
  %\Felix{$L \leq c' \cdot (1 + \lceil \log_2 (1 + \beta) \rceil )
  %                  \cdot (1 + \nicefrac{\beta}{d})$,}
  and such that for any function $f \in \mathcal{F}_{\beta,d, B}$ and any
  $\varepsilon \in (0, \nicefrac{1}{2})$, there is a neural network
  $\Phi^f_\varepsilon$ with at most $L$ layers, and at most
  $c \cdot \varepsilon^{-d/\beta}$ nonzero,
  $(s,\varepsilon)$-quantized weights such that %for all $p\in (0,2]$,
  \[
    \|
      \Realization_\varrho(\Phi^f_\varepsilon) - f
    \|_{L^p ([-\nicefrac{1}{2}, \nicefrac{1}{2}]^d)}
    \leq \varepsilon
    \quad \text{ and } \quad
    \|
      \Realization_\varrho(\Phi^f_\varepsilon)
    \|_{\sup}
    \leq \lceil B \rceil.
  \]
\end{theorem}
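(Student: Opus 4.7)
The plan is to combine a local Taylor approximation with an approximate partition of unity built from rectangular indicator functions, using Lemmas \ref{lem:HoelderTaylor}, \ref{lem:ComputationalUnit} and \ref{lem:MultiplicationWithACharacteristicFunctionArray} as building blocks.

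First, I would discretize the cube: write $\beta = n+\sigma$ with $n\in\N_0$, $\sigma\in(0,1]$, choose $\delta \sim \varepsilon^{1/\beta}$ (so that $CB\delta^\beta$ is a small fraction of $\varepsilon$), and partition $[-\nicefrac{1}{2},\nicefrac{1}{2}]^d$ into essentially disjoint axis-aligned cubes $R_\ell = \prod_{i=1}^d [a_{i,\ell},b_{i,\ell}]$ of side length $\delta$, centered at points $x_\ell$, with $m \lesssim \delta^{-d} \sim \varepsilon^{-d/\beta}$ many cubes. By Lemma \ref{lem:HoelderTaylor}, for each $\ell$ there is a Taylor polynomial $p_\ell(x)=\sum_{|\alpha|\leq n} c_{\ell,\alpha}(x-x_\ell)^\alpha$ with $|c_{\ell,\alpha}|\leq CB$ and $|f(x)-p_\ell(x)|\leq CB\delta^\beta$ for all $x\in R_\ell$. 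Hence $\|f-\sum_\ell \chi_{R_\ell}p_\ell\|_{L^\infty}\leq CB\delta^\beta$, which, after choosing $\delta$ appropriately, we can make smaller than $\varepsilon/3$.

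Next, I would build the network in three stages. Stage one: apply Lemma \ref{lem:ComputationalUnit} with the coefficients $\{c_{\ell,\alpha}\}$ and base points $\{x_\ell\}$ at accuracy $\varepsilon_1 \sim \varepsilon$, producing a network $\Phi^{\mathrm p}_{\varepsilon_1}$ with $m$-dimensional output, at most $c(\varepsilon^{-d/\beta}+m)=O(\varepsilon^{-d/\beta})$ quantized weights, and at most $1+(1+\lceil\log_2\beta\rceil)(11+\beta/d)$ layers, whose $\ell$-th output approximates $p_\ell$ uniformly on $[-\nicefrac12,\nicefrac12]^d$ up to $\varepsilon_1$. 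Stage two: apply Lemma \ref{lem:boundedApprox} componentwise to cap each output by $\lceil CB\rceil+1$, which costs $2$ extra layers and $O(m)$ extra weights and is needed to invoke the next stage. Stage three: apply Lemma \ref{lem:MultiplicationWithACharacteristicFunctionArray} with accuracy $\sim\varepsilon$ to the capped network and the rectangles $R_\ell$ to obtain $\Psi_\varepsilon$ with one-dimensional output, satisfying
\[
  \Bigl\|\Realization_\varrho(\Psi_\varepsilon)-\sum_\ell \chi_{R_\ell}\cdot[\Realization_\varrho(\Phi^{\mathrm p}_{\varepsilon_1})]_\ell\Bigr\|_{L^p}\leq \varepsilon/3,
\]
adding $6$ further layers and $O(m+M(\Phi^{\mathrm p}_{\varepsilon_1}))=O(\varepsilon^{-d/\beta})$ weights. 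Finally, I prepend one more application of Lemma \ref{lem:boundedApprox} with bound $\lceil B\rceil$ to enforce the sup-norm constraint (costing $2$ layers and $O(1)$ weights).

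The triangle inequality then bundles three sources of error: the Taylor error $\|f-\sum_\ell \chi_{R_\ell}p_\ell\|_{L^p}$, the polynomial-approximation error (using Lemma \ref{lem:ComputationalUnit}'s sup bound integrated over the unit cube), and the indicator-multiplication error from Lemma \ref{lem:MultiplicationWithACharacteristicFunctionArray}; each can be made $\leq \varepsilon/3$ by the choices above, so the total is $\leq \varepsilon$. The total depth comes to at most $2+6+2+1+(1+\lceil\log_2\beta\rceil)(11+\beta/d)=11+(1+\lceil\log_2\beta\rceil)(11+\beta/d)$, and the total weight count is $O(\varepsilon^{-d/\beta})$ with $(s,\varepsilon)$-quantized weights (possibly after a harmless conversion via Remark \ref{rem:QuantisationConversion} to absorb the constant factors in the accuracy parameters).

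The main technical obstacle is bookkeeping: ensuring the rectangles $R_\ell$ genuinely cover the domain modulo null sets (so summing $\chi_{R_\ell}p_\ell$ reconstructs $f$ piecewise), that the hypothesis $\|[\Realization_\varrho(\Phi)]_\ell\|_{L^\infty}\leq B$ of Lemma \ref{lem:MultiplicationWithACharacteristicFunctionArray} is respected globally (not just on $R_\ell$) -- which is exactly what the intermediate cap via Lemma \ref{lem:boundedApprox} handles -- and that all the quantization scales and the accuracies chosen at the various stages combine properly into $(s,\varepsilon)$-quantization of the final weights for a single $s=s(d,\beta,B,p)$.
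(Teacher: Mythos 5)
Your proposal follows exactly the paper's argument: discretize the cube into $\sim\varepsilon^{-d/\beta}$ rectangles, apply Lemma \ref{lem:HoelderTaylor} for the local Taylor polynomials, feed the coefficients and base points into Lemma \ref{lem:ComputationalUnit}, cap the outputs via Lemma \ref{lem:boundedApprox}, glue with Lemma \ref{lem:MultiplicationWithACharacteristicFunctionArray}, and apply Lemma \ref{lem:boundedApprox} once more for the sup-norm bound, with the same $11 + (1+\lceil\log_2\beta\rceil)(11+\beta/d)$ layer accounting. Your identification of the intermediate cap as necessary for the $L^\infty$ hypothesis of Lemma \ref{lem:MultiplicationWithACharacteristicFunctionArray} matches the paper's use of $\tau_{B_1}$, and the quantization-conversion concern is handled exactly as you suggest via Remark \ref{rem:QuantisationConversion}.
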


\begin{proof}
As in the proof of Lemma \ref{lem:MultiplicationWithACharacteristicFunction},
\Felix{setting $p_0 := \lceil p \rceil \in \N$, it suffices to consider
approximation in $L^{p_0}$ instead of $L^p$, thanks to
Equation \eqref{eq:JensenEstimate}.}
%the case $p = 2$, thanks to Jensen's inequality.
%Also, we only need to consider the case $B = 1$, since the general case
%follows by reweighting.
Let $\beta = n + \sigma$ with $n\in \N_0$ and $\sigma \in (0,1]$.
Further, let $C = C(d,\beta) > 0$ denote the constant from
Lemma \ref{lem:HoelderTaylor}, and define
\[
  N := \left\lceil
         \left(\frac{\varepsilon}{4 C B d^\beta} \right)^{-\frac{1}{\beta}}
       \right\rceil \in \N .
\]
Finally, for $\lambda \in \{1, \dots, N\}^d$, set
\[
  I_\lambda
  := \prod_{i = 1}^d
      \left[
        \frac{\lambda_i - 1}{N} -\frac{1}{2}, \frac{\lambda_i}{N} -\frac{1}{2}
      \right].
\]
As a result, we have (with disjointness up to null-sets) that
\begin{equation}
  \left[-\frac{1}{2}, \frac{1}{2}\right]^d
  = \overset{\bullet}{\bigcup_{\lambda\in \{1, \dots, N\}^d}} I_\lambda
  \quad \text{and} \quad
  I_\lambda \subset \overline{B}_{1/n}^{\|\cdot\|_{\ell^\infty}}(x)
            \subset \overline{B}_{d/N}^{|\cdot|}(x)
  \quad \text{for all} \quad x \in I_\lambda.
  \label{eq:TheSizeOfILambda}
\end{equation}
\Felix{Let us write $\{1,\dots,N\}^d = \{\lambda_1, \dots, \lambda_{N^d}\}$,
and for each $i \in \{1,\dots,N^d\}$ choose}
%Choose for all $\lambda \in \{1, \dots, N\}^d$
a point $x_i$ in the interior of $I_{\lambda_i}$,  and set
$c_{i, \alpha} := \partial^\alpha f (x_i)/\alpha!$ for
$\alpha \in \N_0^d$ with $|\alpha| \leq n$.
Note $|c_{i, \alpha}| \leq B$.

In view of Lemma \ref{lem:HoelderTaylor} and
Equation \eqref{eq:TheSizeOfILambda}, we see
\begin{equation}
  \sup_{\substack{i \in \{1, \dots, N^d\} \\ x \in I_{\lambda_i}}}
      \left|
          f(x) - p_{i,\alpha}(x)
      \right|
  \leq C B \left(\frac{d}{N}\right)^\beta
  \quad \text{with} \quad
  p_{i,\alpha}(x)
  := \sum_{|\alpha| \leq n}
       \frac{\partial^\alpha f (x_i)}{\alpha!} (x-x_i)^\alpha \, .
  %= \sum_{|\alpha| < \beta}
  %    c_{i,\alpha} (x - x_i)^\alpha \, .
  \label{eq:LocalPolynomialApproximation}
\end{equation}
\Felix{In particular, this implies
\begin{equation}
  |p_{i,\alpha}(x)|
  \leq |f(x)| + C d^\beta B
  \leq \left\lceil (1 + C d^\beta) \cdot B \right\rceil
  =: B_1
  \quad \text{for all} \quad
  x \in I_{\lambda_i} \, .
  \label{eq:SmoothFunctionApproximationB1Definition}
\end{equation}
}
Next, with the base points $(x_i)_{i=1,\dots,N^d}$ and the coefficients
$(c_{i,\alpha})$ from above, take $\Phi^{\mathrm{p}}_{\varepsilon/4}$ as in
Lemma \ref{lem:ComputationalUnit} with accuracy $\nicefrac{\varepsilon}{4}$
instead of $\varepsilon$, and with $m = N^d$.
%and with $B_0 := \max\{1, 2^{\lceil \log_2 B \rceil}\}$ instead of $B$.
By Lemma \ref{lem:ComputationalUnit}, the network
$\Phi_{\varepsilon/4}^{\mathrm{p}}$ has at most $L_1 = L_1(d, \beta)$ layers,
with
\Felix{$L_1 \leq 1 + (1 + \lceil \log_2 \beta \rceil ) \cdot (11 + \nicefrac{\beta}{d})$}
%$L_1 \leq c' \cdot (1 + \lceil \log_2 (1 + \beta) \rceil ) \cdot (1 + \nicefrac{\beta}{d})$,
and at most  $c_1 (\varepsilon^{-d/\beta} + N^d)$ nonzero,
$(s_1,\varepsilon)$-quantized weights
(see also Remark \ref{rem:QuantisationConversion}),
for certain $s_1 = s_1(d,\beta, B) \in \N$ and $c_1 = c_1(d, \beta,B) > 0$.
%and an absolute constant $c' \in \N$.

\Felix{Now, Lemma \ref{lem:boundedApprox} (applied with $B_1$ instead of $B$)
yields a network $\Psi_{\eps/4}^{\mathrm{p}}$ with
\[
  \Realization_\varrho (\Psi_{\eps/4}^{\mathrm{p}})
   = (\tau_{B_1} \times \cdots \times \tau_{B_1})
     \circ \Realization_\varrho (\Phi_{\eps/4}^{\mathrm{p}}),
\]
where $\tau_{B_1} : \R \to [-B_1,B_1]$ is $1$-Lipschitz and satisfies
$\tau_{B_1} (x) = x$ for all $x \in [-B_1,B_1]$.
Furthermore, Lemma \ref{lem:boundedApprox} shows that
$\Psi_{\eps/4}^{\mathrm{p}}$ has at most
$2 c_1 \cdot (\varepsilon^{-d/\beta} + N^d) + c_2 \cdot N^d
 \leq c_3 \cdot (\varepsilon^{-d/\beta} + N^d)$ nonzero,
$(s_2, \varepsilon)$-quantized weights for an absolute constant $c_2 > 0$ and
suitable $s_2 = s_2 (d,\beta,B) \in \N$ and $c_3 = c_3 (d,\beta,B) > 0$.
Finally, Lemma \ref{lem:boundedApprox} also yields
\[
  L_2
  := L(\Psi_{\eps/4}^{\mathrm{p}})
  \leq 2 + L_1
  \leq 3
       + (1 + \lceil \log_2 \beta \rceil )
         \cdot \left(11 + \frac{\beta}{d}\right) \, .
\]
By combining Equation \eqref{eq:SmoothFunctionApproximationB1Definition}
with the properties of the function $\tau_{B_1}$ and with the properties of
the networks $\Phi_{\eps/4}^{\mathrm{p}}$ stated in
Lemma \ref{lem:ComputationalUnit}, we see
\[
  \left|
    [\Realization_\varrho (\Psi_{\eps/4}^{\mathrm{p}})(x)]_i - p_{i,\alpha}(x)
  \right|
  = \left|
      \tau_{B_1}(
                  \Realization_\varrho (\Phi_{\eps/4}^{\mathrm{p}}) (x)
                )
      - \tau_{B_1} (p_{i,\alpha} (x))
    \right|
  \leq \left|
         \Realization_\varrho (\Phi_{\eps/4}^{\mathrm{p}}) (x)
         - p_{i,\alpha} (x)
       \right|
  \leq \frac{\eps}{4}
\]
for all $x \in I_{\lambda_i}$ and $i \in \{1,\dots,N^d\}$.
Therefore, recalling Equation \eqref{eq:LocalPolynomialApproximation} and our
choice of $N$ from the beginning of the proof, we get}
\begin{align*}
  \left\|
    f
    - \sum_{\Felix{i \in \{1, \dots, N^d\}}}
        \chi_{I_{\Felix{\lambda_i}}}
        [\Realization_{\varrho}(\Psi^{\mathrm{p}}_{\varepsilon/4})]_{\Felix{i}}
  \right\|_{L^\infty}
  & \leq \sup_{\Felix{\substack{i \in \{1, \dots, N^d\} \\ x \in I_{\lambda_i}}}}
           |
            f(x)
            - [\Realization_{\varrho}(\Psi^{\mathrm{p}}_{\eps/4})]_i (x)
           | \\
  & \Felix{
    \leq \frac{\varepsilon}{4}
         + \sup_{\substack{i \in \{1, \dots, N^d\} \\ x \in I_{\lambda_i}}}
             |f(x) - p_{i,\alpha} (x)|
    \leq \frac{\eps}{4} + C B \left(\frac{d}{N}\right)^\beta
    \leq \frac{\eps}{2} \, .}
\end{align*}
By the triangle inequality, \Felix{and since
$\|\mybullet\|_{L^{p_0} ([-\nicefrac{1}{2},\nicefrac{1}{2}]^d)}
 \leq \|\mybullet\|_{L^{\infty} ([-\nicefrac{1}{2},\nicefrac{1}{2}]^d)}$,
we see that we are done---at least if we ignore the bound
$\|\Realization_\varrho (\Phi_\eps^f)\|_{\sup} \leq \lceil B \rceil$ for
the moment---if we can find a network}
$\Psi_\varepsilon$ with properly quantized weights, at most
\Felix{$L \leq 9 + \big( 1 + \lceil \log_2 \beta \rceil \big)
                   \cdot \big( 11 + \nicefrac{\beta}{d}\big)$}
layers,
and at most $c_6 \cdot \varepsilon^{-d/\beta}$ nonzero weights satisfying
$\|\Realization_{\varrho}(\Psi_\varepsilon)
   - \sum_{\Felix{i \in \{1, \dots, N^d\}}}
       \chi_{I_{\Felix{\lambda_i}}}
       [\Realization_{\varrho}(\Psi^{\mathrm{p}}_{\eps/4})]_{\Felix{i}}
 \|_{L^{\Felix{p_0}}}
\leq \nicefrac{\varepsilon}{2}$.
We will construct such a network using
Lemma \ref{lem:MultiplicationWithACharacteristicFunctionArray}.

Indeed, if we apply that lemma, with $\nicefrac{\varepsilon}{2}$ instead of
$\varepsilon$ \Felix{and $p_0$ instead of $p$}, with
$\Phi = \Felix{\Psi}_{\varepsilon/4}^{\mathrm{p}}$ and $m = N^d$,
and with the intervals $I_{\Felix{\lambda_i}}$,
\Felix{$i \in \{1, \dots, N^d\}$}, then we get
a neural network $\Psi_\varepsilon$ which satisfies the desired estimate.

Furthermore, $\Psi_{\varepsilon}$ has at most
\Felix{$6 + L(\Psi_{\varepsilon/4}^{\mathrm{p}})
\leq 9 + \big(1 + \lceil \log_2 \beta \rceil \big)
         \cdot \big(11 + \nicefrac{\beta}{d}\big)$}
%L_1 \leq c'' \cdot (1 + \lceil \log_2 (1 + \beta) \rceil ) \cdot (1 + \nicefrac{\beta}{d})$
layers. Moreover, $\Psi_\varepsilon$ has at most
\[
  \Felix{c_4} \cdot \big(
                       N^d
                       + L(\Felix{\Psi}_{\varepsilon/4}^{\mathrm{p}})
                       + M(\Felix{\Psi}_{\varepsilon/4}^{\mathrm{p}})
                    \big)
  \leq \Felix{c_4} \cdot \big(
                              N^d + \Felix{L_2}
                              + M(\Felix{\Psi}_{\varepsilon/4}^{\mathrm{p}})
                         \big)
  \leq \Felix{c_5} \cdot \big(N^d + \Felix{c_3} (\varepsilon^{-d/\beta} + N^d) \big)
\]
nonzero, $(\max\{\Felix{s_2}, s_0\}, \varepsilon/(2N^d))$-quantized weights,
with constants \Felix{$c_4 = c_4(d) > 0$, $c_5 = c_5 (d,\beta) > 0$}, and
$s_0 = s_0 (d\Felix{,p},B) \in \N$.
By choice of $N$, this shows that $\Psi_{\varepsilon}$ has the correct number
of nonzero weights.

Finally, we have $\varepsilon/(2N^d) \geq c_7 \cdot \varepsilon^{1+d/\beta}$,
for $c_7 = c_7( d, \beta, B)$ so that Remark \ref{rem:QuantisationConversion}
shows that the weights of $\Psi_{\varepsilon}$ are quantized as stated in the
theorem.

\medskip{}

\Felix{We yet have to modify the network $\Psi_\eps$ to obtain a network
$\Phi_\eps^f$ with
$\|\Realization_\varrho(\Phi_\eps^f)\|_{\sup} \leq \lceil B \rceil$.
To this end, we apply}
Lemma \ref{lem:boundedApprox} to $\Psi_\eps$.
This yields a network $\Phi^f_\eps$ with the required number of layers and of
(correctly quantized) nonzero weights, which satisfies
$\Realization_\varrho (\Phi_\eps^f)
= \tau_B \circ \Realization_\varrho (\Psi_\eps)$,
\Felix{with $\tau_B$ as in Lemma \ref{lem:boundedApprox}}.
By the properties of $\tau_B$, and since $\|f\|_{\sup}\leq B$,
so that $f = \tau_B \circ f$, we obtain
\[
  \|
    \Realization_\varrho(\Phi^f_\eps)
    - f
  \|_{L^{\Felix{p_0}}([-\nicefrac{1}{2}, \nicefrac{1}{2}]^d)}
  \leq \|
         \Realization_\varrho(\Psi_\eps)
         - f
       \|_{L^{\Felix{p_0}}([-\nicefrac{1}{2}, \nicefrac{1}{2}]^d)}
  \leq \eps.
\qedhere
\]
\end{proof}

\subsection{Approximation of horizon functions}

We proceed to construct networks that yield good approximations of horizon
functions.
The underlying idea is relatively straightforward:
We have already seen in Lemma \ref{lem:ApproxOfHeaviside} that networks yield
approximate realizations of Heaviside functions.
Since a horizon function is simply a smoothly transformed Heaviside function,
we only need to realize this smooth transformation with a network.
This is possible using Theorem \ref{thm:ApproxOfSmoothFctnAppendix}.
The following lemma makes these arguments rigorous.

\begin{lemma}\label{lem:HorFunctAppendix}
  For $\Felix{p},\beta,B > 0$ and $d \in \N_{\geq 2}$ there are constants
  $L = L(d, \beta) \in \N$, $c= c(d, \beta, \Felix{B, p}) > 0$, and
  $s = s(d, \beta, B \Felix{,p}) \in \N$
  with
  \Felix{$L \leq 14 + \big( 1 + \lceil \log_2 \beta \rceil \big)
                      \big( 11 + \nicefrac{2\beta}{d} \big)$}
  %\Felix{$L \leq c' \cdot (1 + \lceil \log_2 (1 + \beta) \rceil )
  %                  \cdot (1 + \nicefrac{\beta}{d})$}
  and such that for every function $f\in \mathcal{HF}_{\beta,d, B}$ and every
  $\varepsilon \in (0, \nicefrac{1}{2})$ there is a neural network
  $\Phi^f_\varepsilon$ with at most $L$ layers and at most
  $c \cdot \varepsilon^{-\Felix{p}(d-1)/\beta}$ nonzero,
  $(s, \varepsilon)$-quantized weights, satisfying
  \[
     \|
       \Realization_\varrho(\Phi^f_\varepsilon) - f
     \|_{L^{\Felix{p}}([-\nicefrac{1}{2}, \nicefrac{1}{2}]^d)}
     < \varepsilon.
  \]
  Moreover, $0 \leq \Realization_\varrho(\Phi^f_\varepsilon)(x) \leq 1$
  for all $x \in \Felix{\R^d}$.
\end{lemma}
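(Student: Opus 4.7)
The strategy is to split the horizon function $f$ into its two constituent parts---a smooth defining function $\gamma$ and the scalar Heaviside threshold $H_1(z) := \chi_{[0,\infty)}(z)$---approximate each by a separate subnetwork, and then compose them. By Definition \ref{def:HorizonFunctions}, we may write $f(x) = H_1(y_1 + \gamma(y_2, \dots, y_d))$ where $y = Tx$ for some permutation matrix $T \in \Pi(d, \R)$ and $\gamma \in \mathcal{F}_{\beta, d-1, B}$. The target approximation is $\hat f (x) := \hat H_1(y_1 + \hat\gamma(y_2, \dots, y_d))$ for ReLU realizations $\hat\gamma$ of $\gamma$ and $\hat H_1$ of $H_1$. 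Since $T$ is linear with only $d$ nonzero entries, it folds (as a column permutation) into the first linear layer of the subsequent network at no extra cost.

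First, I would apply Theorem \ref{thm:ApproxOfSmoothFctnAppendix} with $d-1$ in place of $d$, exponent $1$ in place of $p$, and accuracy $\delta_1 := \varepsilon^p / 4$, to obtain a network $\Phi_\gamma$ with $\|\gamma - \Realization_\varrho(\Phi_\gamma)\|_{L^1} \leq \delta_1$, $\|\Realization_\varrho(\Phi_\gamma)\|_{\sup} \leq \lceil B \rceil$, at most $L_\gamma \leq 11 + (1 + \lceil \log_2 \beta \rceil)(11 + \beta/(d-1))$ layers, and at most $\mathcal{O}(\delta_1^{-(d-1)/\beta}) = \mathcal{O}(\varepsilon^{-p(d-1)/\beta})$ nonzero weights, which by Remark \ref{rem:QuantisationConversion} may be taken $(s, \varepsilon)$-quantized for some $s = s(d, \beta, B, p)$. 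Second, a scalar analogue of Lemma \ref{lem:ApproxOfHeaviside} (the construction is identical in input dimension $1$, simply restricting the first matrix to one column) with $\delta_2 \leq \varepsilon^p / 4$ chosen from the dyadic grid yields a two-layer network $\Phi_H$ of constant weight count with $\hat H_1 := \Realization_\varrho(\Phi_H) \in [0,1]$ and $|H_1(z) - \hat H_1(z)| \leq \chi_{[0, \delta_2]}(z)$ for every $z \in \R$, with suitably quantized weights. Finally, I assemble $\Phi^f_\varepsilon$ by parallelizing (Definition \ref{def:Parallelization}) an identity branch carrying $y_1$, padded to $L_\gamma$ layers via Remark \ref{rem:DeepIdentity}, with $\Phi_\gamma$ acting on $(y_2, \dots, y_d)$, collapsing the two outputs via the affine combination $y_1 + \hat\gamma(y_2, \dots, y_d)$ absorbed into the first layer of $\Phi_H$, and sparsely concatenating (Definition \ref{def:SpecialConc}) with $\Phi_H$. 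Using $\beta/(d-1) \leq 2\beta/d$ for $d \geq 2$, the layer count is at most $L_\gamma + 2 \leq 14 + (1 + \lceil \log_2 \beta \rceil)(11 + 2\beta/d)$, and the total number of nonzero weights remains $\mathcal{O}(\varepsilon^{-p(d-1)/\beta})$.

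The main obstacle is bounding $\|f - \hat f\|_{L^p}$ in the presence of the discontinuous threshold: $L^1$ control of $\hat\gamma - \gamma$ does \emph{not} yield pointwise control of $f - \hat f$. My plan is to work with the measure of the disagreement set $S := \{x \in [-\nicefrac12, \nicefrac12]^d \,:\, f(x) \neq \hat f(x)\}$; since $0 \leq f, \hat f \leq 1$, one has $\|f - \hat f\|_{L^p}^p \leq |S|$. The pointwise splitting
\[
  |f(x) - \hat f(x)|
  \leq |H_1(y_1 \! + \! \gamma) - H_1(y_1 \! + \! \hat\gamma)|
       + |H_1(y_1 \! + \! \hat\gamma) - \hat H_1(y_1 \! + \! \hat\gamma)|
\]
controls $|S|$ in two parts: for fixed $(y_2, \dots, y_d)$, the $y_1$-section of the first summand's support is an interval of length exactly $|\gamma - \hat\gamma|(y_2, \dots, y_d)$ (the $y_1$-values for which $y_1 + \gamma$ and $y_1 + \hat\gamma$ lie on opposite sides of $0$), while the $y_1$-section of the second summand's support is contained in an interval of length at most $\delta_2$. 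Integrating over $(y_2, \dots, y_d) \in [-\nicefrac12, \nicefrac12]^{d-1}$ yields $|S| \leq \|\gamma - \hat\gamma\|_{L^1} + \delta_2 \leq \delta_1 + \delta_2 \leq \varepsilon^p/2 < \varepsilon^p$, whence $\|f - \hat f\|_{L^p} < \varepsilon$ as required. The bounds $0 \leq \Realization_\varrho(\Phi^f_\varepsilon)(x) \leq 1$ follow immediately from $\hat H_1 \in [0,1]$.
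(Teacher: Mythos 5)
Your proof is correct and takes essentially the same route as the paper's: approximate $\gamma$ in $L^1$ via Theorem \ref{thm:ApproxOfSmoothFctnAppendix}, approximate the Heaviside by a small two-layer ReLU network (Lemma \ref{lem:ApproxOfHeaviside}), compose via parallelization and sparse concatenation, and bound the $L^p$ error by a $y_1$-fibered (sectional) measure estimate. The only noteworthy variation is that you bound $\|f-\hat f\|_{L^p}^p$ directly by the Lebesgue measure of the disagreement set, which neatly sidesteps the $2^{\max\{1,p^{-1}\}}$ quasi-triangle-inequality factors the paper carries through its split into terms $\mathrm{I}$ and $\mathrm{II}$, but is the same argument in substance (one tiny caveat: the length of the $y_1$-section of the first summand's support is \emph{at most}, not exactly, $|\gamma-\hat\gamma|(\hat y)$, since $\gamma$ and $\hat\gamma$ may exceed the cube; the resulting inequality is the one you need, so nothing breaks).
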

\begin{proof}
Since multiplying $A_1$ in the definition of a neural network
$\Phi = ((A_1, b_1),\dots, (A_L, b_L))$ by a permutation matrix does not change
the number of layers or weights, or the possible values of the nonzero weights,
we can certainly restrict ourselves to horizon functions
$f \in \mathcal{HF}_{\beta,d, B}$ for which the permutation matrix
$T$ from Definition \ref{def:HorizonFunctions} is the identity matrix.
Choose $\gamma \in \mathcal{F}_{\beta,d-1,B}$ such that
$f = H \circ \widetilde{\gamma}$, where $H = \chi_{[0,\infty) \times \R^{d-1}}$
is the Heaviside function, and where
\[
  \widetilde{\gamma}(x) = (x_1 + \gamma(x_2, \dots, x_d), x_2, \dots, x_d ),
  \quad \text{ for } \quad
  x = (x_1, \dots, x_d) \in \left[-\nicefrac{1}{2}, \nicefrac{1}{2} \right]^d.
\]
Theorem \ref{thm:ApproxOfSmoothFctnAppendix}
(applied with $p=1$, with $d-1$ instead of $d$ and with
\Felix{$\frac{1}{2} \cdot (\varepsilon/4)^p$} instead of $\varepsilon$)
yields a network $\Phi^\gamma_\varepsilon$ with at most
\Felix{
\[
  L = L(d,\beta)
  \leq 11 + \big(1 + \lceil \log_2 \beta \rceil \big)
            \cdot \Big(11 + \frac{\beta}{d-1}\Big)
  \leq 11 + \big(1 + \lceil \log_2 \beta \rceil \big)
            \cdot \Big(11 + \frac{2\beta}{d}\Big)
\]}%
%\Felix{
%$L = L(d,\beta) \leq 11 + \big(1 + \log_2 (1 + \lceil \beta \rceil) \big)
%                          \cdot \big(12 + \nicefrac{2\beta}{d}\big)$}
layers, and at most \Felix{$c \cdot \varepsilon^{-{p(d-1)}/\beta}$} nonzero
weights (where $c = c(d,\beta,B\Felix{,p}) \in \N$) such that
$\gamma_\varepsilon := \Realization_\varrho(\Phi^\gamma_\varepsilon)$
approximates $\gamma$ with an $L^1$-error of less than
\Felix{$\frac{1}{2} \cdot (\eps/4)^p$}.
We also recall (by invoking Remark \ref{rem:QuantisationConversion})
that it is possible to construct this network with
$(s, \varepsilon)$-quantized weights, for some
$s = s(d,\beta, B\Felix{,p}) \in \N$.

Clearly, one can construct a network $\Phi^{\tilde{\gamma}}_\varepsilon$
of the same complexity (number of nonzero weights and quantization)
up to multiplicative constants that depend only on $d$, which satisfies
\[
  \Realization_{\varrho} (\Phi^{\tilde{\gamma}}_\varepsilon) (x)
  = (x_1 + \gamma_\varepsilon (x_2, \dots, x_d), x_2, \dots, x_d)
  \quad \text{for all} \quad
  x \in \R^d \, ,
\]
\Felix{and furthermore
$L(\Phi_\eps^{\tilde{\gamma}}) \leq 1 + L(\Phi_\eps^{\gamma})$}.
%$\|
%   \Realization_\varrho(\Phi^{\tilde{\gamma}}_\varepsilon) - \tilde{\gamma}
% \|_{L^\infty} < \varepsilon^2/16$.

As a second step, \Felix{we choose $\eps' \in 2^{-\N}$ with
$\frac{1}{4} \cdot (\varepsilon/4)^p \leq \eps'
\leq \frac{1}{2} \cdot (\varepsilon/4)^p$, and}
invoke Lemma \ref{lem:ApproxOfHeaviside}
(with $\varepsilon'$ instead of $\varepsilon$)
to obtain a neural network $\Phi_{\varepsilon'}^H$
with two layers and five weights such that
$|H(x) - \Realization_\varrho(\Phi_{\varepsilon'}^H)(x)|
\leq \chi_{\Felix{[0,\varepsilon'] \times \R^{d-1}}}(x)$
and $0 \leq \Realization_\varrho (\Phi_{\varepsilon'}^H) (x) \leq 1$
for all $x\in \R^d$.
%We choose $\varepsilon' \in 2^{-\N}$ so that
%$\nicefrac{\varepsilon}{2} \leq \varepsilon' \leq \varepsilon$.
Furthermore, Lemma \ref{lem:ApproxOfHeaviside} shows that all weights of
$\Phi_{\varepsilon'}^H$ are elements of
\Felix{$[-4 (\varepsilon/4)^{-p}, 4 \cdot (\varepsilon/4)^{-p}] \cap \Z
 \subset [-\varepsilon^{-s'}, \varepsilon^{-s'}] \cap \Z$ for
$s' := 2 + 3 \lceil p \rceil$.
Here, we used that $\varepsilon \leq \nicefrac{1}{2}$, so that
$(\varepsilon')^{-1} \leq 4 \cdot (\varepsilon/4)^{-p}
\leq 4 \cdot \varepsilon^{-3p} \leq \varepsilon^{-(2+3p)}$.}

Remark \ref{rem:SpecialConc} shows that there is
%It is not hard to see for
%an absolute constant $c'' > 0$ and certain constants
a constant $\widetilde{c} = \widetilde{c}(d, \beta, B) \in \N$
%and
%$\widetilde{L} = \widetilde{L} (d,\beta)
%\leq c'' \cdot (1 + \lceil \log_2 (1 + \beta) \rceil )
%         \cdot (1 + \nicefrac{\beta}{d})$
such that $\Phi_{\varepsilon'}^H \sconc \Phi_\varepsilon^{\tilde{\gamma}}$
is a neural network with at most
\Felix{
$\widetilde{L} \leq 2 + L(\Phi_\varepsilon^{\tilde{\gamma}})
               \leq 14 + \big( 1 + \lceil \log_2 \beta \rceil \big)
                         \cdot (11 + \nicefrac{2\beta}{d})$}
layers, and not more than
$\widetilde{c} \cdot \varepsilon^{-{\Felix{p} (d-1)}/\beta}$
nonzero $(\Felix{s''}, \varepsilon)$-quantized weights for a suitable
\Felix{$s'' = s'' (d,\beta,B,p) \in \N$.}
Furthermore, we have
$0 \leq \Realization_{\varrho}
        (\Phi_{\varepsilon'}^H \sconc \Phi_\varepsilon^{\tilde{\gamma}})
   \leq 1$,
since $0 \leq \Realization_{\varrho} (\Phi_{\varepsilon'}^H)\leq 1$.
% by Lemma \ref{lem:ApproxOfHeaviside}.
Thus, to complete the proof, it remains to show that
$\Realization_\varrho(\Phi_{\varepsilon'}^H
\sconc \Phi_\varepsilon^{\tilde{\gamma}})$
indeed approximates $f = H \circ \widetilde{\gamma}$ with
an $L^{\Felix{p}}$-error of at most $\varepsilon$.

To this end, we use Equation \eqref{eq:PseudoTriangleInequality} to deduce
because of $\max\{1,p^{-1}\} \leq 1 + p^{-1} =: q$ that
\begin{align*}
  \|
    H \circ \widetilde{\gamma}
    - \Realization_\varrho
      (\Phi_{\varepsilon'}^H \sconc \Phi_\varepsilon^{\tilde{\gamma}})
  \|_{L^{\Felix{p}}}
  =    &~  \|
             H \circ \widetilde{\gamma}
             - \Realization_\varrho (\Phi_{\varepsilon'}^H)
               \circ \Realization_\varrho(\Phi_\varepsilon^{\tilde{\gamma}})
           \|_{L^{\Felix{p}}} \\
  \leq &~ \Felix{2^q
                 \cdot \max \big\{
                               \|
                                 H \circ \tilde{\gamma}
                                 - H \circ \Realization_\varrho
                                             (\Phi_\varepsilon^{\tilde{\gamma}})
                               \|_{L^p} \, ,
                               \|
                                 H \circ \Realization_\varrho
                                           (\Phi_\varepsilon^{\tilde{\gamma}})
                                 - \Realization_\varrho
                                     (\Phi_{\varepsilon'}^H)
                                   \circ \Realization_\varrho
                                           (\Phi_\varepsilon^{\tilde{\gamma}})
                               \|_{L^{p}}
                            \big\} } \\
  \pp{=}&: ~ \Felix{2^q \cdot \max \{ \mathrm{I} \,,\, \mathrm{II} \}.}
\end{align*}
First, we estimate term $\mathrm{I}$.
For this, we use the shorthand notation $\chi_{\widetilde{\gamma}_1 >0}$ for the
indicator function of the set
$\{x\in [-\nicefrac{1}{2},\nicefrac{1}{2}]^d: \widetilde{\gamma}_1(x) > 0\}$
and variations thereof.
Moreover, we denote by
$\Realization_\varrho(\Phi_\varepsilon^{\widetilde{\gamma}})_1$ the first
coordinate of the $\R^d$-valued function
$\Realization_\varrho(\Phi_\varepsilon^{\widetilde{\gamma}})$.
Recall that with our choice of $\Phi_\varepsilon^{\widetilde{\gamma}}$, we have
that $\Realization_{\varrho} (\Phi_\varepsilon^{\widetilde{\gamma}})_1(x)
= x_1 + \gamma_\varepsilon (x_2,\dots, x_d)$
for all $x \in [-\nicefrac{1}{2}, \nicefrac{1}{2}]^d$.
Having set the notation, we estimate
\begin{align*}
  & \Felix{(2^q \cdot \mathrm{I})^p}
  = \Felix{2^{1+p}}
    \cdot \|
            H \circ \widetilde{\gamma}
            - H \circ \Realization_\varrho
                        (\Phi_\varepsilon^{\widetilde{\gamma}})
          \|_{L^{\Felix{p}}}^{\Felix{p}} \\
  \quad
  & = \Felix{2^{1+p}}
      \int_{[-\nicefrac{1}{2},\nicefrac{1}{2}]^d}
        |
         \chi_{\widetilde{\gamma}_1 \geq 0}(x)
         - \chi_{\Realization_\varrho
                   (\Phi_\varepsilon^{\widetilde{\gamma}})_1 \geq 0}
             (x)
        |^{\Felix{p}}
      dx \\
  \quad
  &= \Felix{2^{1+p}}
     \int_{[-\nicefrac{1}{2},\nicefrac{1}{2}]^{d-1}}
       \int_{-\nicefrac{1}{2}}^{\nicefrac{1}{2}}
           \chi_{\widetilde{\gamma}_1 \geq 0,
                 \Realization_\varrho
                   (\Phi_\varepsilon^{\widetilde{\gamma}})_1 < 0}
             (x_1,\dots,x_d)
         + \chi_{\widetilde{\gamma}_1 < 0,
                 \Realization_\varrho
                   (\Phi_\varepsilon^{\widetilde{\gamma}})_1 \geq 0}
             (x_1,\dots,x_d)
       d x_1 \,
     d(x_2, \dots, x_d).
%= \ & \int_{[-\nicefrac{1}{2},\nicefrac{1}{2}]^d}
%        \chi_{\widetilde{\gamma_1}>0,
%              \Realization_\varrho
%                 (\Phi_\varepsilon^{\widetilde{\gamma}})_1\leq 0}(x)
%      dx
%      + \int_{[-\nicefrac{1}{2},\nicefrac{1}{2}]^d}
%            \chi_{\widetilde{\gamma_1}\leq 0,
%                  \Realization_\varrho
%                    (\Phi_\varepsilon^{\widetilde{\gamma}})_1>0} (x)
%        dx
%=: \mathrm{I}_1 + \mathrm{I}_2.
\end{align*}
Now, we observe for fixed
$(x_2,\dots, x_d) \in [-\nicefrac{1}{2}, \nicefrac{1}{2}]^{d-1}$
the following equivalence:
\begin{align*}
    \chi_{\widetilde{\gamma}_1 \geq 0,
          \Realization_\varrho(\Phi_\varepsilon^{\widetilde{\gamma}})_1 < 0}(x)
    = 1
  & \Longleftrightarrow
    x_1 + \gamma (x_2,\dots,x_d) \geq 0
    \quad \text{ and } \quad
    x_1 + \gamma_\varepsilon (x_2,\dots,x_d) < 0 \\
  & \Longleftrightarrow
    x_1 \in [- \gamma (x_2,\dots, x_d), - \gamma_\varepsilon (x_2,\dots,x_d) ).
\end{align*}
This implies
\[
  \int_{-\nicefrac{1}{2}}^{\nicefrac{1}{2}}
      \chi_{\widetilde{\gamma}_1 \geq 0,
            \Realization_\varrho(\Phi_\varepsilon^{\widetilde{\gamma}})_1 < 0}
        (x_1,\dots,x_d)
  d x_1
  \leq \max \{0, \gamma (x_2,\dots, x_d) - \gamma_\varepsilon (x_2,\dots,x_d)\}.
\]
By the same reasoning,
$\int_{-\nicefrac{1}{2}}^{\nicefrac{1}{2}}
   \chi_{\widetilde{\gamma}_1 < 0,
         \Realization_\varrho(\Phi_\varepsilon^{\widetilde{\gamma}})_1 \geq 0}
     (x_1,\dots,x_d)
 d x_1
 \leq \max \{0, \gamma_\varepsilon (x_2,\dots,x_d) - \gamma (x_2,\dots,x_d)\}$.
In total, we get because of $\max\{0,y\} + \max\{0, -y\} = |y|$
%\Felix{and with $x = (x_1,\dots,x_d)$ as well as $x' = (x_2,\dots,x_d)$}
that
\begin{align*}
    (\Felix{2^q} \cdot \mathrm{I})^{\Felix{p}}
  &= \Felix{2^{1+p}}
     \int_{[-\nicefrac{1}{2},\nicefrac{1}{2}]^{d-1}}
       \int_{-\nicefrac{1}{2}}^{\nicefrac{1}{2}}
            \chi_{\widetilde{\gamma}_1 \geq 0,
                  \Realization_\varrho
                    (\Phi_\varepsilon^{\widetilde{\gamma}})_1 < 0}
              (\Felix{x_1,y})
          + \chi_{\widetilde{\gamma}_1 < 0,
                  \Realization_\varrho
                    (\Phi_\varepsilon^{\widetilde{\gamma}})_1 \geq 0}
              (\Felix{x_1,y})
       \, d x_1 \,
     \, d \Felix{y} \\ %(x_2, \dots, x_d) \\
  &\!\leq
       \Felix{2^{1+p}}
       \int_{[-\nicefrac{1}{2},\nicefrac{1}{2}]^{d-1}}
         \max \{0, \gamma(\Felix{y}) - \gamma_\varepsilon(\Felix{y})\}
         +
         \max \{0, \gamma_\varepsilon(\Felix{y}) - \gamma(\Felix{y})\}
     \, d \Felix{y} \\ % (x_2,\dots,x_d) \\
  &= \Felix{2^{1+p}} \,
     \|
       \gamma - \gamma_\varepsilon
     \|_{L^1 ([-\nicefrac{1}{2}, \nicefrac{1}{2}]^{d-1})}
  \leq \Felix{(\varepsilon/2)^p} \, ,
\end{align*}
and hence \Felix{$2^{q} \cdot \mathrm{I} \leq \nicefrac{\varepsilon}{2}$}.

%We invoke the estimate
%$\|
%   \mathrm{R}_\varrho(\Phi_\varepsilon^{\tilde{\gamma}})_1
%   - \tilde{\gamma}_1
% \|_{L^\infty} \leq \varepsilon^2/16$
%to conclude that
%\begin{align*}
%       \mathrm{I}_1
%       \leq & \int_{[-\frac{1}{2},\frac{1}{2}]^d}
%                 \chi_{0 \leq \tilde{\gamma}_1 \leq  \frac{\varepsilon^2}{16}} (x)
%              dx,
%\qquad \mathrm{I}_2
%       \leq \int_{[-\frac{1}{2},\frac{1}{2}]^d}
%                \chi_{0 \leq - \tilde{\gamma}_1 \leq \frac{\varepsilon^2}{16}} (x)
%            dx.
%\end{align*}
%Since $\det D \tilde{\gamma} = 1$ we obtain with the change-of-variables
%formula that $\mathrm{I}_1, \mathrm{I}_2 \leq \varepsilon^2/16$.

To estimate the term $\mathrm{II}$, we recall that
$|H(x) - \Realization_\varrho(\Phi_{\varepsilon'}^H)(x)|
\leq \chi_{\Felix{[0,\varepsilon'] \times \R^{d-1}}}(x)
\leq \chi_{\Felix{[0,2^{-1} (\varepsilon/4)^p] \times \R^{d-1}}} (x)$
for all $x\in \R^d$.
Therefore,
%Since we also have $\| \Realization_{\varrho} (\Phi_\varepsilon^{\tilde{\gamma}}) - \tilde{\gamma} \|_{L^\infty} \leq \frac{\varepsilon^2}{16}$, we get
\begin{align*}
  \Felix{(2^q \cdot \mathrm{II})^p}
  =   &  \Felix{2^{1+p} \,}
         \|
           H \circ \Realization_\varrho(\Phi_\varepsilon^{\tilde{\gamma}})
           - \Realization_\varrho(\Phi_{\varepsilon'}^H)
             \circ \Realization_\varrho( \Phi_\varepsilon^{\tilde{\gamma}})
         \|_{L^{\Felix{p}}}^\Felix{p}
  \leq  \Felix{2^{1+p}} \ %
        \int_{[-\nicefrac{1}{2},\nicefrac{1}{2}]^d}
           \chi_{0 \leq \Realization_\varrho
                           (\Phi_\varepsilon^{\tilde{\gamma}})_1
                   \leq \Felix{\frac{1}{2} \cdot (\varepsilon/4)^p}}(x)
        \, dx\\
  = & \Felix{2^{1+p}}
      \int_{[-\nicefrac{1}{2},\nicefrac{1}{2}]^{d-1}}
         \int_{-\nicefrac{1}{2}}^{\nicefrac{1}{2}}
            %\chi_{-\frac{\varepsilon^2}{16} \leq \tilde{\gamma}_1 \leq \frac{\varepsilon^2}{8}}(x)
            \chi_{0 \leq x_1 + \gamma_\varepsilon (x_2,\dots, x_d)
                    \leq \Felix{\frac{1}{2} \cdot (\varepsilon/4)^p}}
         \, dx_1
      \, d (x_2,\dots, x_d) \\
  \leq & \Felix{2^{p}}
       \int_{[-\nicefrac{1}{2}, \nicefrac{1}{2}]^{d-1}}
         \Felix{(\varepsilon/4)^p}
       \, d (x_2,\dots, x_d)
  = \Felix{(\varepsilon/2)^p} \, ,
\end{align*}
%Again, since $\det D\tilde{\gamma} \equiv 1$,
%an application of the change-of-variables formula yields
%$\mathrm{II}^2 \leq \frac{3}{16} \varepsilon^2$.
\Felix{and hence $2^q \cdot \mathrm{II} \leq \varepsilon/2$.}
In conclusion, we obtain
\[
  \|
    H \circ \tilde{\gamma}
    - \Realization_\varrho
      (
        \Phi_{\varepsilon'}^H \sconc \Phi_\varepsilon^{\tilde{\gamma}}
      )
  \|_{L^{\Felix{p}}}
  \leq 2^q \cdot \max \big\{ \mathrm{I} \,,\, \mathrm{II} \big\}
  \leq \frac{\varepsilon}{2} % + \frac{\varepsilon}{4}
  < \varepsilon.
  \qedhere
\]
\end{proof}

\subsection{Approximation of piecewise constant
\texorpdfstring{\Felix{and piecewise smooth}}{and piecewise smooth} functions}
\label{sub:AppendixPiecewiseSmoothApproximation}

Since for $K \in \mathcal{K}_{r, \beta, d, B}$ the indicator function $\chi_K$
is locally a horizon function, we can use Lemma \ref{lem:HorFunctAppendix}
to construct neural networks \Felix{that approximate these indicator functions}.
%approximately realizing such functions.

\begin{theorem}\label{thm:ApproximationOfPiecewiseConstantFunctionsAppendix}
Let $r \in \N$, $d\in \N_{\geq 2}$ and $\Felix{p}, \beta, B > 0$ be arbitrary.
There are constants $L=L(\beta,d) \in \N$,
$c=c(d,\Felix{p}, \beta,r,B)>0$, and $s= s(d,\Felix{p}, \beta,r,B) \in \N$ with
\Felix{$L \leq 22 + \big(1 + \lceil \log_2 \beta \rceil \big)
                    \cdot (11 + \nicefrac{2\beta}{d})$}
%\Felix{$L\leq c' \cdot (1 + \lceil \log_2 (1 + \beta) \rceil )
%                 \cdot (1 + \nicefrac{\beta}{d})$}
and such that for all $\varepsilon \in (0, \nicefrac{1}{2})$ and arbitrary
$K \in \mathcal{K}_{r,\beta,d,B}$ there exists a neural network
$\Phi^K_\varepsilon$ with at most $L$ layers and at most
$c \cdot \varepsilon^{-\Felix{p}(d-1)/\beta}$ nonzero,
$(s,\varepsilon)$-quantized weights such that
\[
  \|
    \Realization_{\varrho}(\Phi^K_\varepsilon) - \chi_K
  \|_{L^{\Felix{p}} ([-\nicefrac{1}{2}, \nicefrac{1}{2}]^d)}
  \leq \varepsilon
  \quad \text{and} \quad
  \|\Realization_\varrho(\Phi^K_\varepsilon)\|_{\sup} \leq 1.
\]
\end{theorem}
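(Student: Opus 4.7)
The strategy is to exploit the fact that, by definition of $\mathcal{K}_{r,\beta,d,B}$, the indicator $\chi_K$ agrees \emph{locally} on $\ell^\infty$-balls of radius $2^{-r}$ with a globally defined horizon function. The plan is therefore to partition $[-\nicefrac{1}{2},\nicefrac{1}{2}]^d$ into a fixed grid of small cubes (with side length chosen small enough to fit inside such a ball), replace $\chi_K$ on each cube by the corresponding horizon function, approximate each horizon function using Lemma \ref{lem:HorFunctAppendix}, and then glue the pieces together via the cutoff machinery of Lemma \ref{lem:MultiplicationWithACharacteristicFunctionArray}.

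Concretely, I would first partition $[-\nicefrac{1}{2},\nicefrac{1}{2}]$ into $N_0 := 2^r$ intervals of length $2^{-r}$, take the resulting $N := N_0^d$ product cubes $Q_1,\dots,Q_N$ with centers $x_i$, and note $Q_i \subset \overline{B_{2^{-(r+1)}}}^{\|\cdot\|_{\ell^\infty}}(x_i) \subset \overline{B_{2^{-r}}}^{\|\cdot\|_{\ell^\infty}}(x_i)$. By the defining property of $\mathcal{K}_{r,\beta,d,B}$, for each $i$ there is some $f_i \in \mathcal{HF}_{\beta,d,B}$ with $\chi_K = f_i$ on $Q_i$, hence
\[
\chi_K = \sum_{i=1}^N \chi_{Q_i}\cdot f_i \qquad \text{a.e.\ on } [-\nicefrac{1}{2},\nicefrac{1}{2}]^d.
\]
Setting $q := \max\{1,p^{-1}\}$ and $\delta := \nicefrac{\varepsilon}{(4 N^q)}$, Lemma \ref{lem:HorFunctAppendix} produces, for each $i$, a neural network $\Psi_i$ with at most $L_H(d,\beta)$ layers (where $L_H \leq 14 + (1+\lceil\log_2\beta\rceil)(11+\nicefrac{2\beta}{d})$), and at most $c_1 \cdot \delta^{-p(d-1)/\beta}$ nonzero $(s_1,\delta)$-quantized weights, with $0 \leq \Realization_\varrho(\Psi_i) \leq 1$ and $\|f_i - \Realization_\varrho(\Psi_i)\|_{L^p} \leq \delta$. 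Since $\delta \gtrsim \varepsilon$ (with the constant depending only on $r,d,p$), each $\Psi_i$ has $\mathcal{O}(\varepsilon^{-p(d-1)/\beta})$ weights.

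I would then form the parallelization $\widetilde{\Phi} := \mathrm{P}(\Psi_1,\dots,\Psi_N)$ (possible since all have $L_H$ layers), apply Lemma \ref{lem:MultiplicationWithACharacteristicFunctionArray} with $m=N$, the cubes $Q_i$, accuracy $\nicefrac{\varepsilon}{2}$, and bound $B=1$ to obtain a network $\widehat\Phi$ approximating $\sum_i \chi_{Q_i}\cdot \Realization_\varrho(\Psi_i)$ in $L^p$ up to error $\nicefrac{\varepsilon}{2}$, and finally compose with the clamping network from Lemma \ref{lem:boundedApprox} (with $B=1$) to force $\|\Realization_\varrho(\Phi_\varepsilon^K)\|_{\sup} \leq 1$. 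The layer count becomes $L_H + 6 + 2 \leq 22 + (1+\lceil\log_2\beta\rceil)(11+\nicefrac{2\beta}{d})$, as required, and the weight count is $c \cdot (N + L_H + N \cdot c_1 \delta^{-p(d-1)/\beta}) \leq c'\cdot\varepsilon^{-p(d-1)/\beta}$ with quantization controlled via Remark \ref{rem:QuantisationConversion}.

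For the error, writing $g := \sum_i \chi_{Q_i}\cdot \Realization_\varrho(\Psi_i)$ and using Equation \eqref{eq:PseudoTriangleInequality} together with the partitioning property of the $Q_i$,
\[
\|\chi_K - g\|_{L^p} \leq N^{q} \max_i \|\chi_{Q_i}(f_i - \Realization_\varrho(\Psi_i))\|_{L^p} \leq N^q \cdot \delta \leq \tfrac{\varepsilon}{4},
\]
and since clamping by $\tau_1$ is $1$-Lipschitz with $\tau_1\circ \chi_K = \chi_K$, the final output is within $\varepsilon$ of $\chi_K$ in $L^p$. The main obstacle is simply bookkeeping: carefully choosing $\delta$ to absorb the $N^{\max\{1,1/p\}}$ factor arising from the $L^p$ quasi-norm (which fails the triangle inequality when $p<1$) and tracking how the quantization parameter of each $\Psi_i$ and of the cutoff network combines through the sparse concatenations and parallelizations so that the final weights remain $(s,\varepsilon)$-quantized for a single constant $s = s(d,r,p,\beta,B)$.
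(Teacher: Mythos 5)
Your proposal takes essentially the same route as the paper's proof: partition $[-\nicefrac{1}{2},\nicefrac{1}{2}]^d$ into $2^{rd}$ cubes of side-length $2^{-r}$, replace $\chi_K$ locally by the horizon functions $f_i$ guaranteed by the definition of $\mathcal{K}_{r,\beta,d,B}$, approximate each with Lemma~\ref{lem:HorFunctAppendix}, parallelize, apply the cutoff array Lemma~\ref{lem:MultiplicationWithACharacteristicFunctionArray}, and finally clamp with Lemma~\ref{lem:boundedApprox}; the layer and weight counts then follow exactly as you describe. One small arithmetic slip: with $\delta = \varepsilon/(4 N^{q})$ and cutoff accuracy $\varepsilon/2$, combining the two error halves via Equation~\eqref{eq:PseudoTriangleInequality} gives $\|\chi_K - \Realization_\varrho(\widehat\Phi)\|_{L^p} \leq 2^{q}\max\{\varepsilon/4,\varepsilon/2\} = 2^{q-1}\varepsilon$, which exceeds $\varepsilon$ when $p < 1$; the paper avoids this by taking the horizon accuracy $\varepsilon/2^{1+q+rdq}$ and cutoff accuracy $\varepsilon/2^{1+q}$, i.e.\ a further factor $2^{q}$ is needed in both tolerances (also recall that Lemma~\ref{lem:HorFunctAppendix} gives networks with \emph{at most} a common depth bound, so the $\Psi_i$ must be padded with identity networks to a common exact depth before parallelizing, as the paper does via Remark~\ref{rem:DeepIdentity}).
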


\begin{proof}
For $\lambda = (\lambda_1, \dots, \lambda_d) \in \{1,\dots, 2^r\}^d$, define
\[
  I_\lambda
  := \prod_{i=1}^d
       \left[
         (\lambda_i - 1) \cdot 2^{-r}-\frac{1}{2},
         \lambda_i \cdot 2^{-r} - \frac{1}{2}
       \right].
\]
We have by construction (with disjointness up to null sets) that
\[
  \left[-\nicefrac{1}{2}, \nicefrac{1}{2}\right]^d
  = \overset{\bullet}{\bigcup_{\lambda \in \{1, \dots, 2^r\}^d}}
       I_\lambda \, ,
  \quad \text{ and } \quad
  I_\lambda \subset \overline{B}_{2^{-r}}^{\| \cdot \|_{\ell^\infty}}(x)
  \quad \text{for all} \quad x \in I_\lambda.
\]
As a consequence of the definition of $\mathcal{K}_{r,\beta,d,B}$,
there is for each $\lambda \in \{1,\dots, 2^r\}^d$ a horizon function
$f_\lambda \in \mathcal{HF}_{\beta,d, B}$ such that
$\chi_{I_{\lambda}} \chi_K = \chi_{I_\lambda} f_\lambda$.

% Since $r \in \N$ is fixed, an application of the triangle inequality shows
% because of $\chi_K = \sum_\lambda \chi_{I_\lambda} \chi_K
% = \sum_\lambda \chi_{I_\lambda} f_\lambda$ (with equality almost everywhere)
% that it suffices to construct for every $\lambda \in \{1, \dots, 2^r\}^d$
% a neural network $\Phi^\lambda_\varepsilon$ with $L$ layers
% and at most $c \cdot \varepsilon^{-2(d-1)/\beta}$ nonzero,
% $(s,\varepsilon)$-quantized weights, whose realization approximates
% $\chi_{I_\lambda} f_\lambda$ with an $L^2$-error of at most
% $\varepsilon/2^{rd}$.

\Felix{For brevity, let us set $q := \max\{1,p^{-1}\}$.}
Now, for each $\lambda \in \{1, \dots, 2^r\}^d$,
Lemma \ref{lem:HorFunctAppendix} yields a neural network
$\Phi^{\lambda}_\varepsilon$ such that
\[
  \|
    \Realization_{\varrho}(\Phi^{\lambda}_\varepsilon)
    - f_\lambda
  \|_{L^{\Felix{p}}}
  \leq \frac{\varepsilon}{2^{1 \Felix{+ q} + r d \Felix{q}}}
  %\leq \frac{\varepsilon}{2^{rd+1}}
  \quad \text{and such that} \quad
  0 \leq \Realization_\varrho (\Phi_\varepsilon^{\lambda}) (x) \leq 1
  \text{ for all }
  \Felix{x \in \R^d} \, .
  %x \in \left[-\nicefrac{1}{2}, \nicefrac{1}{2}\right]^d .
\]
%Moreover, $0 \leq \Realization_\varrho(\Phi^{\lambda}_\varepsilon)(x) \leq 1$ for all $x\in [-\nicefrac{1}{2},\nicefrac{1}{2}]^d$.
By Lemma \ref{lem:HorFunctAppendix} and Remark \ref{rem:QuantisationConversion}
there exists $c_1 = c_1(d, \beta, B, r \Felix{,p}) > 0$,
$s_1=s_1(d,\beta,B, r \Felix{,p}) \in \N$, and $L_1 = L_1(d,\beta) \in \N$
with
\Felix{$L_1 \leq 14 + \big(1 + \lceil \log_2 \beta \rceil \big)
                      \cdot (11 + \nicefrac{2\beta}{d})$},
such that $\Phi^{\lambda}_\varepsilon$ has at most $L_1$ layers and
at most $c_1 \cdot \varepsilon^{-\Felix{p} (d-1)/\beta}$ nonzero,
$(s_1, \varepsilon)$-quantized weights.

Next, by possibly replacing $\Phi^{\lambda}_\varepsilon$ by
$\Phi^{\mathrm{Id}}_{1, L_\lambda} \sconc \Phi_\varepsilon^{\lambda}$
with $\Phi^{\mathrm{Id}}_{1, L_\lambda}$ as in Remark \ref{rem:DeepIdentity}
and for $L_\lambda = L_1 - L(\Phi_\varepsilon^{\lambda})$, we can assume that
each network $\Phi_\varepsilon^{\lambda}$ has exactly $L_1$ layers.
Note in view of Remark \ref{rem:SpecialConc} and because of
$L_1 = L_1 (d,\beta)$ that this will not change the quantization of the weights,
and that the number of weights of $\Phi_\varepsilon^{\lambda}$ is still bounded
by $c_1 ' \cdot \varepsilon^{-\Felix{p} (d-1)/\beta}$ for a suitable
constant $c_1 ' = c_1 ' (d,\beta,B,r \Felix{,p})$.
For simplicity, we will write $c_1$ instead of $c_1 '$ in what follows.

Now, write $\{1, \dots, 2^r\}^d = \{\lambda_1, \dots, \lambda_{2^{rd}} \}$,
and set
\[
  \Phi := P(\Phi_\varepsilon^{\lambda_1},
            P(\Phi_\varepsilon^{\lambda_2},
              \dots,
                P(\Phi_\varepsilon^{\lambda_{2^{rd} - 1}},
                  \Phi_{\varepsilon}^{\lambda_{2^{rd}}})
              \dots)
           ) \, .
\]
Note that $\Phi$ has $L_1$ layers, and at most
$2^{rd} \cdot c_1 \cdot \varepsilon^{-\Felix{p} (d-1)/\beta}
\leq c_2 \cdot \varepsilon^{-\Felix{p} (d-1)/\beta}$ nonzero,
$(s_1, \varepsilon)$-quantized weights, for a suitable constant
$c_2 = c_2 (d,\beta, B, r \Felix{,p} ) > 0$.

Finally, an application of
Lemma \ref{lem:MultiplicationWithACharacteristicFunctionArray}
with $m = 2^{rd}$ and $B=1$, with $\nicefrac{\varepsilon}{2^{1+q}}$ instead of
$\varepsilon$, and with the intervals $I_{\lambda_\ell}$,
$\ell \in \{1,\dots, 2^{rd}\}$ yields a network $\Psi$
\Felix{which satisfies---thanks to Equation
\eqref{eq:PseudoTriangleInequality}---the following estimate:}
\begin{align*}
  \|
    \Realization_\varrho (\Psi) - \chi_K
  \|_{L^{\Felix{p}}}
  &\leq \Felix{2^q} \,
        \left\|
          \Realization_\varrho (\Psi)
          - \sum_{\ell =1,\dots, 2^{rd}}
               \chi_{I_{\lambda_\ell}} [\Realization_\varrho(\Phi)]_{\ell}
       \right\|_{L^{\Felix{p}}}
       + \Felix{2^q} \,
         \left\|
            \sum_{\ell = 1,\dots, 2^{rd}}
                \chi_{I_{\lambda_\ell}}
                \cdot ([\Realization_\varrho(\Phi)]_{\ell} - f_{\lambda_\ell})
          \right\|_{L^{\Felix{p}}} \\
  %({\scriptstyle{\text{eq. } \eqref{eq:PseudoTriangleInequality}}})
  &\leq \frac{\varepsilon}{2}
        \Felix{
        + 2^q \, 2^{rdq} \cdot
          \max \{ \|
                    \Realization_\varrho (\Phi_\varepsilon^{\lambda_\ell})
                    - f_{\lambda_\ell}
                  \|_{L^p}
                  \,:\,
                  \ell = 1, \dots, 2^{rd}
               \}
        }
  \leq \varepsilon \, .
\end{align*}
Here, we used that
$\chi_K = \sum_{\ell = 1}^{2^{rd}} \chi_{I_{\lambda_\ell}} \chi_K
=\sum_{\ell = 1}^{2^{rd}} \chi_{I_{\lambda_\ell}} f_{\lambda_\ell}$,
with equality almost everywhere, and that $[\Realization_\varrho(\Phi)]_{\ell}
= \Realization_\varrho (\Phi_\varepsilon^{\lambda_\ell})$, by construction
of $\Phi$.

To complete the proof, it remains to verify that $\Psi$ has the required
complexity, \Felix{and to modify $\Psi$ slightly in order to ensure
$\|\Realization_\varrho (\Psi)\|_{\sup} \leq 1$.}
But Lemma \ref{lem:MultiplicationWithACharacteristicFunctionArray} shows that
$\Psi$ has at most $6 + L(\Phi) = 6 + L_1$ layers.
%which is easily seen to satisfy the required bound.
The same lemma also shows that the weights of $\Psi$
are $(\max\{s_0, s_1\}, \varepsilon/2^{1 + \Felix{q}})$-quantized
for a constant $s_0 = s_0 (d \Felix{,p}) \in \N$, so that
Remark \ref{rem:QuantisationConversion} shows that $\Psi$ has
$(s_2, \varepsilon)$-quantized weights, for a suitable constant
$s_2 = s_2 (d,\beta, B, r \Felix{,p}) \in \N$.
Finally, Lemma \ref{lem:MultiplicationWithACharacteristicFunctionArray}
also shows
\[
  M(\Psi) \leq c \cdot (2^{rd} + L_1 + M(\Phi))
          \leq c_3 \cdot \varepsilon^{-\Felix{p} (d-1)/\beta},
\]
for suitable constants $c = c(d) > 0$ and
$c_3 = c_3 (d,\beta,B,r \Felix{,p}) > 0$.

Finally, an application of Lemma \ref{lem:boundedApprox} to $\Psi$
as at the end of the proof of Theorem \ref{thm:ApproxOfSmoothFctnAppendix}
yields the network $\Phi_\varepsilon^K$ satisfying all desired properties.
\end{proof}

Theorem \ref{thm:ApproximationOfPiecewiseConstantFunctionsAppendix}
yields an approximation result by neural networks for functions that are
piecewise constant.
However, a simple extension allows us to also approximate piecewise smooth
functions.
\begin{corollary}\label{cor:PiecewiseSmoothFunctionsAppendix}
  Let $r \in \N$, $d\in \N_{\geq 2}$, and $B, \beta \Felix{,p} > 0$.
  \Felix{Define
  \[
    \beta' := \frac{d \beta}{p(d-1)} \, ,
    \qquad %\text{and} \quad
    \beta_0 := \max\{\beta,\beta'\},
    \quad \text{and} \quad
    \mathcal{E}_{r,\beta,d,B}^p
    := \{
         \chi_K \cdot g
         \,:\,
         g \in \mathcal{F}_{\beta',d,B}
         \text{ and } K \in \mathcal{K}_{r,\beta,d,B}
       \} \, .
  \]}
  Then there exist constants $c = c(d,\beta,r, \Felix{p}, B) > 0$,
  $s = s(d,\beta,r \Felix{,p} ,B) \in \N$, and $L = L(d, \beta) \in \N$ with
  \Felix{$L \leq 34 + \big(1 + \lceil \log_2 \beta_0 \rceil \big)
                      \cdot \big( 11 + \nicefrac{3 \beta_0}{d} \big)$}
  %\Felix{$L \leq ??? \cdot (1 + \log_2 (\lceil \beta_0 \rceil ) )
  %                   \cdot (1 + \nicefrac{4 \beta_0}{d})$},
  such that for all $\varepsilon \in (0, \nicefrac{1}{2})$ and all
  $f \in \mathcal{E}_{r, \beta, d, B}^{\Felix{p}}$ there exists
  a neural network $\Phi^f_\varepsilon$ with at most $L$ layers,
  and at most $c \cdot \varepsilon^{-\Felix{p} (d-1)/\beta}$ nonzero,
  $(s, \varepsilon)$-quantized weights, such that
  \[
    \|
      \Realization_{\varrho}(\Phi^f_\varepsilon) - f
    \|_{L^{\Felix{p}} ([-\nicefrac{1}{2}, \nicefrac{1}{2}]^d)}
    \leq \varepsilon
    \quad \text{ and } \quad
    \| \Realization_\varrho(\Phi^f_\varepsilon) \|_{\sup} \leq \lceil B \rceil.
  \]
\end{corollary}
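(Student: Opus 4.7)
The plan is to construct the approximating network $\Phi_\eps^f$ by combining three ingredients: a network approximating $\chi_K$, a network approximating $g$, and the approximate multiplication from Lemma \ref{lem:MultiplicationWithBoundedLayerNumber}. The key algebraic observation driving the weight count is the identity $d/\beta' = p(d-1)/\beta$, which guarantees that the cost of approximating the smooth factor $g$ (where the exponent is $d/\beta'$) matches the cost of approximating $\chi_K$ (where the exponent is $p(d-1)/\beta$).

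Setting $q := \max\{1, 1/p\}$, I would first choose $\eps_1 := \eps \cdot (3 \cdot 2^q \cdot \lceil B \rceil)^{-1}$, $\eps_2 := \eps / (3 \cdot 2^q)$, and $\eps_3 := \eps / (3 \cdot 2^q)$. Then I would invoke Theorem \ref{thm:ApproximationOfPiecewiseConstantFunctionsAppendix} to obtain a network $\Phi^K_{\eps_1}$ with $\mathcal{O}(\eps_1^{-p(d-1)/\beta})$ properly quantized weights, with $\|\Realization_\varrho(\Phi^K_{\eps_1})\|_{\sup} \leq 1$, and with at most $22 + (1 + \lceil \log_2 \beta \rceil) (11 + 2\beta/d)$ layers; and Theorem \ref{thm:ApproxOfSmoothFctnAppendix} to obtain a network $\Phi^g_{\eps_2}$ with $\mathcal{O}(\eps_2^{-d/\beta'})$ properly quantized weights, $\|\Realization_\varrho(\Phi^g_{\eps_2})\|_{\sup} \leq \lceil B \rceil$, and at most $11 + (1 + \lceil \log_2 \beta' \rceil)(11 + \beta'/d)$ layers. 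Next, I would apply Lemma \ref{lem:MultiplicationWithBoundedLayerNumber} with $M := \lceil B \rceil$, $\theta := d/\beta' = p(d-1)/\beta$, and $L := \lfloor \beta'/(2d) \rfloor + 1$, producing an approximate multiplication $\wt{\times}$ with accuracy $\eps_3$ on $[-M,M]^2$, with at most $\mathcal{O}(\eps^{-p(d-1)/\beta})$ properly quantized weights and at most $\beta'/d + 10$ layers. Padding the shallower of $\Phi^K_{\eps_1}$, $\Phi^g_{\eps_2}$ with an identity network from Remark \ref{rem:DeepIdentity} so that both have common depth, I would then set
\[
  \widetilde{\Phi}^f_\eps
  := \wt{\times} \sconc P(\Phi^K_{\eps_1}, \Phi^g_{\eps_2}) \, .
\]

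For the error estimate, writing $\tilde{\chi}_K := \Realization_\varrho(\Phi^K_{\eps_1})$ and $\tilde{g} := \Realization_\varrho(\Phi^g_{\eps_2})$, I would use the quasi-triangle inequality \eqref{eq:PseudoTriangleInequality} together with the decomposition
\[
  \wt{\times}(\tilde{\chi}_K, \tilde{g}) - \chi_K g
  = \bigl[\wt{\times}(\tilde{\chi}_K, \tilde{g}) - \tilde{\chi}_K \tilde{g}\bigr]
  + \tilde{\chi}_K (\tilde{g} - g)
  + (\tilde{\chi}_K - \chi_K) g \, .
\]
The first term is pointwise bounded by $\eps_3$ since $|\tilde{\chi}_K|,|\tilde{g}|$ lie in $[-M,M]$; the second has $L^p$-norm at most $\eps_2$ since $\|\tilde{\chi}_K\|_{\sup} \leq 1$; and the third has $L^p$-norm at most $B \cdot \eps_1$ since $\|g\|_{\sup} \leq B$. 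The choices of $\eps_1,\eps_2,\eps_3$ then give $\|\Realization_\varrho(\widetilde{\Phi}^f_\eps) - f\|_{L^p} \leq \eps$. Composing with the clipping network from Lemma \ref{lem:boundedApprox} produces a network $\Phi^f_\eps$ with $\|\Realization_\varrho(\Phi^f_\eps)\|_{\sup} \leq \lceil B \rceil$ which still satisfies the $L^p$ bound, at the cost of two extra layers and a fixed additive number of weights.

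The weight count follows from Remark \ref{rem:SpecialConc}: $M(\Phi^f_\eps) \leq c_1 \cdot \max\{M(\wt{\times}), M(\Phi^K_{\eps_1}) + M(\Phi^g_{\eps_2})\} \leq c \cdot \eps^{-p(d-1)/\beta}$, absorbing the factors $\eps_i/\eps$ into the constant $c = c(d,p,\beta,r,B)$. Quantization is handled via Remark \ref{rem:QuantisationConversion} applied to each constituent. For the depth count, summing the layers of $\wt{\times}$ and of the depth-matched parallelization, and using $\beta'/d \leq \beta_0/d \leq (1 + \lceil \log_2 \beta_0 \rceil) \beta_0/d$ to absorb the contribution of $\wt{\times}$ into the logarithmic term, yields
\[
  L(\Phi^f_\eps)
  \leq 2 + (\beta'/d + 10) + 22 + (1 + \lceil \log_2 \beta_0 \rceil)(11 + 2\beta_0/d)
  \leq 34 + (1 + \lceil \log_2 \beta_0 \rceil)(11 + 3\beta_0/d),
\]
as required. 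The main technical obstacle will be the bookkeeping for depth and quantization: in particular, ensuring that $L(\wt{\times})$ can be chosen of order $\beta'/d$ independently of $\eps$ (forcing the choice $\theta = d/\beta'$ in Lemma \ref{lem:MultiplicationWithBoundedLayerNumber}), and verifying that the accuracy parameters $\eps_1,\eps_2,\eps_3$ differ from $\eps$ only by multiplicative constants, so that Remark \ref{rem:QuantisationConversion} yields a uniform quantization level $s = s(d,p,\beta,r,B)$.
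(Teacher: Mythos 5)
Your approach is identical to the paper's: approximate $\chi_K$ via Theorem~\ref{thm:ApproximationOfPiecewiseConstantFunctionsAppendix}, approximate $g$ via Theorem~\ref{thm:ApproxOfSmoothFctnAppendix}, combine via the approximate multiplication from Lemma~\ref{lem:MultiplicationWithBoundedLayerNumber} with $\theta = p(d-1)/\beta = d/\beta'$, pad with identity networks to equalize depth, and clip with Lemma~\ref{lem:boundedApprox}. The key observation $d/\beta' = p(d-1)/\beta$, the choice of $L$ in the multiplication lemma (of order $\beta'/d$ so that the depth is $\varepsilon$-independent), and the final depth and weight bookkeeping all match the paper.

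One arithmetic slip: your stated accuracy parameters $\varepsilon_1 = \varepsilon/(3\cdot 2^q\lceil B\rceil)$, $\varepsilon_2 = \varepsilon_3 = \varepsilon/(3\cdot 2^q)$ are not quite tight enough for small $p$. With the direct three-term decomposition and Equation~\eqref{eq:PseudoTriangleInequality} applied to $N=3$ summands, the bound is $3^q\max_i\|f_i\|_{L^p}$ with $q=\max\{1,p^{-1}\}$, so you need each term $\leq\varepsilon/3^q$; since $3\cdot 2^q < 3^q$ once $q > \log 3/\log(3/2)\approx 2.71$, your choices fail for $p\lesssim 0.37$. (The paper avoids this by nesting two two-term splits, which costs $4^q$ on the $\Phi^K$- and $\Phi^g$-terms; it picks accuracies $\varepsilon/(3\cdot 4^q B)$ and $\varepsilon/(3\cdot 4^q)$ there.) This is a constant-factor fix with no structural consequence, since the absorbed factor still depends only on $(p,B)$; simply replace $2^q$ by $4^q$ (or $3^q$, matching whichever form of the quasi-triangle inequality you use) in $\varepsilon_1,\varepsilon_2$.
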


\begin{proof}
\Felix{Set $q := \max\{1,p^{-1}\}$.}
Let $\varepsilon \in (0,\nicefrac{1}{2})$ and
$f = \chi_{K} \cdot g$ with $g \in \mathcal{F}_{\beta',d, B}$ and
$K \in \mathcal{K}_{r, \beta, d, B}$.
%, where we recall $\beta ' = \nicefrac{d\beta}{2(d-1)}$,
%so that $\nicefrac{d}{\beta'} = \nicefrac{2(d-1)}{\beta}$.
%Note because of $d \geq 2$ that $\nicefrac{d}{2} \leq d-1 \leq d$,
%and hence $\beta' \leq \beta$, so that
%$1 + \nicefrac{\beta'}{d} \leq 1 + \nicefrac{\beta}{d}$ and
%$1 + \lceil \log_2 (1 + \beta') \rceil \leq 1 + \lceil \log_2 (1 + \beta) \rceil$.
We start by constructing the following three networks:
\smallskip{}

\Felix{First,} Theorem \ref{thm:ApproximationOfPiecewiseConstantFunctionsAppendix}
combined with Remark \ref{rem:QuantisationConversion} yields certain constants
$c_1 = c_1(d,\beta,r, \Felix{p}, B) > 0$,
$s_1 = s_1(d,\beta,r, \Felix{p}, B)\in\N$, and $L_1 = L_1(\beta,d) \in \N$ with
\Felix{
\[
  L_1
  \leq 22 + \big(1 + \lceil \log_2 \beta \rceil \big)
             \cdot (11 + \nicefrac{2\beta}{d})
  \leq 22 + \big(1 + \lceil \log_2 \beta_0 \rceil \big)
             \cdot (11 + \nicefrac{2\beta_0}{d})
\]}%
%$L_1 \leq c' \cdot (1 + \lceil \log_2 (1 + \beta) \rceil)
%             \cdot (1 + \nicefrac{\beta}{d})$,
and a network $\Phi^K_\varepsilon$ with no more than $L_1$ layers and at most
$c_1 \cdot \varepsilon^{-\Felix{p} (d-1)/\beta}$ nonzero,
$(s_1,\varepsilon)$-quantized weights, such that
%$0 \leq \Realization_\varrho (\Phi_\varepsilon^K) \leq 1$ and
\[
  \|
    \Realization_{\varrho}(\Phi^K_\varepsilon)
    - \chi_K
  \|_{L^{\Felix{p}}\Felix{([-\nicefrac{1}{2}, \nicefrac{1}{2}]^d)}}
  \leq \frac{\varepsilon}{3 \cdot \Felix{4^q} \cdot B}
  \quad \text{and} \quad
  \Felix{\|\Realization_{\varrho} (\Phi_\varepsilon^K)\|_{\sup} \leq 1} \, .
\]

\Felix{Second},
%since $\beta' = \beta' (\beta, d) \leq \beta$
%(see the beginning of the proof),
Theorem \ref{thm:ApproxOfSmoothFctnAppendix}
combined with Remark \ref{rem:QuantisationConversion} yields
$L_2 = L_2(\beta,d) \in \N$ and $c_2 = c_2(d,\beta,B \Felix{,p}) > 0$,
$s_2 = s_2(d,\beta,B \Felix{,p}) \in \N$ with
\Felix{
\[
  L_2
  \leq 11 + \big(1 + \lceil \log_2 \beta' \rceil \big)
            \cdot \big(11 + \nicefrac{\beta'}{d}\big)
  \leq 11 + \big(1 + \lceil \log_2 \beta_0 \rceil \big)
            \cdot \big(11 + \nicefrac{2\beta_0}{d}\big)
\]}%
%\[
%L_2
%\leq c'' \cdot (1 + \lceil \log_2 (1 + \beta' ) \rceil ) \cdot (1 + \nicefrac{\beta'}{d})
%\leq c'' \cdot (1 + \lceil \log_2 (1 + \beta) \rceil ) \cdot (1 + \nicefrac{\beta}{d}),
%\]
and a network $\Phi^g_\varepsilon$ with no more than $L_2$ layers and at most
$c_2 \cdot \varepsilon^{-d/\beta'}
= c_2 \cdot \varepsilon^{-\Felix{p} (d-1)/\beta}$ nonzero,
$(s_2,\varepsilon)$-quantized weights, such that
\[
  \|
    \Realization_{\varrho}(\Phi^g_\varepsilon) - g
  \|_{L^{\Felix{p}}\Felix{([-\nicefrac{1}{2},\nicefrac{1}{2}]^d)}}
  \leq \frac{\varepsilon}{3 \cdot \Felix{4^q}}
  \Felix{
  \quad \text{and} \quad
  \|\Realization_\varrho (\Phi_\varepsilon^g)\|_{\sup} \leq \lceil B \rceil \, .
  }
\]

\smallskip{}

As usual, we can assume
\[
  L(\Phi_\varepsilon^K)
  = L(\Phi_\varepsilon^g)
  = \max\{L_{\Felix{1}}, L_{\Felix{2}}\}
  \Felix{
  \leq 22 + \big( 1 + \lceil \log_2 \beta_0 \rceil \big)
            \cdot \big(11 + \nicefrac{2\beta_0}{d}\big) \, ,}
\]
by possibly switching from $\Phi_\varepsilon^K$ or $\Phi_\varepsilon^g$ to
$\Phi_{1, \lambda_1}^{\mathrm{Id}} \sconc \Phi_\varepsilon^K$ or
$\Phi_{1, \lambda_2}^{\mathrm{Id}} \sconc \Phi_\varepsilon^g$
for $\lambda_1 = \max\{L_{\Felix{1}}, L_{\Felix{2}}\} - L(\Phi_\varepsilon^K)$
and $\lambda_2 = \max\{L_{\Felix{1}}, L_{\Felix{2}}\} - L(\Phi_\varepsilon^g)$.
This might necessitate changing the constants $c_1$ and $c_2$,
but these constants stay of the required form.

%Set $B_0 := 2^{\lceil \log_2 \max \{1, B\} \rceil}$,
%and note $|g| \leq B \leq B_0$.
%Precisely as in the proof of Theorem \ref{thm:ApproxOfSmoothFctnAppendix}
%(after Equation \eqref{eq:SmoothApproximationPolynomialApprox}),
%we see that by adding an additional layer
%(with a constant number of quantized weights) to $\Phi_g^\varepsilon$,
%we can (and will) assume
%$-B_0 \leq \Realization_\varrho (\Phi_g^\varepsilon ) \leq B_0$.

% Set $B_0 := 2^{\lceil \log_2 B \rceil}$ and note for $y \in \R$ that
% $\max \{y, -B_0\} = \varrho (y + B_0) - B_0$ and
% $\min \{y, B_0\} = B_0 - \varrho (B_0 - y)$.
% Therefore, we can slightly modify the neural network
% $\Phi^{\mathrm{p}}_{\varepsilon/4}$ to obtain a network
% $\Psi_{\varepsilon/4}^{\mathrm{p}}$ with essentially
% \footnote{Precisely, the constants $c', c_1, s$ can be slightly
% enlarged in such a way that $\Psi_{\varepsilon / 4}^{\mathrm{p}}$ satisfies
% the same complexity estimates as $\Psi_{\varepsilon/4}^{\mathrm{p}}$}
% the same complexity as $\Phi_{\varepsilon/4}^{\mathrm{p}}$ and such that
% $\Realization_\varrho (\Psi_{\varepsilon/4}^{\mathrm{p}})
% = \min \{B_0,
%          \max \{- B_0,
%                 \Realization_\varrho (\Phi_{\varepsilon/4}^{\mathrm{p}})
%               \}
%        \}$.
% Since we have $- B_0 \leq - B \leq f \leq B \leq B_0$,
% it is not hard to see that \eqref{eq:SmoothApproximationPolynomialApprox}
% remains valid also for $\Psi_{\varepsilon/4}^{\mathrm{p}}$.
% In the following, we will thus simply write
% $\Phi_{\varepsilon/4}^{\mathrm{p}}$ for $\Psi_{\varepsilon/4}^{\mathrm{p}}$.

\smallskip{}

\Felix{Third}, Lemma \ref{lem:MultiplicationWithBoundedLayerNumber}
(applied with $\theta = \pp{\nicefrac{\Felix{p} (d-1)}{\beta} = \nicefrac{d}{\beta'}
\geq \nicefrac{d}{\beta_0}}$, with
$L_3^{(0)} := 1 + \lfloor \nicefrac{\beta_0}{2d} \rfloor$ instead of $L$,
with $3^{-1} \cdot 2^{-q} \cdot \varepsilon$ instead of $\varepsilon$,
and with $M = \lceil B \rceil$),
combined with Remark \ref{rem:QuantisationConversion}
yields constants $c_3 = c_3(d,\beta, \Felix{p}, B)$, $s_3 = s_3(B) \in \N$, and
$L_3 = L_3(\beta,d) \in \N$ with
\Felix{$L_3 \leq 8 + 2 \cdot L_3^{(0)} \leq 10 + \nicefrac{\beta_0}{d}$}
%\leq 10 \cdot \big( 1 + \nicefrac{\beta_0}{d} \big)$}
%\leq c''' L_3^{(0)} \leq 2 c'''(1+\log_2(1 + \beta))(1+\nicefrac{\beta}{d})$,
and a network $\wt{\times}$ with at most $L_3$ layers and at most
$c_3 \cdot \varepsilon^{-\theta} = c_3 \cdot \varepsilon^{-\Felix{p} (d-1)/\beta}$
nonzero, $(s_3,\varepsilon)$-quantized weights such that
\[
  | x y -\Realization_{\varrho}(\wt{\times})(x,y)|
  \leq \frac{\varepsilon}{3 \cdot \Felix{2^q}}
  \quad \text{ for all }
        x,y \in \left[- \lceil B \rceil, \lceil B \rceil \right].
\]

Now, we set $\Psi^f_\varepsilon
:= \wt{\times} \sconc P(\Phi^K_\varepsilon, \Phi^g_\varepsilon)$.
By Remark \ref{rem:SpecialConc}, $\Psi^f_\varepsilon$ has at most
\[
  \max\{L_1,L_2\} + L_3
  \Felix{
  \leq 32 + \big(1 + \lceil \log_2 \beta_0 \rceil \big)
            \cdot \big( 11 + \nicefrac{3 \beta_0}{d} \big)}
\]
layers and
$c_4 \cdot  \varepsilon^{-\Felix{p} (d-1)/\beta}$ nonzero,
$(\max\{s_1,s_2,s_3\}, \varepsilon)$-quantized weights,
where $c_4 = c_4 (d,\beta, r, \Felix{p}, B) > 0$.
%Since $\beta' \leq \beta$ (see the beginning of the proof) independent of $d$,
%there exists an absolute constant $c''''>0$ such that
%\[
%  \max\{L_1,L_2\} + L_3
%  \leq c'''' \cdot (1 + \lceil \log_2 (1 + \beta) \rceil )
%             \cdot (1 + \nicefrac{\beta}{d}).
%\]

Finally, we show that $\Psi_\varepsilon^f$ satisfies the claimed error bound.
To this end, we recall Equation \eqref{eq:PseudoTriangleInequality}
and the identity $f = g \cdot \chi_K$ in order to estimate
\begin{align*}
  \| \Realization_{\varrho}(\Psi^f_\varepsilon) - f \|_{L^{\Felix{p}}}
  = &~\|
        \Realization_{\varrho}(\wt{\times})
          (\Realization_{\varrho}(\Phi^K_\varepsilon),
           \Realization_{\varrho}(\Phi^g_\varepsilon))
        - f
      \|_{L^{\Felix{p}}} \\
  \leq &~ \Felix{2^q} \cdot
          \|
            \Realization_{\varrho}(\wt{\times})
              (\Realization_{\varrho}(\Phi^K_\varepsilon),
               \Realization_{\varrho}(\Phi^g_\varepsilon))
            - \Realization_{\varrho}(\Phi^K_\varepsilon)
              \cdot \Realization_{\varrho}(\Phi^g_\varepsilon)
          \|_{L^{\Felix{p}}}
          + \Felix{2^q} \cdot
            \|
              \Realization_{\varrho}(\Phi^K_\varepsilon)
              \cdot \Realization_{\varrho}(\Phi^g_\varepsilon)
              - f
            \|_{L^{\Felix{p}}} \\
  \leq &~ \frac{\varepsilon}{3}
          + \Felix{4^q} \cdot
            \|
              \Realization_{\varrho}(\Phi^K_\varepsilon)
              \cdot [ \Realization_{\varrho}(\Phi^g_\varepsilon) - g ]
              %\cdot \Realization_{\varrho}(\Phi^K_\varepsilon)
          \|_{L^{\Felix{p}}}
          + \Felix{4^q} \cdot
            \|
              g \cdot [ \Realization_{\varrho}(\Phi^K_\varepsilon) - \chi_K ]
            \|_{L^{\Felix{p}}}.
\end{align*}
We continue by recalling
$\| \Realization_\varrho (\Phi_\varepsilon^K)\|_{\sup} \leq 1$, so that
\begin{align*}%\label{eq:estimateThisWithTheReverseTriangleInequality}
  \Felix{4^q} \cdot
  \|
    \Realization_{\varrho}(\Phi^K_\varepsilon)
    \cdot [ \Realization_{\varrho}(\Phi^g_\varepsilon) - g]
  \|_{L^{\Felix{p}}}
  %\cdot \Realization_{\varrho}(\Phi^K_\varepsilon)\|_{L^2}
  \leq \Felix{4^q} \cdot
       \|\Realization_{\varrho}(\Phi^g_\varepsilon) - g\|_{L^{\Felix{p}}}
  \leq \frac{\varepsilon}{3}.
\end{align*}
% Since $\|\Realization_{\varrho}(\Phi^K_\varepsilon) - \chi_K\|_{L^2}
% \leq \varepsilon/(3 B) <1$ and $\|\chi_K\|_{L^2}\leq 1$ we conclude,
% using the reverse triangle inequality, that
% $\|\Realization_{\varrho}(\Phi^K_\varepsilon)\|<2$.
% Thus \eqref{eq:estimateThisWithTheReverseTriangleInequality} yields
% \[
%   \|
%     \Realization_{\varrho}(\Phi^K_\varepsilon)
%     \cdot \Realization_{\varrho}(\Phi^g_\varepsilon)
%     - g \cdot \Realization_{\varrho}(\Phi^K_\varepsilon)
%   \|_{L^2}
%   \leq \frac{\varepsilon}{3}.
% \]
Moreover, since $g \in \mathcal{F}_{\beta', d, B}$, so that
$\| g \|_{\sup} \leq B$, we also have
\[
  \Felix{4^q} \cdot
  \|
    g \cdot [ \Realization_{\varrho}(\Phi^K_\varepsilon) - \chi_K ]
  \|_{L^{\Felix{p}}}
  \leq \Felix{4^q} \cdot B \cdot
       \|
         \Realization_{\varrho}(\Phi^K_\varepsilon) - \chi_K
       \|_{L^{\Felix{p}}}
  \leq \frac{\varepsilon}{3}.
\]
Combining all estimates above yields
$\|\Realization_{\varrho}(\Psi^f_\varepsilon) - f \|_{L^{\Felix{p}}}
\leq \varepsilon$.
An application of Lemma \ref{lem:boundedApprox} to $\Psi^f_\varepsilon$ as at
the end of the proof of Theorem \ref{thm:ApproxOfSmoothFctnAppendix} yields the
network $\Phi^f_\varepsilon$ satisfying all desired properties.
\end{proof}

%%%%%%%%%%%%%%%%%%%%%%%%%%%%%%%%%%%%%%%%%%%%%%%%%%%%%%%%%%%%%%%%%%%%%%%%%%%%%%%%
\section{Lower bounds for the approximation of horizon functions}
\label{sec:LowerBoundProofs}
%%%%%%%%%%%%%%%%%%%%%%%%%%%%%%%%%%%%%%%%%%%%%%%%%%%%%%%%%%%%%%%%%%%%%%%%%%%%%%%%

In this section, we give the proofs of Theorem \ref{thm:Optimality}, which
establishes a lower bound for approximation uniformly over the class of horizon
functions, and of Theorem \ref{thm:SingleFunctionOptimality},
which establishes a similar lower bound for the approximation of a \emph{single}
judiciously chosen horizon function $f$.

Since the proof of the lower bound for the uniform setting is simpler but
contains most of the crucial ideas, we begin with this setting.
The improvement to a lower bound for the approximation of a single function is
then obtained by a suitable application of the Baire category theorem.

\subsection{Lower bounds for the uniform setting}

The general idea is as follows: In Lemma \ref{lem:NNEncoding},
we will show that if we denote by
\[
  \mathcal{NN}_{M,K,d}^{\mathcal{B}, \varrho}
  := \{
       \Realization_\varrho (\Phi)
       \,:\,
       \Phi \in \mathcal{NN}_{M,K,d}^{\mathcal{B}}
     \}
\]
the set of all realizations (with activation function $\varrho$) of networks in
$\mathcal{NN}_{M,K,d}^{\mathcal{B}}$, then each function
$f \Felix{= \Realization_\varrho(\Phi)}
   \in \mathcal{NN}_{M,K,d}^{\mathcal{B}, \varrho}$
can be encoded with $\ell := C \cdot M \cdot (K + \lceil \log_2 M \rceil)$ bits,
for a universal constant $C = C(d) \in \N$.
\Felix{More precisely}, there is an injective map
$\Gamma : \mathcal{NN}_{M,K,d}^{\mathcal{B}, \varrho} \to \{0,1\}^\ell$,
with suitable left inverse
$\Theta : \{0,1\}^\ell \to \mathcal{NN}_{M,K,d}^{\mathcal{B}, \varrho}$.
Thus, if to a given $\varepsilon > 0$, there is for each
$f \in \mathcal{HF}_{\beta, d, B}$ a neural network
$\Phi_{f,\varepsilon} \in \mathcal{NN}_{M,K,d}^{\mathcal{B}}$ with
$\|
   f - \Realization_\varrho (\Phi_{f,\varepsilon})
 \|_{L^{\Felix{p}}\Felix{([-\nicefrac{1}{2},\nicefrac{1}{2}]^d)}}
 \leq \varepsilon$,
then the \emph{encoder-decoder pair} $(E^\ell, D^\ell)$ defined by
\[
  \begin{alignedat}{3}
      & E^\ell : &  & \mathcal{HF}_{\beta,d,B} \to \left\{ 0,1\right\} ^{\ell},
                            &  & f \mapsto \Gamma
                                           \left(
                                             \Realization_\varrho
                                               (\Phi_{f, \varepsilon})
                                           \right),\\
   & D^\ell:
   & \ & \left\{ 0,1\right\} ^{\ell}
         \to L^{\Felix{p}}\left(
                    \left[-\nicefrac{1}{2},\,\nicefrac{1}{2}\right]^{d}
                  \right),
   & \ & c \mapsto \Felix{[\Theta (c)]|_{[-\nicefrac{1}{2}, \nicefrac{1}{2}]^d}}
  \end{alignedat}
\]
\emph{achieves \Felix{$L^p$-}distortion $\varepsilon$}, that is, it satisfies
\[
  \sup_{f \in \mathcal{HF}_{\beta, d, B}}
     \, \| f - D^\ell (E^\ell (f)) \|_{L^{\Felix{p}}}
  \leq \varepsilon .
\]

From this, we obtain the desired lower bound by showing that each
encoder-decoder pair $(E^\ell, D^\ell)$ for $\mathcal{HF}_{\beta, d, B}$
which achieves \Felix{$L^p$-}distortion $\varepsilon$ necessarily has to satisfy
$\ell \gtrsim \varepsilon^{-\Felix{p}(d-1)/\beta}$.

Of course, this last statement is highly nontrivial; it is essentially a
lower bound on the \emph{description complexity} of the class
$\mathcal{HF}_{\beta, d, B}$.
As we will see, this description complexity---which is expressed using
encoder-decoder pairs---is closely related to the asymptotic behavior of the
so-called \emph{entropy numbers} of the class $\mathcal{HF}_{\beta, d, B}$.

Deriving a lower bound for these entropy numbers from first principles would be
quite difficult.
But luckily, we can use a trick to transfer known results from
\cite{ClementsLipschitzFunctionsL1Entropy} about the entropy numbers of the
class $C^{0,\beta}([-\nicefrac{1}{2},\nicefrac{1}{2}]^{d-1})$ to bounds on the
entropy numbers of the class of horizon functions.
%By using a suitable construction, this lower bound for the entropy numbers for
%the class $\mathcal{HF}_{\beta, d, B}$ can be derived from
%known bounds (see \cite{ClementsLipschitzFunctionsL1Entropy})
%for the class $C^{0,\beta}([0,1]^d)$.
This trick
%for transferring the entropy bounds from
%$C^{0,\beta}([-\nicefrac{1}{2}, \nicefrac{1}{2}]^{d-1})$
%to the class of horizon functions
is explained by the following lemma.
\begin{lemma}\label{lem:HFBounds}
For $d \in \N_{\geq 2}$, and an arbitrary Borel measurable function
$\gamma : [-\nicefrac{1}{2}, \nicefrac{1}{2}]^{d-1} \to \R$, define
\[
  \mathrm{HF}_\gamma :
  \left[-\nicefrac{1}{2}, \nicefrac{1}{2}\right]^d \to \{0,1\}, \quad
  (x_1, x_2, \dots, x_d) \mapsto H (
                                    x_1 + \gamma (x_2, \dots, x_d),
                                    x_2,
                                    \dots,
                                    x_d
                                   ),
\]
where $H = \chi_{[0,\infty) \times \R^{d-1}}$ denotes the Heaviside function.
Then, we have for arbitrary $p \in (0,\infty)$ and arbitrary measurable
$\psi, \gamma : [-\nicefrac{1}{2}, \nicefrac{1}{2}]^{d-1}
\to [-\nicefrac{1}{2}, \nicefrac{1}{2}]$ the identity
\[
  \|
    \mathrm{HF}_\gamma - \mathrm{HF}_\psi
  \|_{L^p ([-\nicefrac{1}{2}, \nicefrac{1}{2}]^d)}
  = \|
      \gamma - \psi
    \|_{L^1 ([-\nicefrac{1}{2}, \nicefrac{1}{2}]^{d-1})}^{\frac{1}{p}} \, .
\]
For measurable $\psi,\gamma : [-\nicefrac{1}{2}, \nicefrac{1}{2}]^{d-1} \to \R$,
we still have
$\|
   \mathrm{HF}_\gamma - \mathrm{HF}_\psi
 \|_{L^p ([-\nicefrac{1}{2}, \nicefrac{1}{2}]^d)}
\leq \|
       \gamma - \psi
     \|_{L^1 ([-\nicefrac{1}{2}, \nicefrac{1}{2}]^{d-1})}^{1/p}$.
\end{lemma}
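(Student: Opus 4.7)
The plan is to reduce the computation of $\| \mathrm{HF}_\gamma - \mathrm{HF}_\psi \|_{L^p}$ to a one-dimensional integral in the $x_1$ variable by means of Tonelli's theorem, and then to exploit that $\mathrm{HF}_\gamma(\cdot,y)$ (for fixed $y := (x_2,\dots,x_d)$) is simply the indicator of a half-line. Since $H = \chi_{[0,\infty) \times \R^{d-1}}$, we have
\[
  \mathrm{HF}_\gamma(x_1, y) = \chi_{\{x_1 + \gamma(y) \geq 0\}}
  = \chi_{[-\gamma(y), \infty)}(x_1),
\]
and the analogous identity for $\psi$. In particular $\mathrm{HF}_\gamma - \mathrm{HF}_\psi$ takes values in $\{-1, 0, 1\}$, so $|\mathrm{HF}_\gamma - \mathrm{HF}_\psi|^p = |\mathrm{HF}_\gamma - \mathrm{HF}_\psi|$ pointwise, irrespective of $p \in (0,\infty)$.

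Applying Tonelli, this gives
\[
  \|\mathrm{HF}_\gamma - \mathrm{HF}_\psi\|_{L^p([-\nicefrac12,\nicefrac12]^d)}^p
  = \int_{[-\nicefrac12,\nicefrac12]^{d-1}}
      \int_{-\nicefrac12}^{\nicefrac12}
        \bigl|
          \chi_{[-\gamma(y),\infty)}(x_1)
          - \chi_{[-\psi(y),\infty)}(x_1)
        \bigr| \, dx_1 \, dy.
\]
For fixed $y$, the inner integrand is the indicator of the symmetric difference of the two half-lines intersected with $[-\nicefrac12,\nicefrac12]$, namely of the interval between $-\gamma(y)$ and $-\psi(y)$. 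Its $x_1$-integral therefore equals the Lebesgue measure of $\bigl(\min\{-\gamma(y),-\psi(y)\},\,\max\{-\gamma(y),-\psi(y)\}\bigr) \cap [-\nicefrac12,\nicefrac12]$, which is bounded above by $|\gamma(y)-\psi(y)|$ in general, with equality whenever both $-\gamma(y), -\psi(y) \in [-\nicefrac12,\nicefrac12]$, i.e.\ whenever $\gamma(y),\psi(y) \in [-\nicefrac12,\nicefrac12]$.

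Integrating the resulting inequality (respectively equality) over $y \in [-\nicefrac12,\nicefrac12]^{d-1}$ yields
\[
  \|\mathrm{HF}_\gamma - \mathrm{HF}_\psi\|_{L^p}^p
  \leq \|\gamma - \psi\|_{L^1([-\nicefrac12,\nicefrac12]^{d-1})},
\]
with equality under the boundedness hypothesis on $\gamma,\psi$. Taking $p$-th roots finishes the proof. There is no real obstacle here; the only subtle point is realising that the $L^p$ (quasi)-norm of a $\{-1,0,1\}$-valued function coincides with the $p$-th root of the $L^1$ norm of its absolute value, which removes any genuine dependence on $p$ from the one-dimensional slice computation.
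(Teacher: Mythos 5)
Your proposal is correct and follows essentially the same route as the paper: identify each slice $x_1 \mapsto \mathrm{HF}_\gamma(x_1,y)$ as the indicator of a half-line, observe that the difference is $\{0,1\}$-valued so the $L^p$ integrand equals the $L^1$ integrand, compute the one-dimensional slice integral as $|\gamma(y)-\psi(y)|$ (an equality under the range hypothesis, an inequality otherwise), and finish with Fubini/Tonelli.
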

\begin{proof}
For $x = (x_1,\dots, x_d) \in \R^d$, we write $\hat{x} := (x_2,\dots, x_d)$.
Then, \Felix{for $x \in [-\nicefrac{1}{2}, \nicefrac{1}{2}]^d$},
we have the following equivalence:
\[
  \mathrm{HF}_\gamma (x) = 1
  \qquad \Longleftrightarrow \qquad
  x_1 + \gamma (\hat{x}) \geq 0.
\]
Thus, $|\mathrm{HF}_\gamma - \mathrm{HF}_\psi|$ is $\{0,1\}$-valued with
\begin{align*}
  & |\mathrm{HF}_\gamma (x) - \mathrm{HF}_\psi (x)| = 1 \\
  \Longleftrightarrow \,\,
  & [
     x_1 + \gamma(\hat{x}) \geq 0
     \quad \text{ and } \quad
     x_1 + \psi(\hat{x}) < 0
    ]
    \qquad \text{ or } \qquad
    [
     x_1 + \gamma(\hat{x}) < 0
     \quad \text{ and } \quad
     x_1 + \psi(\hat{x}) \geq 0
    ] \\
  \Longleftrightarrow \,\,
  & x_1 \in [-\gamma(\hat{x}) , - \psi(\hat{x}))
    \quad \text{ or } \quad
    x_1 \in [-\psi(\hat{x}), -\gamma(\hat{x})).
\end{align*}
But since we have
$[-\gamma(\hat{x}) , - \psi(\hat{x}))
 \cap [-\psi(\hat{x}), -\gamma(\hat{x}))
 \subset [-\gamma(\hat{x}), -\gamma(\hat{x})) = \emptyset$,
and since $\gamma,\psi$ only take values in
$[-\nicefrac{1}{2},\nicefrac{1}{2}]$,
so that $[-\gamma(\hat{x}) , - \psi(\hat{x}))
\cup [-\psi(\hat{x}), -\gamma(\hat{x}))
\subset [-\nicefrac{1}{2},\nicefrac{1}{2}]$,
we get with the one-dimensional Lebesgue measure $\mu$ for each
$\hat{x} \in [-\nicefrac{1}{2}, \nicefrac{1}{2}]^{d-1}$ that
\begin{align}
&\mu (
     \{
        x_1 \in \left[-\nicefrac{1}{2}, \nicefrac{1}{2}\right]
        \, : \,
        |\mathrm{HF}_\gamma (x_1, \hat{x}) - \mathrm{HF}_\psi (x_1, \hat{x})|
        = 1
     \}
    ) \nonumber \\
& = \mu([-\gamma(\hat{x}), -\psi(\hat{x})))
    + \mu([-\psi(\hat{x}), -\gamma(\hat{x})))
  \label{eq:HFBoundsModificationForLargeGamma} \\
& = \max \{ 0, \gamma(\hat{x}) - \psi(\hat{x}) \}
    + \max \{ 0, \psi(\hat{x}) - \gamma(\hat{x}) \} \nonumber \\
& = |\gamma(\hat{x}) - \psi(\hat{x})| \nonumber.
\end{align}

Since $|\mathrm{HF}_\gamma - \mathrm{HF}_\psi|$ is $\{0,1\}$-valued,
this implies by Fubini's theorem
\begin{align}
  \|
    \mathrm{HF}_\gamma - \mathrm{HF}_\psi
  \|_{L^p ([-\nicefrac{1}{2}, \nicefrac{1}{2}]^d)}^p
  & = \int_{[-\nicefrac{1}{2}, \nicefrac{1}{2}]^{d-1}}
        \int_{-\nicefrac12}^{\nicefrac12}
          |
           \mathrm{HF}_\gamma (x_1, \hat{x})
           - \mathrm{HF}_\psi (x_1, \hat{x})
          |^p
        \, d x_1
      \, d \hat{x}
          \nonumber \\
  & = \int_{[-\nicefrac{1}{2}, \nicefrac{1}{2}]^{d-1}}
        \mu \left(
              \left\{
                x_1 \in \left[-\nicefrac{1}{2}, \nicefrac{1}{2}\right]
                \, : \,
                |
                 \mathrm{HF}_\gamma (x_1, \hat{x})
                 - \mathrm{HF}_\psi (x_1, \hat{x})
                | = 1
              \right\}
            \right)
      \, d \hat{x} \nonumber \\
  & = \int_{[-\nicefrac{1}{2}, \nicefrac{1}{2}]^{d-1}}
         | \gamma(\hat{x}) - \psi(\hat{x})|
      \, d \hat{x}
    = \| \gamma - \psi\|_{L^1 ([-\nicefrac{1}{2}, \nicefrac{1}{2}]^{d-1})},
    %\nonumber ,
  \label{eq:HFBoundsModificationForLargeGamma2}
\end{align}
as claimed.

If we have $\psi, \gamma : [-\nicefrac{1}{2}, \nicefrac{1}{2}]^{d-1} \to \R$
instead of
$\psi, \gamma : [-\nicefrac{1}{2}, \nicefrac{1}{2}]^{d-1}
\to [-\nicefrac{1}{2}, \nicefrac{1}{2}]$, then the equality
in \eqref{eq:HFBoundsModificationForLargeGamma}---and thus also the one in
\eqref{eq:HFBoundsModificationForLargeGamma2}---need to be
replaced by ``$\leq$'', but the remainder of the proof remains valid.
\end{proof}

Our next goal (see Lemma \ref{lem:HorizonFunctionsMinimaxCodeLength})
is to show that an $\ell$-bit encoder-decoder pair $(E^\ell, D^\ell)$
which achieves \Felix{$L^p$-}distortion $\varepsilon$ over the class
$\mathcal{HF}_{\beta, d, B}$ needs to satisfy
$\ell \gtrsim \varepsilon^{- \Felix{p}(d-1)/\beta}$.
Before we prove this, let us fix some notation and terminology:
% To prove Theorem \ref{thm:SingleFunctionOptimality},
% we will employ the notion of the so-called \emph{minimax code length}
% $L(\varepsilon, \mathcal{C})$ of a given function class
% $\mathcal{C} \subset L^2 (\Omega)$.
% In the end, we will take $\mathcal{C} = \mathcal{HF}_{n,d,B}$ to be the class
% of horizon functions, and determine lower bounds for the minimax
% code length of this class, which will then yield the optimality.
% But first, let us clarify the notion of the minimax code length
% $L(\varepsilon, \mathcal{C})$.
% Before we can demonstrate that the approximation from before is optimal,
% we need to introduce a suitable optimality criterion.
% We will establish optimality within the framework of rate distortion
% theory to be recalled below, see also \cite{DONOHO1993100,grohs2015optimally}.

% A quantity of central interest is the minimal length $\ell\in \N$ for which
% there exists an encoder-decoder pair
% $(E, D) \in \mathfrak{E}^\ell \times \mathfrak{D}^\ell$ that achieves
% distortion $\varepsilon >0$ over the function class $\mathcal{C}$.
%, and its asymptotic behavior as made precise in the following definition.

\begin{definition}%\label{def:optexp}
Let $p \in (0,\infty)$, let $\Omega \subset \R^d$ be measurable, and let
$\mathcal{C}\subset L^{\Felix{p}}(\Omega)$ be an arbitrary function class.
%We assume $d\in \N$, let $\Omega \subseteq \R^d$ be measurable,
%and consider a function class  $\mathcal{C}\subset L^2(\Omega)$.
For each $\ell\in \N$, we denote by \pp{$\mathfrak{E}^\ell
  := \{E: \mathcal{C} \to \{0,1\}^{\ell}\}$ the set of \emph{binary encoders mapping elements of $\mathcal{C}$
to bit-strings of length $\ell$}, and we let $\mathfrak{D}^\ell
  := \{D:\{0,1\}^{\ell} \to  L^{\Felix{p}}(\Omega)\}$ be the set of \emph{binary decoders mapping bit-strings of length $\ell$ to
elements of $L^{\Felix{p}}(\Omega)$}}.

\pp{An encoder-decoder pair
$(E^\ell, D^\ell) \in \mathfrak{E}^\ell \times \mathfrak{D}^\ell$
is said to {\em achieve \Felix{$L^p$-}distortion $\varepsilon >0$
over the function class $\mathcal{C}$}, if $\sup_{f\in \mathcal{C}}
  \|D^\ell (E^\ell (f)) - f \|_{L^{\Felix{p}}(\Omega)}
  \leq \varepsilon$.}
% This means that the worst case error incurred by applying the encoder-decoder
% pair  $(E, D) \in \mathfrak{E}^\ell \times \mathfrak{D}^\ell$
% to an element of $\mathcal{C}$ is upper-bounded by $\varepsilon$,
% often also expressed as the uniform error over
% $\mathcal{C}$ being bounded by $\varepsilon$.
%Let $d\in \N$, let $\Omega \subset \R^d$ be measurable,
%and let $\mathcal{C}\subset L^2(\Omega)$.
Finally, for $\varepsilon >0$ the \emph{minimax code length}
$L_{\Felix{p}}(\varepsilon, \mathcal{C})$ is
\[
  L_{\Felix{p}}(\varepsilon, \mathcal{C})
  := \min
     \left\{
       \ell\in \N
       \,:\,
       \exists \, (E^\ell, D^\ell) \in  \mathfrak{E}^\ell \times \mathfrak{D}^\ell:
         \sup_{f\in \mathcal{C}}
           \|D^\ell (E^\ell (f)) - f \|_{L^{\Felix{p}}(\Omega)}
         \leq \varepsilon
     \right\},
\]
with the interpretation $L_{\Felix{p}}(\varepsilon, \mathcal{C}) = \infty$ if
$\sup_{f\in \mathcal{C}} \|D^\ell (E^\ell (f)) - f \|_{L^{\Felix{p}}(\Omega)}
> \varepsilon$ for all
$(E^\ell, D^\ell) \in \mathfrak{E^\ell} \times \mathfrak{D}^\ell$ and arbitrary
$\ell \in \N$.
%Moreover, the \emph{optimal exponent} $\gamma^*(\mathcal{C})$ is defined by
%\[
%  \gamma^*(\mathcal{C})
%  := \inf \{
%            \gamma \in \R
%            :
%            L(\varepsilon, \mathcal{C}) = O(\varepsilon ^{-\gamma})
%          \}.
%\]
\end{definition}

Now that we have fixed the terminology, we derive a lower bound on the
asymptotic behavior of the minimax code length for the class
$\mathcal{HF}_{\beta,d,B}$ of horizon functions, by using
Lemma \ref{lem:HFBounds} to transfer results about the behavior of
the entropy numbers of $C^{0,\beta}([0,1]^{d-1})$ to the class
$\mathcal{HF}_{\beta,d,B}$.
We remark that this result is essentially folklore; see for example
\cite{Surflets,SurfletsTechnicalReport} for related, but less detailed proofs;
in fact, our proof is based on those two papers.

\begin{lemma}\label{lem:HorizonFunctionsMinimaxCodeLength}
Let $d \in \N_{\geq2}$, and $\Felix{p}, \beta, B > 0$ be arbitrary.
Then there are constants $C=C\left(d, \Felix{p}, \beta ,B\right) > 0$ and
$\varepsilon_0 = \varepsilon_0(d, \Felix{p}, \beta, B) > 0$, such that for each
$\varepsilon\in\left(0,\varepsilon_0\right)$,
the minimax code length
$L_{\Felix{p}}\left(\varepsilon,\mathcal{HF}_{\beta,d,B}\right)$
of the class $\mathcal{HF}_{\beta,d,B}$ of horizon functions satisfies \pp{$L_{\Felix{p}}(\varepsilon,\mathcal{HF}_{\beta,d,B})
  \geq C \cdot \varepsilon^{-\frac{\Felix{p} \left(d-1\right)}{\beta}}$}.
\end{lemma}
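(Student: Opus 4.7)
My plan is to combine the identity in Lemma \ref{lem:HFBounds} with a standard $L^1$-packing construction in $\mathcal{F}_{\beta,d-1,B}$, and then conclude by a pigeonhole (counting) argument in the style of rate--distortion theory.

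By Lemma \ref{lem:HFBounds}, for measurable $\gamma,\psi: [-\nicefrac{1}{2},\nicefrac{1}{2}]^{d-1} \to [-\nicefrac{1}{2},\nicefrac{1}{2}]$ one has $\|\mathrm{HF}_\gamma - \mathrm{HF}_\psi\|_{L^p} = \|\gamma-\psi\|_{L^1}^{1/p}$, which reduces the task to producing a large $L^1$-packing of a $[-\nicefrac{1}{2},\nicefrac{1}{2}]$-valued subset of $\mathcal{F}_{\beta,d-1,B}$ at scale of order $\varepsilon^p$. The construction I have in mind is the standard bump-packing: fix a nonzero $\phi_0 \in C_c^\infty(\R^{d-1})$ whose support lies strictly inside $[0,1]^{d-1}$, partition $[-\nicefrac{1}{2},\nicefrac{1}{2}]^{d-1}$ into $N^{d-1}$ axis-aligned cubes $Q_k$ of side length $1/N$, and set $\phi_k(x) := N^{-\beta}\,\phi_0\!\big(N(x-x_k)\big)$, where $x_k$ is chosen so that $\supp\phi_k \subset \mathrm{int}(Q_k)$. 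The exponent $-\beta$ is calibrated so that $\|\phi_k\|_{C^{0,\beta}}$ stays uniformly bounded in $N$. For $\sigma \in \{0,1\}^{N^{d-1}}$, put $\gamma_\sigma := \sum_k \sigma_k\,\phi_k$; for $N$ sufficiently large, $\|\gamma_\sigma\|_\infty \leq \nicefrac{1}{2}$, and after a one-time absolute rescaling of $\phi_0$ (by a constant depending only on $d$ and $B$) we have $\gamma_\sigma \in \mathcal{F}_{\beta,d-1,B}$. The disjointness of supports yields $\|\gamma_\sigma - \gamma_{\sigma'}\|_{L^1} = c_0 \cdot d_H(\sigma,\sigma') \cdot N^{-\beta-(d-1)}$, where $d_H$ denotes Hamming distance and $c_0 = \|\phi_0\|_{L^1}$.

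By the Gilbert--Varshamov bound there exists $\mathcal{S} \subset \{0,1\}^{N^{d-1}}$ with $|\mathcal{S}| \geq 2^{c_1 N^{d-1}}$ and pairwise Hamming distance at least $N^{d-1}/4$ (for an absolute constant $c_1 > 0$). Combined with the identity from Lemma \ref{lem:HFBounds}, this yields
\[
  \|\mathrm{HF}_{\gamma_\sigma} - \mathrm{HF}_{\gamma_{\sigma'}}\|_{L^p}
  \geq c_2\, N^{-\beta/p}
  \qquad \text{for all distinct } \sigma,\sigma' \in \mathcal{S}.
\]
I then choose $N$ as the smallest integer with $c_2 N^{-\beta/p} > 2^{\max\{1,p^{-1}\}}\varepsilon$, so that $N \asymp \varepsilon^{-p/\beta}$. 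If an encoder-decoder pair $(E^\ell,D^\ell)$ achieves $L^p$-distortion $\varepsilon$ on $\mathcal{HF}_{\beta,d,B}$ and were to send two distinct $\mathrm{HF}_{\gamma_\sigma},\mathrm{HF}_{\gamma_{\sigma'}}$ (with $\sigma,\sigma'\in\mathcal{S}$) to the same codeword, then the (quasi-)triangle inequality \eqref{eq:PseudoTriangleInequality} would force $\|\mathrm{HF}_{\gamma_\sigma} - \mathrm{HF}_{\gamma_{\sigma'}}\|_{L^p} \leq 2^{\max\{1,p^{-1}\}}\varepsilon$, contradicting the separation. Hence $2^\ell \geq |\mathcal{S}| \geq 2^{c_1 N^{d-1}}$, so $\ell \geq c_1 N^{d-1} \geq C\varepsilon^{-p(d-1)/\beta}$ for all $\varepsilon \in (0,\varepsilon_0)$, where $\varepsilon_0 = \varepsilon_0(d,p,\beta,B)$ is chosen to enforce the range condition $\|\gamma_\sigma\|_\infty \leq \nicefrac{1}{2}$.

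The main technical point I expect to require care is the verification that $\gamma_\sigma \in \mathcal{F}_{\beta,d-1,B}$ uniformly in $\sigma$ and $N$. The $\sup$-norm bounds on $\partial^\alpha \gamma_\sigma$ for $|\alpha| \leq n := \lfloor\beta\rfloor$ (or $n = \beta - 1$ if $\beta \in \N$) are immediate from the scaling; however, the Hölder seminorm of $\partial^\alpha \gamma_\sigma$ for $|\alpha|=n$ needs a case split for pairs $x,y$ lying in distinct cubes $Q_k,Q_{k'}$. There, the fact that $\phi_0$ (and hence every $\partial^\alpha\phi_k$) vanishes on a neighborhood of $\partial Q_k$ allows me to insert intermediate zero points on the segment $[x,y]$ and reduce to the single-cube estimate at the cost of only a factor of $2$. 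A single absolute rescaling of $\phi_0$ then absorbs all dimension-dependent constants into the target bound $B$.
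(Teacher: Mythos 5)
Your proposal is correct and follows the same high-level strategy as the paper in Steps 2--3 (transfer to horizon functions via Lemma~\ref{lem:HFBounds}, then pigeonhole against the $2^\ell$ codewords), but it replaces the paper's Step~1 by a genuinely different, self-contained construction. The paper invokes the published entropy bound of Clements (\cite{ClementsLipschitzFunctionsL1Entropy}) for the $L^1$-metric entropy of H\"older balls, which in turn forces it to reconcile Clements' definition of the H\"older class with $\mathcal{F}_{\beta,d-1,B}$ via the intermediate-derivative estimate \eqref{eq:IntermediateDerivativesApplication} (Lemma~\ref{lem:DerivativeHoelderEstimate}). You instead build the $\varepsilon^p$-separated family explicitly: disjointly supported bumps $\phi_k(x) = N^{-\beta}\phi_0(N(x-x_k))$ on an $N^{-1}$-grid combined with a Gilbert--Varshamov code $\mathcal{S}\subset\{0,1\}^{N^{d-1}}$ of pairwise Hamming distance $\gtrsim N^{d-1}$. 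The scaling gives $\|\gamma_\sigma - \gamma_{\sigma'}\|_{L^1} \gtrsim N^{-\beta}$ for distinct $\sigma,\sigma' \in \mathcal{S}$, and with $N \asymp \varepsilon^{-p/\beta}$ you obtain $\log_2|\mathcal{S}| \gtrsim \varepsilon^{-p(d-1)/\beta}$, exactly what the external entropy theorem yields. Your approach is more elementary and makes the packing explicit; the paper's is shorter at the cost of an additional auxiliary norm-equivalence lemma. The key verification you flag---that $\gamma_\sigma \in \mathcal{F}_{\beta,d-1,B}$ uniformly in $\sigma,N$---does indeed go through: for $|\alpha|=n$ with $x\in Q_k$, $y\in Q_{k'}$, $k\neq k'$, one bounds $|\partial^\alpha\gamma_\sigma(x)|\leq \Lip_\sigma(\partial^\alpha\phi_0)\cdot\dist(x,\{\phi_k=0\})^\sigma\leq\Lip_\sigma(\partial^\alpha\phi_0)\,|x-y|^\sigma$ since $\supp\phi_k\subsetneq Q_k$, and symmetrically for $y$, giving the factor~$2$ you claim.

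One small slip: ``smallest integer with $c_2 N^{-\beta/p}>2^{\max\{1,p^{-1}\}}\varepsilon$'' should read ``\emph{largest} integer''---the left side decreases in $N$, and you want $N$ as large as possible subject to the separation constraint; but the intended choice $N\asymp\varepsilon^{-p/\beta}$ is clear and all subsequent bounds are unaffected.
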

\begin{proof}
\textbf{Step 1}: We prove that there are constants
$C_{1} = C_{1}\left(d,\beta,B\right)>0$ and
$\varepsilon_1 = \varepsilon_1 (d,\beta, B) > 0$
such that for each $\varepsilon\in\left(0, \varepsilon_1\right)$, there is some
$N\geq\exp\left(C_{1}\cdot\varepsilon^{-\left(d-1\right)/\beta}\right)$,
and functions $f_{1},\dots,f_{N}\in\mathcal{F}_{\beta,d-1,B}$ satisfying
$\left\Vert f_{i}-f_{\ell}\right\Vert _{L^{1}}\geq\varepsilon$ for
$i\neq\ell$.

To show this, we need some preparation:
%For brevity, set $Q := \left[ -\nicefrac{1}{2}, \nicefrac{1}{2}\right]^{d-1}$.
First, let us write $\beta = n + \sigma$ with $n \in \N_0$ and $\sigma \in (0,1]$.
% Next, \cite[Theorem 5 in Section 3 and Example 2 in Section 3.3]{SteinSingularIntegralsAndDifferentiability}
% show that there is a bounded extension operator
%$E:W^{n,\infty}\left(Q\right)\to W^{n,\infty}\left(\mathbb{R}^{d-1}\right)$,
% i.e., such that $Ef=f$ on $Q$.
% By possibly replacing $Ef$ by $\varphi\cdot Ef$ for a fixed function
% $\varphi\in C_{c}^{\infty}\left(\mathbb{R}^{d-1}\right)$ with $\varphi\equiv1$
% on a neighborhood of $\left[-\nicefrac{1}{2},\nicefrac{1}{2}\right]^{d-1}$,
% we can assume that ${\rm supp}\,Ef\subset\left[-R,R\right]^{d-1}$
% for some fixed $R>0$ and all
%$f\in W^{n,\infty}(\left(-\nicefrac{1}{2},\nicefrac{1}{2}\right)^{d-1})$.
It is easy to see from Lemma \ref{lem:DerivativeHoelderEstimate}
(by translating everything from $\left[0,1\right]^{d-1}$
to $\left[-\nicefrac{1}{2},\nicefrac{1}{2}\right]^{d-1}$)
that there is a constant $C_{2} = C_{2}\left(d,\beta\right) > 0$ such that
each $u\in C^{n}\left([-\nicefrac{1}{2}, \nicefrac{1}{2}]^{d-1}\right)$
satisfies
\begin{equation}
  \left\Vert u \right\Vert_{C^{0,\beta}}
  \leq C_{2} \cdot \left(
                     \left\Vert u \right\Vert_{\sup}
                     + \max_{\left|\alpha\right| = n}
                         \Lip_\sigma  \left(\partial^{\alpha}u\right)\right).
  \label{eq:IntermediateDerivativesApplication}
\end{equation}
Let $C_{3}:=B/\left(1+2C_{2}\right)$, and
set
\[
  F_{\beta}^{d-1}\left(C_{3}\right)
  := \left\{
       u\in C^{n}\left( [-\nicefrac{1}{2}, \nicefrac{1}{2}]^{d-1} \right)
       \,:\,
       \left\Vert u\right\Vert_{\sup}\leq C_{3}
       \text{ and }
       \max_{\left|\alpha\right| = n}
         \Lip_\sigma \left(\partial^{\alpha}u\right)
       \leq C_{3}
     \right\} ,
\]
as in \cite{ClementsLipschitzFunctionsL1Entropy}.
Actually, in \cite{ClementsLipschitzFunctionsL1Entropy},
the unit cube $\left[0,1\right]^{d-1}$ is used instead of
$\left[-\nicefrac{1}{2},\nicefrac{1}{2}\right]^{d-1}$,
but it is easy to see (by translation) that this makes no difference
for what follows.
Precisely, we want to use \cite[Theorem 3]{ClementsLipschitzFunctionsL1Entropy},
which ensures \Felix{the} existence of a large number of functions
$f_{1},\dots,f_{N}\in F_{\beta}^{d-1}\left(C_{3}\right)$
with $\left\Vert f_{i}-f_{\ell}\right\Vert _{L^{1}}\geq \varepsilon$
for $i\neq\ell$.
To see that this indeed follows from
\cite[Theorem 3]{ClementsLipschitzFunctionsL1Entropy},
we recall a few notions from
\cite[Page 1086]{ClementsLipschitzFunctionsL1Entropy}:
For a subset $U\subset X$ of a metric space $\left(X,d\right)$,
we say that $U$ is \textbf{$\varepsilon$-distinguishable} if
$d\left(x,y\right)\geq\varepsilon$ for all $x,y\in U$ with $x\neq y$.
Next, for $\emptyset\neq A\subset X$, we define
$M_{\varepsilon}\left(A\right)
 :=\max\left\{
          \left|U\right|
          \,:\,
          U\subset A\text{ is }\varepsilon\text{-distinguishable}
       \right\} $,
and we define the \textbf{capacity} of $A$
as\footnote{We remark that some authors use a logarithm with a different basis
than the natural logarithm. For us this does not matter, since we
will obtain a bound
$C_{\varepsilon}\left(A\right)\geq C\cdot\varepsilon^{-\left(d-1\right)/\beta}$,
so that a different choice of basis just leads to a different constant $C$.}
$C_{\varepsilon}\left(A\right)=\ln M_{\varepsilon}\left(A\right)$.
Additionally, there is also the notion of the \textbf{(metric) entropy}
$H_{\varepsilon}\left(A\right)$ of $A$, the precise definition of
which is immaterial for us; the only property of the entropy that
we will need is that
$C_{\varepsilon}\left(A\right)\geq H_{\varepsilon}\left(A\right)$.

Finally, \cite[Theorem 3]{ClementsLipschitzFunctionsL1Entropy} shows
that considering $A=F_{\beta}^{d-1}\left(C_{3}\right)$ as a
subset of the metric space
$X=L^{1}(\left[-\nicefrac{1}{2},\nicefrac{1}{2}\right]^{d-1})$
yields that the entropy of $F_{\beta}^{d-1} \! \left(C_{3}\right)$ satisfies
$H_{\varepsilon}\left(\smash{F_{\beta}^{d-1}} \! \left(C_{3}\right)\right)
 \!\geq \! C_{1}\cdot\varepsilon^{-\left(d-1\right)/\beta}$
for $\varepsilon\in\left(0,\varepsilon_1 \right)$ and certain constants
$C_{1} = C_{1}\left(d,\beta,C_{3}\right)=C_{1}\left(d,\beta,B\right) > 0$
and $\varepsilon_1 = \varepsilon_1 (d,\beta, B) > 0$.
Because of $\vphantom{\sum_j}\ln M_{\varepsilon}\left(A\right)
=C_{\varepsilon}\left(A\right)\geq H_{\varepsilon}\left(A\right)$,
and by definition of $M_{\varepsilon}\left(A\right)$, this implies
that there is some
$N\geq\exp\left(C_{1}\cdot\varepsilon^{-\left(d-1\right)/\beta}\right)$
and certain functions $f_{1},\dots,f_{N}\in F_{\beta}^{d-1}\left(C_{3}\right)$
with $\left\Vert f_{i}-f_{\ell}\right\Vert _{L^{1}}\geq\varepsilon$
for $i\neq\ell$.
To complete the proof of Step 1, we observe as a consequence of
Equation \eqref{eq:IntermediateDerivativesApplication}
that each $f_i \in F_\beta^{d-1}(C_3)$ satisfies
$f_i \in C^n ([-\nicefrac{1}{2}, \nicefrac{1}{2}]^{d-1})$, with
\[
  \| f_i \|_{C^{0,\beta}}
  \leq C_2 \cdot \left(
                   \| f_i \|_{\sup}
                   + \max_{|\alpha|=n}
                       \Lip_\sigma (\partial^\alpha f_i)
                 \right)
  \leq C_2 \cdot 2C_3 \leq B,
\]
that is, $f_i \in \mathcal{F}_{\beta, d-1, B}$.

\smallskip{}

\textbf{Step 2}: For simplicity, let
$B_{0}:=\min\left\{ \nicefrac{1}{2},B\right\} $.
Further, for $x\in\left[-\nicefrac{1}{2},\nicefrac{1}{2}\right]^{d}$,
let us write $x=\left(x_{1},\hat{x}\right)$, with
$x_{1}\in\left[-\nicefrac{1}{2},\nicefrac{1}{2}\right]$
and $\hat{x}\in\left[-\nicefrac{1}{2},\nicefrac{1}{2}\right]^{d-1}$.
Finally, recall from Lemma \ref{lem:HFBounds} that to every measurable function
$\gamma : [-\nicefrac{1}{2}, \nicefrac{1}{2}]^{d-1} \to \R$,
we associate the function
%for $\gamma\in\mathcal{F}_{n,d-1,B_{0}}$, set
\begin{equation*}
  {\rm HF}_{\gamma} :
  \left[-\nicefrac{1}{2},\:\nicefrac{1}{2}\right]^{d}
  \to \left\{ 0,1\right\},
  \left(x_{1},\hat{x}\right)
  \mapsto H\left(x_{1}+\gamma\left(\hat{x}\right),\hat{x}\right) \, ,
  \Felix{\quad \text{where} \quad
  H = \chi_{[0,\infty) \times \R^{d-1}}} \, .
   % \label{eq:HFDefinition}
\end{equation*}
Now, each $\gamma \in \mathcal{F}_{\beta, d-1, B_0}$ satisfies
$\|\gamma\|_{\sup} \leq \|\gamma\|_{C^{0,\beta}}
                       \leq B_0 \leq \nicefrac{1}{2}$,
and thus $\gamma : [-\nicefrac{1}{2}, \nicefrac{1}{2}]^{d-1}
                   \to [-\nicefrac{1}{2}, \nicefrac{1}{2}]$.
Therefore, Lemma \ref{lem:HFBounds} shows
\begin{equation}
  \|
    \mathrm{HF}_\gamma - \mathrm{HF}_\psi
  \|_{L^{\Felix{p}} ([-\nicefrac{1}{2}, \nicefrac{1}{2}]^d)}
  \geq \|
         \gamma - \psi
       \|_{L^1 ([-\nicefrac{1}{2}, \nicefrac{1}{2}]^{d-1})}^{\nicefrac{1}{\Felix{p}}}
  \qquad \text{ for all } \gamma, \psi \in \mathcal{F}_{\beta, d-1, B_0}.
  \label{eq:HFDistanceLowerBound}
\end{equation}
Finally, we remark that directly from the definition, we have
$\mathrm{HF}_\gamma \in \mathcal{HF}_{\beta, d, B_0}
 \subset \mathcal{HF}_{\beta, d, B}$
for all $\gamma \in \mathcal{F}_{\beta, d-1, B_0}$.

\smallskip{}

\textbf{Step 3}: In this step, we actually prove the claim:
\Felix{Let $q := \max\{1, p^{-1} \}$.}
Step 1 (applied with $B_{0} = \min\left\{ \nicefrac{1}{2}, B\right\}$ instead of
$B$ and with \Felix{$(4^q \, \varepsilon)^p$} instead of $\varepsilon$)
yields constants $C_{1} = C_{1}\left(d,\beta,B\right)>0$ and
$\varepsilon_0 = \varepsilon_0 (d,\beta, \Felix{p}, B) > 0$, such that for
$\varepsilon\in\left(0,\varepsilon_0\right)$, there is some
$N \geq \exp\left(
               C_{1} \cdot
               \Felix{\left( 4^q \, \varepsilon \right)^{-p (d-1)/\beta}}
            \right)$
and $f_{1},\dots,f_{N} \in \mathcal{F}_{\beta,d-1,B_{0}}
                           \subset\mathcal{F}_{\beta,d-1,B}$
with
$\left\Vert f_{i} - f_{\ell}\right\Vert _{L^{1}} \geq \Felix{(4^q \, \varepsilon)^p}$
for $i\neq\ell$. With this constant $C_{1}$, we will show
\[
  L_{\Felix{p}}\left(\varepsilon,\mathcal{HF}_{\beta,d,B}\right)
  \geq \frac{C_{1}}{\Felix{4^{q p (d-1)/\beta}}}
       \cdot \varepsilon^{-\frac{\Felix{p} (d-1)}{\beta}},
  \qquad \text{ for all } \varepsilon \in (0, \varepsilon_0),
\]
which clearly implies the claim.

For the proof, let
$E^{\ell}:\mathcal{HF}_{\beta,d,B}\to\left\{0,1\right\}^{\ell}$
and $D^{\ell}:\left\{ 0,1\right\}^{\ell}
              \to L^{\Felix{p}}(\left[-\nicefrac{1}{2},\nicefrac{1}{2}\right]^{d})$
be any encoder-decoder pair which achieves \Felix{$L^p$-}distortion
$\varepsilon\in\left(0,\varepsilon_0\right)$
over the class $\mathcal{HF}_{\beta,d,B}$.
We need to show
\[
  \ell
  \geq \frac{C_{1}}{\Felix{4^{q p (d-1) / \beta}}}
       \cdot \varepsilon^{-\frac{\Felix{p}(d-1)}{\beta}}.
\]
Assume towards a contradiction that this fails. Thus,
$|\left\{ 0,1\right\} ^{\ell}| = 2^{\ell}\leq e^{\ell}
 < \exp \left(
          C_{1}
          \cdot \left( \Felix{4^q \, \varepsilon} \right)^{-\Felix{p}(d-1)/\beta}
        \right)$.
By the pigeonhole principle, with $f_{1},\dots,f_{N}$ as above, this
ensures existence of $i,j\in\left\{ 1,\dots,N\right\} $ with $i\neq j$,
but with
$E^{\ell}\left({\rm HF}_{f_{i}}\right) = E^{\ell}\left({\rm HF}_{f_{j}}\right)$.
But by Step 2 (Equation \eqref{eq:HFDistanceLowerBound})
\Felix{and by Equation \eqref{eq:PseudoTriangleInequality},} this entails
\begin{align*}
  %&      2\sqrt{2}\varepsilon
  %  =    \sqrt{8\varepsilon^{2}}
    4^q \, \varepsilon
  & \leq \left\Vert f_{i}-f_{j}\right\Vert_{L^{1}}^{\Felix{1/p}}
    \leq\left\Vert {\rm HF}_{f_{i}}-{\rm HF}_{f_{j}} \right\Vert_{L^{\Felix{p}}}
    \Felix{= \left\|
        {\rm HF}_{f_{i}}
        - D^{\ell}\left(E^{\ell}\left({\rm HF}_{f_{i}}\right)\right)
        + D^{\ell}\left(E^{\ell}\left({\rm HF}_{f_{j}}\right)\right)
        - {\rm HF}_{f_{j}}
      \right\|_{L^p}} \\
  & \leq \Felix{2^q \cdot
         \max}
         \left\{
           \left\Vert
             {\rm HF}_{f_{i}}
             - D^{\ell}\left(E^{\ell}\left({\rm HF}_{f_{i}}\right)\right)
           \right\Vert _{L^{\Felix{p}}} \, ,
           %\left\Vert
           %  D^{\ell}\left(E^{\ell}\left({\rm HF}_{f_{i}}\right)\right)
           %  - D^{\ell}\left(E^{\ell}\left({\rm HF}_{f_{j}}\right)\right)
           %\right\Vert_{L^{\Felix{p}}} \, ,
           \left\Vert
             D^{\ell}\left(E^{\ell}\left({\rm HF}_{f_{j}}\right)\right)
             - {\rm HF}_{f_{j}}
           \right\Vert_{L^{\Felix{p}}}
         \right\} \\
  & \leq 2^q \cdot \varepsilon \, ,
  %& \leq \varepsilon + 0 + \varepsilon
  %  =    2\varepsilon,
\end{align*}
a contradiction. Here, we used in the last step that
%$E^{\ell}\left({\rm HF}_{f_{i}}\right)=E^{\ell}\left({\rm HF}_{f_{j}}\right)$,
%and that
the pair $\left(E^{\ell},D^{\ell}\right)$
achieves \Felix{$L^p$-}distortion $\varepsilon$ over $\mathcal{HF}_{\beta,d,B}
\supset \left\{ {\rm HF}_{f_{1}},\dots,{\rm HF}_{f_{N}}\right\}$.
This contradiction completes the proof.
\end{proof}

Now that we have a lower bound on the minimax code length of the class of
horizon functions, the next step of the program that was outlined at the
beginning of this subsection is to show that if each horizon function
$f \in \mathcal{HF}_{\beta, d, B}$ can be approximated with \Felix{$L^p$} error
$\leq \varepsilon$ by a neural network of bounded complexity,
then this yields an encoder-decoder pair for the class
$\mathcal{HF}_{\beta, d, B}$ of a certain (small) bit-length $\ell$.
\Felix{The} main idea for showing this is to encode the approximating neural
networks as bit-strings. Our next lemma shows that this is possible.

\begin{lemma}\label{lem:NNEncoding}
  Let $d \in \N$, and let $\mathcal{B}$ be an encoding scheme for real numbers.
  For $M, K \in \N$, let $\mathcal{NN}_{M,K,d}^{\mathcal{B}}$ be as in
  Definition \ref{def:WeightEncodability}.
  Let $\varrho : \R \to \R$ with $\varrho (0) = 0$, and define
  \[
    \mathcal{NN}_{M,K,d}^{\mathcal{B},\varrho}
    := \{
         \Realization_\varrho (\Phi)
         \,:\,
         \Phi \in \mathcal{NN}_{M,K,d}^{\mathcal{B}}
       \}.
  \]
  % For $M,K \in \mathbb{N}$, let $\mathcal{NN}_{M,K}$
  % denote the class of neural networks $\Phi : \R^d \to \R$ with at most
  % $M$ (nonzero) weights, where each of these nonzero weights can be encoded
  % using $K$ bits.

  There is a universal constant $C = C(d) \in \N$, such that for arbitrary
  $M,K \in \N$, there is an injective map
  $\Gamma_{M,K,d}^{\mathcal{B}, \varrho} :
   \mathcal{NN}_{M,K,d}^{\mathcal{B},\varrho}
   \to \{ 0,1 \}^{C M (K + \lceil \log_2 M \rceil)}$.
\end{lemma}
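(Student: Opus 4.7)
The plan is to encode each realization by a bit-level description of a representative network computing it. The crucial ingredient is that $\varrho(0)=0$, which lets us prune inessential neurons without changing the realization, so that the representative can be taken to have bounded size.

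\medskip{}

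\textbf{Pruning.} Given $\Phi=((A_1,b_1),\dots,(A_L,b_L))\in \mathcal{NN}_{M,K,d}^{\mathcal{B}}$, we iteratively remove two types of hidden neurons. Type~I: a neuron $j$ in hidden layer $\ell$ with row $j$ of $A_\ell$ and entry $j$ of $b_\ell$ both zero outputs $\varrho(0)=0$, so deleting the neuron together with column $j$ of $A_{\ell+1}$ leaves $\Realization_\varrho(\Phi)$ unchanged. Type~II: a neuron $j$ whose outgoing column in $A_{\ell+1}$ is already zero has no downstream effect and can be deleted analogously. Neither operation introduces new nonzero weights, so both preserve the bound $M$. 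After iterating to stability, every surviving hidden neuron carries at least one nonzero incoming entry (in its row of $A_\ell$ or its bias), hence the total number of hidden neurons is at most $M$. Collapsing each maximal run of all-zero intermediate layers into a single dummy layer (which merely passes $0$ forward), we obtain an equivalent network $\tilde\Phi$ with $L' \leq 2M+1$ layers and each layer of width at most $\max(d,M,1)$.

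\medskip{}

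\textbf{Encoding and definition of $\Gamma$.} We describe $\tilde\Phi$ by writing down (a) the layer count $L'$, using $\lceil\log_2(2M+2)\rceil$ bits; (b) the sequence $N_1,\dots,N_{L'-1}$ of hidden layer widths (with $N_0=d$ and $N_{L'}=1$ fixed), using $(L'-1)\lceil\log_2(M+d+1)\rceil$ bits; and (c) for each of the at most $M$ nonzero entries, a tuple $(\ell,\sigma,i,j,w)$ specifying the layer index $\ell$, a flag $\sigma\in\{A,b\}$, the row and column indices, and a $K$-bit preimage $w\in\{0,1\}^K$ of its value under $B_K$. Each tuple costs $K + O(\log_2 M)$ bits, with the implicit constant depending only on $d$. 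Summing (a)--(c) and padding with zeros yields an encoding of length exactly $C\cdot M\cdot(K+\lceil\log_2 M\rceil)$ for some $C=C(d)\in \N$. We then invoke the Axiom of Choice to pick, for each $f\in \mathcal{NN}_{M,K,d}^{\mathcal{B},\varrho}$, some $\Phi_f\in\mathcal{NN}_{M,K,d}^{\mathcal{B}}$ with $\Realization_\varrho(\Phi_f)=f$, and define $\Gamma(f)$ as the bit-string encoding of $\tilde\Phi_f$. Injectivity is immediate, since the bit-string determines $\tilde\Phi_f$ unambiguously, and hence determines $\Realization_\varrho(\tilde\Phi_f)=f$.

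\medskip{}

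The main obstacle is the pruning argument: one must verify that both types of neuron removal genuinely leave $\Realization_\varrho$ invariant (this is where the hypothesis $\varrho(0)=0$ is indispensable), and that the iteration terminates with the stated bound on hidden neurons and the subsequent collapsing gives the claimed layer count. The rest is straightforward bookkeeping; the counting in the second step is designed exactly so as to match the target $C\,M\,(K+\lceil\log_2 M\rceil)$ with $C$ depending only on $d$.
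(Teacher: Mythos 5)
Your plan follows the same route as the paper: exploit $\varrho(0)=0$ to replace an arbitrary realization by a representative network of bounded size, then serialize that network. But your pruning argument has a genuine gap. The claim that, after iterating Type~I pruning to stability, ``every surviving hidden neuron carries at least one nonzero incoming entry'' is not justified, because Definition~\ref{def:NeuralNetworks} requires every layer width $N_\ell\in\N$, i.e.\ $N_\ell\geq 1$. A hidden layer of width $1$ whose single row of $[A_\ell\mid b_\ell]$ is identically zero therefore cannot be pruned by deleting its one neuron -- the layer would become empty -- so the iteration gets stuck and such neurons survive with a zero row. Your subsequent ``collapsing'' step quietly acknowledges this (if the claim were true there would be no all-zero intermediate layers left to collapse), so the two halves of the argument are inconsistent; in fact width-$1$ zero-row layers can appear in unbounded number after Type~I/II pruning, since each contributes nothing to $M$. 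The paper's Lemma~\ref{lem:WeightsAndNeuronsAssumptionJustification} resolves exactly this with a case distinction: a zero row in a width-$>1$ layer is removed as you do (Case~1); a width-$1$ zero-row layer at position $\ell>1$ is handled by collapsing layers $1,\dots,\ell$ into a single explicit zero layer $((0,0))$, which strictly decreases $N(\Phi)$ (Case~2); and the residual case $\ell=1$, $N_1=1$ is shown to be impossible when $\ell$ is taken maximal among layers with a zero row, under the hypothesis $N(\Phi)>M(\Phi)+d+1$ (Case~3). Iterating then gives the clean bound $N(\Phi')\leq M(\Phi')+d+1$, hence $L\leq M+1$, with no stuck dummy layers.

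Once one has that bound, the encoding is essentially interchangeable with yours -- store $L$, then for each matrix/bias its dimensions, the positions of the at most $M$ nonzero entries, and a $K$-bit code per entry -- and costs $C\cdot M\cdot(K+\lceil\log_2 M\rceil)$ bits with $C=C(d)$. Your additional Type~II pruning and the $L'\leq 2M+1$ workaround are not needed for this; they are compensations for the missing Case~2/Case~3 analysis rather than shortcuts.
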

% \begin{remark*} \todo{Kann jetzt meiner Meinung nach
%                       geloescht werden, richtig?}
%     Of course, the phrase ``the weights can be encoded with $K$ bits'' is not
%     completely precise. What is meant is that there is a fixed encoding map
%     $E : \R \to \{0,1\}^K$, and a fixed decoding map
%     $D : \{0,1\}^K \to \R$ satisfying $D(E(x)) = x$ for all $x \in \R$
%     which appear as values of weights in the neural network.
%
%     Strictly speaking, the set $\mathcal{NN}_{M,K}$
%     is thus dependent on the precise choice of the
%     encoding and decoding maps $E,D$, but in most
%     cases these will be fixed, and thus implied
%     by the context.
% \end{remark*}

\begin{proof}
The proof is similar to that of \cite[Theorem 2.7]{BoeGKP2017}.
However, since we define networks slightly differently in this work,
we repeat the main points of the proof with some simplifications.

\smallskip{}

In Lemma \ref{lem:WeightsAndNeuronsAssumptionJustification}, it is shown that
for each $f \in \mathcal{NN}_{M,K,d}^{\mathcal{B},\varrho}$, there is a neural
network $\Phi_f \in \mathcal{NN}_{M,K,d}^{\mathcal{B}}$
satisfying $f = \Realization_\varrho (\Phi_f)$ and furthermore
$N(\Phi_f) \leq M(\Phi_f) + d + 1$.

Therefore, it suffices to show for
\[
  \mathcal{NN}_{M,K}^\ast
  := \{
       \Phi \in \mathcal{NN}_{M,K,d}^\mathcal{B}
       \,:\,
       N(\Phi) \leq M(\Phi) + d + 1
     \}
\]
and $\ell := C \cdot M \cdot (K + \lceil \log_2 M \rceil)$
(with a suitable constant $C = C(d) \in \N$) that there is an injective map
$\Theta_{M,K}^{\mathcal{B}} : \mathcal{NN}_{M,K}^\ast \to \{0,1\}^{\ell}$,
since then the map
$\Gamma_{M,K,\Felix{d}}^{\mathcal{B}, \varrho} :
 \mathcal{NN}_{M,K,d}^{\mathcal{B}, \varrho}
 \to \{0,1\}^\ell, f \mapsto \Theta_{M,K}^{\mathcal{B}} (\Phi_f)$
is easily seen to be injective.

% -------------------------
% In the sequel, if $\Phi$ is a neural network with output dimension $1$,
% we will always make the simplifying assumption that
% \todo[inline]{I changed this assumption! Since the $d$ input neurons are
%  counted in the definition of $N(\Phi)$, one can e.g. consider a net with only
%  $1$ nonzero weight which still satisfies $N(\Phi) \geq d$.
%  At which places is this assumption used?
%  Does the modification lead to any kind of problems?}
% \begin{align} \label{eq:AssumptionWeightsAndNeurons}
%    N(\Phi) \leq M(\Phi) + d + 1.
% \end{align}
% At least in case of $\varrho(0) = 0$ (which is true for the ReLU),
% Assumption \eqref{eq:AssumptionWeightsAndNeurons} is justified,
% in the sense that two every network $\Phi$ with output dimension $1$,
% there is a potentially different network $\Phi'$ which satisfies
% condition \eqref{eq:AssumptionWeightsAndNeurons} and such that
% $\Realization_\varrho (\Phi) = \Realization_\varrho (\Phi')$.
% The intuitive reason for this is that if condition
% \eqref{eq:AssumptionWeightsAndNeurons} fails, then $\Phi$ has to have a
% ``dead neuron'' which one can eliminate.
% This argument is made precise in
% Lemma \ref{lem:WeightsAndNeuronsAssumptionJustification}.
% since if it would not hold there would be matrices $A_\ell$ with vanishing
% rows or columns which could be removed without changing the outcome of
% any realization (with rectifier $\varrho$) of the network.
% -------------------------
To prove \Felix{the} existence of $\Theta_{M,K}^{\mathcal{B}}$, we show that
each $\Phi \in \mathcal{NN}_{M,K}^\ast$ can be encoded (in a uniquely decodable
way) with $\ell$ bits.
To show this, we first observe that each such $\Phi$
satisfies for $L := L(\Phi)$ the estimates
\[
  L =    \sum_{\ell=1}^L 1
    \leq \sum_{\ell=1}^L N_\ell
    =    N(\Phi) - d
    \leq M(\Phi) + 1
    \leq M + 1,
  \text{ and }
  N(\Phi)
  \leq M(\Phi) + d + 1
  \leq M + d + 1
  \leq 3d \cdot M
  =:   T.
\]

Next, in the notation of Definition \ref{def:NeuralNetworks},
we can write $\Phi = ((A_1, b_1), \dots, (A_L, b_L))$, so that it suffices to
encode (in a uniquely decodable way) the integer $L \in \N$, the matrices
$A_1, \dots, A_L$ and the vectors $b_1, \dots, b_L$
using a bit-string of length $\ell$.
To show this, let $\mathcal{B} = (B_n)_{n \in \N}$.

% To encode a network $\Phi$ of Definition \ref{def:NeuralNetworks} it is
% sufficient to be able to encode the $2 L$ matrices in the definition
% of $\Phi$ with a bit-string of length $C M (\lceil 2\log_2(M)+ K) \rceil$.

Now, if $A \in \R^{n_1 \times n_2}$ with $1 \leq n_1, n_2 \leq T$ and
$\| A \|_{\ell^0} = m$ and with $A_{i,j} \in B_K (\{0,1\}^K)$ if
$A_{i,j} \neq 0$, then one can store $A$ by storing the values
$n_1, n_2 \Felix{\in \{1,\dots,T\}}$,
the value $\Felix{m \in \{0,\dots,T^2\}}$, the position of each of the $m$
nonzero entries of $A$, and the bit-string of length $K$ that is associated
(by $B_K$) to each nonzero weight.
Since one can always zero-pad the obtained bit-string to a larger length,
and since we have
\[
  \log_2 (T)
  =    \log_2 (3d) + \log_2 (M)
  \leq C_1 + \lceil \log_2 M \rceil
\]
and
$\log_2 (1 + T^2)
 \leq \log_2 (2 T^2)
 =    1 + 2 \log_2 (T)
 \leq 1 + 2C_1 + 2 \lceil \log_2 M \rceil$
for a suitable $C_1 = C_1(d) \in \N$, this can be done with
\begin{align*}
  &      \lceil \log_2 T \rceil
         + \lceil \log_2 T \rceil
         + \lceil \log_2 (T^2 +1) \rceil
         + m \cdot \left(
                     \lceil \log_2 T \rceil + \lceil \log_2 T \rceil + K
                   \right) \\
  & \leq 2C_1
         + 2 \lceil \log_2 M \rceil
         + 1
         + 2C_1
         + 2 \lceil \log_2 M \rceil
         + m \left(
               K + 2C_1 + 2 \lceil \log_2 M \rceil
             \right) \\
  & \leq 1
         + 4C_1
         + 4 \lceil \log_2 M \rceil
         + 2(1 + C_1) \cdot m \cdot (K + \lceil \log_2 M \rceil ) \\
  & \leq C_2
         + 4 \lceil \log_2 M \rceil
         + C_2 \cdot m \cdot (K + \lceil \log_2 M \rceil)
\end{align*}
bits, for a suitable constant $C_2 = C_2 (d) \in \N$.

Likewise, but easier, if $b \in \R^n$ with $1 \leq n \leq T$,
with $\| b \|_{\ell^0} = m$ and with $b_i \in B_K (\{0,1\}^K)$ if $b_i \neq 0$,
then one can store $b$ by storing the values $\Felix{n \in \{1,\dots,T\}}$ and
$m \in \{0,\dots,n\} \subset \{0,\dots,T\}$, and the position of each nonzero
entry \Felix{of $b$}, as well as the bit-string of length $K$ associated
(by $B_K$) to each \Felix{such} nonzero entry.
Because of $\log_2 (T+1) \leq \log_2 (2T) \leq 1 + \log_2 (T)$,
this can be done with
\begin{align*}
  \lceil \log_2 T \rceil
  + \lceil \log_2 (T+1) \rceil
  + m \cdot (K + \lceil \log_2 T \rceil)
  &\leq 1
        + 2 C_1
        + 2 \lceil \log_2 M \rceil
        + m \cdot (K + C_1 + \lceil \log_2 M \rceil ) \\
  &\leq C_2
        + 4 \lceil \log_2 M \rceil
        + C_2 \cdot m \cdot (K + \lceil \log_2 M \rceil )
\end{align*}
bits, after possibly enlarging the constant $C_2 = C_2 (d) \in \N$ from above.

%Clearly, if $A$ is an $n_1 \times n_2$ matrix, $n_1,n_2 \leq m_2$, with only
%$m_1$ nonzero weights each encodeable by $K$ bits, then one can store $A$ by
%storing the values $n_1, n_2$, the position of each entry,
%and the associated weights. This can certainly be done with
%\[
%  \lceil\log_2(M)\rceil
%  + \lceil\log_2(M)\rceil
%  + m_1 (\lceil\log_2(M)\rceil + \lceil\log_2(M)\rceil + K)
%  \leq 2\lceil\log_2(M)\rceil + m_1(2 + K).
%\]
%bits.
%Let $\Phi$ be a neural network with $L$ layers and
%\[
%  \Phi = ((A_1, b_1), \dots, (A_L, b_L)).
%\]
%For $\ell = \{1, \dots, L\}$, we denote by $m_\ell^A, m_\ell^b$ the number of
%nonzero entries of $A_\ell$ and $b_\ell$ respectively.
%By definition, we have that
%$\sum_{1\leq \ell \leq L} m_\ell^A + m_\ell^b = M(\Phi)$.

Note that when decoding a given bit string, the values \Felix{of}
$M, K, d$---and thus also of $T$---are known.
Overall, our encoding scheme for encoding networks
$\Phi \in \mathcal{NN}_{M,K}^\ast$ now works as follows:

\textbf{Step 1:}
We store the number $\Felix{L \in \{1,\dots,M+1\}}$ in a bit-string of length
$\lceil\log_2(M+1)\rceil$.

% \textbf{Step 2:}
% We store the numbers $(m_1^A, m_1^b, \dots, m_L^A, m_L^b)$.
% The length of this bit string is
% $2L\lceil\log_2(M)\rceil \leq 2M\lceil\log_2(M)\rceil$.

\textbf{Step 2:}
We encode each $A_\ell$ using a bit string of length
$C_2 + 4 \lceil \log_2 M \rceil
 + C_2 \cdot \|A_\ell\|_{\ell^0} \cdot (K + \lceil \log_2 M \rceil )$,
and each $b_\ell$ using a bit string of length
$C_2 + 4 \lceil \log_2 M \rceil +
 C_2 \cdot \|b_\ell\|_{\ell^0} \cdot (K + \lceil \log_2 M \rceil )$.
As seen above, this can indeed be done in such a way that one can uniquely
reconstruct $A_1, \dots, A_L$ and $b_1, \dots, b_L$ from these bit-strings,
once one knows $M, K, d$ (which are given) and $L$,
which is given by the bit string from Step 1.

Overall, this encodes the network $\Phi = ((A_1, b_1),\dots, (A_L,b_L))$
in a uniquely decodable way using a bit-string of length
\begin{align*}
  &  \lceil \log_2 (M+1) \rceil
  + 2 \cdot \sum_{\ell = 1}^L (C_2 + 4 \lceil \log_2 M \rceil)
  + C_2 \cdot (K + \lceil \log_2 M \rceil)
        \cdot \sum_{\ell=1}^L
                (\|A_\ell\|_{\ell^0} + \|b_\ell\|_{\ell^0}) \\
  &\leq 1 + \lceil \log_2 M \rceil
       + 2 L \cdot (C_2 + 4 \lceil \log_2 M \rceil)
       + C_2 \cdot M \cdot (K + \lceil \log_2 M \rceil) \\
  &\leq K + \lceil \log_2 M \rceil
        + 4 \max\{4, C_2\} \cdot M \cdot (1 + \lceil \log_2 M \rceil)
        + C_2 \cdot M \cdot (K + \lceil \log_2 M \rceil) \\
  &\leq (1 + C_2 + 4 \max\{4, C_2\}) \cdot M
        \cdot (K + \lceil \log_2 M \rceil).
\end{align*}
Here, we used that $L \leq M+1 \leq 2M$ and that $M,K \geq 1$.
With $C := 1 + C_2 + 4 \max\{4, C_2\}$, we have thus proved the claim.
%We reserve for each $A_\ell$ a bit-string of size
%$2\lceil\log_2(M)\rceil + m_\ell^A(2 + K)$ and for each $b_\ell$ a bit-string
%of size $2\lceil\log_2(M)\rceil + m_\ell^b(2 + K)$.
%This is sufficient due to our previous considerations.
%Since $\sum_{1\leq \ell \leq L} m_\ell^A + m_\ell^b = M(\Phi)$ we conclude
%that we require a bit-string of length
%$$
%  4L\lceil\log_2(M)\rceil + M(2 + K) \leq 4M\lceil\log_2(M)\rceil + M(2 + K).
%$$
%
%Combining the bit-strings of all steps yields a bit-string of length
%\begin{align*}
%  &~\lceil\log_2(M)\rceil
%    + 2M\lceil\log_2(M)\rceil
%    + 4M\lceil\log_2(M)\rceil
%    + M(2 + K)\\
%  = &~(6M+1)\lceil\log_2(M)\rceil + M(2+K) \leq CM \lceil\log_2(M)+K\rceil.
%\end{align*}
%
%Finally, we need to understand whether the constructed map is indeed injective.
%The string from Step 1 uniquely determines the number of layers and fixes the
%length of the string associated to Step 2.
%The string from Step 2 fixes the number of nonzeros in each matrix and also
%allows to decode the information from Step 3.
%As discussed before, the information stored in the string associated to Step 3
%uniquely determines $\Phi$.
\end{proof}

Now, since we have a lower bound on the minimax code-length of the class of
horizon functions and since we know how to encode neural networks of limited
complexity, we can now prove our optimality result in the uniform setting,
by making precise the arguments that we sketched at the beginning
of the present subsection.

\begin{proof}[Proof of Theorem \ref{thm:Optimality}]
  We will use the notation $\mathcal{NN}_{M,K,d}^{\mathcal{B}}$ from
  Definition \ref{def:WeightEncodability} and the notation
  $\mathcal{NN}_{M,K,d}^{\mathcal{B}, \varrho}$ from Lemma \ref{lem:NNEncoding}.
  Recall from that lemma that there is an absolute constant
  $C_1 = C_1 (d) \in \N$, such that for arbitrary $M,K \in \N$,
  there is an injective map
  \[
    \Gamma :
    \mathcal{NN}_{M,K,d}^{\mathcal{B},\varrho}
    \to \{ 0, 1 \}^{C_1 \cdot M \cdot (K + \lceil \log_2 M \rceil)} \, .
  \]
  Furthermore, Lemma \ref{lem:HorizonFunctionsMinimaxCodeLength} yields
  constants $C_2 = C_2 (d, \Felix{p}, \beta ,B) > 0$ and
  $\nicefrac{1}{2} > \varepsilon_0 = \varepsilon_0(d, \Felix{p}, \beta, B) > 0$
  such that the minimax code length of $\mathcal{HF}_{\beta,d,B}$ satisfies
  $L_{\Felix{p}}(\varepsilon, \mathcal{HF}_{\beta,d,B})
   \geq C_2 \cdot \varepsilon^{- \Felix{p} (d-1) / \beta}$
  for all $\varepsilon \in (0,\varepsilon_0)$. Define
  \[
    C := \min \left\{
                     1,
                     C_2 \,
                     \bigg/ \, \left[
                                 2 C_1 \cdot
                                       \left(
                                         2 + \frac{\Felix{p}d}{\beta} + C_0
                                       \right)
                               \right]
              \right\}
      >  0 ,
  \]
  fix some $\varepsilon \in (0, \varepsilon_0)$, and define \pp{$K_0 := \left\lceil
               C_0 \cdot \log_2 \left( \nicefrac{1}{\varepsilon} \right)
             \right\rceil$ and $M_0 := \left\lfloor
                C \cdot \varepsilon^{-\Felix{p} (d-1)/\beta}
                  \big/ \log_2 \left(\nicefrac{1}{\varepsilon}\right)
             \right\rfloor$}. To prove the theorem, it suffices to show that there is
  $f_\varepsilon \in \mathcal{HF}_{\beta,d,B}$ such that for every
  $\Phi \in \mathcal{NN}_{M, K_0, d}^{\mathcal{B}}$ (for arbitrary $M \in \N$)
  with $\|f_\varepsilon - \Realization_\varrho(\Phi)\|_{L^{\Felix{p}}}
        \leq \varepsilon$,
  it already follows that $M > M_0$.

  Assume towards a contradiction that this fails; thus, for \emph{every}
  $f \in \mathcal{HF}_{\beta,d,B}$, there is
  $\Phi_f \in \mathcal{NN}_{M, K_0,d}^{\mathcal{B}}$ with
  $\| f - \Realization_\varrho (\Phi_f) \|_{L^{\Felix{p}}} \leq \varepsilon$,
  but such that $M \leq M_0$.
  In particular, $\Phi_f \in \mathcal{NN}_{M, K_0, d}^{\mathcal{B}}
  \subset \mathcal{NN}_{M_0, K_0, d}^{\mathcal{B}}$,
  so that
  $\Realization_\varrho (\Phi_f)
   \in \mathcal{NN}_{M_0, K_0, d}^{\mathcal{B}, \varrho}$.

  \pp{Let $\ell := C_1 \cdot M_0 \cdot (K_0 + \lceil \log_2 M_0 \rceil)$,} and recall from above (or from Lemma \ref{lem:NNEncoding}) that there is an
  injection
  $\Gamma : \mathcal{NN}_{M_0, K_0,d}^{\mathcal{B},\varrho} \to \{0,1\}^\ell$.
  Therefore, there is a left inverse
  $\Lambda : \{0,1\}^\ell \to \mathcal{NN}_{M_0, K_0, d}^{\mathcal{B}, \varrho}$
  for $\Gamma$. Using these \Felix{two maps}, we can now define an
  encoder-decoder pair for \Felix{the class} $\mathcal{HF}_{\beta,d,B}$,
  as follows:
  \[
    \begin{alignedat}{3}
      & E^\ell : &  & \mathcal{HF}_{\beta,d,B} \to \left\{ 0,1\right\} ^{\ell},
                 &  & f \mapsto \Gamma
                                \left(\Realization_\varrho (\Phi_{f}) \right),\\
      & D^\ell:
      & \ & \left\{ 0,1\right\} ^{\ell}
            \to L^{\Felix{p}}(\left[-\nicefrac{1}{2},\,\nicefrac{1}{2}\right]^{d}),
      & \ & c \mapsto \left[
                        \Lambda\left(c\right)
                      \right]|_{\left[-\nicefrac{1}{2},
                                       \nicefrac{1}{2}\right]^{d}} .
    \end{alignedat}
  \]
  With this definition, we have
  \[
    D^\ell (E^\ell (f))
    = [\Lambda (\Gamma ( \Realization_\varrho (\Phi_f) ))]
       |_{[- \nicefrac{1}{2}, \nicefrac{1}{2}]^d}
    = \Realization_\varrho (\Phi_f) |_{[-\nicefrac{1}{2}, \nicefrac{1}{2}]^d},
  \]
  and thus $\| f - D^\ell (E^\ell (f)) \|_{L^{\Felix{p}}} \leq \varepsilon$
  for all $f \in \mathcal{HF}_{\beta,d,B}$. By definition of the minimax code
  length $L_{\Felix{p}}(\varepsilon, \mathcal{HF}_{\beta,d,B})$, this implies
  \begin{equation}
    \ell
    \geq L_{\Felix{p}} (\varepsilon, \mathcal{HF}_{\beta,d,B})
    \geq C_2 \cdot \varepsilon^{-\Felix{p} (d-1)/\beta}.
    \label{eq:OptimalityEllLowerBound}
  \end{equation}

  In the remainder of the proof, we use elementary estimates to derive a
  contradiction to the preceding estimate for $\ell$.
  First, recall $\varepsilon < \nicefrac{1}{2}$, so that
  $\log_2 (\nicefrac{1}{\varepsilon}) \geq 1$,
  and hence
  $M_0 \leq C \cdot \varepsilon^{-\Felix{p} (d-1)/\beta}
       \leq \varepsilon^{-\Felix{p} (d-1)/\beta}$,
  which implies
  $\log_2 M_0 \leq \frac{\Felix{p} (d-1)}{\beta}
                   \cdot \log_2 (\nicefrac{1}{\varepsilon})
   \leq \frac{\Felix{p} d}{\beta} \cdot \log_2 (\nicefrac{1}{\varepsilon})$.
  Therefore, we get
  \[
    K_0 + \lceil \log_2 M_0 \rceil
    \leq 1
         + C_0 \cdot \log_2 \left(\frac{1}{\varepsilon}\right)
         + \left\lceil
             \frac{\Felix{p} d}{\beta}
             \cdot \log_2 \left(\frac{1}{\varepsilon}\right)
           \right\rceil
    \leq \left(2 + C_0 + \frac{\Felix{p} d}{\beta}\right)
         \cdot \log_2 \left(\frac{1}{\varepsilon}\right) \, ,
  \]
  where the last step used again that
  $\log_2 (\nicefrac{1}{\varepsilon})\geq 1$.
  All in all, recalling the definitions of $M_0$, \Felix{of $\ell$},
  and of $C$, we see
  \[
    \ell
    = C_1 \cdot M_0 \cdot (K_0 + \lceil \log_2 M_0 \rceil)
    \leq C_1 \cdot M_0 \cdot \left(2 + \frac{\Felix{p} d}{\beta} + C_0\right)
                       \cdot \log_2 \left(\frac{1}{\varepsilon}\right)
    \leq \frac{C_2}{2} \cdot \varepsilon^{-\Felix{p} (d-1)/\beta},
  \]
  which yields the desired contradiction, once we recall
  Equation \eqref{eq:OptimalityEllLowerBound}.
\end{proof}

\subsection{Lower bounds for the setting of instance optimality}

In the previous \Felix{sub}section, we showed (up to log factors) that
$M_{\varepsilon,\Felix{p}}^{\mathcal{B}, \varrho, C_0} (\mathcal{HF}_{\beta, d, B})
\gtrsim \varepsilon^{-\Felix{p} (d-1)/\beta}$.
Here, the quantity
$M = M_{\varepsilon,\Felix{p}}^{\mathcal{B}, \varrho, C_0} (\mathcal{HF}_{\beta, d, B})$
is the minimal $M \in \N$ such that \emph{every}
$f \in \mathcal{HF}_{\beta, d, B}$ can be approximated up to an
$L^{\Felix{p}}$-error of at most $\varepsilon$ using a neural network with
$M$ nonzero weights (and such that \Felix{each of these} weights can be encoded
with at most $\lceil C_0 \cdot \log_2 (\nicefrac{1}{\varepsilon}) \rceil$ bits,
using the encoding scheme $\mathcal{B}$).

In this section, we show that a similar lower bound holds if one is
interested in \Felix{approximating} a \emph{single} (judiciously chosen)
function $f \in \mathcal{HF}_{\beta, d, B}$, \Felix{and} not just if one is
interested in a uniform approximation over the whole class of horizon functions.

The proof idea is somewhat similar to the one that was used for the lower
bounds in the uniform setting:
We first obtain a lower bound regarding encoder-decoder pairs which achieve a
small $L^1$-error over the class $\mathcal{F}_{\beta, \Felix{d}, B}$, and then
we use the map $\gamma \mapsto \mathrm{HF}_\gamma$ to transfer
the result to the class of horizon functions.

Thus, our first step is the following lemma which uses Baire's category
theorem to ``upgrade'' the lower bound regarding encoder-decoder pairs with
\emph{uniform} error control to a lower bound concerning
encoder-decoder pairs with \emph{non-uniform} error control.
%\pp{We start with the following lemma.}

\begin{lemma}\label{lem:IndividualCnApproximation}
Let $d \in \N$ and $\beta, B > 0$ be arbitrary, and write
$\beta = n + \sigma$ with $n \in \N_0$ and $\sigma \in (0,1]$.
Define
\[
  %C^{0,\beta} ([-\nicefrac{1}{2}, \nicefrac{1}{2}]^d)
  X := \{
         u \in C^n ([-\nicefrac{1}{2}, \nicefrac{1}{2}]^d)
         \,:\,
         \| u \|_{C^{0,\beta}} \leq B < \infty
       \}.
\]
%For each $m \in \N$, let $f_m : \N \to (0,\infty)$ be arbitrary with
%$\lim_{\ell \to \infty} \ell^{n/d} \cdot f_m (\ell) = 0$.
Let $\phi : \N \to (0,\infty)$ be arbitrary with
$\lim_{\ell \to \infty} \ell^{\beta/d} \cdot \phi (\ell) = 0$.
Finally, let $I \subset \N$ be infinite, and for each $\ell \in I$,
let $E^\ell : X \to \{0,1\}^\ell$ and
$D^\ell : \{0,1\}^\ell \to L^1 ([-\nicefrac{1}{2}, \nicefrac{1}{2}]^d)$
be arbitrary maps.

Then there is some $u \in X$, such that the sequence
$\big(
   \| u - D^\ell (E^\ell (u))\|_{L^1} \, \big/ \, \phi (\ell)
 \big)_{\ell \in I}$
is unbounded.
\end{lemma}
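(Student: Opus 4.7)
The plan is to argue by contradiction via Baire's category theorem. I would equip $X$ with the $C^{0,\beta}$ norm, under which $X$ is a closed ball in the Banach space $C^{n,\sigma}([-\nicefrac{1}{2},\nicefrac{1}{2}]^d)$, hence a complete metric space. Suppose toward contradiction that for every $u \in X$ the sequence $(\|u - D^\ell(E^\ell(u))\|_{L^1}/\phi(\ell))_{\ell \in I}$ is bounded. Then every such $u$ belongs, for suitable $C, \ell_0 \in \N$, to
\[
  A_{C,\ell_0}
  := \bigl\{
       u \in X
       \,:\,
       \forall\,\ell \in I \text{ with } \ell \geq \ell_0
       \,\,\exists\, g \in D^\ell(\{0,1\}^\ell)
       \text{ with } \|u - g\|_{L^1} \leq C\,\phi(\ell)
     \bigr\}.
\]
Hence $X = \bigcup_{C, \ell_0 \in \N} A_{C,\ell_0}$, and this is the countable cover to which Baire's theorem will be applied.

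The first technical step is to show each $A_{C,\ell_0}$ is closed in $X$. Since $|D^\ell(\{0,1\}^\ell)| \leq 2^\ell$, for each fixed $\ell$ the inner condition defines a finite union of closed $L^1$-balls intersected with $X$. If $u_n \to u$ in $C^{0,\beta}$, then $u_n \to u$ in $L^1$, and for each $\ell \geq \ell_0$ with $\ell \in I$, pigeonholing on the finite set of admissible $g_n^{(\ell)} \in D^\ell(\{0,1\}^\ell)$ yields a value $g^{(\ell)}$ hit along a subsequence, whence $\|u - g^{(\ell)}\|_{L^1} \leq C\phi(\ell)$. By Baire's theorem, some $A_{C_0,\ell_0^*}$ has nonempty interior in $X$, giving $u_0 \in X$ and $r > 0$ with $B_r^{C^{0,\beta}}(u_0) \cap X \subset A_{C_0,\ell_0^*}$. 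A standard scaling adjustment (replacing $u_0$ by $(1-\delta)u_0$ for $\delta > 0$ small) produces $u_0' \in X$ with $\|u_0'\|_{C^{0,\beta}} < B$ and $r' > 0$ such that $B_{r'}^{C^{0,\beta}}(u_0') \subset X \cap B_r^{C^{0,\beta}}(u_0) \subset A_{C_0,\ell_0^*}$.

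Translating by $-u_0'$, this says that the open $C^{0,\beta}$-ball $\{h : \|h\|_{C^{0,\beta}} < r'\}$ can, for every $\ell \geq \ell_0^*$ in $I$, be covered by at most $2^\ell$ closed $L^1$-balls of radius $C_0\phi(\ell)$. On the other hand, the Clements entropy estimate from \cite{ClementsLipschitzFunctionsL1Entropy}—applied exactly as in Step~1 of the proof of Lemma \ref{lem:HorizonFunctionsMinimaxCodeLength}, but now in dimension $d$ instead of $d-1$ and with $C_3 := r'/(1+2C_2)$—yields, for $\varepsilon$ sufficiently small, at least $\exp(c_1 \varepsilon^{-d/\beta})$ elements of $\{h : \|h\|_{C^{0,\beta}} \leq r'\}$ that are pairwise $\varepsilon$-separated in $L^1$. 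Taking $\varepsilon := 3C_0\,\phi(\ell)$, each of the $2^\ell$ covering $L^1$-balls contains at most one such separated point, forcing $\ell\,\ln 2 \geq c_1 (3C_0)^{-d/\beta} \phi(\ell)^{-d/\beta}$, i.e.\ $\ell^{\beta/d} \phi(\ell) \geq c_2 > 0$ for all sufficiently large $\ell \in I$; this contradicts $\lim_{\ell\to\infty} \ell^{\beta/d}\phi(\ell) = 0$ because $I$ is infinite. I expect the main obstacle to be the closedness of $A_{C,\ell_0}$: the maps $D^\ell \circ E^\ell$ may be wildly discontinuous, so the argument crucially exploits the finiteness of the range of $D^\ell$ via pigeonhole, rather than any regularity of the encoder–decoder pair. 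The entropy step is then a routine rescaling of the Clements bound once the interior ball has been produced by Baire.
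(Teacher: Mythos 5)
Your proof is correct, and it takes a genuinely cleaner route than the paper's. The essential difference is in the choice of the sets to which Baire's theorem is applied. The paper works with
\[
  G_N := \{u \in X : \forall\, \ell \in I : \|u - D^\ell(E^\ell(u))\|_{L^1} \leq N\,\phi(\ell)\}\,,
\]
which need not be closed because $u \mapsto D^\ell(E^\ell(u))$ can be arbitrarily discontinuous; this forces the paper to work with the closures $\overline{G_N}$ and then to spend an entire step constructing modified encoder--decoder pairs $\widetilde{E^\ell},\widetilde{D^\ell}$ (with $\widetilde{E^\ell}$ an argmin over the finite range of $\widetilde{D^\ell}$) and a limiting argument to transfer the approximation property from $\overline{G_N}$ to these modified maps. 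You instead define
\[
  A_{C,\ell_0} := \{u \in X : \forall\,\ell \in I, \ell \geq \ell_0 :\; \exists\,g \in D^\ell(\{0,1\}^\ell) \text{ with } \|u-g\|_{L^1} \leq C\,\phi(\ell)\}\,,
\]
which only asks that $u$ be $C\phi(\ell)$-approximable in $L^1$ by \emph{some} codeword in the finite range of $D^\ell$. For each fixed $\ell$ this is a finite union of closed $L^1$-balls intersected with $X$, hence $C^{0,\beta}$-closed (the inclusion $C^{0,\beta} \hookrightarrow L^1$ is continuous), and $A_{C,\ell_0}$ is the intersection of such sets, hence closed. This makes the argmin/limiting step unnecessary: once Baire and the scaling trick produce a $C^{0,\beta}$-ball $B_{r'}(u_0') \subset A_{C_0,\ell_0^*}$ with $\|u_0'\|_{C^{0,\beta}} < B$, the covering statement is immediate after translating by $-u_0'$, and the Clements entropy bound (applied in dimension $d$, exactly as in Step~1 of Lemma~\ref{lem:HorizonFunctionsMinimaxCodeLength}) finishes by the usual packing-vs-covering count with $\varepsilon = 3 C_0 \phi(\ell)$. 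Both proofs ultimately exploit the finiteness of the codebook, but you build it into the definition from the start, whereas the paper discovers it mid-proof; the remaining steps and the final contradiction $\liminf_{\ell \in I} \ell^{\beta/d}\phi(\ell) > 0$ are the same in spirit. (The extra parameter $\ell_0$ in your definition is harmless but not needed: under the contradiction hypothesis one already has $X = \bigcup_C A_{C,1}$.)
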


\begin{proof}
We assume towards a contradiction that the claim is false.
This means
\begin{equation}
  \forall \, u \in X:
      %\exists m \in \N :
          \left(
              %\frac{\| u - D^\ell (E^\ell (u))\|_{L^1}}{\phi (\ell)}
              \| u - D^\ell (E^\ell (u))\|_{L^1} \, \big/ \, \phi (\ell)
          \right)_{\ell \in I}
          \text{ is a bounded sequence}.
  \label{eq:IndividualCnApproximationContradictionAssumption}
\end{equation}
In the following, we consider the Banach space
\[
  C^{0,\beta} \left(\left[-\nicefrac{1}{2}, \nicefrac{1}{2}\right]^d\right)
  := \left\{
       u \in C^n \left(\left[-\nicefrac{1}{2},\nicefrac{1}{2}\right]^d\right)
       \,:\,
       \| u \|_{C^{0,\beta}} < \infty
     \right\},
\]
that is, all balls $B_\delta (u)$ or $\overline{B}_\delta (u)$ for
$u \in C^{0,\beta}$, and all closures $\overline{M}$ for $M \subset C^{0,\beta}$,
are to be understood with respect to the $\|\cdot\|_{C^{0,\beta}}$ norm.

We divide the proof into three steps.

\textbf{Step 1}: For $N \in \N$, let us set
\[
  %G_{m,N}
  G_{N}
  := \left\{
       u \in X
       \, : \,
       \forall \, \ell \in I :
          \| u - D^\ell (E^\ell (u)) \|_{L^1}
          %\leq N \cdot f_m (\ell)
          \leq N \cdot \phi (\ell)
     \right\} \, .
\]
In this step, we show that there is some $N \in \N$ and certain $\delta > 0$ and
$u_0 \in X$ with
\begin{equation}
  \overline{B_\delta} (u_0)  \subset \overline{G_{N}}.
  \label{eq:IndividualCnApproximationStep1Result}
\end{equation}

To see this, first note that
Equation \eqref{eq:IndividualCnApproximationContradictionAssumption} simply says
$X = \bigcup_{N \in \N} G_{N}$.
But $X$ is a closed \Felix{subset} of the Banach space
$C^{0,\beta} ([-\nicefrac{1}{2}, \nicefrac{1}{2}]^d)$,
and thus a complete metric space. Therefore, the Baire category theorem
(see e.g.\@ \cite[Theorem 5.9]{FollandRA}) shows that
at least one of the \pp{$\overline{G_{N}}$} has nonempty interior
(with respect to $X$). In other words, Baire's theorem ensures \Felix{the}
existence of some $N \in \N$ and of $\delta_0 \in (0,1)$ and $v_0 \in X$
such that \pp{$X \cap B_{\delta_0} (v_0) \subset \overline{G_{N}}$}, where the ball $B_{\delta_0} (v_0)$ and the closure $\overline{G_{N}}$
are both formed with respect to the norm $\|\cdot\|_{C^{0,\beta}}$.

Now, set $u_0 := (1 - \delta_0 / (1+B)) \cdot v_0$ and note
\[
  \|u_0\|_{C^{0,\beta}}
  =    (1 - \delta_0 / (1+B)) \cdot \| v_0 \|_{C^{0,\beta}}
  \leq (1 - \delta_0 / (1+B)) \cdot B
  <    B,
\]
%< \| v_0 \|_{C^n} \leq B$,
as well as
$\| u_0 - v_0 \|_{C^{0,\beta}}
 = \frac{\delta_0}{1+B} \cdot \| v_0 \|_{C^{0,\beta}}
 < \delta_0$.
These two properties easily imply that there is some $\delta > 0$ with
$\overline{B_\delta} (u_0) \subset X \cap B_{\delta_0} (v_0)$.
Because of $X \cap B_{\delta_0} (v_0) \subset \overline{G_{N}}$, this
establishes Equation \eqref{eq:IndividualCnApproximationStep1Result}.

\smallskip{}

\textbf{Step 2}: For brevity, set
\[
  Y := \overline{B_\delta} (0)
     = \{
         u \in C^{0,\beta} ([-\nicefrac{1}{2}, \nicefrac{1}{2}]^d)
         \,:\,
         \| u \|_{C^{0,\beta}} \leq \delta
       \}
     = \mathcal{F}_{\beta,d,\delta},
\]
where the notation $\mathcal{F}_{\beta,d,\delta}$ is as in
Equation \eqref{eq:SmoothFunctionClassDefinition}.
Our goal in this step is for each $\ell \in I \subset \N$ to construct modified
maps
$\widetilde{D^\ell} : \{0,1\}^\ell \to L^1 ([-\nicefrac{1}{2}, \nicefrac{1}{2}]^d)$
and $\widetilde{E^\ell} : Y \to \{0,1\}^\ell$ which satisfy
\begin{equation}
  \| u - \widetilde{D^\ell} (\widetilde{E^\ell} (u)) \|_{L^1}
  \leq N \cdot \phi (\ell)
  \qquad \text{ for all } u \in Y \text{ and all } \ell \in I.
  \label{eq:IndividualCnApproximationStep2Result}
\end{equation}

To this end, define
\[
  \widetilde{D^\ell} :
  \{0,1\}^\ell \to     L^1([-\nicefrac{1}{2}, \nicefrac{1}{2}]^d), \
  c            \mapsto D^\ell (c) - u_0.
\]
Now, since $\{0,1\}^\ell$ is finite, there is for each $u \in Y$ a certain
(not necessarily unique) coefficient sequence $c_u \in \{0,1\}^\ell$ with
\[
  \| u - \widetilde{D^\ell} (c_u) \|_{L^1}
  = \min_{c \in \{0,1\}^\ell} \| u - \widetilde{D^\ell} (c) \|_{L^1} \, .
\]
With this choice of $c_u$, we define
$\widetilde{E^\ell} : Y \to \{0,1\}^\ell, u \mapsto c_u$.
To prove Equation \eqref{eq:IndividualCnApproximationStep2Result}, recall for
$u \in Y$ from Step 1 that
$u + u_0 \in \overline{B_\delta}(u_0) \subset \overline{G_{N}}$.
Thus, there is a sequence $(u_k)_{k \in \N}$ in $G_{N}$
with $\| (u + u_0) - u_k \|_{C^{0,\beta}} \to 0$ as $k \to \infty$.
In particular, we get
\begin{align*}
  \| u - \widetilde{D^\ell} (\widetilde{E^\ell} (u))\|_{L^1}
  & =    \min_{c \in \{0,1\}^\ell}
            \| u - \widetilde{D^\ell} (c) \|_{L^1}
    \leq \| u - \widetilde{D^\ell} (E^\ell (u_k)) \|_{L^1} \\
  & =    \| (u + u_0) - D^\ell (E^\ell (u_k)) \|_{L^1} \\
  & \leq \| (u + u_0) - u_k \|_{L^1}
         + \| u_k - D^\ell (E^\ell (u_k)) \|_{L^1} \\
  ({\scriptstyle{\text{since } u_k \in G_{N}
                 \text{ and } \|\cdot\|_{L^1 ([-\nicefrac{1}{2},\nicefrac{1}{2}]^d)}
                              \leq \|\cdot\|_{C^{0,\beta}}}})
  & \leq \| (u + u_0) - u_k \|_{C^{0,\beta}} + N \cdot \phi (\ell)
    \xrightarrow[k \to \infty]{} N \cdot \phi (\ell),
\end{align*}
which is precisely what was claimed in
\eqref{eq:IndividualCnApproximationStep2Result}.

\smallskip{}

\textbf{Step 3}: In this step, we complete the proof.
To this end, recall from Step 1 of the proof of
Lemma \ref{lem:HorizonFunctionsMinimaxCodeLength} that there are constants
%\todo{This is not really nice, since we are referencing a proof instead of
%a Lemma, etc. Maybe we should make this claim into an individual lemma?}
$C = C(\beta,d,\delta) > 0$ and
$\varepsilon_0 = \varepsilon_0 (\beta, d, \delta) > 0$
such that for every $\varepsilon \in (0,\varepsilon_0)$, there is some
$N \geq \exp(C \cdot \varepsilon^{-d/\beta})$ and certain functions
$u_1, \dots, u_N \in Y = \mathcal{F}_{\beta,d,\delta}$ satisfying
$\| u_i - u_j \|_{L^1} \geq \varepsilon$ for $i \neq j$.

\smallskip{}

We now apply this for every fixed, sufficiently large $\ell \in I$ with the
choice $\varepsilon = (C^{-1} \cdot \ell)^{-\beta/d}$.
Note that we indeed have $\varepsilon \in (0,\varepsilon_0)$, once $\ell$ is
large enough, which we always assume in the following;
since $I \subset \N$ is infinite, there exist arbitrarily large $\ell \in I$.
As just seen, there is
$N \geq \exp(C \cdot [(C^{-1} \cdot \ell)^{-\beta/d}]^{-d/\beta}) = e^\ell$,
and certain functions $u_1,\dots, u_N \in Y$ with
$\| u_i - u_j \|_{L^1} \geq \varepsilon = (C^{-1} \cdot \ell)^{-\beta/d}$
for $i \neq j$.

\smallskip{}

Because of $N \geq e^\ell > 2^\ell = |\{0,1\}^\ell|\vphantom{\sum_j}$,
the pigeonhole principle shows that there are $i, j \in \{1,\dots, N\}$
with $i \neq j$, but such that
$\widetilde{E^\ell} (u_i) = \widetilde{E^\ell} (u_j)$.
In view of Equation \eqref{eq:IndividualCnApproximationStep2Result},
this implies
\begin{align*}
  (C^{-1} \cdot \ell)^{-\beta/d}
  & \leq \| u_i - u_j \|_{L^1} \\
  & \leq \| u_i - \widetilde{D^\ell} (\widetilde{E^\ell} (u_i))\|_{L^1}
         + \| \widetilde{D^\ell} (\widetilde{E^\ell} (u_i))
              - \widetilde{D^\ell} (\widetilde{E^\ell} (u_j))\|_{L^1}
         + \| \widetilde{D^\ell} (\widetilde{E^\ell} (u_j)) - u_j \|_{L^1} \\
  & \leq N \cdot \phi (\ell) + 0 + N \cdot \phi (\ell).
\end{align*}
By rearranging, and by our assumption on $\phi$, this implies
\[
  \frac{C^{\beta/d}}{2N}
  \leq \ell^{\beta/d} \cdot \phi (\ell)
  \xrightarrow[\ell \in I, \ell \to \infty]{} 0 ,
\]
which is the desired contradiction, since the left-hand side is positive and
independent of $\ell$.
Note that we again used that $I$ is infinite to ensure that the limit
$\ell \in I, \ell \to \infty$ makes sense.
\end{proof}

\pp{Our next result transfers the lower bound of the previous lemma to the class
$\mathcal{HF}_{\beta,d,B}$ of horizon functions.}

\begin{lemma}\label{lem:IndividualHorizonApproximation}
  Let $d \in \N_{\geq 2}$ and $\Felix{p}, \beta, B > 0$ be arbitrary.
  Furthermore, let $\vartheta : \N \to (0,\infty)$ be arbitrary with
  $\lim_{\ell \to \infty}
     \ell^{\beta/(\Felix{p}(d-1))} \cdot \vartheta (\ell)
   = 0$.
  Finally, let $I \subset \N$ be infinite, and for each
  $\ell \in I$ let $E^\ell : \mathcal{HF}_{\beta,d,B} \to \{0,1\}^\ell$ and
  $D^\ell : \{0,1\}^\ell \to L^{\Felix{p}} ([-\nicefrac{1}{2}, \nicefrac{1}{2}]^d)$
  be arbitrary.

  Then there is some $f \in \mathcal{HF}_{\beta,d,B}$ such that the sequence \pp{
  $
    \big(
        %\frac{\| f - D^\ell (E^\ell (f))\|_{L^2}}{ \vartheta (\ell) }
        \| f - D^\ell (E^\ell (f))\|_{L^{\Felix{p}}} \, \big/ \, \vartheta (\ell)
    \big)_{\ell \in I}
  $}
  is unbounded.
\end{lemma}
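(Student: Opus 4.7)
The plan is to reduce to Lemma \ref{lem:IndividualCnApproximation} via the map $\gamma \mapsto \mathrm{HF}_\gamma$, exploiting the identity from Lemma \ref{lem:HFBounds} which says that
\[
  \|\mathrm{HF}_\gamma - \mathrm{HF}_\psi\|_{L^p([-\nicefrac{1}{2},\nicefrac{1}{2}]^d)}^p
  = \|\gamma - \psi\|_{L^1([-\nicefrac{1}{2},\nicefrac{1}{2}]^{d-1})}
\]
whenever $\gamma,\psi$ are measurable with values in $[-\nicefrac{1}{2},\nicefrac{1}{2}]$. I would set $B_0 := \min\{\nicefrac{1}{2}, B\}$, so every $\gamma \in \mathcal{F}_{\beta, d-1, B_0}$ satisfies $\|\gamma\|_\infty \leq B_0 \leq \nicefrac{1}{2}$, and hence $\mathrm{HF}_\gamma \in \mathcal{HF}_{\beta, d, B_0} \subset \mathcal{HF}_{\beta, d, B}$ is in the domain of the given encoder $E^\ell$.

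From the given pair $(E^\ell, D^\ell)$ for $\mathcal{HF}_{\beta, d, B}$ I would build an auxiliary encoder-decoder pair $(\widetilde{E}^\ell, \widetilde{D}^\ell)$ for $X := \mathcal{F}_{\beta, d-1, B_0}$ by setting $\widetilde{E}^\ell(\gamma) := E^\ell(\mathrm{HF}_\gamma)$. For the decoder, for each $c \in \{0,1\}^\ell$ I would consider
\[
  m_c := \inf\{\|\mathrm{HF}_\psi - D^\ell(c)\|_{L^p} : \psi : [-\nicefrac{1}{2},\nicefrac{1}{2}]^{d-1} \to [-\nicefrac{1}{2},\nicefrac{1}{2}] \text{ measurable}\},
\]
which is finite (take $\psi \equiv 0$), and choose any measurable $\gamma_c$ into $[-\nicefrac{1}{2}, \nicefrac{1}{2}]$ satisfying $\|\mathrm{HF}_{\gamma_c} - D^\ell(c)\|_{L^p} \leq m_c + \vartheta(\ell)$, setting $\widetilde{D}^\ell(c) := \gamma_c$. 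Only $2^\ell$ such choices are needed for each $\ell \in I$.

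For any $\gamma \in X$ with $c := \widetilde{E}^\ell(\gamma)$, the function $\gamma$ is admissible in the infimum defining $m_c$, so $m_c \leq \|\mathrm{HF}_\gamma - D^\ell(c)\|_{L^p}$. Combining this with the quasi-triangle inequality \eqref{eq:PseudoTriangleInequality} and Lemma \ref{lem:HFBounds} then yields a constant $C_p > 0$ depending only on $p$ with
\[
  \|\gamma - \gamma_c\|_{L^1} = \|\mathrm{HF}_\gamma - \mathrm{HF}_{\gamma_c}\|_{L^p}^p \leq C_p \, \|\mathrm{HF}_\gamma - D^\ell(c)\|_{L^p}^p + C_p \, \vartheta(\ell)^p.
\]

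Setting $\phi(\ell) := \vartheta(\ell)^p$, the hypothesis $\ell^{\beta/(p(d-1))} \vartheta(\ell) \to 0$ immediately gives $\ell^{\beta/(d-1)} \phi(\ell) = (\ell^{\beta/(p(d-1))} \vartheta(\ell))^p \to 0$, so Lemma \ref{lem:IndividualCnApproximation} (applied in dimension $d-1$ with $B_0$ in place of $B$) produces $u \in \mathcal{F}_{\beta, d-1, B_0}$ for which the sequence $(\|u - \widetilde{D}^\ell(\widetilde{E}^\ell(u))\|_{L^1}/\phi(\ell))_{\ell \in I}$ is unbounded. Setting $f := \mathrm{HF}_u \in \mathcal{HF}_{\beta, d, B_0} \subset \mathcal{HF}_{\beta, d, B}$ and rearranging the displayed bound shows the same unboundedness for $(\|f - D^\ell(E^\ell(f))\|_{L^p}/\vartheta(\ell))_{\ell \in I}$, which is the desired conclusion. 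The main delicate point is calibrating the tolerance in the definition of $\gamma_c$: taking $\vartheta(\ell)$ (rather than, say, a constant, or a fixed polynomial rate like $1/\ell$) is what ensures the error term $\vartheta(\ell)^p$ above stays comparable to $\phi(\ell)$ regardless of how rapidly $\vartheta$ decays.
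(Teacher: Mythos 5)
Your proof is correct and follows the same overall blueprint as the paper's: transport the problem to $\mathcal{F}_{\beta,d-1,B_0}$ via the isometry-like map $\gamma \mapsto \mathrm{HF}_{\gamma}$ of Lemma \ref{lem:HFBounds}, build an auxiliary encoder--decoder pair for that class, and invoke Lemma \ref{lem:IndividualCnApproximation} with $\phi := \vartheta^{p}$. The substantive difference is in how the auxiliary decoder $\widetilde{D}^{\ell}$ is constructed. The paper's proof spends its Steps 1 and 2 proving that $K_{B_0}$ is compact in $L^{1}$ (via Arzel\`{a}--Ascoli) and that $\gamma \mapsto \mathrm{HF}_{\gamma}$ is continuous from $L^{1}$ into $L^{p}$, precisely so that an \emph{exact} minimizer $\theta_c \in K_{B_0}$ of $\|D^{\ell}(c) - \mathrm{HF}_{\gamma}\|_{L^{p}}$ exists. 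You sidestep both of those steps entirely by taking a near-minimizer with tolerance $\vartheta(\ell)$, which exists for free by the definition of an infimum, and then absorbing the resulting extra error $\vartheta(\ell)^{p} = \phi(\ell)$ into a harmless additive constant after dividing by $\phi(\ell)$. Your remark about the calibration of the tolerance is exactly the right observation: choosing a tolerance that scales like $\vartheta(\ell)$ (rather than any fixed rate) is what keeps the argument uniformly valid no matter how fast $\vartheta$ decays. The paper's version gives a marginally cleaner inequality (no additive slack), but your route is more elementary, requiring neither compactness nor continuity, which is a genuine streamlining.
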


\begin{proof}
  Write $\beta = n + \sigma$ with $n \in \N_0$ and $\sigma \in (0,1]$.

  \textbf{Step 1}:
  We show for arbitrary $C > 0$ that the set
  \[
    K_C
    := \{
           f \in C^{n} ([-\nicefrac{1}{2}, \nicefrac{1}{2}]^{d-1})
           \, : \,
           \| f \|_{C^{0,\beta}} \leq C
        \}
  \]
  is a compact subset of $L^1 ([-\nicefrac{1}{2}, \nicefrac{1}{2}]^{d-1})$.
  To see this, let $(f_k)_{k \in \N}$ be an arbitrary sequence in $K_C$.
  Then, for each $\alpha \in \N_0^{d-1}$ with $|\alpha| < n$, we have
  \[
    \Lip_1 (\partial^\alpha f_k)
    \leq \| \nabla (\partial^\alpha f_k) \|_{L^\infty}
    \leq \sum_{j=1}^{d-1} \| \partial^{\alpha + e_j} f_k \|_{L^\infty}
    \leq (d-1) \cdot \| f_k \|_{C^{0,\beta}} \leq d \cdot C,
  \]
  and for $\alpha \in \N_0^{d-1}$ with $|\alpha| = n$, we have
  $\Lip_\sigma (\partial^\alpha f_k) \leq \| f_k \|_{C^{0,\beta}} \leq C$,
  where we emphasize that $\sigma > 0$.
  Furthermore, for $|\alpha| \leq n$ arbitrary, we have
  $\|\partial^\alpha f_k\|_{L^\infty} \leq \| f_k \|_{C^{0,\beta}} \leq C$.

  We have thus shown that each of the sequences
  $(\partial^\alpha f_k)_{k \in \N}$, for $|\alpha| \leq n$, is uniformly
  bounded and equicontinuous.
  By the Arzela-Ascoli theorem (see e.g.\@ \cite[Theorem 4.44]{FollandRA}),
  there is thus a common subsequence $(f_{k_t})_{t \in \N}$ such that
  $(\partial^\alpha f_{k_t})_{t \in \N}$ converges uniformly to a continuous
  function $g_\alpha \in C([-\nicefrac{1}{2}, \nicefrac{1}{2}]^{d-1})$
  for each $\alpha \in \N_0^{d-1}$ with $|\alpha|\leq n$.

  It is now a standard result (see for example
  \cite[Theorem 9.1 in XIII, §9]{LangRealFunctional}) that $f := g_0$ satisfies
  $f \in C^n ([-\nicefrac{1}{2}, \nicefrac{1}{2}]^{d-1})$ with
  $\partial^\alpha f = g_\alpha$ for $\alpha \in \N_0^{d-1}$ with
  $|\alpha| \leq n$. In particular, $f_{k_t} \to g_0 = f$ uniformly,
  and thus also in $L^1 ([-\nicefrac{1}{2}, \nicefrac{1}{2}]^{d-1})$.
  Thus, to prove \Felix{the} compactness of
  $K_C \subset L^1([-\nicefrac{1}{2}, \nicefrac{1}{2}]^{d-1})$,
  it suffices to show $f \in K_C$.
  But for $\alpha \in \N_0^{d-1}$ with $|\alpha| \leq n$, we have
  $\| \partial^\alpha f \|_{L^\infty} = \| g_\alpha \|_{L^\infty}
  = \lim_{t \to \infty} \|\partial^\alpha f_{k_t}\|_{L^\infty} \leq C$.
  Finally, for $|\alpha| = n$, and arbitrary
  $x,y \in [-\nicefrac{1}{2}, \nicefrac{1}{2}]^{d-1}$, we have
  \begin{align*}
          |\partial^\alpha f(x) - \partial^\alpha f(y)|
    &=    |g_\alpha (x) - g_\alpha (y)|
    =     \lim_{t \to \infty}
            |\partial^\alpha f_{k_t} (x) - \partial^\alpha f_{k_t}(y)| \\
    &\leq \limsup_{t \to \infty}
            \Lip_\sigma (\partial^\alpha f_{k_t}) \cdot |x-y|^\sigma
    \leq  \sup_{k \in \N}
            \|f_k\|_{C^{0,\beta}} \cdot |x-y|^\sigma
    \leq C \cdot |x-y|^\sigma.
  \end{align*}
  Therefore, $\Lip_\sigma (\partial^\alpha f) \leq C < \infty$.
  All in all, we have thus verified $\| f \|_{C^{0,\beta}} \leq C$, that is,
  $f \in K_C$.

  \smallskip{}

  \textbf{Step 2}: We observe with Lemma \ref{lem:HFBounds} that
  \[
    \Lambda :
    L^1 \left(\left[-\nicefrac{1}{2}, \nicefrac{1}{2}\right]^{d-1}\right)
    \to L^{\Felix{p}} \left(\left[-\nicefrac{1}{2}, \nicefrac{1}{2}\right]^d\right),
    \gamma \mapsto \mathrm{HF}_\gamma
    \,\, ,
  \]
  with $\mathrm{HF}_\gamma$ as in Lemma \ref{lem:HFBounds}, is continuous.

  \smallskip{}

  \textbf{Step 3}: Let $B_0 := \min \{B, \nicefrac{1}{2} \}$
  \Felix{and $q := \max \{1,p^{-1}\}$}.
  In this step, we construct modified encoding-decoding pairs
  $(\widetilde{E^\ell}, \widetilde{D^\ell})$ with
  $\widetilde{E^\ell} : \mathcal{F}_{\beta, d-1, B_0} \to \{0,1\}^\ell$ and
  $\widetilde{D^\ell} : \{0,1\}^\ell \to L^1([-\nicefrac{1}{2}, \nicefrac{1}{2}]^{d-1})$ such that
  \begin{equation}
    \|
      \gamma - \widetilde{D^\ell} (\widetilde{E^\ell} (\gamma))
    \|_{L^1 ([-\nicefrac{1}{2}, \nicefrac{1}{2}]^{d-1})}
    \leq \Felix{2^{pq}} \cdot
         \|
           \mathrm{HF}_\gamma - D^\ell (E^\ell (\mathrm{HF}_\gamma))
         \|_{L^{\Felix{p}}([-\nicefrac{1}{2}, \nicefrac{1}{2}]^d)}^{\Felix{p}}
    \quad \text{ for all } \gamma \in \mathcal{F}_{\beta, d-1, B_0} \, .
    \label{eq:IndividualHorizonApproximationStep3}
  \end{equation}

  For the construction, first note from Steps 1 and 2 that there is for each
  $g \in L^{\Felix{p}} ([-\nicefrac{1}{2}, \nicefrac{1}{2}]^d)$ some
  (not necessarily unique) $\gamma_g \in K_{B_0}$ with
  $\| g - \mathrm{HF}_{\gamma_g} \|_{L^{\Felix{p}}}
   = \min_{\gamma \in K_{B_0}} \| g - \mathrm{HF}_\gamma \|_{L^{\Felix{p}}}$.
  Now, for each $c \in \{0,1\}^\ell$, \Felix{define}
  $\theta_c := \gamma_{D^\ell (c)} \in K_{B_0}
   \subset L^1 ([-\nicefrac{1}{2}, \nicefrac{1}{2}]^{d-1})$,
  \Felix{and observe}
  \begin{equation}
    \| D^\ell (c) - \mathrm{HF}_{\theta_c} \|_{L^{\Felix{p}}}
    = \min_{\gamma \in K_{B_0}}
        \| D^\ell (c) - \mathrm{HF}_\gamma \|_{L^{\Felix{p}}}
    \qquad \text{ for all } c \in \{0,1\}^\ell .
    \label{eq:IndividualHorizonApproximationThetaChoice}
  \end{equation}
  With this choice, let
  \[
    \widetilde{D^\ell} :
    \{0,1\}^\ell \to     L^1 ([-\nicefrac{1}{2}, \nicefrac{1}{2}]^{d-1}), \,
    c            \mapsto \theta_c \, .
  \]

  Now, since $\{0,1\}^\ell$ is finite, there is for each
  $\gamma \in \mathcal{F}_{\beta, d-1,B_0}$ some (not necessarily unique)
  $c_\gamma \in \{0,1\}^\ell$ with
  $\| \gamma - \widetilde{D^\ell} (c_\gamma) \|_{L^1}
   = \min_{c \in \{0,1\}^\ell} \| \gamma - \widetilde{D^\ell} (c) \|_{L^1}$.
  With this choice, set
  \[
    \widetilde{E^\ell} :
    \mathcal{F}_{\beta, d-1, B_0} \to     \{0,1\}^\ell, \,
    \gamma                        \mapsto c_\gamma .
  \]

  Now that we have constructed $\widetilde{E^\ell}, \widetilde{D^\ell}$,
  it remains to establish Equation \eqref{eq:IndividualHorizonApproximationStep3}.
  Recall from Lemma \ref{lem:HFBounds} that all
  $\gamma, \psi \in L^1 ([-\nicefrac{1}{2}, \nicefrac{1}{2}]^{d-1})$ with
  $\| \gamma \|_{\sup}, \|\psi\|_{\sup} \leq \nicefrac{1}{2}\vphantom{\sum_j}$
  satisfy
  %Step 2 of the proof of Lemma \ref{lem:HorizonFunctionsMinimaxCodeLength}
  %\todo{Again, this is not so nice, since we are referencing a proof.
  % Furthermore, the claim used here is proven in the proof, but the claim is
  % not precisely as used here...}
  %that all
  %$\gamma, \psi \in L^1 ((-\nicefrac{1}{2}, \nicefrac{1}{2})^{d-1})$ with
  %$\| \gamma \|_{\infty}, \| \psi \|_{\infty} \leq \frac{1}{4}$
  $\| \mathrm{HF}_\gamma - \mathrm{HF}_\psi \|_{L^{\Felix{p}}}^{\Felix{p}}
   = \| \gamma - \psi \|_{L^1}$.
  Therefore, we get for arbitrary $\gamma \in \mathcal{F}_{\beta,d-1,B_0}$ with
  $c^{(0)} := E^\ell (\mathrm{HF}_\gamma)$ that
  \begin{align*}
    \| \gamma - \widetilde{D^\ell} (\widetilde{E^\ell} (\gamma))\|_{L^1}
    & =    \min_{c \in \{0,1\}^\ell}
              \| \gamma - \widetilde{D^\ell} (c) \|_{L^1}
      \leq    \| \gamma - \widetilde{D^\ell} (c^{(0)}) \|_{L^1}
      =       \| \gamma -  \theta_{c^{(0)}} \|_{L^1} \\
    \left({\scriptstyle{\substack{
                          \|\gamma\|_{\sup} , \| \theta_{c^{(0)}}\|_{\sup}
                          \leq B_0 \leq \nicefrac{1}{2} \\
                          \text{since } \gamma \in \mathcal{F}_{\beta,d-1,B_0}
                          \text{ and } \theta_{c^{(0)}} \in K_{B_0}}}}\right)
    & = \|
          \mathrm{HF}_\gamma - \mathrm{HF}_{\theta_{c^{(0)}}}
        \|_{L^{\Felix{p}}}^{\Felix{p}} \\
    ({\scriptstyle{\text{Equation } \eqref{eq:PseudoTriangleInequality}}})
    & \leq \left(
             \Felix{2^q \cdot \max}
             \left\{
                \| \mathrm{HF}_\gamma - D^\ell (c^{(0)})\|_{L^{\Felix{p}}} \, ,
                \|
                  D^\ell (c^{(0)}) - \mathrm{HF}_{\theta_{c^{(0)}}}
                \|_{L^{\Felix{p}}}
             \right\}
           \right)^{\Felix{p}} \\
           %\left(
           %   \| \mathrm{HF}_\gamma - D^\ell (c^{(0)})\|_{L^2}
           %    \! + \!
           %    \| D^\ell (c^{(0)}) - \mathrm{HF}_{\theta_{c^{(0)}}} \|_{L^2}
           %\right)^2 \\
    ({\scriptstyle{\text{Equation }
                       \eqref{eq:IndividualHorizonApproximationThetaChoice}}})
    %({\scriptstyle{\text{choice of } \theta_{c^{(0)}}}})
    & =    \left(
              \Felix{2^q \cdot \max}
              \left\{
                \| \mathrm{HF}_\gamma - D^\ell (c^{(0)})\|_{L^{\Felix{p}}} \, ,
                \min_{\psi \in K_{B_0}}
                  \| D^\ell (c^{(0)}) - \mathrm{HF}_{\psi} \|_{L^{\Felix{p}}}
              \right\}
           \right)^{\Felix{p}} \\
    ({\scriptstyle{\text{since } \gamma \in \mathcal{F}_{\beta,d-1,B_0}
                                        =   K_{B_0}}})
    & \leq  \left(
              2^{\Felix{q}}
              \cdot \| \mathrm{HF}_\gamma - D^\ell (c^{(0)})\|_{L^{\Felix{p}}}
            \right)^{\Felix{p}}
      =     \Felix{2^{pq}}
            \cdot \|
                    \mathrm{HF}_\gamma - D^\ell ( E^\ell (\mathrm{HF}_\gamma ))
                  \|_{L^{\Felix{p}}}^{\Felix{p}} \, .
  \end{align*}
  This completes the proof of
  Equation \eqref{eq:IndividualHorizonApproximationStep3}.

  \smallskip{}

  \textbf{Step 4}: In this step, we complete the proof.
  To this end, let us assume towards a contradiction that the claim fails.
  Thus, for \emph{every} $f \in \mathcal{HF}_{\beta,d,B}$, we have
  \[
    \| f - D^\ell (E^\ell (f))\|_{L^{\Felix{p}}}
    \leq C_{f} \cdot \vartheta (\ell)
    \qquad \text{ for all } \ell \in I,
  \]
  for a finite constant $C_{f} > 0$.

  By Step 3, this implies for $\phi := \vartheta^{\Felix{p}}$ and arbitrary
  $\gamma \in \mathcal{F}_{\beta,d-1,B_0}$ because of
  $\mathrm{HF}_\gamma \in \mathcal{HF}_{\beta,d,B_0}
                      \subset \mathcal{HF}_{\beta,d,B}$
  that
  \[
    \| \gamma - \widetilde{D^\ell} (\widetilde{E^\ell} (\gamma)) \|_{L^1}
    \leq \Felix{2^{pq}}
         \cdot \|
                 \mathrm{HF}_\gamma - D^\ell (E^\ell (\mathrm{HF}_\gamma))
               \|_{L^{\Felix{p}}}^{\Felix{p}}
    \leq \Felix{2^{pq}}
         \cdot C_{\mathrm{HF}_\gamma}^{\Felix{p}}
         \cdot (\vartheta (\ell))^{\Felix{p}}
    =    \Felix{2^{pq}}
         \cdot C_{\mathrm{HF}_\gamma}^{\Felix{p}}
         \cdot \phi (\ell)
    \qquad \text{ for all } \ell \in I\!.
  \]
  But by assumption on $\vartheta$, we have
  $\lim_{\ell \to \infty} \ell^{\beta/(d-1)} \cdot \phi (\ell)
   = \lim_{\ell \to \infty}
       \big(\ell^{\beta/[\Felix{p} (d-1)]} \cdot \vartheta (\ell)\big)^{\Felix{p}}
   = 0$,
  so that Lemma \ref{lem:IndividualCnApproximation} yields the desired
  contradiction.
\end{proof}

With the preceding lemma, we have shown that, given a sequence of
encoder-decoder pairs for the class of horizon functions, one can always find a
\emph{single function} which is not ``too well approximated'' by the sequence.
We now use this result to prove the claimed lower bound in the setting of
instance optimality.

\begin{proof}[Proof of Theorem \ref{thm:SingleFunctionOptimality}]
  \textbf{Step 1}: For technical reasons, we first need to study for fixed,
  but arbitrary $\nu > 0$ the monotonicity of the function
  \[
    \phi
    : (2, \infty) \to     (0,\infty),
      x           \mapsto \frac{x^\nu}{\log_2 (x) \cdot \log_2 (\log_2 (x))} .
  \]
  We claim that there is some $x_0 = x_0 (\nu)$ such that
  $\phi|_{[x_0, \infty)}$ is strictly increasing;
  since we clearly have $\phi (x) \to \infty$ as $x \to \infty$,
  we can then choose $x_0$ so that also $\phi(x_0) \geq 4$.

  To show \Felix{the} existence of $x_0$, first note from a direct computation
  that
  \[
    \phi ' (x)
    = x^{\nu - 1}
      \cdot \left[
              \nu \cdot \log_2 (x)
                  \cdot \log_2 (\log_2 (x))
              - \frac{\log_2 (\log_2 (x))}{\ln 2}
              - (\ln 2)^{-2}
            \right]
      \bigg/ [\log_2 (x) \cdot \log_2 (\log_2 (x))]^2.
  \]
  Here, the denominator is positive.
  Furthermore, the first term in the numerator dominates the other two terms
  for $x$ large enough.
  Therefore, $\phi '(x)$ is positive for $x$ large enough.
  This establishes the claim of Step 1.

  \smallskip{}

  \textbf{Step 2}: In this technical step, we construct quantities
  $\Omega_\varepsilon, K_\varepsilon, \ell_\varepsilon \in \N$
  for $0 < \varepsilon \leq \varepsilon_0$, for a certain
  $\varepsilon_0 \in (0, \nicefrac14]$, and use these quantities
  to define an infinite set $I \subset \N$.
  The relevance of these constructions will become apparent in Steps 3 and 4.

  Let $\phi, x_0$ be as in Step 1, with
  $\nu := \nicefrac{\Felix{p} (d-1)}{\beta}$.
  By possibly enlarging $x_0$, we can (and will) assume $x_0 \geq 4$.
  Set $\varepsilon_0 := x_0^{-1}$, choose the constant $C = C(d) \in \N$ as
  provided by Lemma \ref{lem:NNEncoding}, and let
  $C_1 = C_1 (d, \Felix{p}, \beta, C_0) \in \N$ with
  $C_1 \geq C_0^{-1} \cdot \left( 1 + \nicefrac{\Felix{p} (d-1)}{\beta} \right)$.
  Furthermore, set $C_2 := C \cdot (1 + C_1) \in \N$.

  Next, for $\varepsilon \in (0, \varepsilon_0]$, define
  \[
    \Omega_\varepsilon
    := \lceil \phi (\varepsilon^{-1}) \rceil
    =  \left\lceil
          \varepsilon^{-\Felix{p} (d-1)/\beta}
          \big/ \left[
                  \log_2 \left(\varepsilon^{-1}\right)
                  \cdot \log_2 \left(\log_2 \left(\varepsilon^{-1}\right)\right)
                \right]
       \right\rceil
    \in \N
  \]
  and
  $K_\varepsilon :=  \lceil C_0 \cdot \log_2 (\nicefrac{1}{\varepsilon}) \rceil
                 \in \N$,
  and set
  $\ell_\varepsilon := C_2 \cdot \Omega_\varepsilon \cdot K_\varepsilon \in \N$.
  %where the constant $C \in \N$ is obtained from Lemma \ref{lem:NNEncoding}.

  First, note because of
  $0 < \varepsilon \leq \varepsilon_0 \leq \nicefrac{1}{4}$ that
  $\varepsilon^{-1} \geq 4$ and hence
  $\log_2 (\nicefrac{1}{\varepsilon}) \geq 2$ and
  $\log_2 (\log_2 (\nicefrac{1}{\varepsilon})) \geq 1$,
  as well as $\varepsilon^{-\Felix{p} (d-1)/\beta} \geq 1$,
  Hence,
  $\Omega_\varepsilon \leq \lceil \varepsilon^{-\Felix{p} (d-1)/\beta} \rceil
                      \leq 1 + \varepsilon^{-\Felix{p} (d-1)/\beta}
                      \leq 2 \cdot \varepsilon^{-\Felix{p} (d-1)/\beta}$,
  which implies
  \[
    \log_2 (\Omega_\varepsilon)
    \leq 1 + \frac{\Felix{p} (d-1)}{\beta}
             \cdot \log_2 \left(\frac{1}{\varepsilon}\right)
    \leq \left( 1 + \frac{\Felix{p} (d-1)}{\beta}\right)
         \cdot \log_2 \left(\frac{1}{\varepsilon}\right)
    \leq C_1 \cdot C_0 \cdot \log_2 \left(\frac{1}{\varepsilon}\right)
    \leq C_1 \cdot K_\varepsilon.
  \]
  Therefore,
  \begin{equation}
    C \cdot \Omega_\varepsilon
      \cdot (K_\varepsilon + \lceil \log_2 \Omega_\varepsilon \rceil)
    \leq C \cdot (1 + C_1) \cdot \Omega_\varepsilon \cdot K_\varepsilon
    =    \ell_\varepsilon.
    \label{eq:SingleFunctionOptimalityProofEllEpsilonLargeEnoughToEncode}
  \end{equation}

  Our last goal in this step is to show that the map
  $\ell_\varepsilon \mapsto (\Omega_\varepsilon, K_\varepsilon)$ is
  well-defined.
  To see this, first recall from Step 1, that if
  $0 < \varepsilon \leq \varepsilon'$, then
  $\Omega_\varepsilon \geq \Omega_{\varepsilon '}$.
  By contraposition, this shows that if
  $\Omega_\varepsilon < \Omega_{\varepsilon '}$, then
  $\varepsilon > \varepsilon'$ and hence $K_\varepsilon \leq K_{\varepsilon'}$,
  so that
  \[
    \ell_\varepsilon
    = C_2 \cdot \Omega_\varepsilon \cdot K_\varepsilon
    \leq C_2 \cdot \Omega_\varepsilon \cdot K_{\varepsilon'}
    < C_2 \cdot \Omega_{\varepsilon'} \cdot K_{\varepsilon'}
    = \ell_{\varepsilon'}.
  \]
  Again by contraposition, we have shown that
  $\Omega_\varepsilon = \Omega_{\varepsilon'}$ if
  $\ell_\varepsilon = \ell_{\varepsilon'}$.
  Even more, if $\ell_\varepsilon = \ell_{\varepsilon'}$, we just saw
  $\Omega_\varepsilon = \Omega_{\varepsilon'}$, but this also implies
  $K_\varepsilon = \ell_\varepsilon / (C_2 \Omega_\varepsilon)
                 = \ell_{\varepsilon'} / (C_2 \Omega_{\varepsilon'})
                 = K_{\varepsilon'}.$
  Hence, $I := \{\ell_\varepsilon \, : \, \varepsilon \in (0, x_0]\} \subset \N$
  is clearly an infinite set, and for $\ell = \ell_\varepsilon \in I$,
  it makes sense to write $\Omega_\varepsilon, K_\varepsilon$,
  since these quantities are independent of the precise choice of
  $\varepsilon \in (0, x_0]$ with $\ell = \ell_\varepsilon$.

  \smallskip{}

  \textbf{Step 3}: In this step, we define for each $\ell \in I$ a certain
  encoder-decoder pair $(E^\ell, D^\ell)$.
  More precisely, we recall from Lemma \ref{lem:NNEncoding} by our choice of
  $C = C(d)$ in Step 2 and because of
  Equation \eqref{eq:SingleFunctionOptimalityProofEllEpsilonLargeEnoughToEncode}
  that for each $\ell = \ell_\varepsilon \in I$,
  there is an injective function
  $\Gamma_{\ell} :
  \mathcal{NN}_{\Omega_\varepsilon, K_\varepsilon,d}^{\mathcal{B}, \varrho}
  \to \{0,1\}^\ell$.
  Let us fix some left-inverse
  $\Psi_\ell :
  \{0,1\}^\ell
  \to \mathcal{NN}_{\Omega_\varepsilon, K_\varepsilon, d}^{\mathcal{B},\varrho}$
  for $\Gamma_\ell$.

  Next, for each $\ell = \ell_\varepsilon \in I$ and each
  $f \in \mathcal{HF}_{\beta,d,B}$ we can use \Felix{the} finiteness of
  $\mathcal{NN}_{\Omega_\varepsilon, K_\varepsilon,d}^{\mathcal{B},\varrho}$
  (which follows from the injectivity of $\Gamma_\ell$)
  to choose a (not necessarily unique) neural network $\Phi_{f,\ell}
  \in \mathcal{NN}_{\Omega_\varepsilon, K_\varepsilon, d}^{\mathcal{B}}$
  which satisfies
  \pp{\[
    \| f - \Realization_{\varrho} (\Phi_{f,\ell}) \|_{L^{\Felix{p}}}
    = \min_{\Phi \in \mathcal{NN}^{\mathcal{B}}_{\Omega_\varepsilon,
                                                 K_\varepsilon, d}}
        \| f - \Realization_{\varrho} (\Phi) \|_{L^{\Felix{p}}}.
  \]}
  With this choice, we can finally define
  \[
    \begin{alignedat}{3}
      & E^\ell : &  & \mathcal{HF}_{\beta,d,B} \to \left\{ 0,1\right\} ^{\ell},
                 &  & f \mapsto \Gamma_\ell
                                \left(
                                  \Realization_\varrho (\Phi_{f, \ell})
                                \right),\\
      & D^\ell :
      & \ & \left\{ 0,1\right\}^{\ell}
            \to L^{\Felix{p}}(\left[-\nicefrac{1}{2},\,\nicefrac{1}{2}\right]^{d}),
      & \ & c \mapsto \left[ \Psi_\ell (c) \right]
                        |_{\left[-\nicefrac{1}{2},\nicefrac{1}{2}\right]^{d}} .
    \end{alignedat}
  \]
  Note that this definition implies
  \begin{equation}
    \|f - D^\ell (E^\ell (f))\|_{L^{\Felix{p}}}
    = \min_{\Phi \in \mathcal{NN}^{\mathcal{B}}_{\Omega_\varepsilon,
                                                 K_\varepsilon, d}}
        \| f - \Realization_{\varrho} (\Phi)  \|_{L^{\Felix{p}}}
    \qquad \text{ for all } \ell = \ell_\varepsilon \in I
           \text{ and }     f \in \mathcal{HF}_{\beta, d, B}.
    \label{eq:SingleFunctionOptimalityConstructedEncoderIsOptimal}
  \end{equation}

  \smallskip{}

  \textbf{Step 4}: In Step 5, we will invoke
  Lemma \ref{lem:IndividualHorizonApproximation} with
  \[
    \vartheta :
    \N \to (0,\infty),
    \ell \mapsto \ell^{-\beta/(\Felix{p} (d-1))}
                 \,\big/\,
                 [\log_2 (\log_2 (\max \{4 , \ell\}))]^{\beta/(\Felix{p} (d-1))}.
  \]
  As a preparation,  in this step, we derive some elementary estimates
  concerning $\Omega_\varepsilon, K_\varepsilon$ and $\ell_\varepsilon$,
  and then also for $\vartheta (\ell_\varepsilon)$.

  First, note for $\varepsilon \in (0, \varepsilon_0 ]$ because of
  $\varepsilon_0 \leq \nicefrac{1}{4}$
  %First, note for $0 < \varepsilon \leq \varepsilon_0 \leq 1/4$
  that $\log_2 \left(\nicefrac{1}{\varepsilon}\right) \geq 2 \geq 1$, and hence
  \[
    K_\varepsilon
    = \left\lceil
        C_0 \cdot \log_2 \left(\frac{1}{\varepsilon}\right)
      \right\rceil
    \leq 1 + C_0 \log_2 \left(\frac{1}{\varepsilon}\right)
    \leq (1 + C_0) \cdot \log_2 \left(\frac{1}{\varepsilon}\right).
  \]
  Next, since
  $\phi(\varepsilon^{-1}) \geq \phi(\varepsilon_0^{-1}) = \phi(x_0) \geq 4$
  for $\varepsilon \in (0, \varepsilon_0]$,
  we have
  $\Omega_\varepsilon = \lceil \phi(\varepsilon^{-1}) \rceil
   \leq 1 + \phi(\varepsilon^{-1}) \leq 2 \phi(\varepsilon^{-1})$.
  All in all, this yields for a suitable constant
  $C_3 = C_3 (d, \Felix{p}, \beta, C_0) \in \N$ that
  \[
    \ell_\varepsilon
    =    C_2 \cdot \Omega_\varepsilon \cdot K_\varepsilon
    \leq 2 \cdot (1 + C_0) \cdot C_2
           \cdot \phi\left(\frac{1}{\varepsilon}\right)
           \cdot \log_2 \left(\frac{1}{\varepsilon}\right)
    = C_3 \cdot \varepsilon^{-\Felix{p} (d-1)/\beta}
          \cdot \left[
                  \log_2 \left(\log_2 \left(\frac{1}{\varepsilon}\right)\right)
                \right]^{-1} .
  \]
  Furthermore, because of $\log_2 (\log_2 (\nicefrac{1}{\varepsilon})) \geq 1$,
  we get for a suitable constant $C_4 = C_4 (d, \Felix{p}, \beta, C_0) \in \N$
  that
  \[
    \log_2 (\ell_\varepsilon)
    \leq \log_2 (C_3 \cdot \varepsilon^{-\Felix{p} (d-1)/\beta})
    = \log_2 (C_3)
      + \frac{\Felix{p} (d-1)}{\beta}
        \cdot \log_2 \left(\frac{1}{\varepsilon}\right)
    \leq C_4 \cdot \log_2 \left(\frac{1}{\varepsilon}\right),
  \]
  so that
  $\log_2 (\log_2 (\ell_\varepsilon))
   \leq \log_2 (C_4) + \log_2 (\log_2 (\nicefrac{1}{\varepsilon}))
   \leq C_5 \cdot \log_2 (\log_2 (\nicefrac{1}{\varepsilon}))$
  for some $C_5 = C_5 (d, \Felix{p}, \beta, C_0) > 0$.

  From the preceding estimates, because of
  $\ell_\varepsilon =    C_2 \cdot \Omega_\varepsilon \cdot K_\varepsilon
                    \geq \Omega_\varepsilon
                    \geq \phi(\varepsilon^{-1})
                    \geq 4$,
  and from the definition of $\vartheta$, we get a constant
  $C_6 = C_6 (d, \Felix{p}, \beta, C_0) > 0$ with
  \begin{equation}
    \begin{split}
            \frac{1}{\vartheta (\ell_\varepsilon)}
      &=    \ell_\varepsilon^{\beta/(\Felix{p} (d-1))}
            \cdot [
                   \log_2(\log_2(\ell_\varepsilon))
                  ]^{\beta / (\Felix{p} (d-1))} \\
      &\leq C_3^{\beta/(\Felix{p} (d-1))} \cdot \varepsilon^{-1} \cdot
            [
             \log_2 (\log_2 \left(\nicefrac{1}{\varepsilon}\right))
            ]^{-\beta / (\Felix{p} (d-1))}
            \cdot C_5^{\beta/(\Felix{p} (d-1))}
            \cdot [
                   \log_2 (\log_2 \left(\nicefrac{1}{\varepsilon}\right))
                  ]^{\beta / (\Felix{p} (d-1))} \\
      &=    C_6 \cdot \varepsilon^{-1}.
    \end{split}
    \label{eq:IndividualFunctionOptimalityThetaEstimate}
  \end{equation}

  \smallskip{}

  \textbf{Step 5}: Now, we complete the proof. First, we note
  \[
      \lim_{\ell \to \infty}
        \ell^{\beta / (\Felix{p} (d-1))} \cdot \vartheta (\ell)
    = \lim_{\ell \to \infty}
        [\log_2 (\log_2 (\max \{4, \ell\}))]^{-\beta / (\Felix{p} (d-1))}
    = 0,
  \]
  as required in Lemma \ref{lem:IndividualHorizonApproximation}.
  Hence, using that lemma, we obtain a horizon function
  $f \in \mathcal{HF}_{\beta,d,B}$ which satisfies
  \begin{align*}
    \infty
    &= \sup_{\ell_\varepsilon \in I}
           \frac{\|
                   f - D^{\ell_\varepsilon} (E^{\ell_\varepsilon} (f))
                 \|_{L^{\pp{p}}}}
                {\vartheta (\ell_\varepsilon)} \\
    ({\scriptstyle{\text{by Equations }
                   \eqref{eq:SingleFunctionOptimalityConstructedEncoderIsOptimal}
                   \text{ and }
                   \eqref{eq:IndividualFunctionOptimalityThetaEstimate}}})
    &\leq C_6 \cdot
          \sup_{ 0 < \varepsilon \leq \varepsilon_0}
            \left[
                \min
                \{
                  \| f - \Realization_{\varrho} (\Phi) \|_{L^{\Felix{p}}}
                  \, : \,
                  \Phi
                  \in \mathcal{NN}^{\mathcal{B}}_{\Omega_\varepsilon,
                                                  K_\varepsilon, d}
                \}
                \cdot \varepsilon^{-1}
            \right].
  \end{align*}

  For brevity, let us set
  $\delta_\varepsilon
  := \min
     \{
       \| f - \Realization_{\varrho} (\Phi) \|_{L^{\Felix{p}}}
       \,:\,
       \Phi \in \mathcal{NN}_{\Omega_\varepsilon, K_\varepsilon,d}^{\mathcal{B}}
     \}$.
  Then the preceding estimate yields a sequence $(\varepsilon_k)_{k \in \N}$
  with $0 < \varepsilon_k \leq \varepsilon_0 \leq \nicefrac{1}{4}$ and such that
  $\varepsilon_k^{-1} \cdot \delta_{\varepsilon_k} \geq 2k$ for all $k \in \N$.
  In particular, $\delta_{\varepsilon_k} > 0$.

  But for $0 < \varepsilon \leq \varepsilon' \leq \varepsilon_0$,
  we have $\Omega_\varepsilon \geq \Omega_{\varepsilon'}$ and
  $K_\varepsilon \geq K_{\varepsilon '}$, see also Step 2.
  Therefore, and since we require the encoding scheme
  $\mathcal{B} = (B_\ell)_{\ell \in \N}$ to be consistent, that is, to satisfy
  $\mathrm{Range}(B_\ell) \subset \mathrm{Range}(B_{\ell + 1})$ for all
  $\ell \in \N$, we have
  $\mathcal{NN}_{\Omega_\varepsilon, K_\varepsilon,d}^{\mathcal{B}}
   \supset \mathcal{NN}^{\mathcal{B}}_{\Omega_{\varepsilon'},
                                       K_{\varepsilon'},d}$,
  and thus $\delta_{\varepsilon} \leq \delta_{\varepsilon'}$.
  In particular, we get
  \[
    0 <    \varepsilon_k
      \leq \frac{\delta_{\varepsilon_k}}{2k}
      \leq \frac{\delta_{\varepsilon_0}}{2k}
      \xrightarrow[k \to \infty]{} 0.
  \]
  We have thus constructed the function $f \in \mathcal{HF}_{\beta, d, B}$
  and the sequence $(\varepsilon_k)_{k \in \N}$, so that it remains to show
  that these have the desired properties.
  To see this, pick any $k \in \N$, and let $M \in \N$ such that there exists
  $\Phi
   \in \mathcal{NN}^{\mathcal{B}}_{M,
                                   \lceil
                                     C_0 \log_2 (\nicefrac{1}{\varepsilon}_k)
                                   \rceil,
                                   d}
   = \mathcal{NN}_{M, K_{\varepsilon_k},d}^{\mathcal{B}}$
  with
  $\| f - \Realization_{\varrho} (\Phi)\|_{L^{\Felix{p}}} \leq \varepsilon_k$.
  Then we get
  \[
    \| f - \Realization_{\varrho} (\Phi)\|_{L^{\Felix{p}}}
    \leq \varepsilon_k
    \leq \frac{\delta_{\varepsilon_k}}{2k}
    < \delta_{\varepsilon_k}
    = \min
      \{
        \| f - \Realization_{\varrho} (\Psi) \|_{L^{\Felix{p}}}
        \,:\,
        \Psi \in \mathcal{NN}^{\mathcal{B}}_{\Omega_{\varepsilon_k},
                                             K_{\varepsilon_k},d}
      \}.
  \]
  But in case of $M \leq \Omega_{\varepsilon_k}$, we would have (as above) that
  $\Phi \in \mathcal{NN}_{M, K_{\varepsilon_k},d}^{\mathcal{B}}
   \subset \mathcal{NN}^{\mathcal{B}}_{\Omega_{\varepsilon_k},
                                       K_{\varepsilon_k}, d}$,
  which then yields a contradiction to the preceding inequality.
  Therefore, we must have
  \[
    M
    > \Omega_{\varepsilon_k}
    = \left\lceil
          \varepsilon_k^{-\Felix{p} (d-1)/\beta}
          \big/
          [\log_2 (1/\varepsilon_k) \cdot \log_2 (\log_2 (1/\varepsilon_k))]
      \right\rceil
    \geq \frac{\varepsilon_k^{-\Felix{p} (d-1)/\beta}}
              {\log_2 (1/\varepsilon_k)
               \cdot \log_2 (\log_2 (1/\varepsilon_k))}.
  \]
  Since $M \in \N$ was chosen arbitrarily, only subject to the restriction
  that there is
  $\Phi \in \mathcal{NN}^{\mathcal{B}}_{M,
                                        \lceil
                                          C_0 \log_2 (1/\varepsilon_k)
                                        \rceil,
                                        d}$
  with
  $\| f - \Realization_{\varrho} (\Phi)\|_{L^{\Felix{p}}} \leq \varepsilon_k$,
  this implies
  $M_{\varepsilon_k, \Felix{p}}^{\mathcal{B},\varrho,C_0}(f)
   > \Omega_{\varepsilon_k}$, as claimed.
\end{proof}

%%%%%%%%%%%%%%%%%%%%%%%%%%%%%%%%%%%%%%%%%%%%%%%%%%%%%%%%%%%%%%%%%%%%%%%%%%%%%%%%
\section{Depth matters: Fast approximation needs deep networks}
\label{sec:DepthMatters}
%%%%%%%%%%%%%%%%%%%%%%%%%%%%%%%%%%%%%%%%%%%%%%%%%%%%%%%%%%%%%%%%%%%%%%%%%%%%%%%%

In this section, we provide the proofs for the theorems from
Subsection \ref{sub:OptimalDepth}.
In the whole section, $\varrho$ will be the ReLU function
$\varrho (x) = \max \{0,x\}$, and all realizations \Felix{of networks}
are \Felix{computed} using this activation function.

The overall proof strategy in this section is heavily inspired by
Yarotsky \cite{YAROTSKY2017103}:
At first, we exclusively work in dimension $d=1$.
For this setting, we begin by establishing (in
Lemma \ref{lem:SquareApproxLowerBound}) a lower bound on the $L^p$ approximation
quality of affine-linear functions to the square function.
By locally approximating a nonlinear $C^3$ function by its Taylor polynomial
of degree two, this then implies (see Corollary \ref{cor:C3ApproxLowerBound})
a lower bound on the $L^p$ approximation quality of affine-linear functions to
nonlinear $C^3$ functions.

We then move to dimension $d > 1$ by saying that $g : \R^d \to \R$ is
\emph{$P$-piecewise slice affine} for some $P \in \N$ if each of the ``slices''
$t \mapsto g(x_0 + t v_0)$ for arbitrary $x_0, v_0 \in \R^d$ is piecewise
affine-linear with at most $P$ pieces.
By applying a ``Fubini-type argument'', we lift the one-dimensional lower
bounds to a lower bound for the $L^p$ approximation quality that can be achieved
for approximating a nonlinear, $d$-dimensional $C^3$ function using
$P$-piecewise slice affine functions, see
Proposition \ref{prop:PPiecewiseAffineFunctionsAreBad}.

We then complete the proof (see Theorem \ref{thm:DepthLowerBoundAppendix})
by invoking known results of Telgarsky \cite{TelgarskySmallPaper} which show
that realizations of ReLU neural networks are always $P$-piecewise slice affine,
for $P \lesssim N^L$, where $N$ is the number of neurons of the network, and $L$
is its depth.

The main difference to the results by Yarotsky \cite{YAROTSKY2017103}
is that Yarotsky considers approximation in $L^\infty$,
while we are interested in approximation in the $L^p$-sense, with $p < \infty$.
In this case, the reduction of the $d$-dimensional case to the one-dimensional
case is more involved; see the proof of
Proposition \ref{prop:PPiecewiseAffineFunctionsAreBad}.

Finally, we remark that shortly after the first version of the present article
appeared on the arXiv, we became aware of the paper \cite{pmlr-v70-safran17a}
and its longer arXiv version \cite{DepthWidthArxiv}, in which a result
very similar to ours was developed.
The main difference is that our approach works for approximation in $L^p$ for
arbitrary \Felix{$p \in (0,\infty)$}, while in
\cite{pmlr-v70-safran17a, DepthWidthArxiv} only the case $p=2$ is considered.
Furthermore, our proof is more elementary, since it does not rely on properties
of Legendre polynomials, which are used crucially in \cite{DepthWidthArxiv}.

\medskip{}

After this high-level overview, let us turn to the details:

\begin{lemma}\label{lem:SquareApproxLowerBound}
  \Felix{For each $p \in (0,\infty)$, there is}
  a constant $C_{0} = C_0 (p) > 0$ with the following property:
  For arbitrary $\alpha, a, b \in \R$ with $a < b$, we have
  \[
    \inf_{\beta,\gamma\in\R}
      \left\Vert
        \alpha\cdot x^{2}-\left(\beta x+\gamma\right)
      \right\Vert_{L^{p}\left(\left[a,b\right];dx\right)}
    \geq C_{0} \cdot \left| \alpha \right| \cdot (b-a)^{2+\frac{1}{p}}.
  \]
\end{lemma}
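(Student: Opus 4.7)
The plan is a straightforward scaling/normalization argument, reducing the general interval $[a,b]$ to the reference interval $[0,1]$ and then extracting a strictly positive $p$-dependent constant from a compactness argument.

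First, I would substitute $x = a + (b-a)t$ with $t \in [0,1]$, so that $dx = (b-a)\, dt$. Writing $\alpha x^2 - (\beta x + \gamma)$ in terms of $t$, the quadratic part becomes $\alpha(b-a)^2 t^2 + \text{(affine in }t\text{)}$, and as $(\beta,\gamma)$ ranges over $\R^2$ the resulting affine-in-$t$ part ranges over all of $\R t + \R$. Hence
\[
  \inf_{\beta,\gamma\in\R}
    \bigl\|\alpha x^2 - (\beta x + \gamma)\bigr\|_{L^p([a,b])}^{p}
  = (b-a)\cdot
    \inf_{\tilde\beta,\tilde\gamma\in\R}
      \int_0^1
        \bigl|\alpha(b-a)^2 t^2 - \tilde\beta t - \tilde\gamma\bigr|^p
      dt \, .
\]
Factoring out $|\alpha|(b-a)^2$ from the integrand (the case $\alpha = 0$ being trivial) reduces the right-hand side to
\[
  (b-a) \cdot |\alpha|^p (b-a)^{2p} \cdot
  \inf_{s,u\in\R} \int_0^1 |t^2 - s t - u|^p\, dt .
\]
Taking $p$-th roots then yields the claim with
\[
  C_0 := C_0(p) := \Bigl(\inf_{s,u\in\R} \|t^2 - (st+u)\|_{L^p([0,1])}^p\Bigr)^{1/p},
\]
provided one verifies $C_0 > 0$.

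The only nontrivial point is the strict positivity of $C_0$. To see this, I would argue that the map $F : \R^2 \to [0,\infty)$, $(s,u) \mapsto \|t^2 - (st+u)\|_{L^p([0,1])}^p$, is continuous and coercive (i.e., $F(s,u) \to \infty$ as $|(s,u)| \to \infty$, since the subspace of affine functions is closed in $L^p$ and $\|st+u\|_{L^p([0,1])}$ is equivalent to $|s|+|u|$ on this two-dimensional subspace). Therefore $F$ attains its infimum at some $(s^\ast,u^\ast)$. If $F(s^\ast,u^\ast) = 0$, then $t^2 = s^\ast t + u^\ast$ almost everywhere on $[0,1]$, which is impossible since $t^2$ is not affine. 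Hence $C_0 > 0$, depending only on $p$. This is the main (and only substantive) obstacle; the rest is a direct change of variables.
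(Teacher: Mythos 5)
Your proof is correct and follows essentially the same route as the paper's: reduce to $\alpha=1$ on $[0,1]$ by factoring and a change of variables, and then show the resulting infimum over affine functions is strictly positive. The only cosmetic difference is in the last step — you run a coercivity-plus-Weierstrass argument to produce a minimizer and then check it is nonzero, while the paper invokes directly that the two-dimensional subspace $\spann\{1,\identity_{[0,1]}\}$ is closed in $L^p([0,1])$ (a fact both arguments ultimately rest on, namely that all Hausdorff linear topologies on a finite-dimensional space coincide, so the result is valid for all $p\in(0,\infty)$ including $p<1$) and that $x\mapsto x^2$ lies outside it.
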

\begin{proof}
For $\alpha=0$, the claim is trivial.
Next, for $\alpha\neq0$, we have
\begin{align*}
  \inf_{\beta,\gamma\in\R}
    \left\Vert
      \alpha\cdot x^{2}-\left(\beta x+\gamma\right)
    \right\Vert_{L^{p}\left(\left[a,b\right];dx\right)}
   & =\left| \alpha \right|
      \cdot
      \inf_{\beta,\gamma\in\R}
        \left\Vert
          x^{2} - \left(\frac{\beta}{\alpha}x+\frac{\gamma}{\alpha}\right)
        \right\Vert_{L^{p}\left(\left[a,b\right];dx\right)}\\
   & =\left| \alpha \right|
      \cdot
      \inf_{\beta',\gamma'\in\R}
        \left\Vert
          x^{2} - \left(\beta'\cdot x+\gamma'\right)
        \right\Vert_{L^{p}\left(\left[a,b\right];dx\right)}.
\end{align*}
This easily shows that it suffices to consider the case $\alpha = 1$.

Next, let us consider the case $a=0$ and $b=1$.
%By Jensen's inequality,
%$\left\Vert f\right\Vert_{L^{p}\left(\left[0,1\right]\right)}
% \geq\left\Vert f\right\Vert _{L^{1}\left(\left[0,1\right]\right)}$
%for $1\leq p<\infty$, and $\left(b-a\right)^{2+\frac{1}{p}}=1$ for
%arbitrary $p$, so that it suffices to consider the case $p=1$.
%Next, consider the space
%$V :=  {\rm span}\left\{ 1,\identity\right\}
%  \leq L^{1}\left(\left[0,1\right]\right)$,
%which is finite dimensional, and hence closed.
%Since $\left(x\mapsto x^{2}\right)\notin V$, the absolute constant
%$C_{0}:={\rm dist}\left(\left(x\mapsto x^{2}\right),\:V\right)$
%is positive.
%This
\Felix{%
The space $V_{\pp{0}} := \spann \{1, \identity_{[0,1]} \} \subset L^p ([0,1])$
is finite-dimensional, and hence closed; see \cite[Theorem 1.21]{RudinFA}.
Since $f_0 : [0,1] \to \R, x \mapsto x^2$ satisfies
$f_0 \in L^p([0,1]) \setminus V_0$, there is thus some $C_0 = C_0 (p) > 0$
with $B_{C_0}^{\|\mybullet\|_{L^p}}(f_0) \subset L^p ([0,1]) \setminus V_0$,
that is, $\|x^2 - (\beta x + \gamma)\|_{L^p([0,1];dx)} \geq C_0$ for all
$\beta, \gamma \in \R$.
Because of $(b-a)^{2 + p^{-1}} = 1$ and $\alpha = 1$, this
proves the claim in case of $a = 0$ and $b = 1$.}

Finally, for the general case, first note by a straightforward application
of the change-of-variables formula that
$
 \left\Vert f \right\Vert_{L^{p}\left(\left[a,b\right]\right)}
 = \left(b-a\right)^{1/p}
   \cdot \left\Vert
           f \left(a+\left(b-a\right)y\right)
         \right\Vert_{L^{p}\left(\left[0,1\right];dy\right)}$
for measurable $f:\left[a,b\right]\to\R$.
Applied to our specific setting, this implies for arbitrary $\beta,\gamma\in\R$
that
\begin{align*}
  \left\Vert
    x^{2}-\left(\beta x+\gamma\right)
  \right\Vert_{L^{p}\left(\left[a,b\right];dx\right)}
  & = (b-a)^{\frac{1}{p}}
      \cdot \left\Vert
              \left(a + (b-a) y\right)^{2}
              - \left[\beta\left(a + (b-a) y\right) + \gamma\right]
            \right\Vert_{L^{p}\left([0,1];dy\right)} \\
  & = (b-a)^{2+\frac{1}{p}}
      \cdot \left\Vert
              y^{2}
              - \left[
                  \frac{\beta - 2a}{b - a} y
                  + \frac{\beta a + \gamma - a^{2}}{(b - a)^{2}}
                \right]
            \right\Vert_{L^{p}\left([0,1];dy\right)}
    \geq C_{0} \cdot (b-a)^{2+\frac{1}{p}}.
\end{align*}
As seen at the beginning of the proof, this yields the claim.
\end{proof}
The preceding lemma shows that affine-linear functions cannot approximate
the square function too well. By approximating $C^{3}$ functions
by their Taylor polynomial of degree $2$, this implies that $C^{3}$
functions with nonvanishing second derivative are not approximated
too well by linear functions. This is made precise by the following lemma:

\begin{lemma}\label{lem:C3ApproxLowerBoundUnitInterval}
  Let $f \in C^{3}\left([0,1]\right)$
  with $\left|f''(x)\right| \geq c > 0$ for all $x \in [0,1]$
  and with $\left\Vert f'''\right\Vert_{\sup} \leq C$, for some $C > 0$.
  Then, \Felix{for $p \in (0,\infty)$} and with $C_{0} \Felix{= C_0 (p)}$ as in
  Lemma \ref{lem:SquareApproxLowerBound}, we have
  \begin{align*}
    \inf_{\beta,\gamma\in\R}
      \left\Vert
        f(x) - (\beta x+\gamma)
      \right\Vert_{L^{p}\left([0,1];dx\right)}
    & \geq \min
           \left\{
             \frac{C_{0}}{\Felix{2^{1 + \max\{1,p^{-1}\}}}} \cdot c
             \,\, , \,\,
             \frac{C_{0}^{3}}{\Felix{2^{3 \max \{1,p^{-1}\}}}}
             \cdot \frac{c^{3}}{C^{2}}
           \right\} \\
    & \Felix{
      \geq 2^{-3 \max \{1,p^{-1}\}}
           \cdot \min \left\{
                        C_0 \, c
                        \,\,,\,\,
                        C_0^3 \cdot \frac{c^3}{C^2}
                      \right\}
           \, .
    }
  \end{align*}
\end{lemma}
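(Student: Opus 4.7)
The plan is to reduce the general nonlinear $C^{3}$ case to Lemma \ref{lem:SquareApproxLowerBound} via a local quadratic Taylor expansion. Fixing $x_0 := \nicefrac{1}{2}$ and a parameter $\delta \in (0, \nicefrac{1}{2}]$ to be chosen later, I will work on the subinterval $I := [x_0 - \delta, x_0 + \delta] \subset [0,1]$ and use the monotonicity $\|\cdot\|_{L^p([0,1])} \geq \|\cdot\|_{L^p(I)}$. Taylor's theorem yields a decomposition $f(x) = P(x) + R(x)$ on $I$, with
\[
  P(x) := f(x_0) + f'(x_0)(x - x_0) + \tfrac{f''(x_0)}{2}(x - x_0)^2
  \qquad \text{and} \qquad
  |R(x)| \leq \tfrac{C}{6} |x - x_0|^3 .
\]

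Since the affine-linear part of $P$ can be absorbed into $\beta x + \gamma$, translation invariance together with Lemma \ref{lem:SquareApproxLowerBound} applied with $\alpha = f''(x_0)/2$ on the centred interval $[-\delta, \delta]$ yields
\[
  \inf_{\beta, \gamma} \|P - (\beta x + \gamma)\|_{L^p(I)}
  \geq C_0 \cdot \tfrac{|f''(x_0)|}{2} \cdot (2\delta)^{2 + 1/p}
  \geq \tfrac{C_0 c}{2}\,(2\delta)^{2 + 1/p},
\]
while the crude pointwise bound on $R$ gives $\|R\|_{L^p(I)} \leq \tfrac{C}{6}\delta^3 (2\delta)^{1/p}$. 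Combining these via the generalised pseudo-triangle inequality $\|A + B\|_{L^p} \geq 2^{-q}\|A\|_{L^p} - \|B\|_{L^p}$ (valid for $q := \max\{1, p^{-1}\}$; obtained by applying Equation \eqref{eq:PseudoTriangleInequality} to the decomposition $A = (A + B) + (-B)$) and then passing to the infimum over $(\beta, \gamma)$, I deduce
\[
  \inf_{\beta, \gamma}\|f - (\beta x + \gamma)\|_{L^p([0,1])}
  \geq (2\delta)^{1/p} \cdot \delta^2 \cdot \Big[2^{1-q} C_0 c - \tfrac{C \delta}{6}\Big].
\]

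The last step is to optimise the interval length. Choosing $\delta := \min\{\nicefrac{1}{2},\, 6 \cdot 2^{-q} C_0 c / C\}$ keeps the bracketed factor bounded below by $2^{-q} C_0 c$. In the ``small $C$'' regime $\delta = \nicefrac{1}{2}$, the resulting lower bound collapses to a constant multiple of $C_0 c / 2^{q}$, matching the first entry $C_0 c / 2^{1+q}$ of the claimed minimum; in the ``large $C$'' regime $\delta = 6 \cdot 2^{-q} C_0 c / C < \nicefrac{1}{2}$, it becomes proportional to $C_0^3 c^3 /(2^{3q} C^2)$, matching the second entry. Any residual small absolute-constant discrepancies can be absorbed either into $C_0 = C_0(p)$ or by shrinking the prefactor in the definition of $\delta$.

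The routine technical subtlety is ensuring that the same $C_0$ carries cleanly through both branches of the minimum without introducing unwanted $p$-dependence: the pseudo-triangle factor $2^{-q}$ and the interval-length factor $(2\delta)^{1/p}$ interact nontrivially for $p < 1$, so the cleanest presentation is to split into the two regimes at the outset and verify each branch independently. Once the split is made, no step is conceptually hard; the entire argument reduces to elementary arithmetic and bookkeeping of constants.
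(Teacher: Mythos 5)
Your proposal has a genuine gap in the ``large $C$'' regime, and it is not a matter of constants that can be absorbed. On your single interval $I = [x_0-\delta, x_0+\delta]$, Lemma~\ref{lem:SquareApproxLowerBound} yields a lower bound $\gtrsim C_0\, c\, (2\delta)^{2+1/p}$, while the remainder obeys $\|R\|_{L^p(I)} \lesssim C\,\delta^3(2\delta)^{1/p}$. After the pseudo-triangle trick you correctly arrive at a bound of the form
\[
  (2\delta)^{1/p}\cdot \delta^2\cdot\Big[\,2^{1-q}C_0 c - \tfrac{C\delta}{6}\,\Big] \, .
\]
Choosing $\delta\propto c/C$ makes the bracket $\asymp C_0 c$, but the surviving prefactor $(2\delta)^{1/p}\cdot\delta^2$ then scales like $(c/C)^{2+1/p}$, not $(c/C)^{2}$. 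Your final bound in that branch is therefore
\[
  \gtrsim \frac{C_0^{3+1/p}\,c^{3+1/p}}{C^{2+1/p}}\, ,
\]
which differs from the claimed $C_0^3 c^3/(2^{3q}C^2)$ by a factor $(C_0 c/C)^{1/p}$ that tends to $0$ as $C/c\to\infty$. Since $c$ and $C$ are free parameters of the lemma and independent of $p$, this deficiency cannot be hidden in $C_0(p)$ or in the prefactor of $\delta$; it is a genuine power loss, so the single-interval argument does not prove the stated estimate.

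The missing idea is a \emph{partition}: the paper splits $[0,1]$ into $N\asymp C/c$ subintervals of length $1/N$ and Taylor-expands about the left endpoint of each. On each piece it gets a bound $\gtrsim C_0 c\, N^{-(2+1/p)}$, exactly what you get on one short interval; but then it assembles the full $L^p$ norm via
\[
  \|f-(\beta x+\gamma)\|_{L^p([0,1])}
  = \Big(\sum_{i=1}^N \|f-(\beta x +\gamma)\|_{L^p([x_i,x_{i+1}])}^p\Big)^{1/p}
  \gtrsim N^{1/p}\cdot C_0 c\, N^{-(2+1/p)}
  = C_0 c\, N^{-2}\, .
\]
That factor $N^{1/p}$ from summing $N$ pieces exactly cancels the $N^{-1/p}$ you lose from working on a short interval, and recovers $\sim c^3/C^2$ with no $1/p$ in the exponent. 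Note also that the reverse triangle inequality is applied at the level of $\|\cdot\|_{L^p}^{\min\{1,p\}}$ on each piece, not globally; and the case-split in the paper is on whether $N=1$ or $N\geq 2$, which plays the role of your $\delta=\nicefrac{1}{2}$ versus $\delta< \nicefrac{1}{2}$ dichotomy. Your computation is otherwise correct up to the point of combining the pieces; the fix is to use all $N\asymp C/c$ of them rather than just one.
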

\begin{proof}
\Felix{Let $q := \min \{1,p\}$ and} set
$N := \left\lceil \nicefrac{2^{\Felix{1/q}}}{(3C_{0})}
      \cdot \nicefrac{C}{c}\right\rceil
   \in \N$.
For $i \in \underline{N+1}$ let $x_{i}:=\nicefrac{(i-1)}{N} \in [0,1]$.
By Taylor's theorem, we know for each $i \in \underline{N}$ and
$x \in (x_{i},x_{i+1})$ that there is some
$\xi_{x} \in (x_{i},x) \subset (x_{i}, x_{i+1})$ with
\begin{align*}
  f(x)
  & =  f (x_{i})
     + f'(x_{i}) \cdot (x-x_{i})
     +\frac{f''(x_{i})}{2} \cdot (x-x_{i})^{2}
     +\frac{f'''(\xi_{x})}{6} \cdot (x-x_{i})^{3}\\
  & =  \frac{f'' \! (x_{i})}{2} \cdot x^{2}
     + x \cdot \left[
                 f' \! (x_{i})
                 - x_{i} \cdot f'' \! (x_{i})
               \right]
     + \! \left[\!
            f(x_{i})
            \!-\!  f' \! (x_{i}) \cdot x_{i}
            +      \frac{1}{2} \cdot f''(x_{i}) \cdot x_{i}^{2}
          \right] \!
     + \frac{f''' \! (\xi_{x})}{6} \cdot (x \! - \! x_{i})^{3} \\
   & =: \alpha_{i} \cdot x^{2}
        + \beta_{i} \cdot x
        + \gamma_{i}
        + \frac{f'''(\xi_{x})}{6} \cdot (x-x_{i})^{3}.
\end{align*}
Hence, since $\left| f'''(\xi_{x}) \right| \leq C$, we get
\begin{align*}
  \left\Vert
    f(x)
    \! - \!
    \left[ \alpha_{i} \cdot x^{2} + \beta_{i}\cdot x + \gamma_{i} \right]
  \right\Vert_{L^{p}\left([x_{i},x_{i+1}];dx\right)}
  & \leq \! (x_{i+1} \! - \! x_{i})^{\frac{1}{p}}
         \cdot \left\Vert
                 f(x)
                 \!-\!
                 \left[
                   \alpha_{i} \cdot x^{2} + \beta_{i} \cdot x + \gamma_{i}
                 \right]
               \right\Vert_{L^\infty ([x_i, x_{i+1}]; dx)}\\
   & \leq N^{-\frac{1}{p}} \cdot \frac{C}{6} \cdot N^{-3}
     =    \frac{C}{6} \cdot N^{-\left(3+\frac{1}{p}\right)}\:.
\end{align*}

\Felix{As noted before Equation \eqref{eq:PseudoTriangleInequality}, we have
$\|f+g\|_{L^p}^q \leq \|f\|_{L^p}^q + \|g\|_{L^p}^q$ for all $f,g \in L^p$,
so that $d(f,g) := \|f-g\|_{L^p}^q$ defines a metric on $L^p$.
The reverse triangle inequality for this metric shows
$\|f+g\|_{L^p}^q \geq \|f\|_{L^p}^q - \|g\|_{L^p}^q$.}
Therefore, by applying Lemma \ref{lem:SquareApproxLowerBound} and by noting
$\left|\alpha_{i}\right|=\left|\nicefrac{f''\left(x_{i}\right)}{2}\right|
 \geq\nicefrac{c}{2}$,
we get for arbitrary $\beta,\gamma\in\R$ and $1\leq i\leq N$ the
estimate
\begin{align*}
  & \left\Vert
      f(x) - (\beta x + \gamma)
    \right\Vert_{L^{p}\left([x_{i},x_{i+1}];dx\right)}^{\Felix{q}} \\
  & \geq \left\Vert
           \alpha_{i} \cdot x^{2}
           + \beta_{i}\cdot x
           + \gamma_{i} - (\beta x + \gamma)
         \right\Vert_{L^{p}\left([x_{i},x_{i+1}];dx\right)}^{\Felix{q}}
         - \left\Vert
             f(x)
             - \left[
                 \alpha_{i}\cdot x^{2} + \beta_{i}\cdot x + \gamma_{i}
               \right]
           \right\Vert_{L^{p}\left([x_{i},x_{i+1}];dx\right)}^{\Felix{q}} \\
  & \geq \Felix{
         \left( \frac{c}{2} \cdot C_{0}\right)^q
         \cdot N^{-\left(2+\frac{1}{p}\right)q}
         - \left(\frac{C}{6}\right)^q \cdot N^{-\left(3+\frac{1}{p}\right)q}
    = \left( \frac{c}{2} \cdot C_{0}\right)^q \cdot N^{-\left(2+\frac{1}{p}\right)q}
                  \cdot \left(
                          1 - \left( \frac{1}{3C_{0}}\cdot\frac{C}{c} \right)^q
                          \cdot N^{-q}
                        \right).
               }
\end{align*}
But \Felix{$\frac{1}{3 C_0} \frac{C}{c} N^{-1} \leq 2^{-1/q}\vphantom{\sum_j}$}
by choice of $N$, whence
$\left\Vert
   f(x) - (\beta x+\gamma)
 \right\Vert_{L^{p}\left([x_{i},x_{i+1}];dx\right)}
 \geq \Felix{\frac{c}{2} C_0 \cdot N^{-\left(2 + \frac{1}{p}\right)} \cdot 2^{-1/q}}$.
Therefore,\vspace{-0.3cm}
\[
  \left\Vert
    f(x) - (\beta x + \gamma)
  \right\Vert_{L^{p}\left([0,1];dx\right)}
  = \left[
      \sum_{i=1}^{N}
         \left\Vert
             f(x) - \left(\beta x+\gamma\right)
         \right\Vert_{L^{p}\left([x_{i},x_{i+1}];dx\right)}^{p}
    \right]^{\frac{1}{p}}
  \geq \frac{C_{0}}{\Felix{2^{1+q^{-1}}}} \cdot c \cdot N^{-2}.
\]

For brevity, set $\theta := \nicefrac{\Felix{2^{1/q}}}{(3C_{0})} \cdot \nicefrac{C}{c}$,
\Felix{so that $N = \lceil \theta \rceil$.}
There are now two cases: First, if $\theta<1$, then $N=1$, so that we get
$\left\Vert f(x) - (\beta x+\gamma) \right\Vert_{L^{p}\left([0,1];dx\right)}
 \geq \Felix{2^{-(1+q^{-1})} \cdot C_0 \, c}$;
that is, the claim is valid in this case.
Finally, if $\theta \geq 1$,
then $N=\left\lceil \theta \right\rceil \leq 1 + \theta \leq 2\theta$,
and thus\vspace{-0.2cm}
\[
  \left\Vert
    f(x) - \left(\beta x+\gamma\right)
  \right\Vert_{L^{p}\left([0,1];dx\right)}
  \geq \frac{C_{0}}{\Felix{2^{1+q^{-1}}}} \cdot c \cdot N^{-2}
  \geq \frac{C_{0}}{\Felix{2^{1+q^{-1}}}} \cdot c \cdot (2\theta)^{-2}
  =    \frac{C_{0}}{\Felix{2^{3+q^{-1}}}} \cdot c
       \cdot \left(
               \frac{3C_{0}}{\Felix{2^{1/q}}}\cdot\frac{c}{C}
             \right)^{2}
  \geq \frac{C_{0}^{3}}{\Felix{2^{3/q}}} \cdot \frac{c^{3}}{C^{2}},
\]
so that the claim also holds in this case.
\end{proof}

The next lemma generalizes the preceding estimate from the
interval $\left[0,1\right]$ to general intervals $\left[a,b\right]$.

\begin{corollary}\label{cor:C3ApproxLowerBound}
Let $c, C, \Felix{p} > 0$ be arbitrary and let $C_{0} \Felix{= C_0(p)} > 0$
as in Lemma \ref{lem:SquareApproxLowerBound}.
Further, let $a,b \in \R$ with
$0 < b-a \Felix{\leq C_0 \cdot \frac{c}{C}} \vphantom{\sum_j}$.

Then, each function $f\in C^{3}\left(\left[a,b\right]\right)$ with
$\left\Vert f'''\right\Vert_{\sup}\leq C$ and with
$\left|f''\left(x\right)\right|\geq c$ for all $x\in\left[a,b\right]$ satisfies
\[
  \inf_{\beta,\gamma\in\R}
    \left\Vert f(x) - (\beta x+\gamma)\right\Vert_{L^{p}\left([a,b];dx\right)}
  \geq \frac{C_{0}}{\Felix{2^{3 \max \{1,p^{-1}\}}}}
       \cdot c \cdot (b-a)^{2+\frac{1}{p}} \, .
  %\qquad \forall \; 1 \leq p <\infty.
\]
\end{corollary}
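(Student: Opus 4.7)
The plan is a direct rescaling argument that transports the unit-interval estimate of Lemma \ref{lem:C3ApproxLowerBoundUnitInterval} to the interval $[a,b]$, and then uses the assumption $b-a \leq C_0 \cdot c/C$ to identify which of the two terms in the min dominates.

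First I would define $g : [0,1] \to \R$ by $g(y) := f(a + (b-a)y)$ and compute the chain-rule scalings $g''(y) = (b-a)^2 f''(a+(b-a)y)$ and $g'''(y) = (b-a)^3 f'''(a+(b-a)y)$. Hence $g \in C^3([0,1])$ with $|g''(y)| \geq c' := c\,(b-a)^2$ and $\|g'''\|_{\sup} \leq C' := C\,(b-a)^3$. Next, exactly as in the proof of Lemma \ref{lem:SquareApproxLowerBound}, the change of variables $x = a + (b-a)y$ gives
\[
  \|f(x) - (\beta x + \gamma)\|_{L^p([a,b];dx)}
  = (b-a)^{1/p} \cdot \|g(y) - (\beta' y + \gamma')\|_{L^p([0,1];dy)},
\]
where $\beta' = \beta(b-a)$ and $\gamma' = \beta a + \gamma$, and the correspondence $(\beta,\gamma) \leftrightarrow (\beta',\gamma')$ is a bijection of $\R^2$. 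Taking the infimum over affine functions therefore yields
\[
  \inf_{\beta,\gamma}\|f(x) - (\beta x + \gamma)\|_{L^p([a,b];dx)}
  = (b-a)^{1/p} \cdot \inf_{\beta',\gamma'}\|g(y) - (\beta' y + \gamma')\|_{L^p([0,1];dy)}.
\]

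Then I would apply Lemma \ref{lem:C3ApproxLowerBoundUnitInterval} to $g$ with the constants $c'$ and $C'$, giving
\[
  \inf_{\beta',\gamma'}\|g(y) - (\beta' y + \gamma')\|_{L^p([0,1];dy)}
  \geq 2^{-3\max\{1,p^{-1}\}} \cdot \min\!\left\{C_0\, c(b-a)^2, \; C_0^3 \cdot \frac{c^3 (b-a)^6}{C^2 (b-a)^6}\right\},
\]
where the factors of $(b-a)$ in the second term cancel exactly. Multiplying by $(b-a)^{1/p}$, the lower bound becomes
\[
  2^{-3\max\{1,p^{-1}\}} \cdot \min\!\left\{C_0\, c\,(b-a)^{2+\frac{1}{p}}, \; C_0^3 \cdot \frac{c^3}{C^2}\,(b-a)^{\frac{1}{p}}\right\}.
\]

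Finally, I would use the standing hypothesis $b - a \leq C_0 \cdot c/C$ to see that the first term in the min is the smaller one: indeed $C_0 c (b-a)^{2+1/p} \leq C_0^3 c^3 / C^2 \cdot (b-a)^{1/p}$ is equivalent to $(b-a)^2 \leq (C_0 c/C)^2$, which holds. This isolates the desired lower bound $2^{-3\max\{1,p^{-1}\}}\, C_0\, c\,(b-a)^{2+1/p}$, completing the proof. There is essentially no obstacle here; the only thing to be careful about is tracking the bijection $(\beta,\gamma) \leftrightarrow (\beta',\gamma')$ so that the infimum is preserved under the change of variables, and verifying that the hypothesis $b - a \leq C_0 c/C$ is precisely the threshold at which the two branches of the min cross.
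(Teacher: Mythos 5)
Your proposal is correct and follows essentially the same route as the paper: rescale to $[0,1]$, observe that $c' = c(b-a)^2$ and $C' = C(b-a)^3$, invoke Lemma \ref{lem:C3ApproxLowerBoundUnitInterval}, note that the hypothesis $b-a \leq C_0\,c/C$ forces the first branch of the minimum to be the smaller one, and finish by the change of variables. The only difference is the order in which you apply the change of variables versus the dominance check (you do the change of variables first, the paper does it last), which is immaterial.
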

\begin{proof}
Define
\[
  \widetilde{f}:
  [0,1] \to     \R,
  x     \mapsto f\left(a + (b-a)x\right),
\]
and note $\widetilde{f}\in C^{3}\left(\left[0,1\right]\right)$ with
$\left|\smash{\widetilde{f}}''\left(x\right)\right|
 = (b-a)^{2} \cdot \left|f''\left(a + (b - a) x\right)\right|
 \geq (b-a)^{2}\cdot c
 =:c'$,
as well as
\[
       \left\Vert \smash{\widetilde{f}}'''\right\Vert _{\sup}
  =    (b-a)^{3} \cdot \left\Vert f'''\right\Vert_{\sup}
  \leq (b-a)^{3}\cdot C
  =: C'.
\]
By our assumptions on $a,b,c,C$, we then have
\[
  \left(
    \Felix{C_{0}^{3}} \cdot \frac{(c')^{3}}{(C')^{2}}
  \right)
  \!\!\bigg/\!\!
  \left( \Felix{C_0} \cdot c' \right)\!
  = \Felix{C_0^3}
    \cdot \frac{\left(b\!-\!a\right)^{6} \cdot c^{3}}
               {\left(b\!-\!a\right)^{6} \cdot C^{2}}
    \cdot \frac{\Felix{1}}{C_{0}}
    \cdot\frac{1}{(b\!-\!a)^{2} \cdot c}
  = \Felix{C_0^2}
    \cdot \frac{c^{2}}{C^{2}}
    \cdot\frac{1}{(b\!-\!a)^{2}}
  \Felix{=} \!
    \left(
      \Felix{C_0}
      \cdot \frac{c}{C}
      \cdot \frac{1}{b\!-\!a}
    \right)^{\! 2}
  \! \Felix{\geq} 1,
\]
so that Lemma \ref{lem:C3ApproxLowerBoundUnitInterval} shows
\begin{align*}
  \inf_{\beta,\gamma\in\R}
    \left\Vert
      \widetilde{f}\left(x\right)-\left(\beta x+\gamma\right)
    \right\Vert_{L^{p}\left([0,1];dx\right)}
  & \geq \Felix{2^{-3 \max \{1,p^{-1}\}}}
         \min
         \left\{
           \Felix{C_0} \cdot c' \,\,,\,\,
           \Felix{C_0^3} \cdot \frac{(c')^{3}}{(C')^{2}}
         \right\} \\
  & =   \Felix{\frac{C_0 \cdot c'}{2^{3 \max\{1,p^{-1}\}}}}
    =   \frac{C_{0}}{\Felix{2^{3 \max\{1,p^{-1}\}}}} \cdot c \cdot (b-a)^{2}.
\end{align*}

To complete the proof, we note from a direct application of the
change-of-variables formula for arbitrary $\beta,\gamma\in\R$ that
\begin{align*}
  \left\Vert
    f(x) - (\beta x + \gamma)
  \right\Vert_{L^{p}\left([a,b];dx\right)}
  & = (b-a)^{\frac{1}{p}}
      \cdot \left\Vert
              \widetilde{f}(y)
              - \left[
                  \beta\left((b - a) y + a\right) + \gamma
                \right]
            \right\Vert_{L^{p}\left([0,1];dy\right)} \\
  & \geq \frac{C_{0}}{\Felix{2^{3 \max \{1,p^{-1}\}}}}
         \cdot c \cdot (b-a)^{2+\frac{1}{p}}.
  \qedhere
\end{align*}
\end{proof}
Before we progress further, we introduce a convenient terminology:
\begin{definition}
\label{def:PiecewiseSliceAffine}Let $P\in\N$. A function $g:\R^{d}\to\R$
is called \textbf{$P$-piecewise slice affine} if for arbitrary
$x_{0},v\in\R^{d}$ the function
$g_{x_{0},v}:\R\to\R,t\mapsto g\left(x_{0}+tv\right)$
is piecewise affine-linear with at most $P$ pieces.
Precisely, this means that there are $-\infty=t_{0}<t_{1}<\dots<t_{P}=\infty$
such that $g_{x_{0},v}|_{\left(t_{i},t_{i+1}\right)}$ is affine-linear
for each $i\in\left\{ 0,\dots,P-1\right\} $.
\end{definition}
\begin{remark*}
Note that we allow $g_{x_0, v}$ to even be discontinuous at the
``break points'' $t_1, \dots, t_{P-1}$.
\end{remark*}
Our next result shows that if a $P$-piecewise slice affine function
approximates a nonlinear function $f\in C^{3}\left(\Omega\right)$
very well, then $P$ needs to be large. This result will then imply
that ReLU networks need to have a certain minimal depth in order to
achieve a given approximation rate for nonlinear functions, once we
show that if $g=\Realization_{\varrho}\left(\Phi\right)$,
then $g$ is $P$-piecewise slice affine for
$P \asymp \left[N\left(\Phi\right)\right]^{L\left(\Phi\right)}$.
Actually, we will not derive this claim from first principles, but
rather use existing results of Telgarsky \cite{TelgarskySmallPaper}.
But first, let us consider the case of general $P$-piecewise slice
affine functions:
\begin{proposition}\label{prop:PPiecewiseAffineFunctionsAreBad}
Let $\Omega\subset\R^{d}$ be nonempty, open, bounded and connected, and let
$f\in C^{3}\left(\Omega\right)$ be nonlinear,
that is, there do \emph{not} exist $y_{0}\in\R$ and $w\in\R^{d}$
with $f\left(x\right)=y_{0}+\left\langle w,x\right\rangle $ for all
$x\in\Omega$.
\Felix{Finally, let $p \in (0,\infty)$.}
Then there is a constant \Felix{$C = C(f,p) > 0$} with the following property:

If $g:\R^{d}\to\R$ is measurable and $P$-piecewise slice affine for some
$P \in \N$, then we have
\[
  \left\Vert f-g \right\Vert_{L^{p}\left(\Omega\right)}
  \geq \Felix{C} \cdot P^{-2} \, .
  %\qquad \text{ for all } 1 \leq p < \infty.
\]
\end{proposition}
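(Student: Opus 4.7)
The plan is to reduce the $d$-dimensional problem to a one-dimensional one along carefully chosen slices, where Corollary \ref{cor:C3ApproxLowerBound} gives a sharp lower bound on each affine piece, and then to combine these per-slice estimates by Fubini.

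First, since $f \in C^3(\Omega)$ is nonlinear, the Hessian $\nabla^2 f$ cannot vanish identically on $\Omega$; by polarization of the quadratic form $v \mapsto v^T (\nabla^2 f(x_0)) v$, there exist $x_0 \in \Omega$ and a unit vector $v \in \R^d$ with $\partial_v^2 f(x_0) \neq 0$. After an orthogonal change of coordinates, which preserves both $\| \cdot \|_{L^p}$ and the property of being $P$-piecewise slice affine, I may assume $v = e_1$. By continuity of $\partial_1^2 f$ and $\partial_1^3 f$ together with openness of $\Omega$, I can then pick a closed axis-aligned box $B = \prod_{i=1}^d [a_i, b_i] \subset \Omega$ containing $x_0$ in its interior, and constants $c, C_2 > 0$, such that $|\partial_1^2 f| \geq c$ and $|\partial_1^3 f| \leq C_2$ on $B$. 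By shrinking $B$ only in the first coordinate, I will additionally arrange $b_1 - a_1 \leq C_0(p) \cdot c / C_2$, where $C_0(p)$ is the constant from Lemma \ref{lem:SquareApproxLowerBound}; this is the length hypothesis required to invoke Corollary \ref{cor:C3ApproxLowerBound}.

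Next, for each $y = (x_2, \dots, x_d) \in \prod_{i \geq 2}[a_i, b_i]$, set $F_y(t) := f(t, y)$ and $G_y(t) := g(t, y)$. Then $F_y \in C^3([a_1, b_1])$ satisfies $|F_y''| \geq c$ and $\|F_y'''\|_{\sup} \leq C_2$, while $G_y$ is piecewise affine with at most $M \leq P$ pieces (depending on $y$), with break points $a_1 = t_0 < t_1 < \dots < t_M = b_1$. Since each $t_{i+1} - t_i \leq b_1 - a_1 \leq C_0 c / C_2$, Corollary \ref{cor:C3ApproxLowerBound} applies on each subinterval and gives
\[
  \|F_y - G_y\|_{L^p([t_i, t_{i+1}])}^p \;\geq\; K_0^p \cdot (t_{i+1} - t_i)^{2p + 1},
  \qquad K_0 := \frac{C_0(p) \, c}{2^{3 \max\{1, p^{-1}\}}}.
\]
Summing over $i$ and using convexity of $s \mapsto s^{2p+1}$ (valid since $2p+1 > 1$) via Jensen's inequality gives
\[
  \sum_{i=0}^{M-1} (t_{i+1} - t_i)^{2p+1} \;\geq\; M \cdot \Big(\tfrac{b_1 - a_1}{M}\Big)^{2p+1} \;\geq\; P^{-2p} \, (b_1 - a_1)^{2p+1},
\]
so that $\|F_y - G_y\|_{L^p([a_1, b_1])} \geq K_0 \cdot (b_1 - a_1)^{2 + 1/p} \cdot P^{-2}$.

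Finally, since the lower bound is uniform in $y$, an application of Fubini's theorem to the nonnegative measurable function $|f - g|^p$ (valid for every $p \in (0,\infty)$) yields
\[
  \|f - g\|_{L^p(\Omega)}^p \;\geq\; \|f - g\|_{L^p(B)}^p \;=\; \int_{\prod_{i \geq 2}[a_i, b_i]} \|F_y - G_y\|_{L^p([a_1, b_1])}^p \, dy \;\geq\; C^p \cdot P^{-2p}
\]
with $C = C(f,p) := K_0 \cdot (b_1 - a_1)^{2 + 1/p} \cdot \big(\prod_{i=2}^d (b_i - a_i)\big)^{1/p} > 0$, proving the claim. The main subtlety is the Jensen-style summation in the middle step: a naive argument that applied Corollary \ref{cor:C3ApproxLowerBound} on only the longest subinterval (of length $\geq (b_1-a_1)/P$) would yield the weaker bound $P^{-(2 + 1/p)}$; exploiting all $M \leq P$ pieces simultaneously via the convexity of $s \mapsto s^{2p+1}$ is what upgrades this to the sharp $P^{-2}$, and is also the reason why the argument extends uniformly across the whole range $p \in (0, \infty)$ rather than only handling $p = \infty$ as in \cite{YAROTSKY2017103}.
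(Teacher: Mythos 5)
Your proof is correct and follows essentially the same route as the paper: reduce to one-dimensional slices along a direction of nonvanishing second derivative, apply Corollary \ref{cor:C3ApproxLowerBound} on each affine piece, combine the per-piece bounds via a power-mean/convexity argument to obtain the factor $P^{-2p}$, and integrate over the transverse cross-section by Fubini. The only (cosmetic) differences are that you invoke Jensen's inequality where the paper uses the equivalent H\"older inequality with exponent $q = 1 + 2p$, and you work on an axis-aligned box after an orthogonal change of coordinates rather than on a ball in a basis of Hessian eigenvectors.
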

\begin{proof}
Let ${\rm Hess}\,f=D\left(\nabla f\right)$ denote the Hessian of
$f$. If we had ${\rm Hess}\,f\equiv0$, then it would follow by standard
results of multivariable calculus (since $\Omega$ is connected) that
$\nabla f$ is constant, and then that
$f\left(x\right)
 =f(x_{0}) + \left\langle \nabla f(x_{0}) ,\, x - x_{0}\right\rangle$
for all $x\in\Omega$, where $x_{0}\in\Omega$ is fixed, but arbitrary.
Since $f$ is assumed nonlinear, this is impossible.

Hence, let $x_{0} \in \Omega$ with ${\rm Hess}\,f (x_{0}) \neq 0$.
Since $A:={\rm Hess}\,f(x_{0})$ is symmetric, the spectral
theorem shows that there is an orthonormal basis
$\left(b_{1},\dots,b_{d}\right)$ of $\R^{d}$ that consists of eigenvectors for
$A$, and at least one of these eigenvectors needs to correspond to a
nonzero eigenvalue;
by rearranging we can assume $A \, b_{d} = \lambda \cdot b_{d}$ for some
$\lambda\in\R\setminus\left\{ 0\right\} $.
Since $\Omega$ is open,
there is some $\varepsilon\in\left(0,\nicefrac{1}{2}\right)$ with
$\overline{B_{d\varepsilon}}\left(x_{0}\right)\subset\Omega$.
Since
$\left|\left\langle {\rm Hess} \, f(x_{0})b_{d} , b_{d}\right\rangle \right|
 = \left| \lambda \right| \neq 0$,
and since ${\rm Hess}\,f$ is continuous, we can possibly shrink $\varepsilon$
to achieve
$\left|\left\langle {\rm Hess} \, f(x)b_{d} ,\, b_{d}\right\rangle \right|
 \geq c := \nicefrac{\left|\lambda\right|}{2}$
for all $x\in\overline{B_{d\varepsilon}}\left(x_{0}\right)$.
Furthermore, since $\overline{B_{d\varepsilon}}(x_{0})\subset\Omega$
is compact, the constant
\[
  C := \max
       \left\{
          1, \,\,
          d^{3}
          \cdot \sup_{x\in\overline{B_{d\varepsilon}}\left(x_{0}\right)} \,\,
                  \max_{\left|\alpha\right|=3} \,\,
                    \left| \partial^{\alpha} f(x) \right|
       \right\}
\]
is finite. Finally, by again shrinking $\varepsilon$ (which can
at most \emph{shrink} $C$), we can assume
$2\varepsilon < \Felix{C_0 \cdot \frac{c}{C}}$,
where $C_{0} \Felix{= C_0 (p)}$ is the constant from
Lemma \ref{lem:SquareApproxLowerBound}.

Now, for $y=\left(y_{1},\dots,y_{d-1}\right)\in\R^{d-1}$ let us set
$z_{y}:=x_{0}+\sum_{i=1}^{d-1}y_{i}b_{i}$.
Note $z_{y}+t\cdot b_{d}\in\overline{B_{d\varepsilon}}\left(x_{0}\right)$
for all $y\in\left[-\varepsilon,\varepsilon\right]^{d-1}$ and
$t\in\left[-\varepsilon,\varepsilon\right]$.
Therefore, since $\left(b_{1},\dots,b_{d}\right)$ is an orthonormal
basis, an application of the change-of-variables formula and of Fubini's
theorem shows
\[
  \left\Vert f-g\right\Vert _{L^{p}\left(\Omega\right)}^{p}
  \geq \int_{B_{d\varepsilon}\left(x_{0}\right)}
         \left|f\left(x\right)-g\left(x\right)\right|^{p}
       dx
  \geq \int_{\left[-\varepsilon,\varepsilon\right]^{d-1}}
         \int_{-\varepsilon}^{\varepsilon}
           \left|
             f \left(z_{y} + t\cdot b_{d}\right)
             - g \left(z_{y} + t\cdot b_{d}\right)
           \right|^{p}
         \,dt
       \,dy.
\]
Note that the choice of $x_{0},\varepsilon,\left(b_{1},\dots,b_{d}\right)$
and $\lambda,c,C,C_{0}$ are all independent of $g$ and $P$.

Now, let $y\in\left[-\varepsilon,\varepsilon\right]^{d-1}$ be fixed,
but arbitrary. Since $g$ is $P$-piecewise slice affine, we know
that the map $g_{z_{y},b_{d}}:\R\to\R,t\mapsto g\left(z_{y}+t\cdot b_{d}\right)$
is piecewise affine-linear, with at most $P$ pieces, that is, there
is a partition $\R=\biguplus_{i=1}^{N}I_{y}^{(i)}$ (up to a null-set) into
open intervals $I_{y}^{(1)},\dots,I_{y}^{(N)}$ with $N\in\underline{P}$ such
that $g_{z_{y},b_{d}}$ is affine-linear on each $I_{y}^{(i)}$. Hence,
with $\lambda$ denoting the one-dimensional Lebesgue measure, we conclude
$2\varepsilon
 = \lambda \left(
             \biguplus_{i=1}^{N}
               (I_{y}^{(i)}\cap\left[-\varepsilon,\varepsilon\right])
           \right)
 = \sum_{i=1}^{N} \lambda(\left[-\varepsilon,\varepsilon\right]\cap I_{y}^{(i)})
 = \sum_{i=1}^N t_y^{(i)}$,
with $t_y^{(i)} := \lambda(I_y^{(i)} \cap [-\varepsilon,\varepsilon])$.
Since intersections of intervals are intervals again, we have
$ (a_y^{(i)}, b_y^{(i)}) \subset I_y^{(i)} \cap [-\varepsilon,\varepsilon]
                         \subset [a_y^{(i)}, b_y^{(i)}]$
for certain
$a_y^{(i)}, b_y^{(i)} \in \R$ with
$-\varepsilon \leq a_y^{(i)} \leq b_y^{(i)} \leq \varepsilon$ and
$t_y^{(i)} = b_y^{(i)} - a_y^{(i)}$.

%so that
%$\lambda\left(\left[-\varepsilon,\varepsilon\right]\cap I_{i}\right)
% \geq \nicefrac{2\varepsilon}{N}
% \geq \nicefrac{2\varepsilon}{P}$
%for some $i\in\underline{N}$.
Now, for each $i \in \underline{N}$, since $g_{z_y, b_d}$ is affine-linear on
$I_y^{(i)} \supset (a_y^{(i)}, b_y^{(i)})$,
there are certain \Felix{$\beta_{y}^{(i)}, \gamma_{y}^{(i)} \in \R$} with
%Therefore,
%$\left[-\varepsilon,\varepsilon\right] \cap I_{i} \supset (a_{y},b_{y})$
%for certain $a_{y},b_{y}\in\left[-\varepsilon,\varepsilon\right]$
%with $b_{y}-a_{y}\geq\nicefrac{2\varepsilon}{P}$. Since $g_{z_{y},b_{d}}$
%is affine-linear on
% $(a_{y},b_{y}) \subset I_{i} \cap [-\varepsilon,\varepsilon]$, there
%are thus certain $\beta_{y},\gamma_{y}\in\R$ with
\begin{align*}
  \int_{-\varepsilon}^{\varepsilon}
    \left|
      f \left(z_{y} + t\cdot b_{d}\right)
      - g \left(z_{y} + t\cdot b_{d}\right)
    \right|^{p}
  \,dt
  &= \sum_{i=1}^N
      \int_{a_y^{(i)}}^{b_y^{(i)}}
          | f( z_y + t \cdot b_d) - g(z_y + t \cdot b_d)|^p
      \, dt \\
  &= \sum_{i=1}^N
      \left\Vert
          f \left(z_{y} + t\cdot b_{d}\right)
          -\left(\beta_{y}^{(i)}t + \gamma_{y}^{(i)}\right)
      \right\Vert_{L^{p}([a_{y}^{(i)},b_{y}^{(i)}]; \, dt)}^{p}.
\end{align*}
But for $i \in \underline{N}$ with $t_y^{(i)} > 0$, and
\[
  f_{y}^{(i)}:[a_{y}^{(i)},b_{y}^{(i)}] \to     \R,
              t                         \mapsto f\left(z_{y}+t\cdot b_{d}\right),
\]
we have $f_{y}^{(i)} \in C^{3}([a_{y}^{(i)},b_{y}^{(i)}])\vphantom{\sum_j}$, and
$|(f_{y}^{(i)}) '' \! \left(t\right)| \!
 = \! \left|
        \left\langle
          {\rm Hess}\,f \! \left(z_{y}+t\cdot b_{d}\right)\cdot b_{d},\,b_{d}
        \right\rangle
      \right|
 \geq c$
for
$t\in [a_{y}^{(i)},b_{y}^{(i)} ] \subset [-\varepsilon, \varepsilon]$,
since $z_{y}+t\cdot b_{d}\in\overline{B_{d\varepsilon}}(x_{0})$,
as we saw above.
Finally, by an iterated application of the chain rule, we also have
\[
  \left|(f_{y}^{(i)})'''\left(t\right)\right|
  = \left|
      \sum_{i,j,\ell=1}^{d}
        (b_{d})_{i} (b_{d})_{j} (b_{d})_{\ell}
        \cdot \left(\partial_{i}\partial_{j}\partial_{\ell}f\right)
                \left(z_{y}+t\cdot b_{d}\right)
    \right|
  \leq d^{3}
       \cdot \sup_{x\in\overline{B_{d\varepsilon}}\left(x_{0}\right)} \,\,
               \max_{\left|\alpha\right|=3} \,\,
                 \left|\partial^{\alpha}f(x)\right|
  = C,
\]
where we used that $\left|\left(b_{d}\right)_{i}\right|\leq\left|b_{d}\right|=1$
for all $i\in\underline{d}$.
All in all, \Felix{setting $r := 3 \max \{1,p^{-1}\}$}, an application of
Corollary \ref{cor:C3ApproxLowerBound} shows because of
$b_{y}^{(i)} - a_{y}^{(i)} \leq 2 \varepsilon \Felix{\leq C_0 \cdot \frac{c}{C}}$
%< \nicefrac{C_{0}}{2} \cdot \nicefrac{c}{C}$
that
\[
  \left\Vert
    f \left(z_{y}+t\cdot b_{d}\right)
    - \left(\beta_{y}^{(i)} t + \gamma_{y}^{(i)}\right)
  \right\Vert_{L^{p}\left([a_{y}^{(i)},b_{y}^{(i)}];dt\right)}^{p}
  \geq
  \left[
      \frac{C_{0}}{\Felix{2^r}}
      \cdot c
      \cdot \left(b_{y}^{(i)} - a_{y}^{(i)}\right)^{2+\frac{1}{p}}
  \right]^{p}
  = \left(
      \frac{C_{0}}{\Felix{2^r}}
      \cdot c
    \right)^{p}
    \cdot (t_y^{(i)})^{1 + 2 p}.
  %\geq \left(\frac{C_{0}}{4}\cdot c\right)^{p}
  %     \cdot \left(\frac{2\varepsilon}{P}\right)^{2p+1}.
\]
In case of $t_y^{(i)} = 0$, this estimate holds trivially.

Now, Hölder's inequality, applied with the exponent
$q := 1 + 2 p \in (\Felix{1}, \infty)$ shows
\[
  2\varepsilon
  = \sum_{i=1}^N (t_y^{(i)} \cdot 1)
  \leq \left(\sum_{i=1}^N
         (t_y^{(i)})^q\right)^{1/q} \cdot N^{1 - \frac{1}{q}}
  \leq \left(\sum_{i=1}^N
         (t_y^{(i)})^{1+2p}\right)^{1/q} \cdot P^{1 - \frac{1}{q}},
  %= \left(
  %    \sum_{i=1}^N (t_y^{(i)})^{1+2p}
  %  \right)^{1/q}
  %  \cdot P^{1 - \frac{1}{q}}
\]
and hence
$\sum_{i=1}^N (t_y^{(i)})^{1 + 2p}
 \geq (2\varepsilon)^q \cdot P^{1 - q}
 =    (2\varepsilon)^{1 + 2p} \cdot P^{-2p}$.
Therefore,
\begin{align*}
  \int_{-\varepsilon}^{\varepsilon}
      \left|
        f \left(z_{y} + t\cdot b_{d}\right)
        - g \left(z_{y} + t\cdot b_{d}\right)
      \right|^{p}
  \,dt
  & \geq
      \left(
          \frac{C_{0}}{\Felix{2^r}} \cdot c
      \right)^{p}
      \cdot \sum_{i=1}^N (t_y^{(i)})^{1 + 2p}
   \geq
      \left(
          \frac{C_{0}}{\Felix{2^r}} \cdot c
      \right)^{p}
      \cdot (2\varepsilon)^{1 + 2p} \cdot P^{-2p}.
\end{align*}

By putting everything together,
%and by recalling $\varepsilon<\nicefrac{1}{2}$
%and $p\geq1$, so that
%$(2\varepsilon)^{2p+d} \geq \left(2\varepsilon\right)^{(d+2) \cdot p}$,
we thus arrive at
\begin{align*}
  \left\Vert f-g \right\Vert_{L^{p}(\Omega)}^{p}
  & \geq \int_{[-\varepsilon,\varepsilon]^{d-1}}
           \int_{-\varepsilon}^{\varepsilon}
             \left|
               f\left(z_{y} + t\cdot b_{d}\right)
               - g \left(z_{y}+t\cdot b_{d}\right)
             \right|^{p}
           \,dt
         \,dy\\
  &\geq
      \left(
          \frac{C_{0}}{\Felix{2^r}} \cdot c
      \right)^{p}
      \cdot (2\varepsilon)^{d + 2p} \cdot P^{-2p}
  \Felix{=}
      \left(
          \frac{C_{0}}{\Felix{2^r}}
          \cdot c
          \cdot (2\varepsilon)^{\Felix{\frac{d}{p}} + 2} \cdot P^{-2}
      \right)^{p},
  %& \geq \int_{\left[-\varepsilon,\varepsilon\right]^{d-1}}
  %         \left(\frac{C_{0}}{4}\cdot c\right)^{p}
  %         \cdot \left(\frac{2\varepsilon}{P}\right)^{2p+1}
  %       \,dy
  %  \geq \left(2\varepsilon\right)^{(d+2)p}
  %       \cdot \left(\frac{C_{0}}{4}\cdot c\right)^{p}
  %       \cdot P^{-\left(1+2p\right)},
\end{align*}
which yields the claim if we set
\Felix{$C = C(f,p) := (2\varepsilon)^{\frac{d}{p} + 2} \cdot 2^{-r} \cdot C_0 \cdot c$},
%$C_{f}:=\left(2\varepsilon\right)^{d+2}\cdot\nicefrac{C_{0}}{4}\cdot c$,
which is indeed independent of $g$ and $P$.
\end{proof}

By using the results of Telgarsky\cite{TelgarskySmallPaper} which
show that functions represented by neural ReLU networks are $P$-piecewise
slice affine for $P\asymp\left[N\left(\Phi\right)\right]^{L\left(\Phi\right)}$,
we can now derive a lower bound on the number of layers that are needed
to achieve a given approximation rate for nonlinear $C^{3}$ functions:

\begin{theorem}\label{thm:DepthLowerBoundAppendix}
Let $\Omega\subset\R^{d}$ be nonempty, open, bounded, and connected.
Furthermore, let $f\in C^{3}\left(\Omega\right)$ be nonlinear,
\Felix{and let $p \in (0,\infty)$}.
Then there is a constant $C_{f,\Felix{p}} > 0$ satisfying
\begin{align*}
  \left\Vert f-\Realization_{\varrho}(\Phi) \right\Vert_{L^{p} (\Omega)}
  & \geq C_{f,\Felix{p}} \cdot \left(N(\Phi) - 1\right)^{-2 \cdot L(\Phi)}, \\
  \left\Vert f - \Realization_{\varrho}(\Phi) \right\Vert_{L^{p} (\Omega)}
  & \geq C_{f,\Felix{p}} \cdot \left(M(\Phi)+d\right)^{-2 \cdot L(\Phi)}
\end{align*}
for each ReLU neural network $\Phi$
with input dimension $d$ and output dimension $1$.
\end{theorem}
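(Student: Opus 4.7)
The plan is to combine Proposition \ref{prop:PPiecewiseAffineFunctionsAreBad} with a sharp bound on the number of affine pieces in a one-dimensional restriction of a ReLU-network realization, in the spirit of \cite{TelgarskySmallPaper}. Concretely, I will first fix arbitrary $x_0, v \in \R^d$ and analyze the one-dimensional function $g(t) := \Realization_\varrho(\Phi)(x_0 + tv)$. Writing $\Phi = ((A_1,b_1),\dots,(A_L,b_L))$ with hidden/output widths $N_1,\dots,N_L$, one sees that $g$ is the realization of a one-dimensional ReLU network obtained from $\Phi$ by pre-composing the first affine map with $t\mapsto x_0+tv$; in particular the widths $N_1,\dots,N_L$ are unchanged.

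Next, I show by induction on $\ell$ that the vector $x_\ell(t) \in \R^{N_\ell}$ computed by the $\ell$-th hidden layer is piecewise affine in $t$ with at most $\prod_{k=1}^{\ell}(N_k+1)$ affine pieces. Indeed, if $x_{\ell-1}$ has $P$ pieces then so does $A_\ell x_{\ell-1} + b_\ell$, and a componentwise application of $\varrho$ can introduce at most one new breakpoint per existing piece per coordinate, yielding at most $P + N_\ell\cdot P = (N_\ell+1)\cdot P$ pieces. Because the final layer is purely affine with no ReLU, $g = x_L$ has at most $\prod_{\ell=1}^{L-1}(N_\ell+1)$ pieces, so $\Realization_\varrho(\Phi)$ is $P$-piecewise slice affine in the sense of Definition \ref{def:PiecewiseSliceAffine} with
\[
  P \;\leq\; \prod_{\ell=1}^{L(\Phi)-1}(N_\ell + 1) \;\leq\; (N(\Phi)-1)^{L(\Phi)} .
\]
The second estimate uses the chain $N_\ell + 1 \leq N(\Phi) - d \leq N(\Phi) - 1$, which follows from $N(\Phi) = d + \sum_k N_k \geq d + N_\ell + N_L \geq d + N_\ell + 1$ for $\ell \in \{1,\dots,L-1\}$. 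Proposition \ref{prop:PPiecewiseAffineFunctionsAreBad} applied to $g = \Realization_\varrho(\Phi)$ now yields the first inequality with $C_{f,p}$ equal to the constant supplied by that proposition.

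For the second inequality, I invoke Lemma \ref{lem:WeightsAndNeuronsAssumptionJustification} (used already in the proof of Lemma \ref{lem:NNEncoding}) to obtain a neural network $\tilde{\Phi}$ satisfying $\Realization_\varrho(\tilde\Phi) = \Realization_\varrho(\Phi)$, $L(\tilde\Phi) \leq L(\Phi)$, $M(\tilde\Phi) \leq M(\Phi)$, and $N(\tilde\Phi) \leq M(\tilde\Phi) + d + 1$. Applying the already-established first inequality to $\tilde\Phi$ and using $N(\tilde\Phi) - 1 \leq M(\Phi)+d$ together with $L(\tilde\Phi) \leq L(\Phi)$ and $M(\Phi)+d \geq 1$ gives the second inequality. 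The only delicate point I foresee is obtaining precisely the stated exponents with no hidden multiplicative constant depending on $L(\Phi)$; this is ensured by the sharp induction above, which crucially exploits that the final layer of a neural network in the sense of Definition \ref{def:NeuralNetworks} carries no activation.
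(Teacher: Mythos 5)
Your proof is correct and follows the same overall strategy as the paper: bound the number of affine pieces of $\Realization_\varrho(\Phi)$ along any line, invoke Proposition \ref{prop:PPiecewiseAffineFunctionsAreBad}, and then derive the $M(\Phi)$-version from the $N(\Phi)$-version via Lemma \ref{lem:WeightsAndNeuronsAssumptionJustification}. Where you diverge is in the piece-counting step. The paper applies Telgarsky's sawtooth calculus component by component, obtaining $M_\ell = \prod_{j=0}^{\ell-1} 2 N_j$ (thus including the input width $N_0 = d$ and a factor $2^L$), and then needs a concavity/AM-GM argument to pass from $\prod_{j=0}^{L-1} N_j$ to $((N(\Phi)-1)/L)^L$, picking up a subsequent factor $(L/2)^{2L} \geq \nicefrac{1}{4}$ in the final estimate. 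You instead track, at each hidden layer, the subdivision of $\R$ on which the full vector $x_\ell(t)$ is simultaneously affine: the ReLU on $N_\ell$ coordinates refines a piece into at most $N_\ell + 1$ sub-pieces, and the final affine layer refines nothing, giving the bound $P \leq \prod_{\ell=1}^{L-1}(N_\ell + 1)$. This is genuinely tighter (it drops the factor $2^L d$) and, since each $N_\ell + 1 \leq N(\Phi) - d \leq N(\Phi) - 1$ for $1 \leq \ell \leq L - 1$ and $N(\Phi) - 1 \geq 1$, it yields $P \leq (N(\Phi)-1)^{L-1} \leq (N(\Phi)-1)^L$ without AM-GM and without the loss of $\nicefrac{1}{4}$ in the constant. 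Your derivation of the second inequality matches the paper's Step~3 verbatim; the monotonicity step $(M(\Phi)+d)^{-2L(\tilde\Phi)} \geq (M(\Phi)+d)^{-2L(\Phi)}$ quietly uses $M(\Phi) + d \geq 1$, which indeed holds since $d \geq 1$. The net effect is a proof of the same theorem with slightly less machinery and a marginally better constant.
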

\begin{remark*}
By adapting the given arguments (mostly the proof of
Lemma \ref{lem:C3ApproxLowerBoundUnitInterval}), one can show that the same
claim remains true for $f \in C^{2+\varepsilon} (\Omega)$,
with fixed but arbitrary $\varepsilon > 0$.
For the sake of brevity, we omitted this generalization.
\end{remark*}

Before we give the proof of Theorem \ref{thm:DepthLowerBoundAppendix}, we
observe the following corollary:
\begin{corollary}
\label{cor:OptimalRateRequiresDepthAppendix}Let $\Omega\subset\R^{d}$ be
nonempty, open, bounded, and connected.
Furthermore, let $f\in C^{3}\left(\Omega\right)$ be nonlinear,
\Felix{and let $p \in (0,\infty)$}.
If there are constants $C,\theta > 0$, a null-sequence
$\left(\varepsilon_{k}\right)_{k\in\N}$ of positive numbers, and
a sequence $\left(\Phi_{k}\right)_{k\in\N}$ of ReLU neural networks satisfying
\[
  \left\Vert f-\Realization_{\varrho}\left(\Phi_{k}\right)\right\Vert _{L^{p}}
  \leq C \cdot \varepsilon_{k}
  \quad \text{ and } \quad
  \left[
    M\left(\Phi_{k}\right) \leq C \cdot \varepsilon_{k}^{-\theta}
    \text{ or }
    N\left(\Phi_{k}\right) \leq C \cdot \varepsilon_{k}^{-\theta}
  \right]\quad
\]
for all $k\in\N$, then
\[
  \liminf_{k\to\infty}\:
    L\left(\Phi_{k}\right)
  \geq \frac{1}{2 \theta}.
\]
\end{corollary}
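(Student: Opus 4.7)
The plan is to derive the corollary directly from Theorem~\ref{thm:DepthLowerBoundAppendix}, applied to each $\Phi_k$ individually; the statement is essentially just a logarithmic rearrangement of the two-sided sandwich provided by the hypothesis and the theorem.

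First I would combine the upper bound $\|f - \Realization_\varrho(\Phi_k)\|_{L^p} \leq C \cdot \varepsilon_k$ from the hypothesis with the lower bound from Theorem~\ref{thm:DepthLowerBoundAppendix}. Depending on which of the two assumptions $M(\Phi_k) \leq C \varepsilon_k^{-\theta}$ or $N(\Phi_k) \leq C \varepsilon_k^{-\theta}$ is being used, this gives either
\[
  C \, \varepsilon_k \;\geq\; C_{f,p} \cdot (M(\Phi_k) + d)^{-2 L(\Phi_k)}
  \qquad\text{or}\qquad
  C \, \varepsilon_k \;\geq\; C_{f,p} \cdot (N(\Phi_k) - 1)^{-2 L(\Phi_k)}.
\]
In either case I can use the complexity bound from the hypothesis to estimate the base: since $\varepsilon_k \to 0$, for all $k$ large enough $C \varepsilon_k^{-\theta}$ dominates the additive shift ($d$, or $-1$), so that the base is at most $2 C \varepsilon_k^{-\theta}$. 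This yields, for all sufficiently large $k$,
\[
  C \, \varepsilon_k \;\geq\; C_{f,p} \cdot \bigl(2 C \varepsilon_k^{-\theta}\bigr)^{-2 L(\Phi_k)}.
\]

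Next, I would take logarithms on both sides and solve for $L(\Phi_k)$, which gives
\[
  L(\Phi_k) \;\geq\; \frac{\log(1/\varepsilon_k) + \log(C_{f,p}/C)}{2 \bigl[\log(2C) + \theta \cdot \log(1/\varepsilon_k)\bigr]}.
\]
Letting $k \to \infty$, both numerator and denominator are dominated by their $\log(1/\varepsilon_k)$ terms, so the right-hand side converges to $\frac{1}{2\theta}$. Taking $\liminf$ on the left then yields the claimed bound $\liminf_{k \to \infty} L(\Phi_k) \geq \frac{1}{2\theta}$.

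The whole argument is essentially bookkeeping: there is no substantive obstacle. The only point where a little care is needed is ensuring that all logarithms are well-defined and that the additive shifts inside the exponent base can safely be absorbed into the multiplicative constant; both are automatic once $k$ is large enough that $\varepsilon_k$ is small, which is guaranteed by $(\varepsilon_k)$ being a null-sequence.
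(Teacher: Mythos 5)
Your proof is correct and takes essentially the same route as the paper: both apply Theorem~\ref{thm:DepthLowerBoundAppendix} to each $\Phi_k$, insert the hypothesized complexity bound on $M(\Phi_k)$ or $N(\Phi_k)$ (after noting that for large $k$ the additive shift $+d$, resp.\ $-1$, in the base can be absorbed since $\varepsilon_k^{-\theta}\to\infty$), and then extract the depth bound. The only difference is stylistic: you solve the resulting inequality directly for $L(\Phi_k)$ by taking logarithms and pass to the limit, whereas the paper argues by contradiction, assuming $L(\Phi_k)\le L<1/(2\theta)$ along a subsequence and deducing $\varepsilon_k^{2L\theta}\lesssim\varepsilon_k$, which fails since $2L\theta<1$.
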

\begin{proof}
Let us assume that the claim is false, that is, we have
$\liminf_{k\to\infty}L\left(\Phi_{k}\right)<(2\theta)^{-1}$.
By switching to a subsequence, we can then assume that there is some
$\delta>0$ with $L\left(\Phi_{k}\right)\leq (2\theta)^{-1}-\delta=:L$
for all $k\in\N$.
Note that $1\leq L\left(\Phi_{k}\right)\leq L$.

Next, since $\varepsilon_{k} \to 0$, and since $f \neq 0$ (because $f$
is nonlinear), we have
$\left\Vert f - \Realization_{\varrho}\left(\Phi_{k}\right)\right\Vert_{L^{p}}
<\left\Vert f \right\Vert _{L^{p}}$
for $k$ large enough (which we will assume in the following).
In particular, $\Realization_{\varrho}\left(\Phi_{k}\right)\neq0$ and hence
$M\left(\Phi_{k}\right)\geq1$, so that
$M\left(\Phi_{k}\right)+d \leq (d+1)\cdot M\left(\Phi_{k}\right)$.

Now, there are two cases for (large) $k\in\N$:
If $M\left(\Phi_{k}\right) \leq C \cdot \varepsilon_{k}^{-\theta}$,
then the second part of Theorem \ref{thm:DepthLowerBoundAppendix} shows
\Felix{that there is a constant $C_{f,p} > 0$ (independent of $k \in \N$)
satisfying}
\begin{equation}
\begin{split}
  C_{f,\Felix{p}} \cdot (1+d)^{-2 L} C^{-2L} \cdot \varepsilon_{k}^{2L\theta}
  & \leq C_{f,\Felix{p}} \cdot (1+d)^{-2L} \cdot \left[ M(\Phi_{k}) \right]^{-2L}
    \leq C_{f,\Felix{p}} \cdot \left(M(\Phi_{k}) + d\right)^{-2L} \\
  & \leq C_{f,\Felix{p}} \cdot \left(M(\Phi_{k}) + d\right)^{-2L(\Phi_{k})}
    \leq \left\Vert
           f - \Realization_{\varrho}\left(\Phi_{k}\right)
         \right\Vert_{L^{p}}
    \leq C \cdot \varepsilon_{k}.
\end{split}
\label{eq:DepthCorollaryFirstCase}
\end{equation}
If otherwise $N(\Phi_{k}) \leq C \cdot \varepsilon_{k}^{-\theta}$,
then the first part of Theorem \ref{thm:DepthLowerBoundAppendix} shows
\Felix{with the same constant $C_{f,p}$ as above that}
\begin{equation}
\begin{split}
  C_{f,\Felix{p}} \cdot C^{-2L} \cdot \varepsilon_{k}^{2L\theta}
  & \leq C_{f,\Felix{p}} \cdot\left(N\left(\Phi_{k}\right)\right)^{-2L}
    \leq C_{f,\Felix{p}} \cdot \left( N(\Phi_{k}) \right)^{-2 L(\Phi_{k})} \\
  & \leq C_{f,\Felix{p}} \cdot \left(N(\Phi_{k}) - 1\right)^{-2L(\Phi_{k})}
    \leq \left\Vert
           f - \Realization_{\varrho}(\Phi_{k})
         \right\Vert_{L^{p}}
    \leq C \cdot \varepsilon_{k}.
\end{split}
\label{eq:DepthMattersCorollaryCase2}
\end{equation}

At least one of the equations (\ref{eq:DepthCorollaryFirstCase})
or (\ref{eq:DepthMattersCorollaryCase2}) holds for infinitely many $k \in \N$.
Since $\varepsilon_{k} \to 0$ and $\varepsilon_{k} > 0$,
this easily yields $2 L \theta \geq 1$, and hence
$(2\theta)^{-1} - \delta = L \geq (2\theta)^{-1}$,
which is the desired contradiction.
\end{proof}

We close this section with the proof of Theorem \ref{thm:DepthLowerBoundAppendix}.
\begin{proof}[Proof of Theorem \ref{thm:DepthLowerBoundAppendix}]
\textbf{Step 1}: In this step, we
show\footnote{Essentially, this is already contained in the statement of
\cite[Lemma 2.1]{TelgarskySmallPaper}, but
\Felix{the paper \cite{TelgarskySmallPaper}} uses a slightly
different definition of neural networks than we do.
Therefore, and for the convenience of the reader, we provide a proof.}
that if $\Phi$ is a neural network
\Felix{with $d$-dimensional input and $1$-dimensional output}
of depth $L$ and with $N$ neurons, then $\Realization_{\varrho}(\Phi)$ is
$P$-piecewise slice affine with $P\leq (\nicefrac{2}{L})^L \cdot (N-1)^L$.

To this end, we first introduce some terminology:
As in \cite{TelgarskySmallPaper}, let us call a \emph{continuous}
function $f:\R\to\R$ \textbf{$t$-sawtooth} (with $t\in\N$) if $f$
is piecewise affine-linear with at most $t$ pieces, that is, there are
$-\infty = x_{0} < x_{1} < \dots < x_{t} = \infty$ such that
$f|_{(x_{i-1},x_{i})}$ is affine-linear for each $i \in \underline{t}$.
Note that there are no issues at the boundary points of the affine-linear
``pieces'', since (in slight contrast to \cite{TelgarskySmallPaper}), we assume
$f$ to be continuous.
Using this terminology, \cite[Lemma 2.3]{TelgarskySmallPaper}
states that if $f,g:\R\to\R$ are $k$-sawtooth and $\ell$-sawtooth,
respectively, then $f+g$ is $k+\ell$-sawtooth, and $f\circ g$ is
$k\ell$-sawtooth. Note that the ReLU $\varrho$ is $2$-sawtooth.

Now, let $\Phi =
\left(\left(A^{(1)},b^{(1)}\right),\dots,\left(A^{(L)},b^{(L)}\right)\right)$
be a neural network with $d$-dimensional input and one-dimensional input,
with $L$ layers and $N$ neurons. \Felix{Thus}, %we have
$A^{\left(\ell\right)}\in\R^{N_{\ell}\times N_{\ell - 1}}$ and
$b^{\left(\ell\right)}\in\R^{N_{\ell}}$, where $N_{0}=d$ and $N_{L}=1$,
and $N = \sum_{j=0}^L N_j$.
Further, let $g:=\Realization_{\varrho}\left(\Phi\right)$ and let $x,v\in\R^{d}$
be arbitrary.
We want to show that $g_{x,v}:\R\to\R,t\mapsto g\left(x+t\cdot v\right)$
is $P$-sawtooth, with $P \leq (\nicefrac{2}{L})^L \cdot (N-1)^L$.
To see this, inductively define
$g^{\left(0\right)},g^{\left(1\right)},\dots,g^{\left(L\right)}$
as follows: $g^{\left(0\right)}:\R\to\R^{d},x\mapsto x+t\cdot v$,
\[
  g^{(\ell+1)}:
  \R \to \R^{N_{\ell+1}},
  t \mapsto \varrho \left(A^{(\ell+1)} \cdot g^{(\ell)}(t) + b^{(\ell+1)}\right)
  \quad\text{ for } 0 \leq \ell \leq L-2,
\]
and $g^{(L)} : \R \to \R, t \mapsto A^{(L)} \cdot g^{(L-1)}(t) + b^{(L)}$.
We clearly have $g^{(L)} = g_{x,v}$.

We will show by induction on $\ell \in \left\{ 0, \dots ,L\right\} $
that each component function $g_{i}^{(\ell)}$ for $i \in \underline{N_{\ell}}$
is $M_{\ell}$-sawtooth, with $M_{\ell} :=\ prod_{j=0}^{\ell-1} 2N_{j}$,
where (by the convention for empty products) $M_{0} = 1$.
Indeed, for $\ell=0$, we have $g_{i}^{(0)}(t) = x_{i} + t \cdot v_{i}$,
which is affine-linear.
Hence, $g_{i}^{(0)}$ is $M_{0}$-sawtooth, since $M_{0}=1$.
For the induction step, assume that all $g_{i}^{(\ell)}$,
$i \in \underline{N_{\ell}}$ are $M_{\ell}$-sawtooth for some
$0 \leq \ell \leq L-1$.
In case of $\ell = L - 1$, let $\theta := \identity_{\R}$, and otherwise
let $\theta := \varrho$.
In either case, we have that $\theta$ is $2$-sawtooth, and
\[
  g_{i}^{(\ell+1)}(t)
  = \theta
    \left(
      b_{i}^{(\ell + 1)}
      + \sum_{j=1}^{N_{\ell}}
          A_{i,j}^{(\ell+1)} \cdot g_{j}^{(\ell)}(t)
    \right)
  \qquad \text{ for } t \in \R
  \quad  \text{ and } i \in \underline{N_{\ell + 1}} .
\]
But since each $g_{j}^{(\ell)}$ is $M_{\ell}$-sawtooth,
so is $t\mapsto A_{i,j}^{(\ell+1)} \cdot g_{j}^{(\ell)}(t)$, so that
$t \mapsto \sum_{j=1}^{N_{\ell}} A_{i,j}^{(\ell+1)} \cdot g_{j}^{(\ell)}(t)$
is $(M_{\ell+1}/2)$-sawtooth, since
$\sum_{j=1}^{N_{\ell}} M_{\ell} = N_{\ell} \cdot M_{\ell}
 = M_{\ell+1} / 2$.
Thus, since $\theta$ is $2$-sawtooth, $g_{i}^{\left(\ell+1\right)}$
is $M_{\ell+1}$-sawtooth, as claimed.
\Felix{Overall}, we have shown that $g_{x,v} = g^{(L)}$ is $M_{L}$-sawtooth,
where $M_{L}=\prod_{j=0}^{L-1} 2N_{j} = 2^{L} \cdot \prod_{j=0}^{L-1} N_{j}$.
Now, by concavity of the natural logarithm and because of $N_{L}=1$,
we have
\[
       \ln \left( \frac{N(\Phi) - 1}{L} \right)
  =    \ln\left(\frac{1}{L} \sum_{j=0}^{L-1} N_{j}\right)
  \geq \frac{1}{L} \sum_{j=0}^{L-1} \ln N_{j}
  =    \frac{1}{L} \cdot \ln\left( \prod_{j=0}^{L-1} N_{j} \right),
\]
and hence
$\prod_{j=0}^{L-1}N_{j}
 \leq e^{L \cdot \ln\left( \left(N(\Phi) - 1\right) / L \right)}
 =\left( \nicefrac{(N(\Phi) - 1)}{L} \right)^{L}$,
so that all in all $M_{L}
\leq \left(\nicefrac{2}{L}\right)^{L} \cdot \left(N(\Phi) - 1\right)^{L}$.

\medskip{}

\textbf{Step 2}:
Now, an application of Proposition \ref{prop:PPiecewiseAffineFunctionsAreBad}
yields a constant $C_{f,\Felix{p}}^{(0)} > 0$ (independent of $\Phi,L$)
satisfying
\[
  \left\Vert
    f \! - \Realization_{\varrho}\left(\Phi\right)
  \right\Vert_{L^{p}}
  \geq C_{f,\Felix{p}}^{(0)} \cdot M_{L}^{-2}
  \geq C_{f,\Felix{p}}^{(0)}
       \cdot \left(\frac{L}{2}\right)^{2L} \!\!
       \cdot \left(N(\Phi) - 1\right)^{-2L}
  \! \geq \! \frac{1}{4}
             \cdot C_{f,\Felix{p}}^{(0)}
             \cdot \left(N(\Phi) - 1\right)^{-2L}.
\]
Here, the estimate $\left(\nicefrac{L}{2}\right)^{2L}\geq\nicefrac{1}{4}$
can be easily seen to be true by distinguishing the cases $L = 1$ and
$L \geq 2$.
This yields the first claim, with $C_{f,\Felix{p}} = C_{f,\Felix{p}}^{(0)}/4$,
since $L = L(\Phi)$.

\medskip{}

\textbf{Step 3}:
Finally, to prove the second claim, recall from
Lemma \ref{lem:WeightsAndNeuronsAssumptionJustification} that there is a neural
network $\Phi'$ with
$\Realization_{\varrho}(\Phi) = \Realization_{\varrho}(\Phi')$ and such that
$M(\Phi') \leq M(\Phi)$ and $N(\Phi') \leq M(\Phi') + d + 1$,
as well as $L(\Phi') \leq L(\Phi)$.
By applying the first claim of the current theorem to $\Phi'$ instead of $\Phi$
(and with $L' = L(\Phi')$ instead of $L=L(\Phi)$), we get
\begin{align*}
         \left\Vert
           f - \Realization_{\varrho}\left(\Phi\right)
         \right\Vert_{L^{p}}
  & =    \left\Vert
          f - \Realization_{\varrho}(\Phi')
         \right\Vert_{L^{p}}
    \geq C_{f,\Felix{p}} \cdot \left(N(\Phi') - 1\right)^{-2 L'}
    \geq C_{f,\Felix{p}} \cdot \left(N(\Phi') - 1\right)^{-2L}\\
  & \geq C_{f,\Felix{p}} \cdot \left(M(\Phi') + d\right)^{-2L}
    \geq C_{f,\Felix{p}} \cdot \left(M(\Phi) + d\right)^{-2L}.
   \qedhere
\end{align*}
\end{proof}

\section{Composition with smooth submersions}\label{sec:SmoothSubmersions}

In Lemma \ref{lem:CompositionWithSubmersion} we claimed that the map
$f \mapsto f \circ \tau$, with a \emph{smooth submersion} $\tau$,
is a bounded map from $L^p$ to $L^p$.
This might be a well-known fact in the right communities, but since we were
unable to find a reference, we provide a full proof. The main ingredients are
the \emph{coarea formula} from geometric measure theory, and the
\emph{constant rank theorem} from differential geometry.

%\begin{lemma}\label{lem:CompositionWithSubmersionAppdx}
%  Let $n,m\in \N$, $U \subset \R^m$ be open,
%  and let $\tau : U \to \R^n$ be continuously
%  differentiable with $n \leq m$.
%  Finally, let $\emptyset \neq K \subset U$ be compact and assume that
%  $D \tau (x)$ has full rank for all $x \in K$.
%
%  Then there is a constant $C > 0$ satisfying
%  \[
%      \int_{K} f \circ \tau \, dx
%      \leq C \cdot \int_{\tau (K)} f \, dx
%      \qquad \text{for all Borel measurable } \quad f : \tau (K) \to \R_+ \, .
% \]
%\end{lemma}

\begin{proof}[Proof of Lemma \ref{lem:CompositionWithSubmersion}]
  \textbf{Step 1 (Shrinking $U$):}
  If a matrix $A \in \R^{n \times m}$ has full rank, then
  $\range A = \R^n$, since $n \leq m$. Therefore, if $y \in \kernel A^T$,
  then $0 = \langle x, A^T y \rangle = \langle Ax, y \rangle$ for all
  $x \in \R^m$, whence $y = 0$; that is, $\kernel A^T = \{0\}$.
  This implies that $A A^T$ is positive definite.
  By applying these observations with $A = D \tau(x)$, we see
  $\gamma (x) > 0$ for all $x \in K$, with the continuous function
  \begin{equation*}
    \gamma : U \to [0,\infty),
             x \mapsto \sqrt {
                               \det \left(
                                      D \tau(x) \cdot [D \tau(x)]^T
                                    \right)
                             } \, .
    %\label{eq:GammaDefinition}
  \end{equation*}
  Conversely, for any $x \in U$ with $\gamma (x) > 0$, it follows that
  $\kernel [D \tau(x)]^T = \{0\}$, so that $D \tau(x)$ has full rank,
  since $n \leq m$. By compactness of $K$ and continuity of $\gamma$, we see
  $C_0 := \min_{x \in K} \gamma (x) > 0$. Thus,
  \[
    U^{(0)}
    := \left\{
         x \in U
         \,:\,
         \gamma (x) > \frac{C_0}{2}
       \right\}
    \subset U \subset \R^m
  \]
  is open with $K \subset U^{(0)}$.
  From now on, we will only be working on the set $U^{(0)}$.

  \medskip{}

  \textbf{Step 2 (Applying the coarea formula):}
  Let $r := \frac{1}{3} \cdot \dist (K, \R^m \setminus U^{(0)}) > 0$, and define
  \begin{equation*}
      U' := \{ x \in \R^m \,:\, \dist (x, K) < r \}
      \quad \text{and} \quad
      K_0 := \{ x \in \R^m \,:\, \dist (x, K) \leq 2r \} \, ,
      %\label{eq:EnlargedUKDefinition}
  \end{equation*}
  so that $K \subset U' \subset K_0$, with $K_0 \subset U^{(0)}$ being compact
  and $U'$ open.

  We claim that $\tau|_{U'}$ is Lipschitz continuous.
  To prove this, set $C_1 := \max_{x \in K_0} \|D \tau (x)\|$, and furthermore
  $C_2 := \max_{x \in K_0} |\tau (x)|$.
  Then, for $x,y \in U' \subset K_0$, there are two cases:
  \smallskip{}

  \textbf{Case 1}: $|x-y| \geq r$. This implies
  \[
      |\tau(x) - \tau(y)|
      \leq 2 C_2
      = \frac{2 C_2}{r} \cdot r
      \leq \frac{2 C_2}{r} \cdot |x-y| \, .
  \]
  %\smallskip{}

  \textbf{Case 2}: $|x-y| < r$. Because of $x \in U'$, we then have
  $x,y \in B_r (x) \subset K_0$. Since $B_r (x)$ is convex with
  $\| D \tau (z)\| \leq C_1$ for all $z \in B_r (x)$, standard estimates
  from multi-variable calculus yield $|\tau(x)-\tau(y)| \leq C_1 \cdot |x-y|$.
  \medskip{}

  Taken together, the two cases \Felix{prove} the Lipschitz continuity of
  $\tau|_{U'}$.
  By the Kirszbraun-Valientine theorem
  (see \cite{KirszbraunTheorem,KirszbraunTheoremModern}, or
  \cite[Section 3.1, Theorem 1]{EvansGariepy} for a simpler version),
  there is thus a Lipschitz continuous map $\tau_0 : \R^m \to \R^n$ extending
  $\tau|_{U'}$.
  Thus, an application of the \emph{coarea formula}
  (see \cite[Theorem 2 in Section 3.4.3]{EvansGariepy},
  and see \cite[Section 3.2.2 and Theorem 3 in Section 3.2]{EvansGariepy} for
  a justification of the identity $J_{\tau_0} (x) = J_\tau (x) = \gamma (x)$
  for $x \in U ' \supset K$) shows
  \begin{align*}
    \int_{K} f \circ \tau \, dx
    & = \int_{K}
        f(\tau(x))
        \cdot J_{\tau_0} (x) \,\big/\, \gamma (x) \, dx
     \leq C_0^{-1} \! \int_{\R^m}
                              \Indicator_K (x)
                              \cdot f(\tau_0(x))
                              \cdot J_{\tau_0}(x)
                          \, dx \\
    & = C_0^{-1} \! \int_{\R^n}
                          \int_{\tau_0^{-1}(\{t\})} \!\!\!
                            \Indicator_K (x)
                            \cdot f(\tau_0(x))
                          \, d\mathcal{H}^{m-n} (x)
                       \, dt
     \leq C_0^{-1} \! \int_{\tau(K)}
                             f(t)
                             \cdot \mathcal{H}^{m-n}
                                   \big(\tau^{-1} (\{t\}) \cap K\big)
                          \, dt \, ,
  \end{align*}
  where $\mathcal{H}^{m-n}$ denotes the $m-n$-dimensional Hausdorff-measure in
  $\R^m$. The last step above used that if $x \in \tau_0^{-1}(\{t\}) \cap K$,
  then $x \in K \subset U'$, so that $\tau(x) = \tau_0 (x) = t$,
  that is, $x \in K \cap \tau^{-1}(\{t\})$, and $t = \tau(x) \in \tau(K)$.

  From the above estimate, we see that we are done once we show
  $\mathcal{H}^{m-n} \big(\tau^{-1}(\{t\}) \cap K\big) \leq C$ for all
  $t \in \tau(K)$.% We will show this in the next step.
  \medskip{}

  \textbf{Step 3 (Estimating $\mathcal{H}^{m-n} (\tau^{-1}(\{t\}) \cap K)$):}
  We saw in Step 1 that $\rank D\tau (x) = n$ for all
  $x \in U^{(0)}$. Thus, by the constant rank theorem
  (see Theorem \ref{thm:ConstantRankTheorem} below),
  for each $x \in K \subset U^{(0)}$ there is an open
  neighborhood $U_x \subset U^{(0)}$ of $x$, an open neighborhood
  $V_x \subset \R^m$ of $0$, and a $C^1$-diffeomorphism
  $\varphi_x : V_x \to U_x$ with $\varphi_x (0) = x$,
  and an invertible affine-linear map $\Felix{T}_x : \R^n \to \R^n$ satisfying
  %$\psi_x : V_x \to \psi_x (V_x) \subset \R^n$ with $\varphi_x (x) = 0$ and
  %$\psi_x (\tau(x)) = 0$, and with
  \[
      \Felix{T}_x \circ \tau \circ \varphi_x = \pi |_{V_x} \, ,
      \quad \text{where} \quad
      \pi (y_1,\dots, y_m) := (y_1,\dots,y_n)
      \, \text{ for } \, y = (y_1,\dots, y_m) \in \R^m \, .
  \]

  Because $0 \in V_x$, we have $[-r_x, r_x]^m \subset V_x$ for some
  $r_x > 0$, and $W_x := \varphi_x ((-r_x, r_x)^m) \subset U_x$
  is an open neighborhood of $x$. By compactness of $K$,
  there are thus $x_1,\dots, x_N \in K$ with $K \subset\bigcup_{i=1}^N W_{x_i}$.
  Now, let
  \[
      C_3 := \max_{i = 1,\dots,N} \,\,
                \max_{x \in [-r_{x_i}, r_{x_i}]^m}
                  \|
                     D \varphi_{x_i} (x)
                  \| \, .
  \]
  Since $(-r_{x_i}, r_{x_i})^m$ is convex, standard arguments show that
  $\varphi_{x_i} : [-r_{x_i}, r_{x_i}]^m \to\R^m$
  is Lipschitz continuous with Lipschitz constant no \Felix{larger} than $C_3$,
  for each $i = 1,\dots,N$.  Therefore, elementary properties of the
  Hausdorff-measure (see \cite[Theorem 7.5]{MattilaGeometryOfSets})
  imply for arbitrary $z \in [-r_{x_i}, r_{x_i}]^n$ that
  \[
      \mathcal{H}^{m-n}
      \big(
        \varphi_{x_i} (\{z\} \times [-r_{x_i}, r_{x_i}]^{m-n})
      \big)
      \leq C_3^{m-n} \cdot \mathcal{H}^{m-n}
                             \big(
                                 \{z\} \times [-r_{x_i}, r_{x_i}]^{m-n}
                             \big)
      =    (2 \cdot C_3 \cdot r_{x_i})^{m-n} \, .
  \]

  Finally, note for arbitrary $i \in \{1,\dots,N\}$, $t \in \tau (K)$, and
  $y \in W_{x_i} \cap \tau^{-1} (\{t\}) \subset U_{x_i}$ with $x := x_i$ that
  \begin{equation}
      \Felix{T}_x^{-1} (\pi (\varphi_x^{-1} (y)))
      = \Felix{T}_x^{-1}
        \circ \Felix{T}_x
        \circ \tau
        \circ \varphi_x (\varphi_x^{-1} (y))
      = \tau (y) = t \, ,
      \label{eq:InverseImageRewriting}
  \end{equation}
  and that
  $z := \Felix{T}_x (t) = \pi (\varphi_x^{-1} (y)) \in [-r_x, r_x]^{n}$,
  since $y \in W_x$, so that $\varphi_x^{-1} (y) \in [-r_x, r_x]^m$.
  Now, with this choice of $z$, Equation \eqref{eq:InverseImageRewriting}
  implies $\varphi_x^{-1} (y) \in \{z\} \times \R^{m-n}$, from which we get
  $\varphi_x^{-1} (y) \in \{z\} \times [-r_x, r_x]^{m-n}$, since
  $y \in W_x = \varphi_x ((-r_x, r_x)^m)$. In summary, we have thus shown
  \[
      W_{x_i} \cap \tau^{-1}(\{t\})
      \subset \varphi_{x_i} (\{z\} \times [-r_{x_i}, r_{x_i}]^{m-n})
      \quad \text{with} \quad
      z = z_{i,t} := \Felix{T}_{x_i} (t) \in [-r_{x_i}, r_{x_i}]^n \, .
  \]
  All in all, we get for arbitrary $t \in \tau(K)$ that
  \begin{align*}
    \hspace{-0.8cm}
    \mathcal{H}^{m-n} \big(K \! \cap \! \tau^{-1}(\{t\})\big)
    & \leq \mathcal{H}^{m-n}
         \Big(
           \tau^{-1}(\{t\}) \! \cap \! \bigcup_{i=1}^N W_{x_i}
         \Big)% \\
    \leq \sum_{i=1}^N
              \mathcal{H}^{m-n} \big(\tau^{-1}(\{t\}) \cap W_{x_i}\big) \\
    & \leq \sum_{i=1}^N
              \mathcal{H}^{m-n}
              \big(
                  \varphi_{x_i}
                  (
                      \{z_{i, t}\} \! \times \! [-r_{x_i}, r_{x_i}]^{m-n}
                  )
                \big)
      \leq \sum_{i=1}^N
             (2 \cdot C_3 \cdot r_{x_i})^{m-n}
      =: C < \infty \, .
      \qedhere
  \end{align*}
\end{proof}

In the above proof, we used the following version of the constant rank theorem:
\begin{theorem}\label{thm:ConstantRankTheorem}
  Let $n,m\in \N$, $\emptyset \neq U \subset \R^m$ be open,
  and let $\tau : U \to \R^n$ be continuously differentiable with
  $\rank D \tau (x) = n$ for all $x \in U$; in particular $n \leq m$.

  Then, for each $x \in U$, there is an invertible affine-linear map
  $\Felix{T}_x : \R^n \to \R^n$, an open neighborhood $U_x \subset U$ of $x$,
  an open $V_x \subset \R^m$ with $0 \in V_x$,
  and a $C^1$-diffeomorphism $\varphi_x : V_x \to U_x$
  with $\varphi_x (0) = x$ which satisfies
  \[
      \Felix{T}_x \big( \tau (\varphi_x (y) ) \big) = (y_1,\dots, y_n)
      \quad \text{for all} \quad y = (y_1,\dots, y_m) \in V_x \, .
  \]
\end{theorem}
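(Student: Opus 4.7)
The plan is to derive Theorem~\ref{thm:ConstantRankTheorem} as a local application of the inverse function theorem, after a permutation of coordinates on the domain and an affine change of coordinates on the target. The basic idea is that since $D\tau(x)$ has full row rank $n$, one can ``complete'' $\tau$ to a locally invertible $C^{1}$ map by appending $m-n$ of the original coordinates, and then invert that enlarged map.

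Fix $x\in U$. Since $D\tau(x)\in\R^{n\times m}$ has rank $n$, there exists a permutation $\sigma\in S_{m}$ such that the columns of $D\tau(x)$ indexed by $\sigma(1),\dots,\sigma(n)$ are linearly independent; let $P\in\R^{m\times m}$ be the associated permutation matrix, so that $D\tau(x)\cdot P=[A_{0}\mid C]$ with $A_{0}\in\R^{n\times n}$ invertible and $C\in\R^{n\times(m-n)}$. I then define the invertible affine map
\[
  T_{x}:\R^{n}\to\R^{n},\qquad w\mapsto A_{0}^{-1}\bigl(w-\tau(x)\bigr).
\]
Choosing an open neighborhood $W_{0}\subset\R^{m}$ of $0$ small enough that $x+P(W_{0})\subset U$, I consider the auxiliary map
\[
  G:W_{0}\to\R^{m},\qquad w\mapsto\bigl(T_{x}(\tau(x+Pw)),\,w_{n+1},\dots,w_{m}\bigr).
\]
A direct computation gives $G(0)=0$ and, via the chain rule,
\[
  DG(0)=\begin{pmatrix}I_{n} & B\\ 0 & I_{m-n}\end{pmatrix},
\]
with $B:=A_{0}^{-1}C$, so that $DG(0)$ is manifestly invertible.

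The inverse function theorem then supplies an open neighborhood $V_{x}$ of $0$ in $\R^{m}$, an open neighborhood $W_{0}'\subset W_{0}$ of $0$, and a $C^{1}$-diffeomorphism $\psi:V_{x}\to W_{0}'$ with $G\circ\psi=\identity_{V_{x}}$ and $\psi(0)=0$. I then set $U_{x}:=x+P(W_{0}')\subset U$ and $\varphi_{x}:V_{x}\to U_{x},\,y\mapsto x+P\,\psi(y)$, which is a $C^{1}$-diffeomorphism with $\varphi_{x}(0)=x$. Extracting the first $n$ coordinates of the equality $G(\psi(y))=y$ yields $T_{x}(\tau(x+P\psi(y)))=(y_{1},\dots,y_{n})$, i.e.\ $T_{x}(\tau(\varphi_{x}(y)))=(y_{1},\dots,y_{n})$ for all $y\in V_{x}$, which is exactly the desired identity. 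I do not anticipate a genuine obstacle here: the argument is a textbook application of the inverse function theorem, and the only care required is to use an \emph{affine} (rather than purely linear) $T_{x}$, so that the constant $\tau(x)$ gets absorbed and $\varphi_{x}(0)=x$ holds simultaneously with the clean equality $T_{x}\circ\tau\circ\varphi_{x}=\pi|_{V_{x}}$.
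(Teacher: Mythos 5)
Your proof is correct, and it is a genuinely different (and arguably more direct) route than the one in the paper. The paper derives the statement from the general form of the constant rank theorem as stated in \cite[Theorem~9.32]{RudinPrinciples}: it first observes that in the full-row-rank case the ``target-side'' diffeomorphism in Rudin's statement degenerates to $\varphi\equiv 0$, obtaining a local $C^1$-diffeomorphism $H$ with $\tau\circ H = A\,(\cdot)$ for $A := D\tau(x)$, and then post-processes this by a singular value decomposition $A=ODQ$ to package everything into an affine $T_x$ and a diffeomorphism $\varphi_x$ that also satisfies $\varphi_x(0)=x$. What you do instead is the standard direct proof of the submersion normal form: you pick $n$ columns of $D\tau(x)$ that form an invertible block $A_0$ (after a column permutation $P$), ``complete'' $\tau$ to a map
\begin{equation*}
  G(w)=\bigl(T_x(\tau(x+Pw)),\,w_{n+1},\dots,w_m\bigr)
\end{equation*}
by appending the remaining $m-n$ coordinates, and observe that $DG(0)$ is block upper-triangular with identity diagonal blocks, hence invertible; the inverse function theorem then produces $\varphi_x$ directly. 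Your computation of $DG(0)$ is correct: the first $n$ rows are $A_0^{-1}D\tau(x)P=[I_n\mid A_0^{-1}C]$, and the remaining rows are $[0\mid I_{m-n}]$. The only slight difference in bookkeeping is that you bake the translation $\tau(x)$ into $T_x$ at the outset so that $G(0)=0$, whereas the paper builds the translation in afterwards; both achieve $\varphi_x(0)=x$ and the clean identity $T_x\circ\tau\circ\varphi_x=\pi|_{V_x}$. What your route buys is a completely self-contained argument relying only on the inverse function theorem, avoiding both the invocation of the general constant rank theorem and the SVD; the paper's route buys brevity by leaning on an off-the-shelf result, at the cost of a slightly opaque translation between two different normal-form conventions.
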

\begin{proof}
  We derive the claim as a consequence of a slightly different version of the
  constant rank theorem, namely of \cite[Theorem 9.32]{RudinPrinciples}.
  In the notation of that theorem, we have $A = D \tau (x)$ and therefore
  $Y_1 = \range A = \R^n$, so that $P = \identity_{\R^n}$ is the unique
  projection of $\R^n$ onto $Y_1$, and thus $Y_2 = \kernel P = \{0\}$.
  Therefore, the map $\varphi$ from \cite[Theorem 9.32]{RudinPrinciples}
  satisfies $\varphi \equiv 0$, so that there is a $C^1$ diffeomorphism
  $H : W_x \to U_x$ satisfying $\tau(H(y)) = Ay$ for all $y \in W_x$,
  with $W_x \subset \R^m$ open and $U_x \subset U$ an open neighborhood of $x$.

  Writing $A = O D Q$ for the singular value decomposition of $A$, we have
  because of $\rank A = n$ that
  $D = \diag (\sigma_1, \dots, \sigma_n) \in \R^{n \times m}$ for
  certain $\sigma_1, \dots, \sigma_n > 0$, and the matrices
  $O \in \R^{n \times n}$ and $Q \in \R^{m \times m}$ are orthogonal.
  Now, setting
  $D^+ := \diag (\sigma_1^{-1},\dots, \sigma_n^{-1}) \in \R^{n\times n}$,
  we have $D^+ D v = (v_1,\dots, v_n) =: \pi v$ for all
  $v = (v_1,\dots,v_m) \in \R^m$.

  Setting $x_0 := H^{-1} (x) \in W_x$, $V_x := Q(W_x - x_0)$,
  as well as $\varphi_x : V_x \to U_x, y \mapsto H(Q^{-1}y + x_0)$ and
  $\Felix{T}_x : \R^n \to \R^n, z \mapsto D^+ O^{-1} z - \pi Q x_0$, we have
  \begin{align*}
    (\Felix{T}_x \circ \tau \circ \varphi_x)(y)
    & = D^+ O^{-1} (\tau (H (Q^{-1} y + x_0))) - \pi Q x_0
      = D^+ O^{-1} A (Q^{-1} y + x_0) - \pi Q x_0 \\
    & = D^+ D Q (Q^{-1} y + x_0) - \pi Q x_0
      = \pi y + \pi Q x_0 - \pi Q x_0
      = \pi y
  \end{align*}
  for all $y \in V_x$, as required.
  It is easy to see that $\Felix{T}_x$ is indeed an invertible affine-linear map,
  and that $\varphi_x : V_x \to U_x$ is a $C^1$-diffeomorphism.
  %\[
  %    B \cdot \tau (\varphi_x (y))
  %    = D^+ O^{-1} \tau (H (P^{-1} y))
  %    = D^+ O^{-1} A P^{-1} y
  %    = D^+ O^{-1} O D P P^{-1} y
  %    = D^+ D y = \pi y
  %\]
  %for all $y \in V_x$, as required.
\end{proof}

\section{Approximation of high-dimensional functions}\label{sec:HighDimApprox}

To ultimately prove Theorem \ref{thm:ApproximationOfSymmetricFunctions},
we start by establishing the following auxiliary lemma.
\begin{lemma}\label{lem:L1concatenations}
  Let $d,D, T \in \N$, $\eps \in (0,1)$, and $\Felix{p}, \kappa, C > 0$.
  Further, let $U \subset \R^d$ and $V \subset \R^D$ be measurable.
  %$V \subset \R^D, U \subset \R^d$ compact,
  %$1 > \varepsilon >0$, and $\kappa>0$.
  Also, let $\tau : V \to U$ be measurable, and such that
  \begin{align} \label{eq:BoundedConcatenations}
    \left\|g \circ \tau\right\|_{L^{\Felix{p}}(V)}
    \leq  \kappa \cdot \left\|g\right\|_{L^{\Felix{p}}(U)}
    \text{ for all } g \in L^{\Felix{p}}(U).
  \end{align}
  Additionally, let $f : U \to \R$ \Felix{and $\widehat{f} : \R^d \to \R$,
  as well as $\widehat{\tau} : V \to \R^d$} be measurable, and assume that
  $\widehat{f}$ is Lipschitz continuous with Lipschitz constant
  $\varepsilon^{-T}$. % and bounded with $\| \widehat{f} \|_{\sup} \leq C$.
  Finally, assume that
  \[
    \left\| f-\widehat{f}\right\|_{L^{\Felix{p}}(U)}
    \leq \frac{\varepsilon}{\Felix{2^{\max\{1,p^{-1}\}}} \cdot \kappa},
    \quad \text{and} \quad
    \left\| \tau - \widehat{\tau} \right\|_{L^{\Felix{p}}(V)}
    \leq \frac{\varepsilon^{T+\Felix{1}}}{\Felix{2^{\max\{1,p^{-1}\}}}} \, .
  \]
  Then
  \[
  \left\|
    f \circ \tau - \widehat{f} \circ \widehat{\tau}
  \right\|_{L^{\Felix{p}}(V)}
  \leq \eps \, .
  %\leq (\kappa + 2 \sqrt{C}) \cdot \varepsilon \, .
  %\sqrt{\left(\kappa^2 + 1 + |V|\right)} \cdot \varepsilon.
\]
\end{lemma}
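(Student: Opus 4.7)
The plan is to decompose the error by adding and subtracting the mixed term $\widehat{f} \circ \tau$, apply the appropriate (quasi)-triangle inequality from Equation \eqref{eq:PseudoTriangleInequality}, and then bound each of the two resulting pieces using one of the two hypotheses.

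First I would write
\[
  f \circ \tau - \widehat{f} \circ \widehat{\tau}
  = (f - \widehat{f}) \circ \tau
    + \widehat{f} \circ \tau - \widehat{f} \circ \widehat{\tau} \, ,
\]
noting that $f - \widehat{f}$ is well-defined on $U$ since $\widehat{f}$ is defined on all of $\R^d \supset U$, and that $\widehat{f} \circ \widehat{\tau}$ is defined on $V$ since $\widehat{\tau}$ maps into $\R^d$. Applying Equation \eqref{eq:PseudoTriangleInequality} with $N = 2$ gives
\[
  \|f \circ \tau - \widehat{f} \circ \widehat{\tau}\|_{L^p(V)}
  \leq 2^{\max\{1,p^{-1}\}} \cdot \max \bigl\{
         \|(f - \widehat{f}) \circ \tau\|_{L^p(V)} \, , \,
         \|\widehat{f} \circ \tau - \widehat{f} \circ \widehat{\tau}\|_{L^p(V)}
       \bigr\} \, .
\]

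Next I would bound the two terms separately. For the first term, the hypothesis \eqref{eq:BoundedConcatenations} applied with $g := f - \widehat{f} \in L^p(U)$ yields
\[
  \|(f - \widehat{f}) \circ \tau\|_{L^p(V)}
  \leq \kappa \cdot \|f - \widehat{f}\|_{L^p(U)}
  \leq \kappa \cdot \frac{\varepsilon}{2^{\max\{1,p^{-1}\}} \cdot \kappa}
  = \frac{\varepsilon}{2^{\max\{1,p^{-1}\}}} \, .
\]
For the second term, I would exploit the Lipschitz continuity of $\widehat{f}$ pointwise: for every $x \in V$,
\[
  |\widehat{f}(\tau(x)) - \widehat{f}(\widehat{\tau}(x))|
  \leq \varepsilon^{-T} \cdot |\tau(x) - \widehat{\tau}(x)| \, ,
\]
so that raising to the $p$th power and integrating gives
\[
  \|\widehat{f} \circ \tau - \widehat{f} \circ \widehat{\tau}\|_{L^p(V)}
  \leq \varepsilon^{-T} \cdot \|\tau - \widehat{\tau}\|_{L^p(V)}
  \leq \varepsilon^{-T} \cdot \frac{\varepsilon^{T+1}}{2^{\max\{1,p^{-1}\}}}
  = \frac{\varepsilon}{2^{\max\{1,p^{-1}\}}} \, .
\]
Here $\|\tau - \widehat{\tau}\|_{L^p(V)}$ is understood as the $L^p$-norm of the vector-valued function $x \mapsto \tau(x) - \widehat{\tau}(x)$ (with some fixed norm on $\R^d$, matching the one used to state the Lipschitz condition on $\widehat{f}$).

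Combining the two bounds, the maximum in the displayed inequality is at most $\varepsilon / 2^{\max\{1,p^{-1}\}}$, and multiplying by the prefactor $2^{\max\{1,p^{-1}\}}$ yields $\|f \circ \tau - \widehat{f} \circ \widehat{\tau}\|_{L^p(V)} \leq \varepsilon$, as required. There is no serious obstacle here: the only subtlety is that for $p < 1$ one must use the quasi-triangle inequality \eqref{eq:PseudoTriangleInequality} rather than the ordinary triangle inequality, which is precisely why the factor $2^{\max\{1,p^{-1}\}}$ appears in the hypotheses.
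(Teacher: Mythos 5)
Your proof is correct and follows essentially the same route as the paper's: the identical decomposition inserting the mixed term $\widehat{f} \circ \tau$, the quasi-triangle inequality \eqref{eq:PseudoTriangleInequality} with factor $2^{\max\{1,p^{-1}\}}$, the hypothesis \eqref{eq:BoundedConcatenations} for the first piece, and the pointwise Lipschitz bound integrated in $p$-th power for the second. Your remark about $f - \widehat{f}$ being well-defined on $U$, and about interpreting $\|\tau - \widehat{\tau}\|_{L^p(V)}$ as the norm of a vector-valued function, is a clean way of making explicit details the paper leaves implicit.
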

\begin{proof}
  \Felix{Setting $q := \max\{1,p^{-1}\}$,
  Equation \eqref{eq:PseudoTriangleInequality} shows}
  %By the triangle inequality, we have
  \[
    \left\|
      f \circ \tau - \widehat{f} \circ \widehat{\tau}
    \right\|_{L^{\Felix{p}}(V)}
    \leq \Felix{2^q \cdot \max}
         \left\{
           \left\|
             f \circ \tau - \widehat{f} \circ \tau
           \right\|_{L^{\Felix{p}}(V)}
           \,\,,\,\,
           \left\|
             \widehat{f} \circ \tau - \widehat{f} \circ \widehat{\tau}
           \right\|_{L^{\Felix{p}}(V)}
         \right\}
    =:   \Felix{2^q \cdot \max} \{ I_1 \,,\, I_2\} \, .
  \]
  We start by estimating $I_1$. Precisely, as a consequence of
  Equation \eqref{eq:BoundedConcatenations}, we have
  \begin{align*}
    \Felix{2^q \cdot} I_1
    = \Felix{2^q} \cdot \left\|
                          f \circ \tau - \widehat{f} \circ \tau
                        \right\|_{L^{\Felix{p}}(V)}
    \leq \Felix{2^q} \, \kappa
         \cdot \left\| f  - \widehat{f} \right\|_{L^{\Felix{p}}(U)}
    \leq \Felix{2^q} \, \kappa \cdot \frac{\varepsilon}{\Felix{2^q} \, \kappa}
    =    \Felix{\varepsilon} \, .
  \end{align*}
  %where we used \eqref{eq:BoundedConcatenations} in the next-to-last estimate.
  %Hence, $I_1 \leq \kappa \cdot \eps$.
  We proceed by estimating $I_2$.
  To this end, first note by \Felix{the} Lipschitz-continuity of
  $\widehat{f}$ that
  \[
    | \widehat{f} (\tau(x)) - \widehat{f} (\widehat{\tau} (x))|^{\Felix{p}}
    %\leq \big(|\widehat{f} (\tau(x))| + |\widehat{f} (\widehat{\tau}(x))|\big)
    %     \cdot |\widehat{f}(\tau(x)) - \widehat{f} (\widehat{\tau}(x))|
    \leq \big( \eps^{-T} \cdot |\tau(x) - \widehat{\tau}(x)| \big)^{\Felix{p}}
    \, .
  \]
  This implies
  \[
    \Felix{\big( 2^q \cdot I_2^2 \big)^p}
    = \Felix{2^{q p}} \cdot
      \int_{V}
         | \widehat{f} (\tau(x)) - \widehat{f} (\widehat{\tau} (x))|^{\Felix{p}}
      \, dx
    \leq \Felix{2^{q p}} \cdot \eps^{-T \Felix{p}}
         \cdot \| \tau - \widehat{\tau} \|_{L^{\Felix{p}}(V)}^{\Felix{p}}
    \leq
    \Felix{
    \Big( 2^q \, \varepsilon^{-T} \, \frac{\varepsilon^{T+1}}{2^q} \Big)^p
    = \varepsilon^p \, ,
    }
    %\leq 2C \cdot \frac{\eps^{2}}{8C}
    %\leq \left( \frac{\varepsilon}{2} \right)^2 \, ,
  \]
  and hence $\Felix{2^q} \, I_2 \leq \Felix{\eps}$.

  \medskip{}

  All in all, we have shown
  $\| f \circ \tau - \widehat{f} \circ \widehat{\tau}\|_{L^{\Felix{p}} (V)}
   \leq \Felix{2^q \cdot \max} \{ I_1 \,,\, I_2\} \leq \eps$,
  as claimed.
\end{proof}

Proving Theorem \ref{thm:ApproximationOfSymmetricFunctions} is now simply a
matter of constructing networks the activations of which satisfy the assumptions
of Lemma \ref{lem:L1concatenations}.
\begin{proof}[Proof of Theorem \ref{thm:ApproximationOfSymmetricFunctions}]
  Let $f\in\mathcal{SE}_{r, \beta, a, B, \kappa, d, D}^{\Felix{p}}$ and
  $f = g \circ \tau$, where $g \in \mathcal{E}_{r, \beta, d, B}^{\Felix{p}}$ and
  $\tau \in \mathcal{S}_{\kappa, d, D, a}$.
  By Corollary \ref{cor:PiecewiseSmoothFunctions}
  \Felix{and Remark \ref{rem:QuantisationConversion}}, there exist constants
  $c' \Felix{= c'(\beta,d,p) \in \N}$,
  $c= c(d, r, \Felix{p}, \beta, B, \kappa) > 0$,
  and $s = s(d, r, \Felix{p}, \beta, \Felix{\kappa}, B) \in \N$
  independent of $g$ such that for any
  $\varepsilon \in (0, \nicefrac{1}{2})\vphantom{\sum_j}$,
  there is a neural network $\Phi^g_\varepsilon$ with at most
  \Felix{$c'$} layers, and at most
  $c \cdot \varepsilon^{-\Felix{p} (d-1)/\beta}$ nonzero,
  $(s,\varepsilon)$-quantized weights such that
  \[
    \left\|\Realization_\varrho(\Phi^g_\varepsilon) -g\right\|_{L^{\Felix{p}}}
    < \frac{\varepsilon}{\Felix{2^{\max\{1,p^{-1}\}}} \, \kappa}
    \, .
    %\quad \text{and} \quad
    %\|\Realization_\varrho(\Phi^g_\varepsilon)\|_{\sup}
    %\leq \lceil B \rceil.
  \]
  Since the ReLU $\varrho$ is Lipschitz with Lipschitz constant $1$,
  since all weights of $\Phi^g_\varepsilon$ are bounded (in absolute value)
  by $\varepsilon^{-s}$, and since there are at most
  $c \cdot \varepsilon^{-\Felix{p} (d-1)/\beta}$ weights arranged in a
  bounded number of layers, there exists \Felix{a number}
  $T = T(d, r, \Felix{p}, \beta, B, \kappa) \in \N$
  \Felix{independent of $g$}
  such that $\Realization_\varrho(\Phi^g_\varepsilon) \Felix{: \R^d \to \R}$ is
  Lipschitz continuous with Lipschitz constant $\varepsilon^{-T}$.
  Note that this uses that $\eps \in (0, \nicefrac{1}{2})$,
  \Felix{so that $\eps^{-T} \to \infty$ as $T \to \infty$}.

  Now, set
  $\beta_0 := \left\lceil
                \beta \cdot \frac{D (T+\Felix{1})}{\Felix{p}(d-1)}
              \right\rceil$.
  Since $\tau_i \in \mathcal{F}_{\beta_0, D, a_{\beta_0}}$ for all
  $i = 1, \dots, d$, we can apply
  \Felix{a combination of
  Remarks \ref{rem:QuantisationConversion} and \ref{rem:DeepIdentity} and}
  Theorem \ref{thm:ApproxOfSmoothFctn}
  (\Felix{applied} with $\tau_i$ instead of $f$, with
  $\varepsilon^{T+\Felix{1}} / \Felix{(2d)^{\max\{1,p^{-1}\}}}$
  instead of $\varepsilon$, with $D$ instead of $d$, with $\beta_0$ instead of
  $\beta$, \Felix{and with $a_{\beta_0}$ instead of $B$})
  to obtain neural networks $\Phi_1, \dots, \Phi_d$
  \Felix{of a common depth
  $L' = L'(D,\beta_0,p) = L'(D,\beta,d,p,T) = L'(D,\beta,d,p,r,B,\kappa) \in \N$},
  such that their parallelization
  \Felix{$\Phi^\tau_\varepsilon
          = P(\Phi_1, P(\Phi_2, \dots, P(\Phi_{d-1},\Phi_d) \dots ))$}
  has at most
  \[
         c'' \cdot (\varepsilon^{T+\Felix{1}}
             /     \Felix{(2d)^{\max\{1,p^{-1}\}}})^{- D / \beta_0}
    \leq c''' \cdot \varepsilon^{- D(T + \Felix{1}) / \beta_0}
    \leq c''' \cdot \varepsilon^{\Felix{p} (d-1)/\beta}
  \]
  many, $(s',\varepsilon)$-quantized weights,
  %and at most
  %$L' = L' (D,\beta_0) = L'(D,T,d) = L'(d,D,r,\beta,B,\kappa)$ many layers,
  and satisfies
  \begin{align*}
          \left\|
              \Realization_\varrho(\Phi^\tau_\varepsilon) - \tau
          \right\|_{L^{\Felix{p}} ([-\nicefrac{1}{2}, \nicefrac{1}{2}]^D; \R^d)}
    %({\scriptstyle{\text{Eq. } \eqref{eq:PseudoTriangleInequality}}})
    & \Felix{
      \overset{\text{Eq. } \eqref{eq:PseudoTriangleInequality}}{\leq}
           d^{\max\{1,p^{-1}\}}
           \cdot \max \left\{
                        \|
                          \Realization_\varrho (\Phi_i)
                          - \tau_i
                        \|_{L^p ([-\nicefrac{1}{2}, \nicefrac{1}{2}]^D)}
                        \,:\,
                        i = 1,\dots,d
                      \right\} } \\
    & \Felix{
      \overset{\phantom{\text{Eq. } \eqref{eq:PseudoTriangleInequality}}}{\leq}
           \frac{\varepsilon^{T+1}}{2^{\max\{1,p^{-1}\}}} } \, .
    %\leq \frac{\varepsilon^{T+2}}{8\lceil B \rceil}.
    %, \,\text{ and } \,
    %\|\Realization_\varrho(\Phi^\tau_\varepsilon)\|_{\sup}
    %\leq \lceil a_n \rceil.
  \end{align*}
  \pp{The above constants satisfy
  \begin{align*}
    c'' &  = c''(d,D,\Felix{p},\beta_0,a_{\beta_0})
           = c''(d,\Felix{p},D,\beta,T,a)
           = c''(d,\Felix{p},D,r,B,a,\beta,\kappa) > 0 \, ,\\
    c''' & = c'''(\Felix{d,p},D,\beta_0,c'')
           = c'''(d,\Felix{p},D,B,r,a,\beta,\kappa) > 0,
    \text{ and }\\
    s' &   = s' (D,\Felix{p},\beta_0,a_{\beta_0},\Felix{T,d})
           = s' (D,\Felix{p},\beta,d,r,a,\kappa,B) \in \N.
  \end{align*}}
  %$c''\cdot \varepsilon^{-1}$, $(s', \varepsilon)$-quantized weights for
  %$c'' = c''(n, D, d, a_n) = c''(D, d, a, B, \beta, r, \kappa)$ and
  %$s' = s'(n, D, d, a_n) = s'(D, d, a, B, \beta, r, \kappa)$,
  %$L' = L'(n, D) = L'(D, B, \beta, r)$ layers, and
  %  \[
  %    \left\| \Realization_\varrho(\Phi^\tau_\varepsilon) - \tau \right\|
  %    \leq \varepsilon^{T+2},
  %    \,\text{ and } \,
  %    \|\Realization_\varrho(\Phi^\tau_\varepsilon)\|_{\sup}
  %    \leq \lceil a_n \rceil.
  %  \]
  Finally, invoking Lemma \ref{lem:L1concatenations}
  %with $\lceil B \rceil$ instead of $C$,
  with $g$ and $\Realization_\varrho(\Phi^g_\varepsilon)$,
  instead of $f$ and $\widehat{f}$, and with $\tau$ and
  $\Realization_\varrho(\Phi^\tau_\varepsilon)$ instead
  of $\tau$ and $\widehat{\tau}$ shows that
  $\Phi^f_\varepsilon : = \Phi^g_\varepsilon \sconc \Phi^\tau_\varepsilon$
  satisfies \eqref{eq:TargetApproxSymmetricFunctions}.
\end{proof}

%%%%%%%%%%%%%%%%%%%%%%%%%%%%%%%%%%%%%%%%%%%%%%%%%%%%%%%%%%%%%%%%%%%%%%%%%%%%%%%%
\section{An estimate of intermediate derivatives}
\label{sec:IntermediateDerivativesEstimate}

\pp{
\begin{lemma}\label{lem:DerivativeHoelderEstimate}
For $n \in \N_0$, $d \in \N$ and $\sigma \in (0,1]$ there is a constant $C = C(n,d,\sigma) > 0$
such that every $f \in C^n([0,1]^d)$ satisfies
\[
    \| \partial^\gamma f \|_{\sup}
    \leq C \cdot
         \big(
            \| f \|_{\sup}
            + \sum_{|\alpha| = n}
                \Lip_\sigma (\partial^\alpha f)
         \big)
    \qquad \text{ for all } \gamma \in \N_0^d \text{ with } |\gamma| \leq n.
\]
\end{lemma}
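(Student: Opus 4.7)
The plan is to represent each partial derivative $\partial^\gamma f(x_0)$ as a linear combination of finitely many values of $f$ in a neighborhood of $x_0$ via Taylor expansion, and then invert the resulting finite linear system by a Vandermonde-type argument. The case $n = 0$ is immediate---the bound reads $\|f\|_{\sup} \leq C \cdot (\|f\|_{\sup} + \Lip_\sigma f)$, trivial with $C = 1$---so from now on I assume $n \geq 1$ and set $L := \sum_{|\alpha| = n} \Lip_\sigma(\partial^\alpha f)$. If $L = \infty$ there is nothing to prove, so I may also assume $L < \infty$.

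First, I would establish a quantitative Taylor remainder bound: for every $x_0 \in [0,1]^d$ and every $h \in \R^d$ with $x_0 + h \in [0,1]^d$,
\[
  f(x_0 + h)
  = \sum_{|\alpha| \leq n} \frac{h^\alpha}{\alpha!} \, \partial^\alpha f(x_0)
    + R(x_0, h),
  \qquad
  |R(x_0, h)| \leq C_1 \cdot L \cdot |h|^{n + \sigma},
\]
for a constant $C_1 = C_1(n, d, \sigma)$. This will follow by applying the one-dimensional Taylor theorem with integral remainder to $g(t) := f(x_0 + t h)$ on $[0,1]$: the $n$-th derivative $g^{(n)}(t) = \sum_{|\alpha| = n} \binom{n}{\alpha} h^\alpha \partial^\alpha f(x_0 + t h)$ splits as $g^{(n)}(0)$ plus a $\sigma$-Hölder-in-$t$ perturbation; isolating $g^{(n)}(0)$ feeds the top-order terms into the Taylor polynomial, while the perturbation, combined with the beta-function factor $\int_0^1 (1-t)^{n-1} t^\sigma \, dt$, produces the $|h|^{n+\sigma}$ remainder. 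The combinatorial identity $\binom{n}{\alpha}/n! = 1/\alpha!$ and the multi-index estimate $|h^\alpha| \leq |h|^{|\alpha|}$ are the only ingredients beyond the 1D Taylor theorem. This Taylor bookkeeping is the main technical obstacle; the rest of the argument is finite-dimensional linear algebra.

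The second step is a Vandermonde inversion that works uniformly in $x_0$. Let $N := \binom{n+d}{d}$ be the number of multi-indices $\gamma \in \N_0^d$ with $|\gamma| \leq n$. Since the monomials $\{s \mapsto s^\gamma : |\gamma| \leq n\}$ are linearly independent in $\R[s_1,\dots,s_d]$, I can fix once and for all $N$ points $s_1, \ldots, s_N \in [0, \tfrac{1}{2}]^d$ (depending only on $n$ and $d$) so that the matrix $V := (s_i^\gamma)_{1 \leq i \leq N, \, |\gamma| \leq n} \in \R^{N \times N}$ is invertible---such a set exists by standard multivariate polynomial interpolation, e.g.\ by selecting $N$ rows from a tensor-product grid inside $[0,\tfrac12]^d$. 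Set $\kappa := \|V^{-1}\|_\infty$. Given $x_0 \in [0,1]^d$, define a sign vector $\epsilon = \epsilon(x_0) \in \{-1,+1\}^d$ by $\epsilon_j := +1$ if $(x_0)_j \leq \tfrac{1}{2}$ and $\epsilon_j := -1$ otherwise, and let $h_i \in \R^d$ have components $(h_i)_j := \epsilon_j \, s_{i,j}$. By construction $x_0 + h_i \in [0,1]^d$ and $|h_i| \leq \tfrac{\sqrt{d}}{2}$.

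Finally, substituting $h = h_i$ into the Taylor identity from Step 1 and using $h_i^\gamma = \epsilon^\gamma s_i^\gamma$, the identities become the linear system $V c(x_0) = b(x_0)$ with $c_\gamma(x_0) := \epsilon^\gamma \, \partial^\gamma f(x_0) / \gamma!$ and $b_i(x_0) := f(x_0 + h_i) - R(x_0, h_i)$, where $|b_i(x_0)| \leq \|f\|_{\sup} + C_1 L (\sqrt{d}/2)^{n + \sigma}$. Applying $V^{-1}$ and using $|\epsilon^\gamma| = 1$ and $\gamma! \geq 1$ yields $|\partial^\gamma f(x_0)| \leq \gamma! \cdot \kappa \cdot (\|f\|_{\sup} + C_1 L (\sqrt{d}/2)^{n+\sigma}) \leq C \cdot (\|f\|_{\sup} + L)$ uniformly in $x_0 \in [0,1]^d$ and in $|\gamma| \leq n$, for a constant $C = C(n, d, \sigma)$; taking the supremum over $x_0$ finishes the proof.
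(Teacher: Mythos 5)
Your proof is correct, and it follows a genuinely different route from the paper's. The paper's argument (Appendix \ref{sec:IntermediateDerivativesEstimate}) is the classical Landau--Kolmogorov/Adams scheme: a mean-value-theorem estimate on short one-dimensional segments (their Step~1), bootstrapped through a double induction with a free parameter $N$ (Steps~2--3), and finally combined (Step~4). That approach stays entirely within elementary calculus but requires careful accounting across the two inductions, and it proves a stronger, parametrized interpolation inequality
$|f|_k \leq \frac{1}{N}|f|_{k,\sigma} + C_{\sigma,d,k,N}\|f\|_{\sup}$
along the way, from which the lemma is extracted by fixing $N$. Your argument instead obtains all intermediate derivatives simultaneously from a single Taylor expansion with $\sigma$-H\"older remainder, followed by inverting a fixed multivariate Vandermonde system on a poised node set in $[0,\nicefrac12]^d$, using the componentwise sign-flip so that the nodes stay inside $[0,1]^d$ for every base point $x_0$. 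This is shorter and more conceptual, at the price of invoking the (standard, but not entirely elementary) existence of a unisolvent interpolation set for total-degree-$\leq n$ polynomials in $\R^d$, e.g.\ the principal lattice of a simplex. Both approaches are fully valid.

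One small slip worth flagging: in the final line you invoke ``$\gamma!\geq 1$'' to pass from $|c_\gamma(x_0)|$ to $|\partial^\gamma f(x_0)|$, but the inequality you actually need there is $\gamma!\leq n!$ (since $|\partial^\gamma f(x_0)| = \gamma!\,|c_\gamma(x_0)|$ and you want a bound uniform over $|\gamma|\leq n$). The conclusion is unaffected because $\gamma!\leq n!$ does hold for $|\gamma|\leq n$ and gets absorbed into $C(n,d,\sigma)$.
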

}
\begin{proof}
Note: This proof is heavily based on that of \cite[Lemmas 4.10 and 4.12]{AdamsSobolevSpacesOld},
where a related, but different estimate is established.

\textbf{Step 1}: We claim for $f \in C^1([0,1]^d)$ and arbitrary $N \in \N$ that
\begin{equation}
    \| \partial_\ell f \|_{\sup}
    \leq 4 \cdot N^{\frac{1}{\sigma}} \cdot \| f \|_{\sup}
         + \frac{1}{N} \cdot \Lip_\sigma (\partial_\ell f)
    \qquad \text{ for all }\ell \in \{1, \dots, d\}.
    \label{eq:DerivativeHoelderEstimateStep1}
\end{equation}
By symmetry (that is, by relabeling the coordinates), we can assume $\ell = 1$.
Define $K := \lceil N^{1/\sigma} \rceil$, and let $x = (x_1,\dots,x_d) \in [0,1]^d$ be arbitrary.
Choose $i \in \{0,\dots,K-1\}$ with $x_1 \in [\nicefrac{i}{K}, \nicefrac{(i+1)}{K}]$.
By the mean value theorem, there is some $\xi \in (\nicefrac{i}{K}, \nicefrac{(i+1)}{K})$ with
\[
    |\partial_1 f (\xi, x_2,\dots, x_d)|
    =    \left|
            \frac{f(\frac{i+1}{K}, x_2,\dots, x_d) - f(\frac{i}{K}, x_2,\dots, x_d)}{\frac{i+1}{K} - \frac{i}{K}}
         \right|
    \leq 2K \cdot \| f \|_{\sup}
    \leq 4 \cdot N^{\frac{1}{\sigma}} \cdot \| f \|_{\sup},
\]
where we used $K = \lceil N^{1/\sigma} \rceil \leq 1+ N^{1/\sigma} \leq 2 \cdot N^{1/\sigma}$.
Since $|(\xi, x_2,\dots, x_d) - x| \leq |\xi - x_1| \leq K^{-1} \leq N^{-1/\sigma}$, the preceding
estimate implies
\begin{align*}
    |\partial_1 f(x)|
    \leq &~|\partial_1 f(x) - \partial_1 f(\xi, x_2,\dots,x_d)| + |\partial_1 f (\xi, x_2, \dots, x_d)| \\
    \leq &~(N^{-\frac{1}{\sigma}})^\sigma \cdot \Lip_\sigma (\partial_1 f) + 4 \cdot N^{\frac{1}{\sigma}} \cdot \| f \|_{\sup}
    =    4 \cdot N^{\frac{1}{\sigma}} \cdot \| f \|_{\sup} + \frac{1}{N} \cdot \Lip_\sigma (\partial_1 f),
\end{align*}
as claimed (since we assumed $\ell = 1$).

\smallskip{}

\textbf{Step 2}: For brevity, set $|f|_\ell := \sum_{|\alpha| = \ell} \| \partial^\alpha f \|_{\sup}$
and $|f|_{\ell, \sigma} := \sum_{|\alpha| = \ell} \Lip_\sigma (\partial^\alpha f) \in [0,\infty]$
for $\ell \in \N_0$ and $f \in C^\ell ([0,1]^d)$.
In this step, we show by induction on $k \in \N_0$ that for each $k \in \N_0$ and $N \in \N$, there is a constant
$C_{\sigma, d, k, N} > 0$ with
\begin{equation}
    | f |_k \leq \frac{1}{N} \cdot |f|_{k,\sigma} + C_{\sigma, d, k, N} \cdot \| f \|_{\sup}
    \qquad \text{ for all }f \in C^k ([0,1]^d).
    \label{eq:DerivativeHoelderEstimateStep2}
\end{equation}

Before we begin with the induction, we first show the following estimate:
\begin{equation}
    |f|_{k, \sigma} \leq d^2 \cdot |f|_{k+1} \qquad \text{ for all } k \in \N_0 \text{ and } f \in C^{k+1}([0,1]^d).
    \label{eq:EstimatingHoelderNormByHigherDerivative}
\end{equation}
To prove Equation \eqref{eq:EstimatingHoelderNormByHigherDerivative}, first note because of
$\mathrm{diam}([0,1]^d) = \sqrt{d}$ that each Lipschitz continuous function $f \in C([0,1]^d)$ satisfies
$|f(x) - f(y)| \leq |x-y|^\sigma \cdot |x-y|^{1-\sigma} \cdot \Lip_1 (f)
\leq |x-y|^\sigma \cdot d^{(1-\sigma)/2} \cdot \Lip_1 (f)$. Therefore, each $f \in C^1([0,1]^d)$
fulfills $\Lip_\sigma (f) \leq d^{(1-\sigma)/2} \cdot \Lip_1 (f) \leq d^{(1-\sigma)/2} \cdot \| \nabla f \|_{\sup}
\leq d \cdot \sum_{\ell=1}^d \| \partial_\ell f \|_{\sup}$, which finally yields for $f \in C^{k+1}([0,1]^d)$ that
\[
    |f|_{k, \sigma}
    = \sum_{|\alpha| = k} \Lip_\sigma (\partial^\alpha f)
    \leq d \sum_{\ell=1}^d \sum_{|\alpha| = k}  \| \partial_\ell \partial^\alpha f\|_{\sup}
    \leq d^2 \cdot |f|_{k+1},
\]
which is nothing but \eqref{eq:EstimatingHoelderNormByHigherDerivative}.

Now we properly begin with the proof of Equation \eqref{eq:DerivativeHoelderEstimateStep2}.
For $k = 0$, Equation \eqref{eq:DerivativeHoelderEstimateStep2} is trivial with $C_{\sigma, d, 0, N} = 1$,
since $|f|_0 = \| f \|_{\sup}$.
For $k = 1$, Equation \eqref{eq:DerivativeHoelderEstimateStep2} is a consequence of
Equation \eqref{eq:DerivativeHoelderEstimateStep1}, which yields
\begin{align*}
    |f|_k = |f|_1
    & =    \sum_{\ell = 1}^d \| \partial_\ell f \|_{\sup}
      \leq 4 \cdot N^{\frac{1}{\sigma}} \cdot \| f \|_{\sup} \cdot \sum_{\ell = 1}^d 1
           + \frac{1}{N} \sum_{\ell=1}^d \Lip_\sigma (\partial_\ell f) \\
    & =    \frac{1}{N} \cdot |f|_{1, \sigma} + 4d \cdot N^{\frac{1}{\sigma}} \cdot \| f \|_{\sup}
      =    \frac{1}{N} \cdot |f|_{k, \sigma} + 4d \cdot N^{\frac{1}{\sigma}} \cdot \| f \|_{\sup} ,
\end{align*}
so that $C_{\sigma, d, 1, N} = 4d \cdot N^{1/\sigma}$ makes Equation \eqref{eq:DerivativeHoelderEstimateStep2}
true for $k = 1$.

For the induction step, note that if $f \in C^{k+1}([0,1]^d)$, and if we apply the case $k = 1$ (with $M$ instead
of $N$) to each of the partial derivatives $\partial^\alpha f$ with $|\alpha| = k$, then we get
\begin{equation}
    \begin{split}
    |f|_{k+1}
    & \leq \sum_{|\alpha| = k}
                |\partial^\alpha f|_1
      \leq \sum_{|\alpha| = k}
             \left(
                \frac{1}{M} |\partial^\alpha f|_{1, \sigma} + C_{\sigma, d, M} \cdot \| \partial^\alpha f\|_{\sup}
             \right) \\
    & \overset{(\ast)}{\leq}
           \frac{d^k}{M} \cdot |f|_{k+1, \sigma} + C_{\sigma, d, M} \cdot |f|_k \\
    ({\scriptstyle{\text{by induction}}})
    & \leq \frac{d^k}{M} \cdot |f|_{k+1, \sigma}
           + C_{\sigma, d, M} \cdot
             \left(
                \frac{1}{N} \cdot |f|_{k,\sigma} + C_{\sigma, d, k, N} \cdot \| f \|_{\sup}
             \right) \\
    ({\scriptstyle{\text{by Eq. } \eqref{eq:EstimatingHoelderNormByHigherDerivative}
                   \text{ since } f \in C^{k+1}([0,1]^d)}})
    & \leq \frac{d^k}{M} \cdot |f|_{k+1, \sigma}
           + C_{\sigma, d, M} \cdot
           \left(
               \frac{d^2}{N} \cdot |f|_{k+1} + C_{\sigma, d, k, N} \cdot \| f \|_{\sup}
           \right),
    \end{split}
    \label{eq:DerivativeHoelderEstimateStep2Intermediate}
\end{equation}
where $M, N \in \N$ can be chosen arbitrarily. In the above calculation, the step marked with $(\ast)$ used the
elementary estimates
$|\partial^\alpha f|_{1,\sigma} = \sum_{\ell=1}^d \Lip_\sigma (\partial_\ell \partial^\alpha f)
\leq \sum_{|\gamma| = k+1} \Lip_\sigma (\partial^\gamma f) = |f|_{k+1, \sigma}$, which is valid for all
$\alpha \in \N_0^d$ with $|\alpha| = k$; \Felix{furthermore, we used that}
$|\{\alpha \in \N_0^d \,:\, |\alpha | = k\}| \leq d^k$.

Finally, note that \eqref{eq:DerivativeHoelderEstimateStep2} is trivially satisfied (for $k+1$ instead of $k$) if
$|f|_{k+1, \sigma} = \infty$. Therefore, we can assume $|f|_{k+1, \sigma} < \infty$. If we now choose
$N = N(\sigma, d, M) \in \N$ to satisfy $N \geq 1 + 2 d^2 C_{\sigma, d, M}$, so that
$C_{\sigma, d, M} \cdot \frac{d^2}{N} \leq \nicefrac{1}{2}$, then we get from
Equation \eqref{eq:DerivativeHoelderEstimateStep2Intermediate} by rearranging that
\begin{align*}
    |f|_{k+1}
    & \leq 2 \cdot
             \left(
                \frac{d^k}{M} \cdot |f|_{k+1,\sigma} + C_{\sigma, d, M} C_{\sigma, d, k, N} \cdot \|f\|_{\sup}
             \right) % \\
     \leq \frac{2 d^k}{M} \cdot |f|_{k+1, \sigma} + C_{\sigma, d, k, M}' \cdot \| f \|_{\sup}.
\end{align*}
Since $M \in \N$ can be chosen arbitrarily, this establishes Equation \eqref{eq:DerivativeHoelderEstimateStep2}
for $k+1$ instead of $k$, and thus completes the induction.

\smallskip{}

\textbf{Step 3}: For arbitrary $k \in \N$, we prove by induction on $0 \leq j \leq k-1$ that there is a constant $C_{\sigma, d, k, j} > 0$
with
\begin{equation}
    | f |_{k-j} \leq |f|_{k, \sigma} + C_{\sigma, d, k, j} \cdot \| f \|_{\sup}
    \qquad \text{ for all } f \in C^k ([0,1]^d).
    \label{eq:DerivativeHoelderEstimateStep3}
\end{equation}
For $j = 0$, this is a direct consequence of Equation \eqref{eq:DerivativeHoelderEstimateStep2} (with $N = 1$).
For the induction step, assume that \eqref{eq:DerivativeHoelderEstimateStep3} holds for some $0 \leq j \leq k-2$, and note
\begin{align*}
    |f|_{k - (j+1)}
    & =    |f|_{k - j - 1} \\
    ({\scriptstyle{ \text{Eq. } \eqref{eq:DerivativeHoelderEstimateStep2} \text{ with } k-j-1 \text{ instead of } k \text{ and with } N = d^2}})
    & \leq \frac{1}{d^2} \cdot |f|_{k-j-1, \sigma} + C_{\sigma, d, k, j}' \cdot \| f \|_{\sup} \\
    ({\scriptstyle{\text{Eq. } \eqref{eq:EstimatingHoelderNormByHigherDerivative} \text{ since } f \in C^{k} \subset C^{(k-j-1)+1}}})
    & \leq |f|_{k-j} + C_{\sigma, d, k, j}' \cdot \| f \|_{\sup} \\
    ({\scriptstyle{\text{by induction}}})
    & \leq |f|_{k, \sigma} + (C_{\sigma, d,k,j} + C_{\sigma, d, k, j}') \cdot \| f \|_{\sup}.
\end{align*}

\smallskip{}

\textbf{Step 4}: In this step, we prove the actual claim. For $n=0$, this is trivial, so that we can assume $n \geq 1$.
Thus, let $f \in C^n([0,1]^d)$, and let $\gamma \in \N_0^d$ with $|\gamma| \leq n$. For $\gamma = 0$, the claim is trivial, so that
we can assume $1 \leq |\gamma| \leq n$. Hence, $j  := n - |\gamma|$ satisfies $0 \leq j \leq n-1$. Therefore, we can apply Step 3
with $k = n$ to conclude
\[
    \| \partial^\gamma f \|_{\sup} \leq | f |_{|\gamma|} = |f|_{n-j} \leq |f|_{n, \sigma} + C_{\sigma, d, n, j} \cdot \| f \|_{\sup}.
\]
This easily implies the claim, with $C = \max\{1, \, \max \{ C_{\sigma, d, n, j} \, : \, 0 \leq j \leq n-1 \}\}$.
\end{proof}

\section{Reducing the number of neurons}
\label{sec:StandingAssumptionJustification}

In this short technical appendix, we prove that for each neural network $\Phi$ with one-dimensional output and $d$-dimensional input,
one can assume essentially without loss of generality that $N(\Phi) \leq M(\Phi) + d + 1$.
This observation is important for the proof of Lemma \ref{lem:NNEncoding}, where we encode the functions represented by a
class of neural networks using a fixed number of bits. It is also used in the proof of Theorem \ref{thm:DepthLowerBoundAppendix}.

\begin{lemma}\label{lem:WeightsAndNeuronsAssumptionJustification}
    Let $\varrho : \R \to \R$ with $\varrho (0) = 0$.
    Then, for every neural network $\Phi$ with input dimension $d \in \N$ and output dimension $1$, there is
    a neural network $\Phi'$ with the same input and output dimension
    and with the following additional properties:
    \begin{itemize}
        \item We have $\Realization_\varrho (\Phi') = \Realization_\varrho (\Phi)$.
        \item We have $N(\Phi') \leq M(\Phi') + d + 1$.
        \item We have $M(\Phi') \leq M(\Phi)$ and $L(\Phi') \leq L(\Phi)$.
        %\item $\Phi'$ has at most as many layers as $\Phi$.
        \item If $I \subset \R$ contains the values of all nonzero weights of $\Phi$, then the same holds for $\Phi'$.
    \end{itemize}
\end{lemma}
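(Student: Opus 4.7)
The plan is to proceed by strong induction on $N(\Phi)$, iteratively stripping away redundant neurons without altering the realization. Call a neuron $(\ell, i)$ \emph{dead} if row $i$ of $A_\ell$ is the zero vector and $b_\ell[i] = 0$; using the hypothesis $\varrho(0) = 0$, such a neuron outputs $0$ regardless of the input, so in particular the $i$-th column of $A_{\ell+1}$ is inert for the realization. I will also use that the output dimension equals $1$, i.e.\ $N_L = 1$.

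The target of the procedure is a simple charging argument. If $\Phi$ has \emph{no} dead hidden neuron (in layers $1,\dots,L-1$), then every hidden neuron contributes at least one nonzero entry to $M(\Phi)$ via its row in $A_\ell$ or its bias entry. Since rows and bias entries are indexed by neurons, this yields an injection from hidden neurons into the set of nonzero weights, so $\sum_{\ell=1}^{L-1} N_\ell \leq M(\Phi)$; combined with $N_L = 1$ one obtains $N(\Phi) = d + \sum_{\ell=1}^{L} N_\ell \leq M(\Phi) + d + 1$, and we may take $\Phi' = \Phi$.

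When a dead hidden neuron $(\ell,i)$ does exist, I would split into two reduction cases producing a smaller network $\widetilde\Phi$ with identical realization, weights drawn from those of $\Phi$, and $L(\widetilde\Phi)\leq L(\Phi)$, $M(\widetilde\Phi)\leq M(\Phi)$, and $N(\widetilde\Phi) < N(\Phi)$. In the easy case $N_\ell \geq 2$, I just delete row $i$ of $A_\ell$, entry $i$ of $b_\ell$, and column $i$ of $A_{\ell+1}$; the deleted column may contain nonzero entries, which only helps the bound on $M$. In the harder case $N_\ell = 1$, naively removing the neuron would leave an empty layer, which the definition of neural network forbids. However, the whole layer then outputs the zero vector identically, so the pre-activation of layer $\ell+1$ reduces to $b_{\ell+1}$ irrespective of its input. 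I would therefore excise layer $\ell$ completely and replace $A_{\ell+1}$ by the zero matrix of the new compatible shape $N_{\ell+1}\times N_{\ell-1}$ (or $N_{\ell+1}\times d$ if $\ell=1$), which preserves the realization, introduces no new nonzero weight values, and reduces both $L$ and $N$ by $1$ while not increasing $M$.

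The main obstacle will be Case (b): making the dimension-reshaping rigorous, checking that inserting a zero matrix of a different shape genuinely preserves the realization, and confirming that the subset-of-weights condition is maintained (no new scalars are introduced because the replacement matrix is $0$). After this is handled, both reduction cases strictly decrease $N(\Phi)$, so the induction terminates at a network with no dead hidden neurons, at which point the charging argument closes the proof. The base case $L(\Phi)=1$ needs no work, since then $N(\Phi)=d+1\leq M(\Phi)+d+1$ holds automatically.
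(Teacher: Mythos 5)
Your proof is correct and rests on the same core mechanism as the paper's: identify a ``dead'' neuron (zero row and zero bias in layer $\ell$), exploit $\varrho(0)=0$ so that it outputs $0$ identically and can be pruned without altering the realization, iterate, and observe that once no dead hidden neuron remains there is an injection from hidden neurons into nonzero weights, giving $\sum_{\ell=1}^{L-1}N_\ell\leq M(\Phi)$ and hence $N(\Phi)\leq M(\Phi)+d+1$ after adding $d$ inputs and $N_L=1$. The only differences are organizational bookkeeping: you run a clean strong induction on $N(\Phi)$ with the charging argument as the stopping criterion, whereas the paper argues by choosing $\ell$ maximal and rules out the problematic sub-case by contradiction; and in the width-one case you excise only layer $\ell$ and set $A_{\ell+1}$ to the zero matrix of the new compatible shape, whereas the paper collapses the entire prefix $(A_1,b_1),\dots,(A_\ell,b_\ell)$ into a single zero layer. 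Both variants preserve the realization, keep the nonzero weights a subset of the original ones, and strictly decrease $N$, so yours is a valid and arguably slightly tidier reorganization of the same argument.
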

\begin{proof}
    \Felix{In case of}
    $N(\Phi) \Felix{\leq} M(\Phi) + d + 1$, \Felix{the network $\Phi' = \Phi$ satisfies}
    all properties \Felix{required in} the statement of the lemma.
    %Assume that $N(\Phi) > M(\Phi) + d + 1$, otherwise $\Phi$ itself admits all properties of the statement of the lemma.
    We \Felix{will} show that \Felix{for $N(\Phi) > M(\Phi) + d + 1$},
    one can always find a network $\Phi'$ with $N(\Phi') < N(\Phi)$ and such that $\Phi'$ has the
    same input and output dimension as $\Phi$, such that $M(\Phi') \leq M(\Phi)$, $L(\Phi') \leq L(\Phi)$
    and $\Realization_\varrho (\Phi') = \Realization_\varrho (\Phi)$,
    and such that if $I \subset \R$ contains the values of all nonzero weights of $\Phi$, then the same holds for $\Phi'$.
    Iterating this observation yields the result.

    For $n_1,n_2 \in \N$ and $A \in \R^{n_1 \times n_2}$, as well as $i \in \{1, \dots, n_1\}$ we denote (in case of $n_1 > 1$) by
    $A_{\hat{i}} \in \R^{(n_1 - 1) \times n_2}$ the matrix resulting from removing the $i$-th row of $A$. Likewise, for $i \in \{1, \dots, n_2\}$ we write
    (in case of $n_2 > 1$) $A^{\hat{i}}$ for the matrix resulting from removing the $i$-th column of $A$.
    Similarly, for $b \in \R^{n_1}$ with $n_1 > 1$, we denote by $b_{\hat{i}} \in \R^{n_1 - 1}$ the vector resulting from removing the $i$-th entry of $b$.

    Let $\Phi = ((A_1, b_1), \dots, (A_L, b_L))$ with $A_\ell \in \R^{N_\ell \times N_{\ell-1}}$ and $b_\ell \in \R^{N_\ell}$
    for $\ell \in \{1, \dots, L\}$.
    Since
    \[
        \sum_{\ell=1}^L N_\ell - 1
        = N(\Phi) - d - 1
        > M(\Phi)
        = \sum_{\ell=1}^L \left( \|A_\ell\|_{\ell^0} + \|b_\ell\|_{\ell^0} \right),
    \]
    there exist more rows of $[A_1, b_1], \dots, [A_{L}, b_{L}]$ than nonzero entries
    in all these matrices. Hence, there exists $\ell \in \{1, \dots, L\}$ and $i \in \{1, \dots, N_\ell\}$ such that
    the $i$-th row of $A_{\ell}$ and the $i$-th entry of $b_\ell$ vanish.
    In fact, let us choose $\ell \in \{1,\dots, L\}$ maximal with the property that there is some $i \in \{1, \dots, N_\ell\}$
    such that the $i$-th row of $A_{\ell}$ and the $i$-th entry of $b_\ell$ vanish.
    Now we distinguish three cases:

    \smallskip
    \noindent
    \textbf{Case 1}: If $N_\ell>1$ (so that in particular $\ell < L$, since $N_L = 1$), then we set
    \[
        \Phi' :=
        ((A_1,b_1),
         \dots,
         (A_{\ell-1}, b_{\ell - 1}),
         ( (A_{\ell})_{\hat{i}}, (b_{\ell})_{\hat{i}} ),
         ( (A_{\ell+1})^{\hat{i}}, b_{\ell+1} ),
         (A_{\ell+2}, b_{\ell+2}),
         \dots,
         (A_L, b_L)).
    \]
    We have that $(A_{\ell+1})^{\hat{i}}x_{\hat{i}} = A_{\ell+1}x$ for all $x = (x_1,\dots, x_{N_\ell}) \in \R^{N_\ell}$ with $x_i = 0$,
    and furthermore for all $x \in \R^{N_{\ell - 1}}$ that $(\varrho (A_\ell \, x + b_\ell) )_{\hat{i}} = \varrho ((A_\ell)_{\hat{i}} \, x + (b_\ell)_{\hat{i}})$.
    Since $\varrho(0) = 0$, we see that the $i$-th entry of $\varrho(A_{\ell} \, x + b_\ell)$ is zero, for arbitrary $x \in \R^{N_{\ell - 1}}$.
    All in all, these observations show $\Realization_\varrho (\Phi') = \Realization_\varrho (\Phi)$.
    Moreover, $N(\Phi')<N(\Phi)$, $M(\Phi') \leq M(\Phi)$, and $L(\Phi') = L(\Phi)$ follow from the construction.
    The statement regarding the values of the nonzero weights being contained in $I$ is also clearly satisfied.

    \smallskip
    \noindent
    \textbf{Case 2}: If $N_\ell = 1$, but $\ell >1$, then we have $A_\ell = 0$ and $b_\ell = 0$.
    We set $\tilde{A}_1 :=  0 \in \R^{1\times d}, \tilde{b}_1 := 0 \in \R$.

    If $\ell < L$ we set
    \[
        \Phi' := ((\tilde{A}_1, \tilde{b}_1), (A_{\ell+1}, b_{\ell + 1}), \dots, (A_L, b_L)).
    \]
    By construction and because of $\varrho(0) = 0$, we have
    $\Realization_\varrho (\Phi') = \Realization_\varrho (\Phi)$ and
    $N(\Phi')<N(\Phi)$ \Felix{(here we use that $\ell > 1$)}, as well as
    $M(\Phi') \leq M(\Phi)$ and $L(\Phi') \leq L(\Phi)$.
    The statement regarding the values of the nonzero weights being contained in
    $I$ is also clearly satisfied.

    If $\ell = L$, then $\Realization_{\varrho}(\Phi) \equiv 0$.
    \Felix{Hence, we have $\Realization_\varrho(\Phi) = \Realization_\varrho (\Phi')$}
    for
    %In this case, set
    \[
        \Phi' := \big( (\tilde{A}_1, \tilde{b}_1) \big).
    \]
    \Felix{Furthermore}, we have $N(\Phi') = d+1 \leq M(\Phi) + d + 1 < N(\Phi)$, as well as $M(\Phi') = 0 \leq M(\Phi)$ and
    $L(\Phi') = 1 \leq L(\Phi)$. Finally, since $\Phi'$ only has weights with value zero, the
    statement regarding the values of the nonzero weights being contained in $I$ is trivially satisfied.

    \smallskip
    \noindent
    \textbf{Case 3}:  If $\ell = 1$ and $N_1 = 1$, then $A_1 = b_1 = 0$. Thus we have
    \[
        \sum_{\ell = 2}^L N_\ell
        = \sum_{\ell=1}^L N_\ell - 1
        = N(\Phi) - d - 1 > M(\Phi)
        = \sum_{\ell=2}^L (\|A_\ell\|_{\ell^0} + \|b_\ell\|_{\ell^0}),
    \]
    and therefore there exists some $\ell' \in \{ 2, \dots, L\}$ and some $j \in \{1, \dots, N_{\ell'}\}$ such that the
    $j$-th row of $A_{\ell'}$ and the $j$-th entry of $b_{\ell'}$ vanish.
    This contradicts the maximality of $\ell$, so that this case cannot occur.
    %Then we continue in either Case 1 or Case 2.
    %
    % Since the construction only proceeds by removing rows and columns of the matrices and vectors $A_1,\dots, A_L$
    % and $b_1,\dots, b_L$, or by removing some of these matrix-vector pairs entirely, we conclude that $M(\Phi') \leq M(\Phi)$
    % and furthermore we see that if a subset $I \subset \R$ contains the values of all nonzero weights of $\Phi$,
    % then the same holds with $\Phi'$ instead of $\Phi$. By the same argument, we conclude that $\Phi'$ has at most as many layers as $\Phi$.
    % This completes the proof.
\end{proof}

\section*{Acknowledgements}

The authors would like to thank Gitta Kutyniok, Philipp Grohs,
Stephan W\"aldchen, \pp{and Nadav Cohen} for fruitful discussions on the topic,
and Dimitri Bytchenkoff for boosting our morale.
F.V. acknowledges support by the European Commission-Project DEDALE
(contract no. 665044) within the H2020 Framework.
P.P acknowledges support by the DFG Collaborative Research Center TRR 109
``Discretization in Geometry and Dynamics".

\small

\bibliographystyle{plain}
\bibliography{references}

\begin{thebibliography}{10}

\bibitem{AdamsSobolevSpacesOld}
R.A. Adams.
\newblock {\em {S}obolev {S}paces}.
\newblock Academic Press, New York-London, 1975.

\bibitem{Anthony2009NNL}
M.~Anthony and P.L. Bartlett.
\newblock {\em Neural Network Learning: Theoretical Foundations}.
\newblock Cambridge University Press, 1st edition, 2009.

\bibitem{Barron1993}
A.R. Barron.
\newblock Universal approximation bounds for superpositions of a sigmoidal
  function.
\newblock {\em IEEE Trans. Inf. Theory}, 39(3):930--945, 1993.

\bibitem{Barron1994}
A.R. Barron.
\newblock {Approximation and estimation bounds for artificial neural networks}.
\newblock {\em Mach. Learn.}, 14(1):115--133, 1994.

\bibitem{Baxt1990Class}
W.G. Baxt.
\newblock Use of an artificial neural network for data analysis in clinical
  decision-making: The diagnosis of acute coronary occlusion.
\newblock {\em Neural Comput.}, 2(4):480--489, 1990.

\bibitem{spie-nn-2017}
H.~B\"olcskei, P.~Grohs, G.~Kutyniok, and P.~Petersen.
\newblock Memory-optimal neural network approximation.
\newblock In {\em Proc. of SPIE (Wavelets and Sparsity XVII)}, 2017.

\bibitem{BoeGKP2017}
H.~B\"{o}lcskei, P.~Grohs, G.~Kutyniok, and P.~Petersen.
\newblock Optimal approximation with sparsely connected deep neural networks.
\newblock {\em arXiv preprint arXiv:1705.01714}, 2017.

\bibitem{Burk1994Cancer}
H.B. Burke.
\newblock Artificial neural networks for cancer research: outcome prediction.
\newblock {\em Semin. Surg. Oncol.}, 10:73--79, 1994.

\bibitem{candes1999curvelets}
E.J. Cand{\`e}s and D.L. Donoho.
\newblock Curvelets: a surprisingly effective nonadaptive representation of
  objects with edges.
\newblock In {\em Curve and surface fitting}, pages 105--120. Vanderbilt
  University Press, 2000.

\bibitem{CurveletsIntro}
E.J. Cand{\`e}s and D.L. Donoho.
\newblock New tight frames of curvelets and optimal representations of objects
  with piecewise {$C^2$} singularities.
\newblock {\em Comm. Pure Appl. Math.}, 57(2):219--266, 2004.

\bibitem{SurfletsTechnicalReport}
V.~Chandrasekaran, M.~Wakin, D.~Baron, and R.G. Baraniuk.
\newblock Compressing piecewise smooth multidimensional functions using
  surflets: Rate-distortion analysis.
\newblock {\em Rice University ECE Technical Report}, 2004.

\bibitem{Surflets}
V.~Chandrasekaran, M.~Wakin, D.~Baron, and R.G. Baraniuk.
\newblock Representation and compression of multidimensional piecewise
  functions using surflets.
\newblock {\em IEEE Trans. Inform. Theory}, 55(1):374--400, 2009.

\bibitem{ClementsLipschitzFunctionsL1Entropy}
G.F. Clements.
\newblock Entropies of several sets of real valued functions.
\newblock {\em Pacific J. Math.}, 13:1085--1095, 1963.

\bibitem{Cybenko1989}
G.~Cybenko.
\newblock {Approximation by superpositions of a sigmoidal function}.
\newblock {\em Math. Control Signal}, 2(4):303--314, 1989.

\bibitem{David2016Go}
{D. Silver}, {A. Huang}, {C. J. Maddison}, {A. Guez}, {L. Sifre}, {G. van den
  Driessche}, {J. Schrittwieser}, {I. Antonoglou}, {V. Panneershelvam}, {M.
  Lanctot}, {S. Dieleman}, {D. Grewe}, {J. Nham}, {N. Kalchbrenner}, {I.
  Sutskever}, {T. Lillicrap}, {M. Leach}, {K. Kavukcuoglu}, {T. Graepel}, and
  {D. Hassabis}.
\newblock {Mastering the game of Go with deep neural networks and tree search}.
\newblock {\em Nature}, 529(7587):484--489, 2016.

\bibitem{NIPS2011_4350}
O.~Delalleau and Y.~Bengio.
\newblock Shallow vs. deep sum-product networks.
\newblock In {\em Advances in Neural Information Processing Systems 24}, pages
  666--674. Curran Associates, Inc., 2011.

\bibitem{DONOHO1993100}
D.L. Donoho.
\newblock Unconditional bases are optimal bases for data compression and for
  statistical estimation.
\newblock {\em Appl. Comput. Harmon. Anal.}, 1(1):100--115, 1993.

\bibitem{DonohoSparseComponentsOfImages}
D.L. Donoho.
\newblock Sparse components of images and optimal atomic decompositions.
\newblock {\em Constr. Approx.}, 17(3):353--382, 2001.

\bibitem{DudleyRealAnalysisProbability}
R.~M. Dudley.
\newblock {\em Real {A}nalysis and {P}robability}, volume~74 of {\em Cambridge
  Studies in Advanced Mathematics}.
\newblock Cambridge University Press, Cambridge, 2002.

\bibitem{EvansGariepy}
L.C. Evans and R.F. Gariepy.
\newblock {\em Measure {T}heory and {F}ine {P}roperties of {F}unctions}.
\newblock CRC press, 1992.

\bibitem{FollandRA}
G.B. Folland.
\newblock {\em {R}eal {A}nalysis: {M}odern {T}echniques and {T}heir
  {A}pplications}.
\newblock Pure and applied mathematics. Wiley, second edition, 1999.

\bibitem{Goodfellow-et-al-2016}
I.~Goodfellow, Y.~Bengio, and A.~Courville.
\newblock {\em Deep Learning}.
\newblock MIT Press, 2016.

\bibitem{grohs2015optimally}
P.~Grohs.
\newblock Optimally sparse data representations.
\newblock In {\em Harmonic and Applied Analysis}, pages 199--248. Springer,
  2015.

\bibitem{GuoL2007}
K.~Guo and D.~Labate.
\newblock Optimally sparse multidimensional representation using shearlets.
\newblock {\em SIAM J. Math. Anal.}, 39(1):298--318, 2007.

\bibitem{GUYON1991}
I.~Guyon.
\newblock Applications of neural networks to character recognition.
\newblock {\em Int. J. Pattern. Recogn.}, 05(01n02):353–382, 1991.

\bibitem{Hint2012acoustic}
G.~Hinton, L.~Deng, D.~Yu, G.E. Dahl, A.R. Mohamed, N.~Jaitly, A.~Senior,
  V.~Vanhoucke, P.~Nguyen, T.N. Sainath, and B.~Kingsbury.
\newblock Deep neural networks for acoustic modeling in speech recognition: The
  shared views of four research groups.
\newblock {\em IEEE Signal Process. Mag.}, 29(6):82--97, 2012.

\bibitem{Hornik1989universalApprox}
K.~Hornik, M.~Stinchcombe, and H.~White.
\newblock Multilayer feedforward networks are universal approximators.
\newblock {\em Neural Netw.}, 2(5):359--366, 1989.

\bibitem{KirszbraunTheorem}
M.D. Kirszbraun.
\newblock {\"U}ber die zusammenziehende und {L}ipschitzsche {T}ransformationen.
\newblock {\em Fundam. Math.}, 22:77--108, 1934.

\bibitem{KnerrDigits1992}
S.~Knerr, L.~Personnaz, and G.~Dreyfus.
\newblock Handwritten digit recognition by neural networks with single-layer
  training.
\newblock {\em IEEE Trans. Neural Netw.}, 3(6):962--968, 1992.

\bibitem{Krizhevsky2012Imagenet}
A.~Krizhevsky, I.~Sutskever, and G.E. Hinton.
\newblock Imagenet classification with deep convolutional neural networks.
\newblock In {\em Advances in Neural Information Processing Systems 25}, pages
  1097--1105. Curran Associates, Inc., 2012.

\bibitem{KLShearIntro2012}
G.~Kutyniok and D.~Labate.
\newblock Introduction to shearlets.
\newblock In {\em Shearlets}, Appl. Numer. Harmon. Anal., pages 1--38.
  Birkh\"auser/Springer, New York, 2012.

\bibitem{KLcmptShearSparse2011}
G.~Kutyniok and W.-Q Lim.
\newblock Compactly supported shearlets are optimally sparse.
\newblock {\em J. Approx. Theory}, 163(11):1564--1589, 2011.

\bibitem{LangRealFunctional}
S.~Lang.
\newblock {\em Real and {F}unctional {A}nalysis}, volume 142 of {\em Graduate
  Texts in Mathematics}.
\newblock Springer-Verlag, New York, third edition, 1993.

\bibitem{LeCun2015DeepLearning}
Y.~LeCun, Y.~Bengio, and G.~Hinton.
\newblock {Deep learning}.
\newblock {\em Nature}, 521(7553):436--444, 2015.

\bibitem{NIPS1989_293}
Y.~LeCun, B.E. Boser, J.S. Denker, D.~Henderson, R.E. Howard, W.E. Hubbard, and
  L.D. Jackel.
\newblock Handwritten digit recognition with a back-propagation network.
\newblock In {\em Advances in Neural Information Processing Systems 2}, page
  396–404. Morgan-Kaufmann, 1990.

\bibitem{LeeSmoothManifolds}
J.M. Lee.
\newblock {\em Introduction to {S}mooth {M}anifolds}, volume 218 of {\em
  Graduate Texts in Mathematics}.
\newblock Springer, New York, second edition, 2013.

\bibitem{PinkusUniversalApproximation}
M.~Leshno, V.~Ya. Lin, A.~Pinkus, and S.~Schocken.
\newblock Multilayer feedforward networks with a nonpolynomial activation
  function can approximate any function.
\newblock {\em Neural Netw.}, 6(6):861 -- 867, 1993.

\bibitem{Maiorov1999LowerBounds}
V.~Maiorov and A.~Pinkus.
\newblock Lower bounds for approximation by {MLP} neural networks.
\newblock {\em Neurocomputing}, 25(1-3):81--91, 1999.

\bibitem{Mallat2012}
S.~Mallat.
\newblock Group invariant scattering.
\newblock {\em Comm. Pure Appl. Math.}, 65(10):1331--1398, 2012.

\bibitem{Gale1991Digits}
G.L. Martin and J.A. Pittman.
\newblock Recognizing hand-printed letters and digits using backpropagation
  learning.
\newblock {\em Neural Comput.}, 3(2):258--267, 1991.

\bibitem{MattilaGeometryOfSets}
P.~Mattila.
\newblock {\em Geometry of Sets and Measures in Euclidean Spaces: Fractals and
  Rectifiability}, volume~44.
\newblock Cambridge university press, 1999.

\bibitem{MP43}
W.~McCulloch and W.~Pitts.
\newblock A logical calculus of ideas immanent in nervous activity.
\newblock {\em Bull. Math. Biophys.}, 5:115--133, 1943.

\bibitem{Megginson}
R.E. Megginson.
\newblock {\em An {I}ntroduction to {B}anach {S}pace {T}heory}, volume 183 of
  {\em Graduate Texts in Mathematics}.
\newblock Springer-Verlag, New York, 1998.

\bibitem{mhaskar2016learning}
H.~Mhaskar, Q.~Liao, and T.~Poggio.
\newblock Learning functions: when is deep better than shallow.
\newblock {\em arXiv preprint arXiv:1603.00988}, 2016.

\bibitem{Mhaskar:1996:NNO:1362203.1362213}
H.N. Mhaskar.
\newblock Neural networks for optimal approximation of smooth and analytic
  functions.
\newblock {\em Neural Comput.}, 8(1):164--177, 1996.

\bibitem{Montufar:2014:NLR:2969033.2969153}
G.~Mont\'{u}far, R.~Pascanu, K.~Cho, and Y.~Bengio.
\newblock On the number of linear regions of deep neural networks.
\newblock In {\em Proceedings of the 27th International Conference on Neural
  Information Processing Systems}, NIPS'14, pages 2924--2932, Cambridge, MA,
  USA, 2014. MIT Press.

\bibitem{MallatBand2005}
E.~Le Pennec and S.~Mallat.
\newblock Sparse geometric image representations with bandelets.
\newblock {\em IEEE Trans. Image Process.}, 14:423--438, 2005.

\bibitem{pinkus_1999}
A.~Pinkus.
\newblock Approximation theory of the {MLP} model in neural networks.
\newblock {\em Acta Numer.}, 8:143--195, 1999.

\bibitem{Poggio2017}
T.~Poggio, H.N. Mhaskar, L.~Rosasco, B.~Miranda, and Q.~Liao.
\newblock Why and when can deep-but not shallow-networks avoid the curse of
  dimensionality: A review.
\newblock {\em International Journal of Automation and Computing}, 2017.

\bibitem{Rosenblatt1962}
F.~Rosenblatt.
\newblock {\em {Principles of Neurodynamics: Perceptrons and the Theory of
  Brain Mechanisms}}.
\newblock Spartan, 1962.

\bibitem{RudinPrinciples}
W.~Rudin.
\newblock {\em Principles of {M}athematical {A}nalysis}.
\newblock McGraw-Hill Book Co., New York-Auckland-D\"usseldorf, third edition,
  1976.
\newblock International Series in Pure and Applied Mathematics.

\bibitem{RudinFA}
W.~Rudin.
\newblock {\em Functional {A}nalysis}.
\newblock International Series in Pure and Applied Mathematics. McGraw-Hill,
  Inc., New York, second edition, 1991.

\bibitem{Rumelhart:1986:LIR:104279.104293}
D.E. Rumelhart, G.E. Hinton, and R.J. Williams.
\newblock Learning internal representations by error propagation.
\newblock In {\em Parallel Distributed Processing: Explorations in the
  Microstructure of Cognition}, pages 318--362. MIT Press, 1986.

\bibitem{DepthWidthArxiv}
I.~Safran and O.~Shamir.
\newblock Depth-width tradeoffs in approximating natural functions with neural
  networks.
\newblock {\em arXiv preprint arXiv:1610.09887}, 2016.

\bibitem{pmlr-v70-safran17a}
I.~Safran and O.~Shamir.
\newblock Depth-width tradeoffs in approximating natural functions with neural
  networks.
\newblock In {\em Proceedings of the 34th International Conference on Machine
  Learning}, volume~70 of {\em Proceedings of Machine Learning Research}, pages
  2979--2987, 2017.

\bibitem{TelgarskySmallPaper}
M.~Telgarsky.
\newblock Representation benefits of deep feedforward networks.
\newblock {\em arXiv preprint arXiv:1509.08101}, 2015.

\bibitem{Telgarsky2016BenefitsOfDepth}
M.~Telgarsky.
\newblock Benefits of depth in neural networks.
\newblock In V.~Feldman, A.~Rakhlin, and O.~Shamir, editors, {\em 29th Annual
  Conference on Learning Theory}, volume~49 of {\em Proceedings of Machine
  Learning Research}, pages 1517--1539, Columbia University, New York, New
  York, USA, 23--26 Jun 2016. PMLR.

\bibitem{Telgarsky2017}
M.~Telgarsky.
\newblock Neural networks and rational functions.
\newblock {\em arXiv preprint arXiv:1706.03301}, 2017.

\bibitem{KirszbraunTheoremModern}
F.A. Valentine.
\newblock On the extension of a vector function so as to preserve a {L}ipschitz
  condition.
\newblock {\em Bull. Am. Math. Soc.}, 49(2):100--108, 1943.

\bibitem{Voi17}
F.~Voigtlaender and A.~Pein.
\newblock Analysis sparsity versus synthesis sparsity for $\alpha$-shearlets.
\newblock {\em arXiv preprint arXiv:1702.03559}, 2017.

\bibitem{WiatowskiBDeepConcNets}
T.~Wiatowski and H.~Bölcskei.
\newblock A {M}athematical {T}heory of {D}eep {C}onvolutional {N}eural
  {N}etworks for {F}eature {E}xtraction.
\newblock {\em IEEE Trans. Inf. Theory}, 64(3):1845--1866, March 2018.

\bibitem{YAROTSKY2017103}
D.~Yarotsky.
\newblock Error bounds for approximations with deep {ReLU} networks.
\newblock {\em Neural Netw.}, 94:103--114, 2017.

\bibitem{Zhang00neuralnetworks}
G.~P. Zhang.
\newblock Neural networks for classification: A survey.
\newblock {\em Trans. Sys. Man Cyber Part C}, 30(4):451--462, November 2000.

\end{thebibliography}

\end{document}